\numberwithin{equation}{section}
\newtheorem{theorem}{Theorem}[section]
\newtheorem{lemma}[theorem]{Lemma}
\newtheorem{proposition}[theorem]{Proposition}
\newtheorem{cor}[theorem]{Corollary}
\newtheorem{rem}[theorem]{Remark}
\newtheorem{definition}[theorem]{Definition}
\newcommand{\bd}{\begin{displaymath}}
\newcommand{\ed}{\end{displaymath}}
\newcommand{\N}{{\mathbb{N}}}
\newcommand{\Z}{{\mathbb{Z}}}
\newcommand{\R}{{\mathbb{R}}}
\newcommand{{\rd}}{\R^d}
\newcommand{\lan}{\langle}
\newcommand{\ran}{\rangle}
\newcommand{\e}{\varepsilon}
\newcommand{\ve}{\epsilon}
\newcommand{\dd}{\text{\rm d}}             
\newcommand{\cE }{{\cal E}}
\newcommand{\cF }{{\cal F}}
\newcommand{\var}[1]{\mathrm{Var}\left({#1}\right)}
\newcommand{\dis}{\displaystyle}
\author{Sergio Albeverio\thanks{Institute for Applied Mathematics and HCM, University of Bonn, Bonn, Germany}\and Seiichiro Kusuoka\thanks{Department of Mathematics, Graduate School of Science, Kyoto University, Kyoto, Japan} \and Song Liang\thanks{School of Education, Waseda University, Tokyo, Japan} \and Makoto Nakashima\thanks{nakamako@math.nagoya-u.ac.jp, Graduate School of Mathematics, Nagoya University, Furocho, Chikusaku, Nagoya, Japan
} }
\begin{document}
\title{Stochastic Quantization of the three-dimensional polymer measure via the Dirichlet form method}

\pagestyle{myheadings}
\markboth{Three dimensional polymer measure}{Three dimensional polymer measure}
\date{}

\maketitle


\begin{center}{\it
Dedicated to the Memory of Xian Yin Zhou, with great admiration and gratitude for his work, on which the present contribution builds
}\end{center}

\begin{abstract}
We prove that there exists a diffusion process whose invariant measure is the three dimensional polymer measure $\nu_\lambda$ for all $\lambda>0$.
We follow in part a previous incomplete unpublished work of the first named author with M. R{\"o}ckner and X.Y. Zhou \cite{ARZ96}. For the construction of $\nu _\lambda$ we rely on previous work by J. Westwater, E. Bolthausen and X.Y. Zhou.
Using $\nu _\lambda$, the diffusion is constructed by means of the theory of Dirichlet forms on infinite-dimensional state spaces. The closability of the appropriate pre-Dirichlet form which is of gradient type is proven, by using a general closability result in \cite{AR89a}. This result does not require an integration by parts formula  (which does not even hold for the two-dimensional polymer measure $\nu_\lambda$) but requires the quasi-invariance of $\nu_\lambda$ along a basis  of vectors in the classical Cameron-Martin space such that the Radon-Nikodym derivatives have versions which form a continuous process.

\end{abstract}

\vspace{1em}
{\bf AMS 2010 Subject Classification: Primary 81S20, 60J65. Secondary 60J46, 60H30.}


\vspace{1em}{\bf Key words:} Three-dimensional polymer measure, closability, Dirichlet forms, diffusion processes, quasi-invariance.


\section{Introduction}

\subsection{Polymer measure}\label{subsec:1.1}

In the literature on polymer physics, random walks and Brownian motions can be regarded as realizations of polymer chains that consist of a huge number of monomers. Moreover, it is natural to think of self-interaction of polymer chains, e.g.~an attractive or repulsive effect due to Van der Waals force between monomers, and a repulsive effect due to the physical restriction such that different monomers should not occupy the same point in space.
Edwards' model (see \cite{Edw65}) is a probabilistic model for long polymer chains that takes into account the self-exclusive volume effects.   It is formally given by a probability measure on Wiener space:
\begin{align}\label{eq:polmea}
\nu_\lambda(\dd\omega)=N_\lambda^{-1}\exp\left(-\lambda \int_0^1\int_s^1 \delta_0(\omega_t-\omega_s)\dd s \dd t\right)\nu_0(\dd \omega),
\end{align}
where $\nu_0$ is the Wiener measure and\begin{align*}
J(\omega):= \int_0^1\int_s^1 \delta_0(\omega_t-\omega_s)\dd s \dd t
\end{align*} is the self-interaction local time of the $d$-dimensional Brownian motion $\{\omega_t\}_{t\geq 0}$ in the time interval $[0,1]$,  $\lambda\in [0,\infty)$ is a coupling constant, and $N_\lambda$ is the normalizing constant to assure that $\nu_\lambda$ is a probability measure. We will give their precise definitions below. 

In constructive quantum field theory (CQFT), Symanzik independently introduced a representation of $\phi^4$ theory in terms of a gas of Brownian paths (see \cite{Sym69}). In this representation, a self-intersection local time naturally appear. Later, Brydges, Fr\"ohlich, Sokal, and Spencer established the existence and nontriviality of $\phi^4_2$ and $\phi^4_3$ via a random walk representation (discrete counterpart to \eqref{eq:polmea}), Schwinger-Dyson equation, and skeleton inequality (see \cite{BFS82, BFS83a, BFS83b, BF84, FFS92}).    

The intersection properties of Brownian paths have been investigated since
the forties \cite{Lev40}. In \cite{DEK50,DEKT57} Dovorezcky, Erd\"{o}s, Kakutani, and Taylor  proved that the $d$-dimensional Brownian motion $\omega= \{\omega_t\}_{t\geq 0}$ has no multiple points when $d\geq 4$ almost surely so that $J=0$  and   $\nu_\lambda$ is trivial, whereas it has multiple points for $d\leq 3$ almost surely, that is $\{0\leq s< t\leq 1:\omega_s=\omega_t\}\not=\emptyset$. Hence, we focus on the case for $d\leq 3$. We remark that the critical dimension of multiple points is $d=6$ from the viewpoint of quasi-everywhere \cite{Lyo86}.  (In connection with the triviality of $\phi^4_4$ theory, intersection local time and Edwards' model in $d=4$ are discussed in \cite{Aiz85, ADC21, AK95, AFHKL86, AZ94, ABZ04, Fro82, Fro83, Hab23, Nel83}.)

In order to describe  $J$ and $\nu_\lambda$ more precisely, we start with recalling some facts about the $d$-dimensional polymer measure.  Let $X:=C_0([0,1],\R^d)$ be the set of all continuous paths in $\R^d$ indexed by $[0,1]$ and starting at zero. Let $\mathcal{B}$ be the $\sigma$-algebra generated by all maps $\omega\mapsto \omega_t$, $t\geq 0$, from $X$ to $\R^d$ and let $\nu_0$ denote the Wiener measure on $(X,\mathcal{B})$.

First, we ``approximate" $J(\omega)$ in the following way: 
For $\e\in (0,1)$, $a\in (0,1)$, $0\leq s\leq t\leq 1$, $0\leq u\leq v\leq 1$, and $\omega\in X$, let \begin{align}
J_{s,t;u,v}^{\e,a}:=\int_s^t\dd \sigma \int_{(u\vee (\sigma+\e))\wedge v}^v \dd \tau p_a(\omega_\sigma-\omega_\tau)\label{eq:Jestuvdfn}
\end{align}
where
\[
p_a(x) := \frac{1}{(2\pi a)^{d/2}}\exp \left( -\frac{|x|^2}{2a}\right) \quad \text{for $a>0$, $x\in \R^d$}.
\]
Taking the limit of $J_{s,t:u,v}^{\e,a}$ as $a \searrow 0$ formally,  
$J^{\e,0}_{s,t;u,v}$ describes the ``self-intersection local time" of Brownian paths $\omega[s,t]$ and $\omega[u,v]$ with removal of times near the diagonal 
by parameter $\e>0$. The probability measure in \eqref{eq:polmea} is defined by
\begin{align}
\nu_{\lambda}(\dd \omega)=\lim_{\e\searrow 0}\frac{1}{N_\lambda^\e}\exp\left(-\lambda J_{0,1;0,1}^{\e ,0}\right)\nu_0(\dd\omega) \label{eq:nulambda}
\end{align}
if the limit exists.
For $\lambda>0$, the realization of paths under $\nu_\lambda$ likes to avoid self-intersection other than those already present under $\nu_0$.
The difficulty to define $\nu_\lambda$  rigorously  depends on the dimension of the space.

For the case $d=1$, Westwater discussed the $L^1$-convergence of $J_{0,1;0,1}^{0,a}$ as $a\searrow 0$  and  the limit is  given by
\begin{align}
\frac{1}{2}\int_\R \ell_1(x)^2\dd x\label{eq:d=1SelfInLocal}
\end{align}
where $\{\ell_1(x)\}_{x\in\R}$ is the local time of Brownian motion $\omega$ (almost surely defined) up to time $1$, which are continuous in space and hence \eqref{eq:d=1SelfInLocal} makes sense (see \cite{Wes80}). Furthermore, \eqref{eq:polmea} does make sense. This explicit expression allows us to analyze the properties of the long-time behavior of $\omega$  under the polymer measures (see \cite{Kus85b,Wes85}). Westwater has shown that for $T \rightarrow \infty$, the law of $\frac{\omega_T}{T}$ converges under $\nu_{\lambda,T}:=\frac{1}{Z_{\lambda,T}}\exp\left(-\lambda J_{0,T;0,T}^{0,0}\right)\nu_0$ to $\frac{1}{2}(\delta _{\lambda^* \lambda ^{1/3}} + \delta _{-\lambda ^* \lambda ^{1/3}} )$, where $J_{0,T;0,T}^{0,0}$ is defined similarly to $J_{0,1;0,1}^{0,0}$ by replacing $[0,1]$ by $[0,T]$ and   $\lambda^* \sim 1.1$ is a constant. This result is improved by van der Hofstad, den Hollander, K\"onig in \cite{vdH98,HHK97,HHK03a,HHK03b} and Najnudel in \cite{Naj10}.
 
In the case that $d=2$, it is known that $J_{0,1;0,1}^{0,a}$ diverges almost surely as $a\searrow 0$ (see \cite{Var69,LeG85,Yor85}). But, there exists a random variable $Y$ such that $J_{0,1;0,1}^{0,a}-E[J_{0,1;0,1}^{0,a}]$ converges to $ Y$ in $L^2(\nu_0)$ as $a\searrow 0$ (Varadhan's renormalization) and $E[e^{-\lambda  Y}]<\infty$ for all $\lambda \geq 0$ (see \cite{Var69,LeG85,Ros86a}). Therefore, \eqref{eq:polmea} is  rigorously defined by $\nu_\lambda(\dd \omega)=\frac{1}{E[e^{-\lambda  Y}]}e^{-\lambda Y}\nu_0(\dd \omega)$.
Moreover, there exists $g_0>0$ such that $E[e^{gY}]<\infty$ for $g\in (-\infty,g_0)$ (see \cite{Sto89,LeG94}).
Then, Bass and Chen showed that the best constant $g_0$ coincides with $A^{-4}$, where
$A$ is the best constant in the Gagliardo-Nirenberg inequality \cite{BC04}.  
Here, we remark that for $\lambda\in (-g_0,\infty)$, $\nu_\lambda$ is absolutely continuous with respect to $\nu_0$. The $k$-multiple intersection local times for the planar Brownian motion are also discussed in \cite{Dyn88b, Dyn88, Ros90}. 

The case $d=3$ is more complicated, because Varadhan's renormalization does not work, since the variance of $J_{0,1;0,1}^{\e,a}$ diverges as $a\searrow 0$ and $\e\searrow 0 $ (see \eqref{eq:kappa2-2} below). However, Westwater constructed the polymer measure $\nu_\lambda$ in \cite{Wes80} and then Bolthausen gave a simpler construction in \cite{Bol93} for small values of the coupling constant $\lambda\geq 0$. In \cite{Wes82} it is shown that the approach in \cite{Wes80} is still suitable for all positive coupling constants, and in \cite{AZ98} it is proved that the polymer measure given by Bolthausen is actually the same as the one given by Westwater. It is also proved that the three-dimensional polymer measure is singular with respect to the Wiener measure (see \cite[Theorem 3]{Wes82}). More precisely, $\nu_{\lambda_1}$ and $\nu_{\lambda_2}$ are mutually singular for $0\leq \lambda_1<\lambda_2$. We remark that Bovier, Felder, and Fr\"ohlich proved the tightness of approximated polymer measures in \cite{BFF84}.
Also, Gubinelli and L\"orinczi discussed the construction of $\nu_\lambda$ (for $d=1$) with the general theory of Gibbs measures via rough path theory in \cite{GL09}.

The self-intersection local times are interesting objects by themselves,
and many studies are dedicated to them. For example, the Wiener-Chaos 
decomposition of regularizations of the self-intersection local times
are studied in \cite{BOS16, dFDS00, Hu96, IPV95}. On the other hand, nondifferentiability of the self-intersection local times in the sense of Meyer-Watanabe
is discussed in \cite{AHZ97,Hu96} in the case that $d=2$. The
self-intersection local times are generalized formally by letting
$J_x(\omega):=\int_0^1\int_0^t \delta _0(\omega_t-\omega_s-x)\dd s\dd
t$ for $x\in\R^d$. Then, it is known that for $x\not =0$,
$J_x(\omega)$ is well-defined as a function for $d=1,2,3$ and as a
generalized function in Sobolev space with negative order for $d\geq
4$ (see \cite{Ros83,LeG85,Yor85,IPV95,Ros05}).
We remark that for $d\geq 4 $, the self-intersection local time
$J_0(\omega)$ of Brownian motion can be defined as a generalized
functional of Brownian motion in the space of Hida distributions (see
\cite{Wat91,HYYW95}) with some suitable renormalization.

Now, we give a rigorous definition of $\nu_\lambda$ as in \cite{Bol93,AZ98}: For the rest of this paper, we fix $\lambda\in [0,\infty)$.
We recall the following result by Bolthausen.

\begin{proposition}\label{prop1}\cite[Proposition (2.1)]{Bol93}
There exists a version of $J_{s,t;u,v}^{\e,a}$ in $(X,\mathcal{B},\nu_0)$ which is jointly continuous in all the variables $0\leq s\leq t\leq 1$ and $0\leq u\leq v\leq 1$, and parameters $\e\in (0,1)$ and $a\in (0,1)$.
Moreover, $\dis\lim_{a\searrow 0}J_{s,t;u,v}^{\e,a}=:J_{s,t;u,v}^{\e,0}$ exists $\nu_0$-a.s.~and also it is jointly continuous in all variables $0\leq s\leq t\leq 1$ and $0\leq u\leq v\leq 1$ and the parameter $\e\in (0,1)$.
\end{proposition}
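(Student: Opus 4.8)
The plan is to verify the hypotheses of Kolmogorov's continuity theorem for the random field $(s,t,u,v,\e,a)\mapsto J^{\e,a}_{s,t;u,v}$ on its six-dimensional parameter set, and then to obtain the limit $a\searrow 0$ by a separate argument exploiting that the relevant moment bounds stay uniform as $a\to 0$. First note that for fixed parameters $J^{\e,a}_{s,t;u,v}=\int_s^t\dd\sigma\int_{(u\vee(\sigma+\e))\wedge v}^{v}\dd\tau\,p_a(\omega_\sigma-\omega_\tau)$ is a well-defined, $\mathcal{B}$-measurable, nonnegative random variable, and $\IE_{\nu_0}[p_a(\omega_\sigma-\omega_\tau)]=p_{a+|\tau-\sigma|}(0)$, so $\IE_{\nu_0}[J^{\e,a}_{s,t;u,v}]<\8$. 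The cutoff forces $\tau-\sigma\ge\e>0$ on the domain of integration, whence $p_{a+|\tau-\sigma|}(0)\le(2\pi\e)^{-d/2}$; this bound stays finite (and, for $\e\ge\e_0>0$, bounded) as $a\searrow 0$, and it is essentially the only point at which the dimension restriction $d\le 3$ and the diagonal cutoff enter --- it is exactly what fails for the unmollified, uncut object when $d\ge 2$.

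The heart of the proof is a moment estimate obtained by Gaussian computation. Writing $p_a(x)=(2\pi)^{-d}\int_{\R^d}e^{\mathrm{i}k\cdot x}e^{-a|k|^2/2}\,\dd k$ and using Gaussianity of the Brownian increments one gets, for any $m\in\N$,
\[
\IE_{\nu_0}\Big[\prod_{j=1}^{m}p_{a_j}(\omega_{\sigma_j}-\omega_{\tau_j})\Big]=(2\pi)^{-md}\int_{(\R^d)^{m}}\Big(\prod_{j=1}^{m}e^{-a_j|k_j|^2/2}\Big)e^{-\frac12 Q(k_1,\dots,k_m)}\,\dd k_1\cdots\dd k_m ,
\]
where $Q(k_1,\dots,k_m)=\mathrm{Var}_{\nu_0}\big(\sum_j k_j\cdot(\omega_{\sigma_j}-\omega_{\tau_j})\big)$ is a positive semidefinite quadratic form whose matrix consists of $d$ identical blocks equal to the $m\times m$ covariance matrix $\Sigma$ of the corresponding one-dimensional increments (expand the increments over a common refinement of the $2m$ time points into independent Brownian increments). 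Carrying out the Gaussian integration and discarding the harmless factors $e^{-a_j|k_j|^2/2}\le 1$ gives $\IE_{\nu_0}\big[\prod_j p_{a_j}(\omega_{\sigma_j}-\omega_{\tau_j})\big]\le C_m(\det\Sigma)^{-d/2}$ wherever $\Sigma$ is nondegenerate. The decisive fact is that, because $d\le 3$ (so $d/2<2$) and because the cutoff $\tau_j-\sigma_j\ge\e$ excludes near-diagonal configurations, the integral of $(\det\Sigma)^{-d/2}$ over the $m$-fold product of the time simplices is finite, with a constant depending only on $m$, $d$ and on a lower bound $\e_0$ for the $\e$'s: the degenerations of $\Sigma$ come only from finitely many near-coincidences of the intervals $[\sigma_i,\tau_i]$, and in dimension $\le 3$ the induced singularities in the time variables are (barely, at $d=3$) integrable. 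Running the same scheme with some of the $p_{a_j}$ replaced either by differences $p_a-p_{a'}$ --- for which $|\widehat{p_a-p_{a'}}(k)|=|e^{-a|k|^2/2}-e^{-a'|k|^2/2}|\le(|k|^2|a-a'|/2)^{\tht}$ for every $\tht\in(0,1)$, a bound free of $\min(a,a')$ --- or by derivatives of the heat kernel, and absorbing the resulting polynomial factors in $k$ using once more the localized nature of the degenerations, one arrives at the following: for every $n\in\N$ there exist $\gamma_n>0$ and $C_n<\8$, with $C_n$ depending on a compact subinterval $[\e_0,\e_1]\subset(0,1)$ but not on $a,a'\in(0,1)$, such that
\[
\IE_{\nu_0}\big[\big|J^{\e,a}_{s,t;u,v}-J^{\e',a'}_{s',t';u',v'}\big|^{2n}\big]\le C_n\big(|s-s'|+|t-t'|+|u-u'|+|v-v'|+|\e-\e'|+|a-a'|\big)^{\gamma_n}
\]
for all admissible $s,t,u,v,s',t',u',v'$ and all $\e,\e'\in[\e_0,\e_1]$, $a,a'\in(0,1)$; moreover $\gamma_n\to\8$ as $n\to\8$ (changing the time endpoints or $\e$ merely changes the domain of a moment-controlled integrand, so those increments carry a large power, the $a$-increment being the delicate one). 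Choosing $n$ with $\gamma_n>6$, Kolmogorov's theorem yields a version of $(s,t,u,v,\e,a)\mapsto J^{\e,a}_{s,t;u,v}$ jointly continuous on $\{0\le s\le t\le 1\}\times\{0\le u\le v\le 1\}\times[\e_0,\e_1]\times(0,1)$ for every $[\e_0,\e_1]\subset(0,1)$; patching these over an exhaustion of $(0,1)$ by such intervals (they coincide $\nu_0$-a.s.\ on overlaps) gives the first assertion.

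For the limit $a\searrow 0$ the key feature is that the displayed estimate is uniform in $a,a'$ down to $0$. Hence the jointly continuous version just constructed satisfies a local Hölder estimate in $a$ with modulus valid on all of $(0,1)$, so for every fixed $(s,t,u,v,\e)$ and $\nu_0$-a.e.\ $\omega$ the map $a\mapsto J^{\e,a}_{s,t;u,v}$ is uniformly continuous on $(0,1)$ and therefore extends continuously to $a=0$; the boundary value, which we denote $J^{\e,0}_{s,t;u,v}$, is the asserted $\nu_0$-a.s.\ limit of $J^{\e,a}_{s,t;u,v}$ as $a\searrow 0$, and as the trace on $\{a=0\}$ of a jointly continuous field it is jointly continuous in $(s,t,u,v,\e)$. (Equivalently and more concretely: along $a_k=2^{-k}$ the estimate with $a=a_k$, $a'=a_{k+1}$ is summable in $k$, so $(J^{\e,a_k}_{s,t;u,v})_k$ is $\nu_0$-a.s.\ Cauchy by Borel--Cantelli; one identifies its limit with the boundary value above and recovers its joint continuity in $(s,t,u,v,\e)$ by one more application of Kolmogorov's theorem, passing the moment bounds to the limit via Fatou.) The main obstacle throughout is the central moment estimate: establishing, for every $n$, the increment bound with exponent $\gamma_n$ unbounded in $n$ --- concretely, controlling $\int(\det\Sigma)^{-d/2}$ and its perturbations over all orderings of the time points and all (localized) degenerations of $\Sigma$ --- which is exactly where the hypotheses $d\le 3$ and $\e>0$ are indispensable.
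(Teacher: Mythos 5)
Your proposal follows essentially the same route as the paper's (and as Bolthausen's, whom the paper cites for this proposition): Fourier inversion of $p_a$, Gaussian moment computation over the refinement of the time intervals into independent Brownian increments, H\"older-type moment bounds with exponents growing in the moment order, and Kolmogorov's continuity theorem, with the $a\searrow 0$ limit following from uniformity of the estimates on the cutoff region $\{\tau-\sigma\ge\e\}$ --- this is exactly the Rosen-type machinery the paper develops in Section \ref{subsec:Ronsemethod} and applies in Lemma \ref{lem:Jacauchy} and Corollary \ref{cor:JaJ}. The one point worth making explicit is your claim that the increment estimate is uniform in $a,a'$ down to $0$: the paper's Lemma \ref{lem:Jacauchy} actually produces a singular factor $a^{-5p/6}$, but this is harmless precisely because $|a-a'|<a$ for $0<a'<a$, so that $|a-a'|^p a^{-5p/6}\le|a-a'|^{p/6}$, giving the uniform H\"older bound you need.
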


\begin{rem}\label{rem:Je}
We can prove in a similar way to \cite[Proposition (2.1)]{Bol93} that for each $\e>0$, $0\leq s\leq t\leq 1$, $0\leq u\leq v\leq 1$, and for any $p\geq 1$, $J_{s,t;u,v}^{\e,a}$ converges in $L^p(\nu_0)$ as $a\searrow 0$, which is not mentioned in the paper \cite{Bol93} explicitly (see Lemma \ref{lem:Jacauchy} and Corollary \ref{cor:JaJ}).
\end{rem}

From now on, we regard $J_{s,t;u,v}^{\e,a}$ as the version given in Proposition \ref{prop1}. In particular, we write $J_{s,t;u,v}^{\e}:=J_{s,t;u,v}^{\e,0}$ and we write \begin{align*}
J_{s,t;u,v}^{\e,0}=\int_s^t\dd \sigma \int_{(u\vee (\sigma+\e))\wedge v}^v \dd \tau \delta_0(\omega_\sigma-\omega_\tau)
\end{align*} instead of   equation \eqref{eq:Jestuvdfn}. Also, we denote $J_{0,1;0,1}^{\e,a}$ by $J_{0,1}^{\e,a}$ for $\e>0$ and $a\geq 0$ (see also Corollary \ref{cor:JaJ}).

For $\e\in (0,1)$, we set
\begin{align}
 \kappa_1(\e)=\int_\e^1 p_t(0)\dd t=2(2\pi)^{-\frac{3}{2}}(\e^{-\frac{1}{2}}-1)\label{eq:kappa1a}
 \end{align} and 
 \begin{align*}
\kappa_2(\e) &= (2\pi)^{-3}\int_0^1\dd s_1{\int_{s_1}^{1}\dd s_2\int_{s_2}^1\dd s_31_{\{\e\leq s_2\}}1_{\{\e\leq s_3-s_1\}}}\\
&\quad \hspace{4cm} \times (s_1(s_2-s_1)+s_1(s_3-s_2)+(s_3-s_2)(s_2-s_1))^{-\frac{3}{2}}.
\end{align*}
In Section \ref{kappa2}, we shall prove that there exist constants $K_1,K_2,K_3\in \R$ such that 
\begin{align}
&\lim_{\e\searrow 0}\left(E[J_{0,1}^\e]-\kappa_1(\e)\right)=K_1,\label{eq:kappa1}\\
&\lim_{\e\searrow 0}\left(\mathrm{Var}\left(J_{0,1}^\e\right)+\frac{2}{(2\pi)^2}\log \e\right)=K_2,\label{eq:kappa2-2}\\
&\lim_{\e\searrow 0}\left(\kappa_2(\e)+\frac{1}{(2\pi)^2}\log \e\right)=K_3.\label{eq:kappa2}
\end{align}
where we denote $J^{\e}_{0,1}$ by $J^{\e,0}_{0,1}$, and $E$ (resp.~$\mathrm{Var}$) is the expectation (resp.~variance) with respect to $\nu_0$.
These imply that $\kappa_1(\e)$ and $2\kappa_2(\e)$ are divergent parts of the expectation and variance of $J_{0,1}^\e$, respectively.

Let \begin{align}
&\bar{J}_{s,t}^{\e,\lambda}:=\lambda J_{s,t}^\e-\lambda(t-s)\kappa_1(\e)+\lambda^2(t-s)\kappa_2(\e),\notag
\intertext{and}
&\nu_{\e,\lambda}:=E\left[\exp\left(-\bar{J}_{0,1}^{\e,\lambda}\right)\right]^{-1}\exp\left(-\bar{J}_{0,1}^{\e,\lambda}\right)\nu_0.\quad \label{eq:nuelambdadfn}
\end{align}
 It was shown in \cite{Bol93} and \cite{AZ98} that the limit $\dis \lim_{\e\searrow 0}\nu_{\e,\lambda}$ exists in the weak sense on the Wiener space $(X,\mathcal{B})$. This limit is denoted by $\nu_\lambda$, which is a rigorous version of \eqref{eq:polmea}. We will recall Bolthausen's construction in Section \ref{sec:3}. See also Section \ref{sub:strategy}. The reader may refer to the lecture notes \cite[Chapter 1]{Bol02} treating Edwards' model.

As mentioned above, $\nu_\lambda$ is singular with respect to $\nu_0$ for $\lambda>0$ in the three-dimensional case, and hence it is hard to analyze path properties under $\nu_\lambda$. In \cite{Kus85a}, Shigeo Kusuoka proved the $\{\omega_{t}\}_{t\in[0,1]}$ under $\nu_\lambda$ is a Dirichlet process in the sense of F\"ollmer \cite{Fol81}. In \cite{Zho91,Zho92,Zho92b,Zho92c,Zho96}, Xianyin Zhou discussed the self-intersection local times, the Hausdorff dimensions of the set of double points, and other properties of the paths.

Also, the self-intersection local time and the associated polymer measures have been studied for $d$-dimensional fractional Brownian motions with Hurst parameter $H\in (0,1)$ \cite{BFS17,BFS21,BOS16,GOSS11,Hu01,HN05,HNS08,Ros87}. In \cite{HN05}, it is proved that if $dH<1$, $J_{0,1;0,1}^{0,a}$ converges in $L^2$-sense; and if $dH\in \left[1,\frac{3}{2}\right)$,  Varadhan's renormalization does also work for $J^{0,a}_{0,1;0,1}$. In \cite{GOSS11, BFS21}, a construction of the polymer measures for $dH\leq 1$ has been given by analyzing the corresponding self-intersection local times.

We can also consider a similar model in a discrete setting by replacing the Brownian motion by a simple random walk $\{S_n\}_{n=0}^\infty$, that is, the self-intersection local time is defined by $L_N(S)=\sum_{0\leq i<j\leq N}1_{\{S_i=S_j\}}$ and define Gibbs measures by \begin{align*}
P_N^g(\dd S)=\frac{1}{Z_N^g}\exp\left(-g L_N(S)\right)P(\dd S),\quad \text{for }g\geq 0,
\end{align*}  
where $P$ is the law of simple random walk $S$ and $Z_N^g$ is a normalizing constant.
This model is called the \textit{weakly self-avoiding walk} or the \textit{Domb-Joyce model}, and has been well-studied (see  \cite{denH09,Law13,MS13}).
Unlike Brownian motion, the self-intersection local time with respect to the simple random walk can be positive.
Hence, $L_N$ and $P_N^g$ do make sense. As discussed in \cite{Law13}, the interaction in Edwards' model is weaker than the one in the weakly-self avoiding walk. To approximate Edwards' model for $d=2,3$, it is suitable to renormalize $g$ as $g_N=2N^{\frac{d-2}{2}}$ with some time-space rescaling. Actually, Stoll in \cite{Sto89} constructed Edwards' model for $d=2$ via this rescaling and the use of Robinson's nonstandard analysis (see e.g. \cite{AFHKL86}).
In \cite{ABZ94}, the same method was discussed for $d=3$.
For $d=4$, it is known that paths of the simple random walk have infinitely many self-intersection points, thus behave in a different way with respect to Brownian motion (see \cite{Law13}).
The limit of the rescaled measures $P_{N}^{g_N}$ may in principle under certain restriction on $\lambda$ yield the polymer measure for $d=4$. In \cite{AZ94}, the question of the asymptotics of the probabilities of the event that two independent simple random walks in $\Z^4$ have no intersection points up to time $N$ is discussed.
Symanzik, Nelson and Aizenman pointed out that this quantity is related to the $\phi^4_4$ theory (see \cite{Sym69,Nel83, Aiz85, ADC21}). In the view point of self-intersection local times, the first named author and Zhou in \cite{AZ95} proved the central limit theorem for the renormalized self-intersection local times $L_N(S)$ for $d\geq 3$ as discussed in \cite{Yor85}.

\subsection{Stochastic quantization}\label{subsec:1.2}

Parisi and Wu introduced the idea for stochastic quantizations in the literature of quantum fields theory. For a given probability measure $\nu$ on a measurable space $\mathcal{X}$, the stochastic quantization of $\nu$ means a construction of a Markov process that has an invariant distribution $\nu$ (see \cite{PW81}).
In the case of finite-dimensions Kolmogorov had studied such a problem via Fokker-Plank equations many years before (see \cite{Kol37}).
In the present paper we discuss stochastic quantization of the three-dimensional polymer measure, which is a much more difficult case, because the measure is on the (infinite-dimensional) path space and is defined via renormalization.

 In the case of $\phi ^4$ Euclidean quantum field theory, the stochastic quantization has been a very hot topic starting many years ago and it remains so also now.
 In \cite{JLM85} stochastic quantization for dimension $2$ given by stochastic partial differential equations (SPDE) with regularized noise is studied.  Stochastic quantization of the two-dimensional $\phi ^4$ theory is
 studied by infinite-dimensional Dirichlet forms in \cite{AR91} (see also \cite{BCM88}) and also by SPDE approach in \cite{DPD03}.
 On the other hand, stochastic quantization for quantum field models in the three-dimensional case has been remained as a difficult problem for a long time.
 However, recently the theories of regularity structures introduced by Hairer in \cite{Hai14} and paracontrolled calculus introduced by
 Gubinelli, Imkeller and Perkowski in \cite{GIP15} appeared and these methods enable us  to solve nonlinear SPDEs with singular noise via renormalization.
 These methods work very well for stochastic quantization of quantum field theories, and are now developing remarkably.
 Stochastic quantization of the three-dimensional $\phi ^4$ theory via SPDEs was first achieved locally in time on torus by Hairer via regularity
 structures (see \cite[Section 1.5.2]{Hai14}), also via paracontrolled calculus locally in time on torus (see \cite{CC18a}), then globally in
 time on torus (see \cite{MW17}), and globally in time on the whole space (see \cite{GH19}).
 Construction of the three-dimensional $\phi ^4$-quantum field measure also has succeeded by the methods of singular SPDEs, e.g., on
 torus (see \cite{AK20}), on the whole space by discrete approximation \cite{GH21}, and on the whole space by continuous approximation (see
 \cite{AK25}).
 Besides, stimulated by the development of the methods of singular SPDEs, many mathematicians study similar but different approaches,
 e.g, by renormalization group (see \cite{Kup16}), by a variational method (see \cite{BG20}), by elliptic stochastic quantization (see
 \cite{GH19, ADVG21}), and by scaling argument (see \cite{Duc25}).
 For the details of the history of the stochastic quantization of $\phi ^4$ theory, see the introductions of \cite{AK20, AK25, GH21} and
 references therein. Here, we recall that, as mentioned in above, in \cite{Sym69} Symanzik
 suggested a close relation between the three-dimensional polymer measure and the three-dimensional $\phi ^4$ theory by formal
 calculations, and indeed many similarities of the models are known, e.g., the renormalization constants which appeared in both models
 have the same asymptotics, both measures are singular with respect to the base measures (the base measure is the free field measure in the case of the
 $\phi ^4$ theory, and the Wiener measure in the case of the polymer measure).
 The renormalization constants of the one-dimensional KPZ equation  also have the same asymptotics of those of the three-dimensional
 polymer measure. See \cite{Hai13} and \cite{GP17} for approaches from singular SPDEs.
 The relation between the one-dimensional KPZ equation and the $(1+1)$-dimensional directed polymers in random environment (DPRE) had already been
 known in the paper \cite{KPZ86}, which first introduced the KPZ equation. Recently, DPRE has focused on the connection between the KPZ equation 
 and the stochastic Burgers equation (see \cite{AZ96b, BC20, BC22, BL18, BL19, Bol89, Com17, DS88, FJ22, FJ23, Gia07, GP17, IS88, NN23, SZ96}).
 Furthermore, stochastic quantization techniques for random measures on a path space have been applied to other problems (see \cite{BC22}).
 Besides, the stochastic quantization of exponential interaction (H{\o}egh-Krohn model) has also been studied both from Dirichlet
 form theory and singular SPDEs (see \cite{AKMR23, HKK21, HKK23} and references therein).

Our main result in this paper is about the stochastic quantization of the three-dimensional polymer measure via the Dirichlet form method.

In \cite{AHRZ99}, the stochastic quantization of the two-dimensional
polymer measure was constructed using the theory of Dirichlet forms on
infinite-dimensional state spaces. In the present paper the same is done in
the case of the three-dimensional polymer measure, which is technically much
more difficult, since in three-dimensions the polymer measure is not absolutely continuous with respect to the Wiener measure (as opposed to the lower-dimensional cases after Varadhan's renormalization for $d=2$).
In the unpublished work \cite{ARZ96} a construction of the Dirichlet form associated with the three-dimensional polymer measure was attempted.
However, the proofs were not complete or contained gaps (e.g. \cite[Proposition 2.1]{ARZ96}).
This work in 1996 remained unfinished due to the sudden tragic departure of Xianyin Zhou in his early age.
The present paper is largely based on the previous work \cite{ARZ96}, that in turn was possible through deep in sights accumulated by X.Y.Zhou in his deep work (see also e.g. \cite{Zho91, Zho92c, Zho92b, Zho92, Zho96}).
In the present paper we present a self-contained full proof of the construction of the Dirichlet form associated with the three-dimensional polymer measure, which guarantees that the statement of the main theorem of the preprint \cite{ARZ96} is correct.

Let us remind that the construction of the polymer measure $\nu _\lambda$ in the cases that $d\leq 2$ are somewhat similar in difficulty with the construction of the Euclidean measures of models of quantum field theory in dimension $d\leq 2$, see e.g. the references  given in the introductions of \cite{AK20, AK25, GH21}.
For the construction of Dirichlet forms associated with $\phi^4_2$-measure see \cite{AR91, AMR15, RZZ17, AKMR23, HKK21, HKK23, BG23}.
For the case $d=3$, the construction of the polymer measure and associated Dirichlet form is comparable in difficulty with the construction of the $\phi ^4_3$-measure and and associated Dirichlet form (see \cite{ZZ18, BG23}).
A direct construction of a stochastic process with the $\phi ^4_3$-measure as an invariant measure has been provided in various recent works \cite{Hai13, GH19, AK20, ZZ18} and the references in \cite{AK25}.
See below after Remark \ref{rem:thm2} for further comments on this.

We formulate the main results of the present paper in the next subsection (Theorems \ref{thm:thm1}, \ref{thm:thm2} and \ref{thm:thm3}).
Let us note that the irreducibility of the Dirichlet form associated to $\nu _\lambda$ also remains an open problem (whereas it has been solved for the case that $d=2$ in \cite{AHRZ99}, see also \cite{AKR97,AKR97b} for relations between irreducibility and ergodicity of associated process).

\begin{rem}
When one replaces Brownian motion by a fractional Brownian motions, similar constructions of associated polymer measures and stochastic quantization have been discussed in \cite{BFS17}. In \cite{Hu01,HNS08}, it is proved that the self-intersection local times are Meyer-Watanabe differentiable for $dH<1$ where $H$ is the Hurst index of fractional white noise. In \cite{BFS17}, the authors used \cite{Pot00} and Meyer-Watanabe differentiability to obtain a diffusion process whose invariant measure is the fractional Edwards' model for $d$-dimensional fractional Brownian motion. The diffusion is constructed via Dirichlet form techniques in infinite-dimensional analysis, where the closability was shown by an integration by parts formula, and irreducibility of the constructed diffusion follows as in the case of planar Brownian motion (discussed in \cite{AHRZ99}). In  \cite{BFS18}, the infinite dimensional SDE associated with the Dirichlet form constructed in \cite{BFS17} is discussed. We will give a formal infinite dimensional SDE associated with our Dirichlet form $(\mathcal{E}_{\nu_\lambda},D(\mathcal{E}_{\nu_\lambda}))$ below in Remark \ref{rem:infiniteSDE}.
\end{rem}

\subsection{Main results}\label{sec:Main}
 
Recall that $X:=C_0([0,1],\R^d)$ and $X$ equipped with the supremum-norm is a separable real Banach space.
Let $X'$ be its dual. Let $H\subset X$ be the classical Cameron-Martin space, i.e., $H:=\{h\in X:\text{$h$ is absolutely continuous and $|h|_H^2=\int_0^1|h'(t)|^2\dd t<\infty$}\}$.
Furthermore, let \begin{align}
K:=\left\{h\in H: \sup_{0\leq t\leq 1}|h'(t)|<\infty\right\}.\label{eq:Kdfn}
\end{align}
Then, $(H,\langle \cdot , \cdot \rangle_{H})$ is a real separable Hilbert space which is densely and continuously embedded into $X$, and $K$ is a linear subspace of $(H,\langle \cdot , \cdot \rangle_H)$.
By identifying $H$ with its dual we obtain that $X'$ is densely embedded into $H$.
Hence, $X'\subset H\subset X$, and $\langle \cdot , \cdot \rangle_H$ restricted to $X'\times H$ coincides with the dualization between $X'$ and $X$.
Define the set  of bounded smooth cylinder functions by
\begin{align*}
\mathcal{F}C_b^\infty :=\{f(l_1,\cdots,l_m):m\in \N,f\in C_b^\infty(\R^m),l_1,\cdots,l_m\in X'\},
\end{align*}
where $C_b^\infty(\R^m)$ denotes the set of all bounded infinitely differentiable functions on $\R^m$ with all bounded continuous partial derivatives.

  Let $h\in X$. For $u\in \mathcal{F}C_b^\infty$, $\omega\in X$, define \begin{align*}
 \frac{\partial u}{\partial h}(\omega):=\left.\frac{\dd }{\dd s}u(\omega+sh)\right|_{s=0}
 \end{align*} 
 and let $\nabla u(\omega)$ denote the unique element in $H$ such that \begin{align*}
 \langle \nabla u(\omega), h\rangle_H=\frac{\partial u}{\partial h}(\omega)\quad \text{for all }h\in H.
 \end{align*}
Define for $u,v\in \mathcal{F}C_b^\infty$\begin{align}
\cE_{\nu_\lambda}(u,v):=\int \langle \nabla u(\omega),\nabla v(\omega)\rangle_H\nu_\lambda(\dd \omega)\label{eq:defcE}
\end{align}
and $\cE_{\nu_\lambda,1}:=\cE_{\nu_\lambda}+(\cdot , \cdot )_{L^2(X;\nu_\lambda)}$.

Now, we can formulate our first result.

\begin{theorem}\label{thm:thm1}\begin{enumerate}[label=(\roman*)]
\item\label{item:1-4-1} The support of $\nu _\lambda$, $\mathrm{supp}\nu_\lambda$, coincides with $X$ (i.e.~$\nu_\lambda(U)>0$ for all non-empty open $U\subset X$). In particular, we can identify each $u\in \mathcal{F}C_b^\infty$ with the corresponding class in $L^2(X;\nu_\lambda)$, and thus $(\cE_{\nu_\lambda},\mathcal{F}C_b^\infty)$ is a symmetric non-negative definite bilinear form on $L^2(X;\nu_\lambda)$.
\item\label{item:1-4-2} The symmetric bilinear form $(\cE_{\nu_\lambda},\mathcal{F}C_b^\infty)$ is closable on $L^2(X;\nu_\lambda)$ and the closure $(\cE_{\nu_\lambda},D(\cE_{\nu_\lambda}))$ is  a symmetric Dirichlet form, i.e., a closed non-negative definite symmetric bilinear form such that $u^\#:=(u\vee 0)\wedge 1\in D(\cE_{\nu_\lambda})$ and $\cE_{\nu_\lambda}(u^\#,v^\#)\leq \cE_{\nu_\lambda}(u,v) $ for all $u\in D(\cE_{\nu_\lambda})$.
\end{enumerate}
\end{theorem}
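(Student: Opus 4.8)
The plan is to split the two statements of Theorem~\ref{thm:thm1} and handle them with rather different tools. For part~\ref{item:1-4-1}, I would first observe that $\mathrm{supp}\,\nu_0 = X$: this is classical, since the Wiener measure charges every nonempty open set in $C_0([0,1],\R^d)$ (small tubes around any smooth path have positive probability by the Cameron--Martin theorem together with the support theorem for Brownian motion). Then I would try to transfer this to $\nu_\lambda$. Since for $d=3$ the measure $\nu_\lambda$ is singular with respect to $\nu_0$, one cannot simply argue via a density; instead I would go back to the approximating measures $\nu_{\e,\lambda}$ of \eqref{eq:nuelambdadfn}, which \emph{are} absolutely continuous with respect to $\nu_0$ with a strictly positive density $E[\exp(-\bar J_{0,1}^{\e,\lambda})]^{-1}\exp(-\bar J_{0,1}^{\e,\lambda})$. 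Hence $\mathrm{supp}\,\nu_{\e,\lambda}=X$ for every $\e>0$. To pass to the limit, fix a nonempty open $U\subset X$ and a smaller open ball $U'$ with $\overline{U'}\subset U$; by the Portmanteau theorem and weak convergence $\nu_{\e,\lambda}\to\nu_\lambda$, $\nu_\lambda(U)\ge \nu_\lambda(U')\ge \limsup_{\e\searrow0}\nu_{\e,\lambda}(\overline{U'})$ is not quite what I want (wrong direction), so I would instead use $\nu_\lambda(U)\ge\liminf_\e\nu_{\e,\lambda}(U)$ for open $U$, and the real work is to show this liminf is strictly positive. For that I would need a \emph{uniform} lower bound on $\nu_{\e,\lambda}(U')$ as $\e\searrow0$; equivalently, a uniform upper bound on the normalizing constants $E[\exp(-\bar J_{0,1}^{\e,\lambda})]^{-1}$ together with a uniform lower bound on $E[\exp(-\bar J_{0,1}^{\e,\lambda})\mathbbm 1_{U'}]$. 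Both should follow from the $L^p(\nu_0)$-convergence of $\bar J_{0,1}^{\e,\lambda}$ (Remark~\ref{rem:Je}) and the uniform integrability of $\exp(-\bar J_{0,1}^{\e,\lambda})$ built into Bolthausen's construction; the lower bound on the restricted integral uses that on a tube $U'$ around a fixed smooth path, the paths are uniformly bounded, so $J_{0,1}^\e$ is bounded there. This is the step I expect to be the main obstacle, since it requires revisiting the quantitative estimates underlying the construction of $\nu_\lambda$ rather than invoking them as a black box. The second sentence of \ref{item:1-4-1} is then immediate: if $\mathrm{supp}\,\nu_\lambda=X$, then no nonzero continuous function vanishes $\nu_\lambda$-a.e., so the map $\mathcal{F}C_b^\infty\to L^2(X;\nu_\lambda)$ is injective, and $(\cE_{\nu_\lambda},\mathcal{F}C_b^\infty)$ is a well-defined symmetric non-negative bilinear form (non-negativity and symmetry are visible from \eqref{eq:defcE}, and finiteness holds because $\nabla u$ is bounded for $u\in\mathcal{F}C_b^\infty$).

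For part~\ref{item:1-4-2}, the closability is the heart of the matter and I would follow exactly the route advertised in the abstract: invoke the general closability criterion of \cite{AR89a}, which — crucially — does not require an integration by parts formula (unavailable here, as noted), but only that $\nu_\lambda$ be quasi-invariant along a family of directions $\{k_n\}\subset K$ spanning a dense subspace of $H$, with Radon--Nikodym derivatives $\frac{\rmd (\nu_\lambda\circ\tau_{sk_n})}{\rmd\nu_\lambda}$ admitting versions that are continuous in $s$ (indeed forming a continuous process in $(s,\omega)$). So the plan is: (1) choose a convenient countable set $\{k_n\}\subset K$ whose linear span is dense in $H$ — e.g.\ a smooth orthonormal-type basis such as suitably normalized integrated trigonometric functions, all lying in $K$ since their derivatives are bounded; (2) prove quasi-invariance of $\nu_\lambda$ under the shifts $\omega\mapsto\omega+sk_n$ and identify the density; (3) verify the density has a version that is continuous (as a process) in $(s,\omega)$; (4) feed this into the theorem of \cite{AR89a} to conclude closability of $(\cE_{\nu_\lambda},\mathcal{F}C_b^\infty)$. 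Step~(2)--(3) is where the singularity with respect to $\nu_0$ bites: one computes the density on the approximating level, where $\nu_{\e,\lambda}\ll\nu_0$ and Cameron--Martin gives an explicit formula
\[
\frac{\rmd(\nu_{\e,\lambda}\circ\tau_{sk})}{\rmd\nu_{\e,\lambda}}(\omega)
=\frac{E[\exp(-\bar J_{0,1}^{\e,\lambda})]}{E[\exp(-\bar J_{0,1}^{\e,\lambda})]}\cdot
\exp\!\Big(-s\lan k',\rmd\omega\ran-\tfrac{s^2}{2}|k|_H^2\Big)\,
\frac{\exp(-\bar J_{0,1}^{\e,\lambda}(\omega+sk))}{\exp(-\bar J_{0,1}^{\e,\lambda}(\omega))},
\]
and one must control the limit $\e\searrow0$ of the ratio $\exp(-\bar J_{0,1}^{\e,\lambda}(\omega+sk)+\bar J_{0,1}^{\e,\lambda}(\omega))$. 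The key point is that shifting by a smooth $k$ changes the self-intersection local time $J_{0,1}^\e$ by a \emph{lower-order, better-behaved} quantity — schematically, the difference of the double integrals involves cross terms $p_a(\omega_\sigma-\omega_\tau+s(k_\sigma-k_\tau))$ versus $p_a(\omega_\sigma-\omega_\tau)$, and the renormalization counterterms $\kappa_1(\e),\kappa_2(\e)$ are \emph{translation-invariant}, so the divergent parts cancel in the difference and what remains converges in $L^p(\nu_0)$, hence (along a subsequence, then upgraded) $\nu_\lambda$-a.s., to a limit that is continuous in $s$. I expect this cancellation-of-divergences argument, and the accompanying uniform integrability needed to pass the Radon--Nikodym derivatives through the weak limit, to be the technically hardest part of the whole theorem.

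Finally, once closability is established, the Markovian (Dirichlet) property is essentially automatic for gradient-type forms: for $u\in\mathcal{F}C_b^\infty$ and the normal contraction $u^\#=(u\vee0)\wedge1$, one approximates $u^\#$ by $\phi_n(u)$ with $\phi_n\in C_b^\infty(\R)$, $0\le\phi_n'\le1$, $\phi_n\to(\cdot\vee0)\wedge1$ uniformly; then $\nabla\phi_n(u)=\phi_n'(u)\nabla u$ so $\cE_{\nu_\lambda}(\phi_n(u),\phi_n(u))\le\cE_{\nu_\lambda}(u,u)$, and one checks $\phi_n(u)\to u^\#$ in $L^2$ with $\cE_{\nu_\lambda}(\phi_n(u))$ bounded, so that by the standard lemma on closed forms (a form is closed iff lower-semicontinuity under $L^2$-convergence with bounded energy holds) $u^\#\in D(\cE_{\nu_\lambda})$ and $\cE_{\nu_\lambda}(u^\#,u^\#)\le\liminf_n\cE_{\nu_\lambda}(\phi_n(u))\le\cE_{\nu_\lambda}(u,u)$; the general inequality $\cE_{\nu_\lambda}(u^\#,v^\#)\le\cE_{\nu_\lambda}(u,v)$ for $u,v\in D(\cE_{\nu_\lambda})$ then follows by density and polarization, or more cleanly by citing the characterization of Dirichlet forms via unit contractions in Fukushima--Oshima--Takeda or Ma--R\"ockner. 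Thus the only genuinely new analytic input is steps (1)--(3) above; the rest is bookkeeping that can be carried out as in \cite{AHRZ99} for the two-dimensional case.
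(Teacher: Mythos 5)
Your plan for part~\ref{item:1-4-2} matches the paper's route: take an orthogonal basis $\{h_n\}$ of $H$ lying in $K_0$ (the paper needs $K_0$, not merely $K$, since Theorem~\ref{thm:thm3} requires $h''$ bounded in order to make sense of the stochastic integral under $\nu_\lambda$), decompose $\cE_{\nu_\lambda}$ into the directional forms $\cE_{h_n}$, and feed the quasi-invariance together with the continuity of $s\mapsto a_{sh_n}$ into the admissibility criterion of \cite{AR89b}; the contraction property at the end is then standard as you say. Your steps (2)--(3) are exactly the content of Theorem~\ref{thm:thm3}, which the paper proves separately in Sections~\ref{sec:2}--\ref{sec:5} and then quotes. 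So for (ii) you reconstruct the paper's argument.

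Part~\ref{item:1-4-1} is where you deviate, and the deviation has a genuine gap. The paper gets full support for free from the same quasi-invariance result: $\nu_\lambda$ is quasi-invariant along every $h$ in the dense linear subspace $K_0\subset X$, and by \cite[Proposition~2.7]{AR90b} this already forces $\mathrm{supp}\,\nu_\lambda=X$ (if $\nu_\lambda(U)=0$ for some nonempty open $U$, quasi-invariance gives $\nu_\lambda(U+h)=0$ for all $h\in K_0$, and a countable dense family of such translates covers $X$, a contradiction). Your alternative via the approximating measures does not close. First, the Portmanteau direction is misstated: for open $U$ one has $\liminf_\e\nu_{\e,\lambda}(U)\ge\nu_\lambda(U)$, which is an \emph{upper} bound on $\nu_\lambda(U)$ and useless here; what you actually need is $\nu_\lambda(U)\ge\nu_\lambda(\overline{U'})\ge\limsup_\e\nu_{\e,\lambda}(\overline{U'})$ for a closed $\overline{U'}\subset U$, i.e.\ a uniform-in-$\e$ lower bound on $\nu_{\e,\lambda}(\overline{U'})$. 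Second, and more seriously, the proposed source for that bound --- ``on a tube $U'$ the paths are uniformly bounded, so $J_{0,1}^\e$ is bounded there'' --- is false. The self-intersection local time is not controlled by the sup-norm of the path: paths in any fixed tube can have arbitrarily large $J_{0,1}^\e$, and moreover $E[J_{0,1}^\e]\sim\kappa_1(\e)\sim\e^{-1/2}$ and $\mathrm{Var}(J_{0,1}^\e)\sim|\log\e|$ both diverge as $\e\searrow0$, so $\overline{J}_{0,1}^{\e,\lambda}$ is certainly not uniformly bounded on $U'$ in $\e$. Consequently the claimed lower bound on $E[\exp(-\overline{J}_{0,1}^{\e,\lambda})\,1_{U'}]$ does not follow, and your argument for (i) breaks at precisely the step you flagged as the obstacle. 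The clean move --- which you already have available for (ii) --- is to deduce (i) directly from the quasi-invariance result.
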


The closability of $(\cE_{\nu_\lambda},\mathcal{F}C_b^\infty)$ on $L^2(X;\nu_\lambda)$ means that the unique continuous extension $\bar{\iota}$ of the continuous embedding $\iota : \mathcal{F}C_b^\infty \hookrightarrow L^2(X;\nu_\lambda)$, where $\mathcal{F}C_b^\infty$ is equipped with norm $(\cE_{\nu_\lambda,1})^{\frac{1}{2}}$ and $L^2(X;\nu_\lambda)$ with $\| \cdot \| _{L^2(X;\nu_\lambda)}$, to the completion $\overline{\cF C_b^\infty}$ of $\cF C_b^\infty $ with respect to $(\cE_{\nu_\lambda,1})^{\frac{1}{2}}$ is still continuous embedding.
The \textit{closure} $(\cE_{\nu_\lambda},D(\cE_{\nu_\lambda}))$ is then the smallest closed extension of $(\cE_{\nu_\lambda},\cF C_b^\infty)$ on $L^2(X;\nu_\lambda)$. For more details on Dirichlet forms the reader is referred, for example,  to \cite{MR92,FOT11,BH91,Sil74}, and for the special type of Dirichlet form appearing here, namely \textit{classical infinite dimensional Dirichlet forms} to \cite{AR90a}.

Let $(L,D(L))$  be the generator of $(\cE_{\nu_\lambda},D(\cE_{\nu_\lambda}))$, i.e., the unique non-positive definite self-adjoint operator on $L^2(X;\nu_\lambda)$ such that \begin{align}
D(\sqrt{-L})=D(\cE_{\nu_\lambda})\quad \text{and}\quad (\sqrt{-L}u,\sqrt{-L}v)=\cE_{\nu_\lambda}(u,v)\quad \text{for all }u,v\in D(\cE_{\nu_\lambda}).
\end{align} 
Let $T_t:=e^{tL},$ $t\geq 0$.
Then, the general theory of Dirichlet form yields the following theorem.
\begin{theorem}\label{thm:thm2}
There exists a diffusion process $\mathbb{M}=(\Omega,\cF,(\cF_t)_{t\geq 0},(X_t)_{t\geq0},(P_\omega)_{\omega\in X})$ which is associated with $(\cE_{\nu_\lambda},D(\cE_{\nu_\lambda}))$, i.e., for all ($\nu_\lambda$-versions of ) $f\in L^2(X;\nu_\lambda)$ and all $t\geq 0$ the function
\begin{align*}
\omega\mapsto p_tf(\omega):=\int_\Omega f(X_t)\dd P_\omega,\quad \omega\in X,
\end{align*}
is a $\nu_\lambda$-version of $T_tf$. Moreover, $\mathbb{M}$ is conservative and $\nu_\lambda$-symmetric. In particular, $\nu_\lambda$ is an invariant measure for $\mathbb{M}$.
\end{theorem}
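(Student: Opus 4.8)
The plan is to deduce Theorem~\ref{thm:thm2} from the general existence theory of symmetric Dirichlet forms, once it is checked that $(\cE_{\nu_\lambda},D(\cE_{\nu_\lambda}))$ --- which, by Theorem~\ref{thm:thm1}, is already a symmetric Dirichlet form on $L^2(X;\nu_\lambda)$ --- is moreover \emph{quasi-regular} and \emph{local}. Given quasi-regularity, the Ma--R\"ockner theory (see \cite[Ch.~IV and Ch.~V]{MR92}, and also \cite{AR90a}) furnishes a $\nu_\lambda$-tight special standard Markov process $\mathbb{M}=(\Omega,\cF,(\cF_t)_{t\ge0},(X_t)_{t\ge0},(P_\omega)_{\omega\in X})$ properly associated with $(\cE_{\nu_\lambda},D(\cE_{\nu_\lambda}))$, so that $\omega\mapsto p_tf(\omega)=\int_\Omega f(X_t)\,\dd P_\omega$ is a $\nu_\lambda$-version of $T_tf$ for every $f\in L^2(X;\nu_\lambda)$ and every $t\ge0$; locality then upgrades $\mathbb{M}$ to a diffusion, i.e.\ yields $\nu_\lambda$-a.s.\ continuous sample paths.

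For quasi-regularity I would run the standard argument for classical infinite-dimensional Dirichlet forms. The structural data at hand are: $X=C_0([0,1],\R^d)$ is a separable Banach space into which the Cameron--Martin space $H$ is densely and continuously embedded; $\nu_\lambda$ is a Borel probability measure on $X$, hence tight (Radon), with full topological support by Theorem~\ref{thm:thm1}\ref{item:1-4-1}; and $\mathcal{F}C_b^\infty$ is, by construction of the closure, a dense subspace of $(D(\cE_{\nu_\lambda}),\cE_{\nu_\lambda,1}^{1/2})$. Since $X$ is separable, one fixes a sequence $\{l_k\}_{k\ge1}\subset X'$ with $|l_k|_H\le1$ separating the points of $X$; then $u_k:=\arctan(l_k)\in\mathcal{F}C_b^\infty$ satisfies $\cE_{\nu_\lambda}(u_k,u_k)\le|l_k|_H^2\le1$, and the $u_k$ are bounded, continuous and jointly point-separating. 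From this one builds, in the usual way, an $\cE_{\nu_\lambda}$-nest of compact subsets of $X$ together with a countable point-separating family in $D(\cE_{\nu_\lambda})$ of quasi-continuous functions, which is exactly what quasi-regularity requires; the closability established in Theorem~\ref{thm:thm1}\ref{item:1-4-2} is what makes this construction legitimate. Locality is immediate from the gradient structure: for $u=f(l_1,\dots,l_m)\in\mathcal{F}C_b^\infty$ one has $\nabla u(\omega)=\sum_{i=1}^m(\partial_if)(l_1(\omega),\dots,l_m(\omega))\,l_i$ in $H$, whence $\langle\nabla u,\nabla v\rangle_H=0$ $\nu_\lambda$-a.e.\ whenever $u$ is constant on a neighbourhood of $\mathrm{supp}\,v$; thus $\cE_{\nu_\lambda}$ is (strongly) local, and the associated process may be taken to be a diffusion.

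It then remains to extract conservativeness, $\nu_\lambda$-symmetry and invariance. Since $\nu_\lambda$ is a probability measure, $1\in L^2(X;\nu_\lambda)$; moreover $1\in D(\cE_{\nu_\lambda})$ with $\cE_{\nu_\lambda}(1,1)=0$ (approximate $1$ in $\mathcal{F}C_b^\infty$ and use $\nabla 1=0$), so $L1=0$ and $T_t1=1$ for every $t\ge0$; equivalently $\mathbb{M}$ has infinite lifetime, i.e.\ is conservative. Symmetry of $(\cE_{\nu_\lambda},D(\cE_{\nu_\lambda}))$ yields symmetry of $(T_t)_{t\ge0}$ on $L^2(X;\nu_\lambda)$, hence $\mathbb{M}$ is $\nu_\lambda$-symmetric; and for every $f\in L^2(X;\nu_\lambda)$,
\[
\int_X p_tf\,\dd\nu_\lambda=\int_X T_tf\,\dd\nu_\lambda=\int_X f\,(T_t1)\,\dd\nu_\lambda=\int_X f\,\dd\nu_\lambda,
\]
so $\nu_\lambda$ is an invariant measure for $\mathbb{M}$. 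The only point above that is not completely routine is the verification of quasi-regularity, and even that is a standard consequence of the general theory of classical infinite-dimensional Dirichlet forms; the genuinely difficult input --- the closability of the pre-Dirichlet form $(\cE_{\nu_\lambda},\mathcal{F}C_b^\infty)$ --- has already been supplied by Theorem~\ref{thm:thm1}.
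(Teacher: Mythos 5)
Your proof is correct and follows essentially the same route as the paper: quasi-regularity and locality of $(\cE_{\nu_\lambda},D(\cE_{\nu_\lambda}))$ via the Ma--R\"ockner theory give a properly associated diffusion, and conservativeness/invariance come from $1\in D(L)$, $L1=0$. The only difference is cosmetic: where the paper simply cites \cite[Ch.~IV, \S 4b)]{MR92} for quasi-regularity and \cite[Ch.~V, Example 1.12(ii)]{MR92} for locality, you spell out the standard construction (the point-separating sequence $u_k=\arctan(l_k)$, the compact nest, the gradient-locality computation), which is exactly the content of those citations.
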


\begin{rem}\label{rem:thm2}
\begin{enumerate}[label=(\roman*)]
\item $\mathbb{M}$ is in fact even properly associated to $(\cE_{\nu_\lambda},D(\cE_{\nu_\lambda}))$ in the sense of \cite[Chap.IV, Definition 2.5]{MR92}.
\item $\Omega$ in Theorem \ref{thm:thm2} can, of course, always be chosen to be $C([0,\infty),X)$, since $\mathbb{M}$ has continuous sample paths and is conservative. 
\end{enumerate}
\end{rem}

As already emphasized in \cite{AHRZ99} the construction of the stochastic quantization of $\nu_\lambda$ by the theory of Dirichlet forms uses $\nu_\lambda$ and is based on some of its properties. It thus does not lead to an independent \textit{construction of $\nu_\lambda$} itself. This is similar to early works on the stochastic quantization of measures in Euclidean field theory (such as e.g. $P(\phi)_2$-fields, see for example \cite{JLM85,AR89a,AR89b,AR91, AMR15, ABR22}) where the respective measures have been used in an essential way.
A truly constructive approach of the invariant measures (more precisely by the stochastic quantization equations for Euclidean quantum fields) was not known (for dimension $d=2,3$) before the work initiated in \cite{DPD03} for $d=2$, and \cite{Hai13, GIP15, AK20, GH21, AK25} for $d=3$ .
For polynomial and exponential type models in $d=2$ even uniqueness results are known (see e.g. \cite{RZZ17, BDV21, DVGT22}).
For $d=3$ $\phi ^4_3$-models results are less complete (see \cite{ALZ06, ZZ18, BG20, AK20, AK25}).
To the best of our knowledge, the problem of a constructive approach of $\nu _\lambda$ for polymer measures in $d=2,3$ does not seem to have been discussed from this point of view, i.e. via solutions of SPDEs. 

For the proof of Theorem \ref{thm:thm1}, we need the following result, which is of its own interest and is in fact the heart of this paper. For $h\in X$, we define $\tau_h(\omega):=\omega+h$, $\omega\in X$, and let
\begin{align}
K_0:=\left\{h\in K\left|\sup_{0\leq t\leq 1}|h''(t)|<\infty\right.\right\},  \quad  \label{eq:Kodfn}
\end{align}
where $K$ is given in \eqref{eq:Kdfn}.

\begin{theorem}\label{thm:thm3}
Let $h\in K_0$. Then, $\nu_\lambda$ is $k$-quasi-invariant, i.e., $\nu_\lambda\circ \tau_{sh}^{-1}$ is absolutely continuous with respect  to $\nu_\lambda$ for all $s\in\R$. If $a_{sh}:=\dis \frac{\dd (\nu_{\lambda}\circ \tau_{sh}^{-1})}{\dd \nu_{\lambda}}$, $s\in \R$, then the process $(a_{sh})_{s\in\R}$ has a version which has $\nu_\lambda$-almost every continuous sample paths.
\end{theorem}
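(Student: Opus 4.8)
The plan is to establish the statement first for the approximations $\nu_{\e,\lambda}$ of \eqref{eq:nuelambdadfn}, where everything is explicit, and then to pass to the limit $\e\searrow0$ with estimates that are uniform in $\e$. Since $\nu_{\e,\lambda}\ll\nu_0$ and the Wiener measure is quasi-invariant under shifts by the Cameron--Martin space, for $h\in K_0$ and $s\in\R$ one obtains
\begin{equation}\label{eq:epsRN}
\frac{\dd(\nu_{\e,\lambda}\circ\tau_{sh}^{-1})}{\dd\nu_{\e,\lambda}}(\omega)=\rho_{sh}(\omega)\,\exp\big(-\lambda\Delta^{\e}_{s,h}(\omega)\big),\qquad \Delta^{\e}_{s,h}:=J_{0,1}^{\e}(\cdot-sh)-J_{0,1}^{\e}(\cdot),
\end{equation}
where the divergent constants $\kappa_1(\e),\kappa_2(\e)$ entering $\bar J_{0,1}^{\e,\lambda}$ cancel between numerator and denominator, and $\rho_{sh}$ is the Cameron--Martin density
\[
\rho_{sh}(\omega)=\exp\!\Big(s\Big(h'(1)\cdot\omega_1-\int_0^1 h''(t)\cdot\omega_t\,\dd t\Big)-\tfrac{s^2}{2}\int_0^1|h'(t)|^2\,\dd t\Big).
\]
Here it is essential that $h\in K_0$, i.e.\ that $h''$ be bounded: then $\rho_{sh}$ is a genuine \emph{continuous} function on $\R\times X$, so it raises no measurability issue with respect to the singular measure $\nu_\lambda$. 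The asserted derivative will be $a_{sh}=\rho_{sh}\exp(-\lambda\Delta_{s,h})$, with $\Delta_{s,h}=\lim_{\e\searrow0}\Delta^{\e}_{s,h}$.

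The analytic core is the convergence of the translate-difference $\Delta^{\e}_{s,h}$. For $h\in K_0$ one has $|h_\sigma-h_\tau|\le\|h'\|_\infty|\tau-\sigma|$, so the short-time contributions responsible for the divergences of $J_{0,1}^{\e}$, of $E[J_{0,1}^{\e}]$ and of $\mathrm{Var}(J_{0,1}^{\e})$ (cf.\ \eqref{eq:kappa1} and \eqref{eq:kappa2-2}) \emph{cancel} in the difference: morally the regularised kernel picks up an extra factor $|s(h_\sigma-h_\tau)|^{2}/(\tau-\sigma)\lesssim|\tau-\sigma|$, which improves the diagonal singularity by one power and removes both the $\e^{-1/2}$ and the $\log\e$ divergence. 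Making this precise by the heat-kernel moment computations underlying Proposition~\ref{prop1} and Remark~\ref{rem:Je}, I would prove: the regularisations $\Delta^{\e,a}_{s,h}=J_{0,1}^{\e,a}(\cdot-sh)-J_{0,1}^{\e,a}(\cdot)$ are jointly continuous in all parameters and converge, as $a\searrow0$ then $\e\searrow0$, in every $L^{p}(\nu_0)$ to a limit $\Delta_{s,h}$, with a rate which is a positive power $\e^{\alpha}$ of $\e$; one has the \emph{uniform} bounds $\sup_{\e}\|\Delta^{\e}_{s,h}\|_{L^{p}(\nu_0)}<\infty$ for every $p$ (in particular $\sup_{\e}\mathrm{Var}_{\nu_0}(\Delta^{\e}_{s,h})<\infty$, which is precisely the cancellation of divergences); and, since $\Delta^{\e}_{s,h}-\Delta^{\e}_{s',h}$ is again a translate-difference along the smooth vector $(s-s')h$, $\sup_{\e}\|\Delta^{\e}_{s,h}-\Delta^{\e}_{s',h}\|_{L^{p}(\nu_0)}\le C_{p}|s-s'|$.

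The heart of the proof, and the main obstacle, is to transport these facts from $\nu_0$ to the singular measure $\nu_\lambda$: since $\nu_\lambda\perp\nu_0$, statements holding $\nu_0$-a.s.\ or in $L^{p}(\nu_0)$ do not transfer automatically, and moreover $\dd\nu_{\e,\lambda}/\dd\nu_0=e^{-\lambda J_{0,1}^{\e}}/E[e^{-\lambda J_{0,1}^{\e}}]$ is bounded in $L^{1}(\nu_0)$ but in \emph{no} $L^{1+\delta}(\nu_0)$: by the cumulant asymptotics behind \eqref{eq:kappa1}--\eqref{eq:kappa2} its $L^{2}(\nu_0)$-norm grows like $e^{\lambda^{2}\kappa_2(\e)}$, i.e.\ like a fixed power $\e^{-\lambda^{2}/(2\pi)^{2}}$ of $1/\e$. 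Nevertheless $\Delta^{\e}_{s,h}$ still converges \emph{in} $L^{p}(\nu_\lambda)$, because this fixed polynomial blow-up is beaten, at high moment order, by the $\e$-rate of the Cauchy differences: H\"older's inequality gives
\[
E_{\nu_{\e,\lambda}}\big[|\Delta^{\e}_{s,h}-\Delta^{\e'}_{s,h}|^{p}\big]\le\Big\|\tfrac{\dd\nu_{\e,\lambda}}{\dd\nu_0}\Big\|_{L^{2}(\nu_0)}\,\big\|\Delta^{\e}_{s,h}-\Delta^{\e'}_{s,h}\big\|_{L^{2p}(\nu_0)}^{p}\lesssim\e^{-\lambda^{2}/(2\pi)^{2}}(C_{p}\,\e^{\alpha})^{p}\longrightarrow 0
\]
as $\e,\e'\to0$ once $p>\lambda^{2}/(\alpha(2\pi)^{2})$, so $\Delta^{\e}_{s,h}$ has a limit $\Delta_{s,h}\in\bigcap_{p}L^{p}(X;\nu_\lambda)$. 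The \emph{absolute continuity} of $\nu_\lambda\circ\tau_{sh}^{-1}$ reduces, via the weak convergence $\nu_{\e,\lambda}\to\nu_\lambda$ (hence $\nu_{\e,\lambda}\circ\tau_{sh}^{-1}\to\nu_\lambda\circ\tau_{sh}^{-1}$) and an $L^{q'}$--$L^{q}$ duality argument, to the uniform bound $\sup_{\e}E_{\nu_{\e,\lambda}}\big[(\rho_{sh}e^{-\lambda\Delta^{\e}_{s,h}})^{q}\big]<\infty$ for some $q>1$; using $(\rho_{sh}e^{-\lambda\Delta^{\e}_{s,h}})\,\nu_{\e,\lambda}=\nu_{\e,\lambda}\circ\tau_{sh}^{-1}$ and the change of variables $\tau_{sh}$, this amounts to uniform-in-$\e$ exponential integrability of $\Delta^{\e}_{\pm s,h}$ under $\nu_{\e,\lambda}$ for a small exponent. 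This last point is the genuine difficulty: it cannot be obtained by transferring $\nu_0$-bounds (which blow up polynomially in $\e$), and one must instead exploit that $\nu_{\e,\lambda}$ suppresses large values of $J_{0,1}^{\e}$, together with the dyadic-scale decomposition underlying Bolthausen's construction of $\nu_\lambda$ (recalled in Section~\ref{sec:3}), keeping $\Delta^{\e}_{s,h}=J_{0,1}^{\e}(\cdot-sh)-J_{0,1}^{\e}(\cdot)$ intact so that the cancellation of divergences survives under the measure change. Granting the uniform bound, passing to the limit in \eqref{eq:epsRN} and using the $L^{p}(\nu_\lambda)$-convergence of $\Delta^{\e}_{s,h}$ identifies $a_{sh}=\rho_{sh}e^{-\lambda\Delta_{s,h}}$, $\nu_\lambda$-a.s.

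For the continuous version, transferring the uniform H\"older-in-$s$ estimates of the second step to $\nu_\lambda$ by the same H\"older mechanism gives $\|\Delta_{sh}-\Delta_{s'h}\|_{L^{p}(X;\nu_\lambda)}\le C_{p}|s-s'|$ for every $p$, so Kolmogorov's continuity criterion applied to the process $s\mapsto\Delta_{s,h}$ on $(X,\mathcal B,\nu_\lambda)$ yields a version with $\nu_\lambda$-a.e.\ continuous sample paths; since $s\mapsto\rho_{sh}(\omega)$ is continuous (indeed smooth) for every $\omega\in X$, the product $a_{sh}=\rho_{sh}e^{-\lambda\Delta_{s,h}}$ then admits a version continuous in $s$ for $\nu_\lambda$-a.e.\ $\omega$. (The cocycle relation $a_{(s+t)h}=a_{th}\,(a_{sh}\circ\tau_{th})$ is consistent with these formulas and, if one prefers, reduces the statement to $s$ near $0$.) I expect essentially all of the real work to lie in the uniform-in-$\e$ estimates of the preceding paragraph --- the quantitative cancellation of the self-intersection-local-time divergences, and above all its persistence under the polymer tilt $\nu_{\e,\lambda}$ in spite of $\nu_\lambda\perp\nu_0$.
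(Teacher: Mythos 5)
Your overall plan---approximate $\nu_\lambda$ by $\nu_{\e,\lambda}$, write the Radon--Nikodym derivative explicitly for the approximation via Cameron--Martin, show that the translate-difference $\Delta^\e_{s,h}$ converges with good moment bounds, and then invoke Kolmogorov's criterion for the continuous version---is the same strategy the paper follows. You also correctly identify the two key obstacles: passing estimates from $\nu_0$ (to which $\nu_\lambda$ is singular) to $\nu_\lambda$, and establishing enough uniform integrability of the RN derivatives to make the weak limits behave. But your treatment of both obstacles has genuine gaps, and on the second one your proposed route is actually different from (and significantly harder than) the paper's.

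\textbf{The moving-target problem.} Your H\"older computation gives
$E_{\nu_{\e,\lambda}}[|\Delta^\e_{s,h}-\Delta^{\e'}_{s,h}|^p]\to 0$,
but the measure $\nu_{\e,\lambda}$ itself depends on $\e$. This does \emph{not} imply that $\Delta^\e_{s,h}$ is Cauchy in $L^p(\nu_\lambda)$: you have a Cauchy-looking estimate under a moving sequence of measures, not under a fixed one. The paper closes exactly this gap with Lemma~\ref{lem:phiest} (and its extension in Remark~\ref{rem:lemphiest}), which gives quantitative control
$\left|E\bigl[e^{-\bar J^{\e_1,\lambda}_{0,1}}|\Phi|^p\bigr]-E\bigl[e^{-\bar J^{\e_2,\lambda}_{0,1}}|\Phi|^p\bigr]\right|\le C\,\max|f|\,(\e_1^{\tilde\delta}+\e_2^{\tilde\delta})$
for $\Phi$ of the specific intersection-local-time form, and then decouples the scale (choosing $\tilde\e_n$ much smaller than $\e_n$) so that the approximation error of $\nu_{\tilde\e_n,\lambda}$ vs.\ $\nu_\lambda$ is small relative to the Cauchy rate of $\Phi_n$. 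This is the content of Theorem~\ref{thm:rho}, whose proof hinges on \eqref{eq:cauchylambda}--\eqref{eq:jlambdaphi}. Your phrase about ``the dyadic-scale decomposition underlying Bolthausen's construction'' gestures at this, but it is not a side remark: Lemma~\ref{lem:phiest} is the analytic heart of the whole construction, and without it (or something of equal strength) the step from ``small under $\nu_{\e,\lambda}$'' to ``small under $\nu_\lambda$'' is simply not there.

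\textbf{The absolute-continuity step.} You reduce absolute continuity to the uniform bound $\sup_\e E_{\nu_{\e,\lambda}}\bigl[(\rho_{sh}e^{-\lambda\Delta^\e_{s,h}})^q\bigr]<\infty$ for some $q>1$, i.e.\ uniform exponential integrability of $\Delta^\e_{s,h}$ under $\nu_{\e,\lambda}$, and you acknowledge that this is ``the genuine difficulty''. The paper never proves such a bound, and with the tools available (moment estimates of Lemma~\ref{lem:Jcauchy} and Theorem~\ref{thm:rho}, which give all polynomial but no exponential moments) it appears to be genuinely out of reach. Instead the paper takes a weaker and more robust route: replace $\nu_{\e,\lambda}$ by a truncated approximation $\mu_{n,\lambda}$ (with $\mathcal J_{n,\lambda}'=\mathcal J_{n,\lambda}\,1_{A_n}$, Lemma~\ref{lem:munlambda}), establish only the first-moment tightness $\sup_n E_{\mu_{n,\lambda}}[|\widetilde J_{\e_n,a_n}(u,h)|]<\infty$ (Lemma~\ref{lem:step2-1}), control the bad sets (Lemmas~\ref{lem:step2}--\ref{lem:step4}), and use a smooth level-$M$ cut-off $\varphi_M$ to prove convergence of finite-dimensional marginals (Lemma~\ref{lem:step1}). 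The conclusion $\nu_\lambda\circ\tau_{uh}^{-1}=a_{uh}\nu_\lambda$ then follows because both sides are probability measures (total mass $1$ comes for free by taking $f\equiv1$) with matching finite-dimensional laws. This is a different argument from your ``$L^{q'}$--$L^q$ duality'', and it is exactly calibrated to what the moment estimates can deliver. If you want to keep your route, you would need an a priori exponential moment bound for $\Delta_{s,h}$ under $\nu_\lambda$, and this would be a stronger result than anything proven in the paper.

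\textbf{Two smaller points.} (i) The asserted modulus $\|\Delta^\e_{s,h}-\Delta^\e_{s',h}\|_{L^p(\nu_0)}\le C_p|s-s'|$ is too strong: the paper's Lemma~\ref{lem:Jcauchy} and Theorem~\ref{thm:rho} give only a H\"older exponent $\gamma\in(1/3,1/2)$, not Lipschitz. This is harmless for Kolmogorov (you only need $\gamma p>1$), but you should not claim Lipschitz. (ii) Your use of $J^{\e}_{0,1}=J^{\e,0}_{0,1}$ in the RN derivative~\eqref{eq:epsRN} quietly relies on a globally defined continuous version; the paper's use of the $a$-regularisation $J^{\e_n,a_n}_{0,1}$ truncated on a good event $A_n$ (cf.\ the definition of $\mathcal J'_{n,\lambda}$) is precisely to make $D_{n,\lambda}$ an honest everywhere-defined function without ambiguity under the singular limit measure.
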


The proof of Theorem \ref{thm:thm3} is given in Section \ref{sec:5} below after the necessary preparations in Section \ref{sec:2}-\ref{sec:4}.

Once we prove Theorem \ref{thm:thm3}, Theorem \ref{thm:thm1} and Theorem \ref{thm:thm2} follow from the theory of the Dirichlet forms as in \cite[Theorem 1.2, Theorem 1.3]{AHRZ99}. 
\begin{proof}[Proof of Theorem \ref{thm:thm1}]
\ref{item:1-4-1} By Theorem \ref{thm:thm3}, $\nu_\lambda$ is $k$-quasi-invariant for each $k\in K_0$. Since  $K_0$  is a linear space and dense in $H$, hence also in $X$, it follows by  \cite[Proposition 2.7]{AR90b} that $\mathrm{supp}\nu_\lambda=X$.

\ref{item:1-4-2} Since $K_0$ is dense in $H$, we can find an orthogonal basis $\{h_n:n\in \N\}$ of $H$ in $K_0$. Then, we can find \begin{align*}
\sum_{n\geq 1}{}_{X'}\langle l,h_n\rangle_{X}^2<\infty\quad \text{for all }l\in X'.
\end{align*}
Suppose that for each $n\in \N$, $(\mathcal{E}_{h_n},\mathcal{F}C_b^\infty)$ on $L^2(X;\nu_\lambda)$ is closable, where  \begin{align*}
\mathcal{E}_{h_n}(u,v)=\int \frac{\partial u}{\partial h_n}\frac{\partial v}{\partial h_n }\dd \nu_\lambda,\quad u,v\in \mathcal{F}C_b^\infty.
\end{align*} 
Then, we get from \cite[Remark 3.7 in Chapter II]{MR92} that 
for $u,v\in \mathcal{F}C_b^\infty$,
\begin{align}
\mathcal{E}_{\nu_\lambda}(u,v)=\sum_{n=1}^\infty \int \frac{\partial u}{\partial h_n}\frac{\partial v}{\partial h_n }\dd \nu_\lambda,			\label{eq:Dirichletnug}
\end{align}
and from \cite[Theorem 3.8]{AR90a} or \cite[Proposition 3.5 in Chapter II ]{MR92} that $(\mathcal{E}_{\nu_\lambda},\mathcal{F}C_b^\infty)$ is closable on $L^2(\nu_\lambda)$ and its closure $(\mathcal{E}_{\nu_\lambda},D(\mathcal{E}_{\nu_\lambda}))$  is a symmetric Dirichlet form. 
Thus, it suffices to prove the closability of each $(\mathcal{E}_{h_n},\mathcal{F}C_b^\infty)$ on $L^2(X;\nu_\lambda)$. 

Since $s\mapsto a_{sh_n}$ is continuous  $\nu_\lambda$-almost everywhere by Theorem \ref{thm:thm3} for each $n\in \N$, $h_n$ is admissible \cite[Corollary 2.5]{AR89b} in the sense \cite[Definition 1.6]{AR89b}.    Then, the closability follows from \cite[Theorem 1.3]{AR89b}.
\end{proof}

The proof of Theorem \ref{thm:thm2} then follows by the standard machinery of Dirichlet forms on infinite-dimensional state spaces and is exactly the same as that of \cite[Theorem 1.2]{AHRZ99}. 

\begin{proof}[Proof of Theorem \ref{thm:thm2}]
Since we have verified the closability of $(\mathcal{E}_{\nu_\lambda},\mathcal{F}C_b^\infty)$ on $L^2(X;\nu_\lambda)$, it follows from \cite[Subsection 4 b) in Chapter IV]{MR92}   that  $(\cE_{\nu_\lambda},D(\cE_{\nu_\lambda}))$ is quasi-regular. Then, the existence of a $\nu_{\lambda}$-symmetric  $\mathbb{M}$ is a consequence of \cite[Theorem 3.5 in Chapter IV]{MR92}.

Also, by \cite[Example 1.12 (ii) in Chapter V]{MR92}, the quasi-regular Dirichlet form  $(\cE_{\nu_\lambda},D(\cE_{\nu_\lambda}))$ possesses the local property, and hence  sample paths of $\mathbb{M}$ are continuous by \cite[Theorem 1.11 in Chapter V]{MR92}. The conservativity of $\mathbb{M}$ is obvious since $1\in D(L)$ 
and $L1= 0$, hence $T_t1=1$ for all $ t\geq  0$. The fact that $\nu_\lambda$ is an invariant measure for $\mathbb{M}$ then follows immediately.
\end{proof}

 The rest of this paper  is devoted to proving Theorem \ref{thm:thm3}.

\begin{rem}
In contrast to the two-dimensional case we have not yet succeeded in proving the irreducibility (which is equivalent to ergodicity) of $(\mathcal{E}_{\nu_\lambda},D(\mathcal{E}_{\nu_\lambda}))$ in Theorem \ref{thm:thm1}.
\end{rem}

\begin{rem}\label{rem:infiniteSDE}

In \cite{AR91} a general theory of the infinite-dimensional SDE associated with a classical Dirichlet form is given.
At present, it is unknown whether this theory is applicable to the three-dimensional polymer measure $\nu _\lambda$ or not. 
Here we formally apply the general theory to the Dirichlet form $(\mathcal{E}_{\nu_\lambda},D(\mathcal{E}_{\nu_\lambda}))$ and the diffusion $\mathbb{M}=(\Omega,\cF,(\cF_t)_{t\geq 0},(X_t)_{t\geq0},(P_\omega)_{\omega\in X})$ for a heuristic observation and we clarify the problems.

For each $k\in K_0$, let $b_k$ be a functional on the path space $X$ which satisfies the integration by parts formula of $\nu_\lambda$ is formally written by
\begin{align*}
\int_X \frac{\partial u}{\partial k}\dd \nu_\lambda=-\int_X u b_k \dd\nu_\lambda
\end{align*}
for all cylindrical polynomial function $u$.
The functional $b_k$ is regarded as the log-derivative of $\nu_\lambda$ with respect to the direction $k$, and the existence of $b_k$ is not known.
As in \cite[Theorems~2.8 and 5.3]{AR91}
\begin{align}
{}_{X'}\langle k,X_t \rangle_{X}-{}_{X'}\langle k,X_0\rangle_{X}= \sqrt{2}W_t^k+\int_0^t b_k(X_s)\dd s \label{eq:Fdecom}
\end{align}
where for all $\omega\in X\backslash S_k$ for some $S_k\subset X$ with capacity zero, $\{W_t^k,\mathcal{F}_t,P_\omega\}_{t\geq 0}$ is an $(\mathcal{F}_t)_{t\geq 0}$-Brownian motion starting at zero.
Here, note that we did not put $\frac{1}{2}$ in \eqref{eq:defcE} and that \eqref{eq:Fdecom} is associated with the Fukushima decomposition.
Formally, as in \cite[Lemma~1]{BFS18}, $b_k$ is given by
\[
b_k={}_{X'}\langle k,\cdot \rangle_{X}+\lambda \frac{\partial \rho_\lambda}{\partial k}
\]
with the formal derivative of $\rho_\lambda$ in direction $k$,
\begin{align*}
\frac{\partial \rho_\lambda}{\partial k} =\lim_{s\to 0}\frac{\rho_{\lambda}(s,k)}{s},
\end{align*}
where $\rho_{\lambda}(s,k)$ is rigorously defined in Theorem~\ref{thm:rho} and Remark~\ref{rem:thmrho}.
We remark that $\rho_{\lambda}(s,k)$ is the rigorously defined version of the random variable heuristically given by $J^{0}_{0,1}(\cdot + sk)-J^{0}_{0,1}(\cdot)$.
However, we do not know whether $\frac{\partial \rho_\lambda}{\partial k}$ exists or not.

This heuristic argument is still not applicable to the two-dimensional, because the self-intersection local time $Y$ is not differentiable in the sense of Meyer-Watanabe \cite{AHZ97}. 
   In \cite{BFS18} this heuristic argument is rigorously applied to the case of the self-interaction polymer measure of fractional Brownian motions under the condition $dH<1$, where $d$ and $H$ are the dimension and the Hurst parameter of the fractional Brownian motion, respectively.
Here, we remark that in the case where $dH<1$ we do not need renormalization for the construction of the polymer measure.

\end{rem}

\subsection{Strategy of the proof of Theorem~\ref{thm:thm3}}\label{sub:strategy}

We first remark that the strategy of the proof of Theorem~\ref{thm:thm3} is similar to the argument in \cite{ARZ96}.
However, for the proofs in the present paper we need a lot of modifications from the argument in \cite{ARZ96}.
Recall that \cite{ARZ96} remains as an unfinished paper (preprint).

As mentioned above, we are able to construct the measure formally given by
\begin{align*}
\nu_\lambda(\dd \omega)=E[e^{-\lambda J_{0,1}^0}]^{-1}\exp\left(-\lambda {J_{0,1}^0}\right)\nu_0 (\dd \omega)
\end{align*}
where
\begin{align*}
J_{0,1}^0=\int_{0}^1\int_s^1 \delta_0(\omega_t-\omega_s)\dd t\dd s
\end{align*}
by approximations.
By using this formal expression we would see from Girsanov's transformation that
\begin{align}
\frac{\dd (\nu_{\lambda}\circ \tau_{uh}^{-1})}{\dd\nu_\lambda}=\exp\left(-\lambda \left(J_{0,1}^{0}(\omega-uh)-J_{0,1}^0(\omega)\right)+u\int_0^1 h_s'\dd \omega_s-\frac{u^2}{2}\int_0^1 \left(h_s'\right)^2\dd s\right)\label{eq:shiftder}
\end{align}
should corresponds with $a_{uk}$ in Theorem~\ref{thm:thm3}. The right-hand side of \eqref{eq:shiftder} has two issues to be dealt with carefully.

The first issue is to clarify how the stochastic integral $\int_0^1 h_s'\dd \omega_s$ is defined. It is not clear whether the integral is well-defined or not in It\^o's sense, because we do not know whether  $\omega$ is a semimartingale under $\nu_\lambda$ or not.
However, we can easily avoid this problem by interpreting the stochastic integral $\int_0^1 h_s'\dd \omega_s$ as
\begin{align}
\int_0^1 h_s'\dd \omega_s=h_1'\omega_1-\int_0^1 \omega_s \dd h'_s,\label{eq:stointrepre}
\end{align}
for $h\in K_0 $,
where the right-hand side of \eqref{eq:stointrepre} is well-defined for any continuous function $\omega$. This treatment of the stochastic integral has already been introduced in \cite{ARZ96}.

Next, we should make sense of the difference $J_{0,1}^{0}(\omega-uh)-J_{0,1}^0(\omega)$ under $\nu_\lambda$.
Moreover, for Theorem \ref{thm:thm3} we have to show the existence of the continuous modification of the random function $u\mapsto J_{0,1}^{0}(\cdot -uh)$ under $\nu_\lambda$ for each $h\in K_0$. For the proof we apply the Kolmogorov criterion.
Since $\nu_\lambda$ is singular to the Wiener measure $\nu _0$, we need a lot of approximations to obtain the desired moment estimate of $J_{0,1}^{0}(\omega-uh)-J_{0,1}^0(\omega)$ under $\nu_\lambda$.
We will prove the moment estimate (Theorem \ref{thm:rho}) in Section \ref{sec:4} by applying lemmas in Sections \ref{sec:2} and \ref{sec:3}.
Here, we remark that modifications of the proofs are needed from those in \cite{ARZ96} in the argument of Sections \ref{sec:2}--\ref{sec:4}.

Now we demonstrate the outline the argument in Sections \ref{sec:2}--\ref{sec:4}.
In Theorem~\ref{thm:rho} we will see that  for $\ve_0>0$ small, $\left\{\widetilde{J}_{2^{-\ve_0 n},2^{-n}}(-u,h):=J_{0,1}^{2^{-\ve_0n},2^{-n}}(\omega-uh)-J_{0,1}^{2^{-\ve_0n},2^{-n}}(\omega)\right\}$ is an $L^p(\nu_\lambda)$-Cauchy sequence for any $p\geq 1$ by showing that\begin{align}
E_{\nu_\lambda}\left[\left|\widetilde{J}_{2^{-\ve_0 (n+1)},2^{-(n+1)}}(-u,h)-\widetilde{J}_{2^{-\ve_0 n},2^{-n}}(-u,h)\right|^p\right]\leq C2^{-apn}\label{eq:JCauchyoutline}
\end{align}
for a suitable $a>0$, where $E_{\nu_\lambda}[\cdot]$ denotes the expectation with respect to $\nu_\lambda$ for $\lambda\geq 0$.
We remark that it is difficult to estimate the left-hand side of \eqref{eq:JCauchyoutline} directly, since $\nu_\lambda$ is  singular with respect to the Wiener measure $\nu_0$.
To deal with this difficulty, we approxiamte the left-side hand of \eqref{eq:JCauchyoutline} by
\begin{align}
E_{\nu_{\widetilde{\e}_n,\lambda}}\left[\left|\widetilde{J}_{2^{-\ve_0 (n+1)},2^{-(n+1)}}(-u,h)-\widetilde{J}_{2^{-\ve_0 n},2^{-n}}(-u,h)\right|^p\right]\label{eq:nuenlambdaJ}
\end{align}
for a suitable sequence $\{\widetilde{\e}_n\}$ such that the difference decays exponentially, where $\nu_{\e,\lambda}$ is an approximation of $\nu_\lambda$ given in \eqref{eq:nuelambdadfn} and $E_{\nu_{\e,\lambda}}[\cdot]$ denotes the expectation with respect to $\nu_{\e,\lambda}$. We remark that \eqref{eq:nuenlambdaJ} is easier to treat, since $\nu_{\e,\lambda}$ is absolutely continuous with respect to the Wiener measure $\nu_0$.

To prove the fact that the left-side hand of \eqref{eq:JCauchyoutline} is approximated by \eqref{eq:nuenlambdaJ}, we apply Lemma~\ref{lem:phiest}, which gives an estimate of $|\rho(\e)-\rho(\e')|$ for $\Phi$ belonging to a family set of suitable functions, where 
\begin{align}
\rho(\e):=E\left[e^{-\overline{J}_{0,1}^{\e,\lambda}}\Phi\prod_{i=1}^n \delta_{x_i}(\omega_{t_i})\right] \label{eq:defrho}
\end{align}
(the precise definition of the right-hand side is given in Remark~\ref{rem:deltaprod}).
In the proof, we prove that its derivative $\frac{\dd\rho(\e)}{\dd \e}$ is of  order $\e^{\delta-1}$ for some $\delta>0$ as $\e\searrow 0$ . The idea is basically the same as that used for the construction of $\nu_\lambda$ by Bolthausen \cite{Bol93}. However, the argument is much more delicate since we need to deal with the functions $\Phi$ in a wider class.
The differentiability of $\rho (\e)$ for $\e>0$ is proved in Lemma~\ref{lem:diffrho} with an explicit representation.
Section \ref{sec:3} is devoted to the proof of Lemma~\ref{lem:phiest}.
In the proof we give precise computations of integrals, which were omitted in \cite{ARZ96}.  

To obtain a sufficiently nice estimate for \eqref{eq:nuenlambdaJ} we will see in Lemma~\ref{lem:Jcauchy} that
\begin{align}
E_{\nu_0}\left[\left|\widetilde{J}_{2^{-\ve_0 (n+1)},2^{-(n+1)}}(-u,h)-\widetilde{J}_{2^{-\ve_0 n},2^{-n}}(-u,h)\right|^p\right]\leq C2^{-bpn}\label{eq:WieenlambdaJ2}
\end{align}
for a suitable $b>0$, where $E_{\nu_0}[\cdot]$ denotes the expectation with respect to the Wiener measure.
By the sufficiently quick convergence following from \eqref{eq:WieenlambdaJ2} and H\"older's inequality, we can show that \eqref{eq:nuenlambdaJ} decays exponentially.
The left-hand side of \eqref{eq:WieenlambdaJ2} is much easier to treat than \eqref{eq:nuenlambdaJ}, since it is the expectation of the functional of the Brownian motion. To prove \eqref{eq:WieenlambdaJ2}, we carefully use Rosen's method, which is a technique to compute and  estimate the moments of approximated self-intersection local times via the Fourier transformation.
J. Rosen originally applied his method in \cite{Ros83} for the moment estimates of off-diagonal self-intersection local time in the two and three dimensional cases.
However, in our  case we need more delicate estimates to apply Rosen's method, because of the singularity of the heat kernel at $t=0$.
This is the most different part from the unpublished paper \cite{ARZ96}.
Indeed, Lemma~\ref{lem:Jcauchy} completes the proof of a weaker version of \cite[Proposition~2.1]{ARZ96}, and the proof of \cite[Proposition~2.1]{ARZ96} seems to contain a serious error.
We give these arguments in Section~\ref{sec:2} as the proof of Lemma~\ref{lem:Jcauchy}.

In Section~\ref{sec:4} we prove Theorem~\ref{thm:rho} by applying Lemmas~\ref{lem:Jcauchy} and \ref{lem:phiest}.
In particular, we prove that $\widetilde{J}_{2^{-\ve_0 n},2^{-n}}(-u,h)$ is also an $L^p(\nu_\lambda)$-Cauchy sequence for all $\lambda>0$ and $p\geq 1$. We denote by $\rho_\lambda(-u,k)$ the $L^p(\nu_\lambda)$ limit, which corresponds to the quantity formally given by $J_{0,1}^{0}(\omega-uh)-J_{0,1}^0(\omega)$, which appears in the beginning of Section \ref{sub:strategy}.

In Section~\ref{sec:5} we prove Theorem~\ref{thm:thm3} by applying Theorem~\ref{thm:rho}.
Also in Section~\ref{sec:5} we give precise arguments, which were not sufficiently discussed in \cite{ARZ96}.

The steps above are the strategy for the proof of Theorem~\ref{thm:thm3}.

\subsection{Organization of the paper}

The organizations of the rest of the paper is as follows.

As explained in Section~\ref{sub:strategy}, in Section \ref{sec:2} we will estimate the $p$-th moments of difference between approximated self-intersection local times $\widetilde{J}_{2^{\ve_0 n},2^{-n}}(-u,h)$ with respect to the Wiener measure $\nu_0$, in particular \eqref{eq:WieenlambdaJ2}. In Section~\ref{subsec:Ronsemethod}, we review Rosen's method, which is an adjusted version to our case. In Section \ref{sec:proofLemJacauchy}, we demonstrate how to use Rosen's method in the proof of   Lemma \ref{lem:Jacauchy}, which is an improved version of  \cite[Proposition~(2.1)]{Bol93}. In Section \ref{sec:ProofLemJcauchy}, we give the proof of Lemma \ref{lem:Jcauchy}, which is the estimate for \eqref{eq:nuenlambdaJ} and the main result in Section~\ref{sec:2}.
This may be the most technical and hard part in the proofs of the
main theorems.
In Sections~\ref{subsec:delta} and \ref{subsec:diff}, we provide some lemmas which guarantee the treatment of Dirac functions and the
differentiability of a key quantity \eqref{eq:defrho} in the parameter of approximation $\e$, respectively.

In Section~\ref{sec:3}, we will show Lemma~\ref{lem:phiest}, which enables us to have a sufficiently nice estimate associated with approximation of the left-hand side of \eqref{eq:JCauchyoutline} by \eqref{eq:nuenlambdaJ}.
For the proof we need a lot of explicit calculations, because we have to check that the divergent terms are canceled out by the renormalization.

In Section~\ref{sec:4}, by combining the results  in Sections~\ref{sec:2} and \ref{sec:3}, we will prove that $\widetilde{J}_{2^{-\ve_0 n},2^{-n}}(-u,h)$ is also an $L^p(\nu_\lambda)$-Cauchy sequence for all $\lambda>0$ and $p\geq 1$.
The random variable $J_{0,1}^{0}(\omega-uh)-J_{0,1}^0(\omega)$, which appears in the beginning of Section~\ref{sub:strategy}, is given by the limit of the sequence.

Finally, in Section~\ref{sec:5} we provide the proof of Theorem~\ref{thm:thm3}, which yields the main theorems as mentioned in Section~\ref{subsec:1.2}.

We remark that some explicit calculations are put in Appendix (Section~\ref{app}).

\section{Some estimates of self-intersection local times in Wiener space}\label{sec:2}

In this section, we prepare some estimates which are needed for later arguments.
We remark that the proofs in this section are very technical.
We set
\begin{align*}
f_a(x)=(2\pi)^{-3}\exp\left(-\frac{a|x|^2}{2}\right),\qquad \text{for }a>0, x=(x_1,x_2,x_3)\in \R^3. 
\end{align*}
Then, the following holds in the weak sense \begin{align*}
\int_{\R^3} e^{i\langle x,y\rangle}f_a(y)\dd y=\frac{1}{(2\pi a)^\frac{3}{2}}e^{-\frac{|x|^2}{2a}}\to \delta_0(x),\quad a\searrow 0,
\end{align*}
where $\delta_0$ is the Dirac function at point $0\in\R^3$ , and $\lan x,y\ran=x_1y_1+x_2y_2+x_3y_3$ for $x=(x_1,x_2,x_3)$ and $y=(y_1,y_2,y_3)$ and we denote by $|x|=\sqrt{\lan x,x\ran}$ the Euclidean norm.
We remark that 
\begin{align*}
J_{0,1}^{\e,a}(\omega)=\int_{T_\e}\int_{\R^3}e^{i\lan y,\omega_t-\omega_s\ran}f_a(y)\dd y\dd s\dd t = \int _{T_\e} \frac{1}{(2\pi a )^{\frac 32}} e^{-\frac{|\omega _t -\omega _s|^2}{2a}} \dd s \dd t,
\end{align*}
where $T_\e:=\{(s,t)\in [0,1]^2:t-s\geq \e\}$.  
The following lemma has been given already in \cite[Proposition (2.1)]{Bol93}, but we will provide in Section \ref{sec:proofLemJacauchy} below its proof for self-containedness and for identification of the dependency in $\e$.
\begin{lemma}\label{lem:Jacauchy}
Let $u\in \R$, $h\in K$, and $p\geq 1$ be given. Then, for $\gamma\in \left(0,\frac{1}{4}\right)$, there exists a constant $C$ such that \begin{align*}
E\left[\left|J_{0,1}^{\e,a}(\cdot+uh)-J_{0,1}^{\e,a'}(\cdot+uh)\right|^p\right]\leq C|a-a'|^{p}a^{-\frac{5}{6}p}+C|a-a'|^{\gamma p} a^{\frac{1-4\gamma}{24}p}\e^{-\frac{3}{4}p}
\end{align*}
for $0<a'<a<1$ and for $\e \in (0,1)$.
\end{lemma}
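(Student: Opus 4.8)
The plan is to establish the bound first for even integers $p$ and then to deduce all $p\ge1$ from Lyapunov's inequality $E[|D|^{p}]\le(E[|D|^{p'}])^{p/p'}$ with $p'$ an even integer $\ge p$, together with subadditivity $(A+B)^{\theta}\le A^{\theta}+B^{\theta}$ for $\theta\in(0,1]$; since the claimed right-hand side has the shape $(\,\cdot\,)^{p}+(\,\cdot\,)^{p}$, this reduction is lossless. Here $D:=J_{0,1}^{\e,a}(\cdot+uh)-J_{0,1}^{\e,a'}(\cdot+uh)$. For even $p$ I would start from the Fourier representation recalled before the lemma, which gives
\begin{align*}
D=\int_{T_\e}\int_{\R^3}e^{i\langle y,\,v_{s,t}\rangle}\bigl(e^{-a|y|^2/2}-e^{-a'|y|^2/2}\bigr)(2\pi)^{-3}\,\dd y\,\dd s\,\dd t,\qquad v_{s,t}:=(\omega_t-\omega_s)+u(h_t-h_s),
\end{align*}
then expand $E[D^{p}]$ as an iterated integral over $(s_j,t_j)_{j=1}^{p}\in(T_\e)^{p}$ and $(y_j)_{j=1}^{p}\in(\R^3)^{p}$ and compute the Gaussian characteristic function
\begin{align*}
E\Bigl[\prod_{j=1}^{p}e^{i\langle y_j,v_{s_j,t_j}\rangle}\Bigr]=\exp\Bigl(iu\sum_{j}\langle y_j,h_{t_j}-h_{s_j}\rangle\Bigr)\exp\Bigl(-\tfrac12 Q(y;s,t)\Bigr),\quad Q(y;s,t):=\sum_{j,k}\langle y_j,y_k\rangle\,\bigl|[s_j,t_j]\cap[s_k,t_k]\bigr|.
\end{align*}
The deterministic shift by $uh$ contributes only a factor of modulus one, hence is irrelevant for an upper bound (in particular $C$ does not in fact depend on $u$ or $h$), and $Q$ is positive semidefinite. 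For the difference of heat factors I would use the two elementary inequalities
\begin{align*}
\bigl|e^{-a|y|^2/2}-e^{-a'|y|^2/2}\bigr|\le (a-a')\,\tfrac{|y|^2}{2}\,e^{-a'|y|^2/2},\qquad \bigl|e^{-a|y|^2/2}-e^{-a'|y|^2/2}\bigr|\le (a-a')^{\gamma}\Bigl(\tfrac{|y|^2}{2}\Bigr)^{\gamma}e^{-a'|y|^2/2},
\end{align*}
coming from $|1-e^{-r}|\le r$ and $|1-e^{-r}|\le r^{\gamma}$ ($r\ge0$); using one or the other on a suitable decomposition of the time domain of integration, the estimate reduces to controlling integrals of the type $\int_{(T_\e)^p}\int_{(\R^3)^p}e^{-\frac12 Q(y;s,t)}\prod_{j}(|y_j|^2/2)^{\alpha}e^{-a'|y_j|^2/2}\,\dd y\,\dd s\,\dd t$ with $\alpha\in\{1,\gamma\}$, the two summands in the conclusion arising from these two choices over complementary pieces of the time simplex.

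For the inner ($y$-)integral I would invoke Rosen's method. Fix an ordering of the $2p$ time endpoints; they split $[0,1]$ into intervals $I_\ell=(\tau_{\ell-1},\tau_\ell)$ of lengths $r_\ell$, and $Q(y;s,t)=\sum_\ell r_\ell\,\bigl|\sum_{j\in S_\ell}y_j\bigr|^2$ with $S_\ell:=\{j:I_\ell\subseteq[s_j,t_j]\}$. Because consecutive $S_\ell$ differ by a single index, the increment variables $z_\ell:=\sum_{j\in S_\ell}y_j$ are obtained from $(y_j)$ by a unimodular integer linear map and, conversely, every $y_j$ is a difference of two $z_\ell$'s; this change of variables diagonalizes $Q$, while the regularizing factors $e^{-a'|y_j|^2/2}$ turn into a further positive quadratic form in the $z_\ell$ which keeps the relevant determinant from degenerating as some $r_\ell\to0$ (effectively replacing those $r_\ell$ by $r_\ell+O(a')$). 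The inner integral then becomes a product of three-dimensional Gaussian integrals with polynomial prefactors, and is therefore bounded by a product $\prod_{\ell\in\Lambda}(r_\ell+c\,a')^{-3/2-\beta_\ell}$ over a suitable $p$-element collection $\Lambda$ of intervals, with $\sum_\ell\beta_\ell$ of order $\alpha p$.

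It then remains to integrate over the time configuration $0\le\tau_1\le\cdots\le\tau_{2p}\le1$. Since each exponent $\tfrac32+\beta_\ell$ exceeds the integrability threshold $1$, the $a'$-shifts together with the cut-off constraint $t_j-s_j\ge\e$ (which forces, for each $j$, the lengths of the intervals covering $[s_j,t_j]$ to sum to at least $\e$) are precisely what make the integral converge; splitting the simplex into regions according to which $r_\ell$ are small compared with $a'$ or $\e$, estimating in each region and summing over regions and over the finitely many orderings, one extracts the powers of $a$, of $a-a'$ (or $(a-a')^{\gamma}$) and of $\e$, with $\gamma<1/4$ being exactly what keeps the resulting power of $\e$ at most $3/4$ per unit of $p$.

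I expect this last step to be the main obstacle: organizing the combinatorics of Rosen's change of variables (choice of the spanning collection $\Lambda$, control of the polynomial prefactors inserted by the factors $|y_j|^{2\alpha}$, which push the Gaussian powers past the integrability threshold) while running the multi-region time integration simultaneously against $a$, $a'$ and $\e$ so as to recover the precise exponents $-\tfrac56$, $\tfrac{1-4\gamma}{24}$ and $-\tfrac34$. This is also where the present argument has to go beyond the one in \cite{Bol93}: differentiating in the smoothing parameter inserts the polynomial factors $|y_j|^{2}$ (resp. $|y_j|^{2\gamma}$), and the $\e$-dependence must be tracked explicitly rather than treating $\e$ as a fixed constant.
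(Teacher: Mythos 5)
Your general toolkit is right---Fourier representation, Gaussian characteristic function, Rosen's ``tree'' change of variables---and your reduction to even $p$ is sound, so the skeleton matches the paper. But the decomposition you propose is in the wrong space, and that is a genuine gap, not a bookkeeping detail. You say the two inequalities $|1-e^{-r}|\le r$ and $|1-e^{-r}|\le r^{\gamma}$ should be applied ``on a suitable decomposition of the time domain of integration,'' with ``the two summands in the conclusion arising from these two choices over complementary pieces of the time simplex.'' In fact the split must be made in the frequency variable $y$, not in time: the first term $C|a-a'|^{p}a^{-5p/6}$ comes from cutting at $|y|\le a^{-1/6}$ and using the Lipschitz bound together with the \emph{deterministic} estimate $\int_{|y|\le a^{-1/6}}|y|^2\,\dd y\sim a^{-5/6}$ (no Gaussian, no oscillation, uniformly over the whole time simplex), while the second term comes from the region $|y|>a^{-1/6}$ treated by the H\"older bound plus Rosen's machinery. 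There is no natural piece of the \emph{time} simplex on which either elementary inequality becomes preferable; the choice is dictated by the size of $|y|$.

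A second, related problem: you propose to let the factors $e^{-a'|y_j|^2/2}$ act as regularizers, effectively shifting interval lengths $r_\ell\mapsto r_\ell+O(a')$. That mechanism yields \emph{negative} powers of $a'$ (each $\int_0^1 (r+ca')^{-3/2-\beta}\,\dd r\sim (a')^{-1/2-\beta}$), whereas the lemma's second term carries a \emph{positive} power $a^{(1-4\gamma)/24}$. That gain comes only from the cut-off $|y_j|>a^{-1/6}$ together with the combinatorial observation---absent from your outline---that on the region $\bigcap_j\{|y_j|\ge a^{-1/6}\}$ at least $p$ of the partial sums $\overline{y}^{\,i}$ must themselves be $\ge a^{-1/6}/2$ (since each $y_j$ is a difference of two consecutive $\overline{y}$'s). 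Combined with a Cauchy--Schwarz split between the coordinate systems $\{\overline{y}^{f(i)}\}$ and $\{\overline{y}^{r(i)},\overline{y}^{d(i)}\}$ this produces the factor $a^{(1-4\gamma)p/12}$ before taking the square root. Dropping $f_{a'}$ entirely (it is $\le(2\pi)^{-3}$) and working only with the cut-off is in fact what the paper does; keeping it as a regularizer does not help and would give you a divergence as $a'\searrow 0$. You correctly flagged the final combinatorial step as the main difficulty (it is, and it is also where the unpublished preprint \cite{ARZ96} had its error, treating only the ordering $s_1<\cdots<s_p<t_i$); but the wrong domain split and the missing ``$p$ large $\overline y$'s'' argument mean the proposal as written would not reproduce the claimed exponents.
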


As a corollary of Lemma \ref{lem:Jacauchy}, we obtain the following.

\begin{cor}\label{cor:JaJ}
Let $u\in \R$, $h\in K$, and $p\geq 1$ be given. Then, $\{J_{0,1}^{\e,a}(\omega+uh)\}_{a\in (0,1)}$ has a continuous version (we also denote it by $J_{0,1}^{\e,a}(\omega+uh)$) in $(X,\mathcal{B},\nu_0)$ and has the $L^p(\nu_0)$-and $\nu_0$-a.s.-limit $J_{0,1}^{\e}(\omega+uh)=\lim_{a\searrow 0}J_{0,1}^{\e,a}(\omega+uh)$.

Moreover, we have
\begin{align}
E\left[\left|J_{0,1}^{\e,a}(\cdot+uh)-J_{0,1}^{\e}(\cdot+uh)\right|^p\right]\leq Ca^{\frac{p}{6}}+C a^{\frac{1+20\gamma}{24}p}\e^{-\frac{3}{4}p}\label{eq:JaJbdd}
\end{align}
for $\gamma \in \left(0,\frac{1}{4}\right)$.
\end{cor}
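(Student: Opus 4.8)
The plan is to derive Corollary~\ref{cor:JaJ} from Lemma~\ref{lem:Jacauchy} by a standard Cauchy-sequence argument combined with the Kolmogorov continuity criterion. First I would fix $\e\in(0,1)$, $u\in\R$, $h\in K$, and $p\geq 1$, and restrict attention to a dyadic sequence $a_n:=2^{-n}$. Applying Lemma~\ref{lem:Jacauchy} with $a=a_n$, $a'=a_{n+1}$ (so $|a-a'|=2^{-(n+1)}$), one gets
\begin{align*}
E\left[\left|J_{0,1}^{\e,a_n}(\cdot+uh)-J_{0,1}^{\e,a_{n+1}}(\cdot+uh)\right|^p\right]\leq C\,2^{-(n+1)p}\,2^{\frac{5}{6}np}+C\,2^{-\gamma(n+1)p}\,2^{-\frac{1-4\gamma}{24}np}\e^{-\frac{3}{4}p}.
\end{align*}
The exponent of $2^{-n}$ in the first term is $p(1-\frac{5}{6})=\frac{p}{6}>0$, and in the second term it is $p\gamma+p\frac{1-4\gamma}{24}=p\,\frac{1+20\gamma}{24}>0$ for $\gamma\in(0,\frac14)$; hence the right-hand side is summable in $n$, so $(J_{0,1}^{\e,a_n}(\cdot+uh))_n$ is Cauchy in $L^p(\nu_0)$, and by Borel--Cantelli it converges $\nu_0$-a.s. along the dyadic subsequence. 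Denote the limit $J_{0,1}^{\e}(\cdot+uh)$.

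Next I would upgrade this to convergence along the full parameter $a\searrow 0$ together with the existence of a continuous version in $a$. For this I would invoke the Kolmogorov--Chentsov criterion on the process $a\mapsto J_{0,1}^{\e,a}(\cdot+uh)$ indexed by $a\in[a_0,1]$ for fixed $a_0>0$: Lemma~\ref{lem:Jacauchy} with $0<a'<a<1$ gives, for $a,a'\in[a_0,1]$,
\begin{align*}
E\left[\left|J_{0,1}^{\e,a}(\cdot+uh)-J_{0,1}^{\e,a'}(\cdot+uh)\right|^p\right]\leq C_{a_0,\e}\,|a-a'|^{\gamma p},
\end{align*}
since on $[a_0,1]$ the factors $a^{-5p/6}$ and $a^{(1-4\gamma)p/24}$ are bounded and $|a-a'|^p\leq |a-a'|^{\gamma p}$ (diameter bounded). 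Choosing $p$ large so that $\gamma p>1$, Kolmogorov's theorem yields a version continuous in $a$ on $[a_0,1]$; patching over $a_0=2^{-k}$, $k\to\infty$, gives a version continuous on $(0,1]$. Continuity in $a$ forces the a.s. limit along dyadics to coincide with the limit along all $a\searrow 0$, so $J_{0,1}^{\e,a}(\omega+uh)\to J_{0,1}^{\e}(\omega+uh)$ both $\nu_0$-a.s. and (by the $L^p$-Cauchy property plus uniform integrability, or by Fatou together with the explicit bound) in $L^p(\nu_0)$.

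Finally I would establish the quantitative bound \eqref{eq:JaJbdd}. Fix $a\in(0,1)$ and let $a_k:=2^{-k}a$ for $k\geq 0$; then $J_{0,1}^{\e,a}(\cdot+uh)-J_{0,1}^{\e}(\cdot+uh)=\sum_{k\geq 0}\bigl(J_{0,1}^{\e,a_k}(\cdot+uh)-J_{0,1}^{\e,a_{k+1}}(\cdot+uh)\bigr)$ in $L^p(\nu_0)$. Applying Lemma~\ref{lem:Jacauchy} to each telescoped difference (with $|a_k-a_{k+1}|=2^{-(k+1)}a$) and summing the Minkowski norms,
\begin{align*}
\left\|J_{0,1}^{\e,a}(\cdot+uh)-J_{0,1}^{\e}(\cdot+uh)\right\|_{L^p}\leq \sum_{k\geq 0}\left(C^{1/p}\,2^{-(k+1)}a\,(2^{-k}a)^{-5/6}+C^{1/p}\,(2^{-(k+1)}a)^{\gamma}(2^{-k}a)^{\frac{1-4\gamma}{24}}\e^{-3/4}\right),
\end{align*}
and each geometric series in $k$ converges (the ratios are $2^{-(1-5/6)}<1$ and $2^{-(\gamma+(1-4\gamma)/24)}<1$), leaving $\leq C'\,a^{1-5/6}+C'\,a^{\gamma+\frac{1-4\gamma}{24}}\e^{-3/4}=C'a^{1/6}+C'a^{\frac{1+20\gamma}{24}}\e^{-3/4}$; raising to the $p$-th power gives \eqref{eq:JaJbdd}. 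The main obstacle here is purely bookkeeping—tracking the $\e$- and $a$-exponents through the dyadic sum and verifying the two geometric series converge for all $\gamma\in(0,\frac14)$—since all the genuine analytic work (the moment bound itself, via Rosen's method) is already contained in Lemma~\ref{lem:Jacauchy}; one should just double-check that the constants produced are uniform in $\e\in(0,1)$ and locally uniform in $a$ as claimed, and that the passage to the full limit $a\searrow0$ (not merely along dyadics) is justified by the $a$-continuity obtained in the second step.
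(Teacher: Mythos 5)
Your argument is sound and reaches the same conclusion, but it takes a more pedestrian route to the quantitative bound \eqref{eq:JaJbdd} than the paper does, and it leaves a small genuine gap in the a.s.\ convergence step that you yourself flag at the end.

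For \eqref{eq:JaJbdd}, the paper simply holds $a$ fixed in the bound of Lemma~\ref{lem:Jacauchy}, lets $a'\searrow 0$, and applies Fatou's lemma on the left-hand side (using the already-established a.s.\ convergence of $J^{\e,a'}$): the right-hand side converges directly to $Ca^p a^{-5p/6}+Ca^{\gamma p}a^{\frac{1-4\gamma}{24}p}\e^{-3p/4}=Ca^{p/6}+Ca^{\frac{1+20\gamma}{24}p}\e^{-3p/4}$, which is exactly \eqref{eq:JaJbdd} with no telescoping or geometric-series bookkeeping. Your dyadic telescoping with Minkowski and the two geometric series is correct (the ratios $2^{-1/6}$ and $2^{-(1+20\gamma)/24}$ are indeed less than one for all $\gamma\in(0,\tfrac14)$, and raising the summed $L^p$-norm to the $p$-th power only costs a harmless $2^{p-1}$), but it is strictly more work for the same answer. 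Similarly, the $L^p$-Cauchy property is immediate from the bound of Lemma~\ref{lem:Jacauchy} for the full net $(J^{\e,a})_{0<a<1}$ once one notes that for $0<a'<a<1$ the right-hand side is at most $Ca^{p/6}+Ca^{\frac{1+20\gamma}{24}p}\e^{-3p/4}\to 0$ as $a\to 0$, so the restriction to dyadics in your first step is unnecessary.

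The gap: you claim that continuity of the version on $(0,1]$ together with a.s.\ convergence along dyadics ``forces'' the full limit $\lim_{a\searrow 0}J^{\e,a}$ to exist a.s. This does not follow; a function continuous on $(0,1]$ can agree with a convergent sequence at dyadic points and yet oscillate without limit as $a\to 0$ (think of $\sin(1/a)$). Patching Kolmogorov over $[a_0,1]$ with $a_0\to 0$ gives you continuity on $(0,1]$ but the Hölder constant you used depends on $a_0$ and blows up, so it gives no control of the oscillation near $0$. To close the gap one must extract a modulus of continuity uniform down to $a=0$. This is possible from Lemma~\ref{lem:Jacauchy} by the elementary observation that for $0<a'<a<1$ one has $|a-a'|\le a$, hence $|a-a'|^p a^{-5p/6}\le|a-a'|^{p/6}$ and $a^{\frac{1-4\gamma}{24}p}\le 1$, so the estimate can be rewritten as $E[|J^{\e,a}-J^{\e,a'}|^p]\le C(\e)|a-a'|^{\min(\frac16,\gamma)p}$ uniformly on $(0,1)$; choosing $p$ with $\min(\tfrac16,\gamma)p>1$, Kolmogorov--Chentsov then yields a Hölder modulus of continuity uniform on $(0,1)$, which gives the uniform Cauchy condition as $a\to 0$ and hence the a.s.\ limit. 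With that repair your proof is complete, and indeed this is presumably also what the paper has in mind when it invokes the Kolmogorov continuity theorem with $p\gamma>1$.
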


\begin{proof}
Fix $\gamma\in (0,\frac{1}{4})$.
Then, we can apply the Kolmogorov continuity theorem to $\left\{J_{0,1}^{\e,a}(\omega+uh)\right\}_{a\in (0,1)}$ in view of Lemma \ref{lem:Jacauchy} for $p\geq 1$ with $p\gamma>1$. Also, we can see that $ \left\{J_{0,1}^{\e,a}(\omega+uh)\right\}_{a\in (0,1)}$ is an $L^p(\nu_0)$-Cauchy sequence as $a\searrow 0$, and hence it has the $L^p(\nu_0)$-limit.
Letting $a'\searrow 0$, \eqref{eq:JaJbdd} follows from Fatou's lemma.
\end{proof}

We define 
\begin{align}
&\widetilde{J}_{\e,a}(u,h):=J_{0,1}^{\e,a}(\omega+uh)-J_{0,1}^{\e,a}(\omega),\label{eq:Jtideuh}
\end{align}
and 
\begin{align}
\widehat{J}_{\e,a}(u_1,u_2,h):=\widetilde{J}_{\e,a}(u_1,h)-\widetilde{J}_{\e,a}(u_2,h),\label{eq:Jhatuuh}
\end{align}
for $a,\e\in (0,1)$,  $u,u_1,u_2\in \R$, and  $h\in K$.
For $\ve_0\in \left(0,\frac{1}{21}\right)$, we define
\begin{align}
\e_n&=2^{-\ve_0n} \label{eq:en}
\intertext{and}
a_n&=2^{-n} \label{eq:an}
\end{align}
for $n\geq 1$.

\begin{lemma}\label{lem:Jcauchy}
Let $\gamma\in (0,\frac{1}{2})$, $\ve_0 \in \left(0, \frac{1-2\gamma}{21}\right)$, and $h\in K$  be given. Then, there exists $\beta>0$ such that the following holds.  
For any given $p\geq 2$ and any interval $[-M,M]$ with $M>0$, there exists $C>0$ such that 
\begin{align*} 
&E\left[\left|\widehat{J}_{\e_n,a_n}(u_1,u_2,h)-\widehat{J}_{\e_m,a_m}(u_1,u_2,h)\right|^{p}\right]\leq C|u_1-u_2|^{p\gamma }2^{-\beta pm}
\end{align*}
for all $u_1,u_2\in [-M,M]$, $n\geq m\geq 1$.
\end{lemma}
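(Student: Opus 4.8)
The plan is to prove Lemma~\ref{lem:Jcauchy} by the same Fourier-analytic technique (Rosen's method) used for Lemma~\ref{lem:Jacauchy}, applied now to the \emph{doubly-differenced} quantity $\widehat{J}_{\e_n,a_n}(u_1,u_2,h)-\widehat{J}_{\e_m,a_m}(u_1,u_2,h)$, keeping careful track of the three scales that vary simultaneously: the regularization parameter $a_n=2^{-n}$, the diagonal-cutoff $\e_n=2^{-\ve_0 n}$, and the shift difference $u_1-u_2$. First I would write, for each fixed $a,\e$, the representation
\begin{align*}
\widehat{J}_{\e,a}(u_1,u_2,h)=\int_{T_\e}\int_{\R^3}e^{i\lan y,\omega_t-\omega_s\ran}\Big(e^{i u_1\lan y,h_t-h_s\ran}-e^{i u_2\lan y,h_t-h_s\ran}\Big)f_a(y)\,\dd y\,\dd s\,\dd t,
\end{align*}
so that the difference over $(n,m)$ splits into a part coming from the change $a_m\to a_n$ in $f_a$ (controlled via $|f_{a_n}(y)-f_{a_m}(y)|\le C\,|a_n-a_m|\,|y|^2 e^{-a_m|y|^2/4}$ together with the Gagliardo-type Gaussian bounds from Lemma~\ref{lem:Jacauchy}) and a part coming from the enlargement of the integration region $T_{\e_m}\setminus T_{\e_n}$, whose Lebesgue measure is of order $\e_m-\e_n\le C\,2^{-\ve_0 m}$. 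Expanding the $p$-th moment produces a $2p$-fold time integral and a $2p$-fold $\R^3$-momentum integral; after integrating out the Gaussian increments of $\omega$ one is left with an exponential in a quadratic form in the momenta $y_1,\dots,y_{2p}$ whose matrix is the covariance of the bridge increments, exactly as in Section~\ref{subsec:Ronsemethod}.

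The second ingredient is to extract the factor $|u_1-u_2|^{p\gamma}$. Here I would use the elementary bound $|e^{i u_1 \theta}-e^{i u_2\theta}|\le (2)^{1-\gamma}|u_1-u_2|^{\gamma}|\theta|^{\gamma}$ with $\theta=\lan y,h_t-h_s\ran$, so that each of the $p$ bracket factors contributes $|u_1-u_2|^{\gamma}$ and an extra weight $|\lan y_j,h_{t_j}-h_{s_j}\ran|^{\gamma}\le \big(\sup|h'|\big)^{\gamma}|t_j-s_j|^{\gamma}|y_j|^{\gamma}$, using $h\in K$. The fractional powers $|y_j|^{\gamma}$ are absorbed into the Gaussian momentum integrals at the cost of a finite constant (this is why we only get $\gamma<\tfrac12$, matching the hypothesis), and the extra time factors $\prod|t_j-s_j|^{\gamma}$ only improve convergence of the time integral. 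Combining with either the $|a_n-a_m|$-bound or the $|\e_m-\e_n|$-region bound and optimizing, one obtains an overall estimate of the form $C|u_1-u_2|^{p\gamma}\big(2^{-p m}a_m^{-\alpha p}+2^{-\ve_0 p m}\e_m^{-\delta p}\big)$ for explicit $\alpha,\delta$; since $a_m=2^{-m}$ and $\e_m=2^{-\ve_0 m}$, and since $\ve_0\in(0,\tfrac{1-2\gamma}{21})$ is chosen precisely so that the negative powers of $\e_m$ do not overwhelm the gain, this is bounded by $C|u_1-u_2|^{p\gamma}2^{-\beta p m}$ for some $\beta>0$.

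The delicate point — and the step I expect to be the main obstacle — is the momentum integral near the diagonal, i.e.\ controlling $\int e^{-\frac12 \lan y, Q(t,s) y\ran}\prod|y_j|^{\gamma}\,\dd y$ uniformly over the ordered simplex of times in $T_{\e_n}$ and after summing over all pairings/chord diagrams that arise from expanding the product of $2p$ complex exponentials. The quadratic form $Q$ degenerates as consecutive times coalesce, and the singularity of the heat kernel at $t=0$ is exactly what forces the $\e$-dependence; this is the place where, as the authors note, the argument departs most from \cite{Bol93} and from \cite{ARZ96}. I would handle it by the block-decomposition of the covariance matrix used in Lemma~\ref{lem:Jacauchy} (isolating the ``short-time'' blocks where $t_j-s_j$ can be small from the ``long-range'' blocks), bounding each short block's contribution by a negative power of the relevant time gap that is integrable because it is cut off below by $\e_n$, and bounding the long-range blocks by the nondegenerate Gaussian integral. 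Summing the finitely many (combinatorially bounded in $p$) diagram contributions and inserting the result into the time integral over $T_{\e_n}$ yields the stated bound; the exponent $21$ in the hypothesis on $\ve_0$ should emerge from tallying the worst-case powers of $\e_n$ produced by this block analysis together with the $\tfrac34$-power already visible in Lemma~\ref{lem:Jacauchy} and Corollary~\ref{cor:JaJ}. Finally, I would take $a=a_n,a_m$ and $\e=\e_n,\e_m$ and pass from the $a>0$ bounds to the $a=0$ statement exactly as in Corollary~\ref{cor:JaJ}, via Fatou after establishing the $L^p(\nu_0)$-Cauchy property in $a$.
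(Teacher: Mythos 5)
Your overall strategy --- splitting the double difference into a change-of-$a$ piece and a change-of-$\e$ piece, representing each via Fourier transform, and applying Rosen's method after extracting $|u_1-u_2|^\gamma$ with the elementary bound $|e^{iu_1\theta}-e^{iu_2\theta}|\le C|u_1-u_2|^\gamma|\theta|^\gamma$ --- is the same route the paper takes (Propositions \ref{prop:Jea1} and \ref{prop:Jea2}, which are then telescoped). However, there is a genuine gap in your treatment of the $\e$-variation on the near-diagonal annulus $T_{\e_{n+1},\e_n}=\{(s,t):\e_{n+1}\le t-s\le\e_n\}$, and it is exactly the place the paper flags as the most delicate step.

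Your plan is to apply the uniform H\"older bound to every one of the $p$ bracket factors and absorb the resulting weights $|y_j|^\gamma|t_j-s_j|^\gamma$. This fails when the intervals $\{[s_j,t_j]\}$ are not all overlapping. Suppose some $(s_p,t_p)$ is isolated from the others. Then $y_p$ enters the covariance only through $(t_p-s_p)|y_p|^2$, and the $y_p$-integral factors out. Applying your H\"older bound to it gives
\begin{align*}
\int_{\R^3}|u_1-u_2|^\gamma|y_p|^\gamma|h_{t_p}-h_{s_p}|^\gamma e^{-\frac12(t_p-s_p)|y_p|^2}\,\dd y_p\ \sim\ |u_1-u_2|^\gamma\,\e_n^\gamma\,(t_p-s_p)^{-\frac{3+\gamma}{2}},
\end{align*}
and since $t_p-s_p\sim\e_n$ on the annulus, integrating $(s_p,t_p)$ over it (area $\sim\e_n$) leaves a net factor $\sim|u_1-u_2|^\gamma\,\e_n^{(\gamma-1)/2}$, which \emph{diverges} as $\e_n\to0$ for any $\gamma<1$. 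So the isolated-interval contribution is not controlled by the H\"older gain at all, and this is not a matter of how carefully one tracks the blocks --- the mechanism is simply inadequate there. The paper's fix is the case analysis (T-i)/(T-ii)/(T-iii) on the topology of $\bigcup_j(s_j,t_j)$. In the disconnected case (T-ii) it does \emph{not} use the H\"older bound on the isolated factor; it computes the $y_p$-integral exactly as $p_{t_p-s_p}(u_1(h_{t_p}-h_{s_p}))-p_{t_p-s_p}(u_2(h_{t_p}-h_{s_p}))$ and invokes the cancellation
\begin{align*}
\big|p_t(u_1 a)-p_t(u_2 a)\big|\ \le\ \frac{C}{t^{5/2}}\,|u_1^2-u_2^2|\,|a|^2
\end{align*}
with $a=h_{t_p}-h_{s_p}$, so $|a|^2\lesssim(t_p-s_p)^2$ cancels the singularity down to $(t_p-s_p)^{-1/2}|u_1-u_2|$, which integrates to $O(\e_n^{1/2}|u_1-u_2|)$ (equation \eqref{eq:Type2inequ}). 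In the connected case (T-i) the paper exploits that \emph{every} gap $|R_i|\le\e_n$, replacing the time integrals $\int_0^1$ by $\int_0^{\e_n}$ and getting $\frac{1}{1+|\bar y|^2}\wedge\e_n$ bounds that give an extra power of $\e_n$. Without these two refinements, the small area of $T_{\e_{n+1},\e_n}$ is not enough to beat the momentum singularities, and your estimate will not close.

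Your $a$-variation step (Proposition \ref{prop:Jea1}) and the passage from $a>0$ to $a=0$ via Fatou are fine and match the paper. Also, a minor point: you write the bound in terms of $2^{-\ve_0 pm}\e_m^{-\delta p}$ etc.\ for a single jump from $(m,\e_m)$ to $(n,\e_n)$; the paper instead telescopes the $\e$-jump one dyadic level at a time ($\e_n\to\e_{n+1}$) and sums a geometric series, which is what produces the summable $2^{-\beta pm}$ and is cleaner to control.
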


We will prove this lemma in Section \ref{sec:ProofLemJcauchy} below.
Taking $u_2=0$ and combining with Corollary \ref{cor:JaJ}, we have the following.

\begin{cor}\label{cor:lemJcauchy}
$\left\{\widehat{J}_{\e_n,a_n}(u_1,u_2,h)\right\}_{n\geq 1}$, $\left\{ \widetilde{J}_{\e_n,a_n}(u,h)\right\}_{n\geq 1}$ and $\Big\{{J}_{0,1}^{\e_n}(\omega+uh)-J_{0,1}^{\e_n}(\omega)\Big\}_{n\geq 1}$ are $L^{p}(\nu_0)$-Cauchy sequences for each $u_1,u_2,u\in\R$, for any $h\in K$, and for any $p\geq 1$.
In particular, $\left\{\widetilde{J}_{\e_n,a_n}(u,h) \right\}_{n\geq 1}$ is bounded in $L^{p}(\nu_0)$.
\end{cor}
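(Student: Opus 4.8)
The plan is to deduce Corollary~\ref{cor:lemJcauchy} directly from Lemma~\ref{lem:Jcauchy} together with Corollary~\ref{cor:JaJ}, by a straightforward Cauchy-sequence argument. First I would fix $u_1,u_2,u\in\R$ and $p\geq 1$, and enlarge $p$ to $p'=\max\{p,2\}$ so that Lemma~\ref{lem:Jcauchy} applies; since $L^{p'}(\nu_0)$-Cauchy implies $L^p(\nu_0)$-Cauchy on the finite measure space $(X,\mathcal{B},\nu_0)$, there is no loss. I would also choose $\gamma\in(0,\tfrac12)$ and then $\ve_0\in\bigl(0,\tfrac{1-2\gamma}{21}\bigr)$ small enough (the hypothesis of the corollary only fixes $\ve_0\in(0,\tfrac{1}{21})$, and one is free to pick any admissible $\gamma$); with $M>0$ chosen so that $u_1,u_2,u\in[-M,M]$, Lemma~\ref{lem:Jcauchy} gives a constant $C=C(p',M,h,\gamma,\ve_0)$ with
\[
E\left[\left|\widehat{J}_{\e_n,a_n}(u_1,u_2,h)-\widehat{J}_{\e_m,a_m}(u_1,u_2,h)\right|^{p'}\right]\leq C|u_1-u_2|^{p'\gamma}2^{-\beta p'm}\to 0
\]
as $m\to\infty$ (uniformly in $n\geq m$). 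Hence $\left\{\widehat{J}_{\e_n,a_n}(u_1,u_2,h)\right\}_{n\geq1}$ is $L^{p'}(\nu_0)$-Cauchy, therefore $L^{p}(\nu_0)$-Cauchy.

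Next I would treat $\left\{\widetilde{J}_{\e_n,a_n}(u,h)\right\}_{n\geq1}$ by taking $u_1=u$, $u_2=0$ in the above and noting $\widehat{J}_{\e_n,a_n}(u,0,h)=\widetilde{J}_{\e_n,a_n}(u,h)$ directly from the definitions \eqref{eq:Jtideuh}–\eqref{eq:Jhatuuh}, since $\widetilde{J}_{\e,a}(0,h)=0$. So this sequence is $L^p(\nu_0)$-Cauchy as well. For the third sequence, recall $\widetilde{J}_{\e_n,a_n}(u,h)=J_{0,1}^{\e_n,a_n}(\omega+uh)-J_{0,1}^{\e_n,a_n}(\omega)$, whereas the target quantity is $J_{0,1}^{\e_n}(\omega+uh)-J_{0,1}^{\e_n}(\omega)$, where $J_{0,1}^{\e}=J_{0,1}^{\e,0}$ is the $a\searrow0$ limit provided by Corollary~\ref{cor:JaJ}. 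The bound \eqref{eq:JaJbdd} with $a=a_n=2^{-n}$ and $\e=\e_n=2^{-\ve_0 n}$ gives
\[
E\left[\left|J_{0,1}^{\e_n,a_n}(\cdot+uh)-J_{0,1}^{\e_n}(\cdot+uh)\right|^p\right]\leq C\,2^{-\frac{p}{6}n}+C\,2^{-\frac{1+20\gamma}{24}pn}\,2^{\frac{3}{4}p\ve_0 n},
\]
and since $\ve_0<\tfrac{1}{21}<\tfrac{1+20\gamma}{18}$ (indeed $\tfrac{3}{4}\ve_0<\tfrac{1+20\gamma}{24}$ already for $\ve_0<\tfrac{1}{18}$), both exponents are strictly negative, so this tends to $0$ geometrically; the same holds with $uh$ replaced by $0$. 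Thus $J_{0,1}^{\e_n,a_n}(\omega+uh)-J_{0,1}^{\e_n,a_n}(\omega)$ and $J_{0,1}^{\e_n}(\omega+uh)-J_{0,1}^{\e_n}(\omega)$ differ by a sequence converging to $0$ in $L^p(\nu_0)$, so the latter inherits the Cauchy property from $\left\{\widetilde{J}_{\e_n,a_n}(u,h)\right\}_{n\geq1}$.

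Finally, boundedness in $L^p(\nu_0)$ of $\left\{\widetilde{J}_{\e_n,a_n}(u,h)\right\}_{n\geq1}$ is immediate once the sequence is $L^p(\nu_0)$-Cauchy: a Cauchy sequence in a normed space is bounded. (Alternatively one can bound the first term by $\widetilde{J}_{\e_1,a_1}(u,h)$ plus the telescoping sum of increments controlled by Lemma~\ref{lem:Jcauchy}.) There is no real obstacle here: the entire content of the corollary has been front-loaded into Lemma~\ref{lem:Jcauchy} and Corollary~\ref{cor:JaJ}, and what remains is bookkeeping — checking that the constraint $\ve_0<\tfrac{1}{21}$ is compatible with a valid choice of $\gamma$ in Lemma~\ref{lem:Jcauchy} and that the exponents in \eqref{eq:JaJbdd} stay negative along the diagonal sequence $(\e_n,a_n)$. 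The only mildly delicate point to state carefully is the reduction $p\mapsto\max\{p,2\}$ and the fact that on a probability space $L^{p'}$-convergence implies $L^{p}$-convergence for $p\leq p'$, which I would invoke via Jensen's (or Hölder's) inequality.
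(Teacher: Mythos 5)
Your proposal is correct and matches the paper's own (very terse) proof exactly: Lemma~\ref{lem:Jcauchy} gives the Cauchy property for $\widehat{J}_{\e_n,a_n}(u_1,u_2,h)$, the specialization $u_2=0$ gives it for $\widetilde{J}_{\e_n,a_n}(u,h)$, and Corollary~\ref{cor:JaJ} transfers it to $J_{0,1}^{\e_n}(\cdot+uh)-J_{0,1}^{\e_n}(\cdot)$. The only small wrinkle is that you should choose $\gamma$ given $\ve_0$ (not the reverse) and keep track that the $\gamma$ constraints differ between Lemma~\ref{lem:Jcauchy} ($\gamma<\tfrac12$) and Corollary~\ref{cor:JaJ} ($\gamma<\tfrac14$) — taking $\gamma\in(0,\tfrac14)$ small enough that $\ve_0<\tfrac{1-2\gamma}{21}$ handles both at once — but these are purely notational and do not affect the argument.
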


\begin{proof}
In view of Lemma \ref{lem:Jcauchy} $\left\{\widehat{J}_{\e_n,a_n}(u_1,u_2,h)\right\}_{n\geq 1}$ is an $L^p(\nu_0)$-Cauchy sequence for each $u_1,u_2\in \R$.
Since
\begin{align*}
\widehat{J}_{\e,a}(u,0,h) =\widetilde{J}_{\e,a}(u,h),
\end{align*}
$\left\{ \widetilde{J}_{\e_n,a_n}(u,h)\right\}_{n\geq 1}$ is also an $L^p(\nu_0)$-Cauchy sequence.
Furthermore, from this fact and Corollary \ref{cor:JaJ}, we have the assertion for $\Big\{{J}_{0,1}^{\e_n}(\omega+uh)-J_{0,1}^{\e_n}(\omega)\Big\}_{n\geq 1}$.
\end{proof}

\begin{rem}
In \cite[Proposition 2.1]{ARZ96}, a similar statement to Lemma \ref{lem:Jcauchy} for another approximation of $J_{0,1}^\e$ and $\eta_{n,\e}$, is discussed.  However, we can find serious errors in its proof.
Indeed, the authors of \cite{ARZ96} missed the problem in (Step 4) in the proof of Lemma \ref{lem:Jacauchy} in the present paper, which is applied in the proof of Lemma \ref{lem:Jcauchy}.
Moreover, they did not provide any proof of an analogous result to Proposition \ref{prop:Jea2} in the present paper. 
\end{rem}

\subsection{J. Rosen's method}\label{subsec:Ronsemethod}

To prove Lemmas \ref{lem:Jacauchy} and \ref{lem:Jcauchy}, we will modify the technique in \cite[Lemma 2]{Ros83}. Rosen's method applies the Fourier transformation to  moments of the approximated self-intersection local time. Then,  we will see at (Step 3) in the proof of Lemma \ref{lem:Jacauchy} that estimates of the integrals in time variables are converted into the  integrals in space variables.

For later use, we give some equations, notations, and some lemmas.

\begin{proposition}\label{prop:momentexp}
We set $T\subset [0,1]^2\cap \{0\leq s<t\leq 1\}$ and $p\in \mathbb{N}$.  Then, we find that for $g\in L^1(\R^3)$, $h\in K$, and $u\in \R$
\begin{align}
&E\left[\left(\int_{T}\dd s\dd t\int_{\R^3}e^{i\lan y,\omega_t-\omega_s+u(h_t-h_s)\ran}g(y)\right)^p\right]\notag\\
&=\int_{T^p}\dd {\mathbf{s}}\dd \mathbf{t}\int_{\R^{3p}}\dd \mathbf{y}\left(\prod_{j=1}^pe^{iu\lan y_j,h_{\mathbf{s},\mathbf{t}}^{(j)}\ran}g(y_j)\right)e^{-\frac{1}{2}\var{\sum_{j=1}^p \left\lan y_j,\omega_{t_j}-\omega_{s_j}\right\ran }}\label{eq:lpg}
\intertext{and}
&E\left[\left(\int_{T}\dd s\dd t\int_{\R^3}\left(e^{i\lan y,\omega_t+u_1h_t-\omega_s-u_1h_s\ran}-e^{i\lan y,\omega_t+u_2h_t-\omega_s-u_2h_s\ran}\right)g(y)\right)^p\right]\notag\\
&=\int_{T^p}\dd {\mathbf{s}}\dd \mathbf{t}\int_{\R^{3p}}\dd \mathbf{y}\left(\prod_{j=1}^p\left(e^{i\left\lan y_j,u_1h_{\mathbf{s},\mathbf{t}}^{(j)}\right\ran}-e^{i\left\lan y_j,u_2h_{\mathbf{s},\mathbf{t}}^{(j)}\right\ran}\right)g(y_j)\right)
e^{-\frac{1}{2}\var{\sum_{j=1}^p \left\lan y_j,\omega_{t_j}-\omega_{s_j}\right\ran }}\label{eq:lpetan}
\end{align}
where $(\mathbf{s},\mathbf{t})$ denotes an element in $[0,1]^{2p}$ with 
$\mathbf{s}=(s_1,\cdots,s_p)$, $\mathbf{t}=(t_1,\cdots,t_p)\in[0,1]^p$ such that $0\leq s_i<t_i\leq 1$ for each $i=1,\cdots,p$, $\mathbf{y}=(y_1,\cdots,y_p)\in \R^{3p}$ with $y_1,\cdots,y_p\in \R^3$, and we define $h_{\mathbf{s},\mathbf{t}}^{(j)}:=h_{t_j}-h_{s_j}$ for $(\mathbf{s},\mathbf{t})$ and $j=1,\dots,p$. 
\end{proposition}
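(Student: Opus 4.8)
The plan is to establish both identities by the same two ingredients: an application of Fubini's theorem to bring the Wiener expectation inside the $(s,t,y)$-integrations, and the elementary formula $E[e^{iZ}]=e^{-\frac12\var{Z}}$ valid for any real centered Gaussian $Z$ under $\nu_0$.

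First I would expand the $p$-th power as an iterated integral over $p$ independent copies of the integration variables: introducing $(s_j,t_j)\in T$ and $y_j\in\R^3$ for $j=1,\dots,p$, one has
\[
\left(\int_{T}\dd s\dd t\int_{\R^3}e^{i\langle y,\omega_t-\omega_s+u(h_t-h_s)\rangle}g(y)\dd y\right)^p
=\int_{T^p}\dd\mathbf{s}\dd\mathbf{t}\int_{\R^{3p}}\dd\mathbf{y}\;\prod_{j=1}^p e^{i\langle y_j,\omega_{t_j}-\omega_{s_j}\rangle}\,e^{iu\langle y_j,h_{\mathbf{s},\mathbf{t}}^{(j)}\rangle}\,g(y_j),
\]
and analogously for the second expression, with each factor replaced by $\big(e^{i\langle y_j,u_1 h_{\mathbf{s},\mathbf{t}}^{(j)}\rangle}-e^{i\langle y_j,u_2 h_{\mathbf{s},\mathbf{t}}^{(j)}\rangle}\big)\,e^{i\langle y_j,\omega_{t_j}-\omega_{s_j}\rangle}\,g(y_j)$.

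Next I would take the expectation over $(X,\mathcal{B},\nu_0)$ and interchange it with the integration in $(\mathbf{s},\mathbf{t},\mathbf{y})$. This is legitimate by Fubini's theorem: uniformly in $\omega$ the absolute value of the integrand is bounded by $\prod_{j}|g(y_j)|$ (resp.\ by $\prod_j 2|g(y_j)|$ for the difference version, after expanding the phases), and
\[
\int_{T^p}\dd\mathbf{s}\dd\mathbf{t}\int_{\R^{3p}}\prod_{j=1}^p|g(y_j)|\dd\mathbf{y}=|T|^p\,\|g\|_{L^1(\R^3)}^p<\infty
\]
since $T\subset[0,1]^2$ has finite Lebesgue measure and $\nu_0$ is a probability measure. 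After the interchange, the factors $e^{iu\langle y_j,h_{\mathbf{s},\mathbf{t}}^{(j)}\rangle}$ (resp.\ the differences thereof) and $g(y_j)$ are deterministic and may be pulled out, leaving only
\[
E\!\left[\prod_{j=1}^p e^{i\langle y_j,\omega_{t_j}-\omega_{s_j}\rangle}\right]=E\!\left[\exp\!\Big(i\sum_{j=1}^p \langle y_j,\omega_{t_j}-\omega_{s_j}\rangle\Big)\right].
\]

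Finally, under $\nu_0$ the random variable $\sum_{j=1}^p\langle y_j,\omega_{t_j}-\omega_{s_j}\rangle$ is a finite linear combination of increments of a $3$-dimensional Brownian motion, hence a real centered Gaussian, so its characteristic function evaluated at $1$ is $\exp\!\big(-\tfrac12\var{\sum_{j=1}^p\langle y_j,\omega_{t_j}-\omega_{s_j}\rangle}\big)$. Substituting this back into the two displays gives \eqref{eq:lpg} and \eqref{eq:lpetan}. I do not expect a serious obstacle in this proposition: it is essentially bookkeeping. The only points deserving a little care are the Fubini justification just given and the observation that the deterministic drift $uh$ shifts the mean but not the variance of the relevant Gaussian, which is precisely why $h$ enters the right-hand sides only through the explicit oscillatory factors and not through the exponent $-\tfrac12\var{\cdots}$.
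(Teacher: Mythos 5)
Your proof is correct and follows essentially the same route as the paper's: expand the $p$-th power, interchange expectation and integration by Fubini (using $g\in L^1(\R^3)$), factor out the deterministic oscillatory terms, and evaluate the Gaussian characteristic function. The extra detail you give on the Fubini bound $|T|^p\|g\|_{L^1}^p$ is implicit but not spelled out in the paper; otherwise the arguments coincide.
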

\begin{proof}
Since $g\in L^1(\R^3)$, we can use Fubini's theorem to obtain
\begin{align*}
&E\left[\left(\int_{T}\dd s\dd t\int_{\R^3}\dd ye^{i\lan y,\omega_t-\omega_s+u(h_t-h_s)\ran}g(y)\right)^p\right]\\
&=\int_{T^p}\dd {\mathbf{s}}\dd \mathbf{t}\int_{\R^{3p}}\dd \mathbf{y}\left(\prod_{j=1}^pe^{\lan y_j,u(h_{t_j}-h_{s_j}) \ran }g(y_j)\right)E\left[\prod_{j=1}^p e^{i\lan y_j,\omega_{t_j}-\omega_{s_j}\ran}\right]\notag
\end{align*}
and that
\begin{align*}
&E\left[\left(\int_{T}\dd s\dd t\int_{\R^3}\dd y\left(e^{i\lan y,\omega_t+u_1h_t-\omega_s-u_1h_s\ran}-e^{i\lan y,\omega_t+u_2h_t-\omega_s-u_2h_s\ran}\right)g(y)\right)^p\right]\notag\\
&=\int_{T^p}\dd {\mathbf{s}}\dd \mathbf{t}\int_{\R^{3p}}\dd \mathbf{y}\left(\prod_{j=1}^p\left(e^{i\left\lan y_j,u_1\left(h_{t_j}-h_{s_j}\right)\right\ran}-e^{i\left\lan y_j,u_2\left(h_{t_j}-h_{s_j}\right)\right\ran}\right)g(y_j)\right)E\left[\prod_{j=1}^p e^{i\lan y_j,\omega_{t_j}-\omega_{s_j}\ran}\right].\notag
\end{align*}
The statement holds from the fact $\lan y_j,\omega_{t_j}-\omega_{s_j}\ran$ is a Gaussian random variable for any $\mathbf{y}$ and $\mathbf{s},\mathbf{t}$.
\end{proof}

Next, we focus on $ \textrm{Var}\left(\sum_{j=1}^p \lan y_j,\omega_{t_j}-\omega_{s_j}\ran\right)$ for $p\geq 1$ and $\mathbf{s},\mathbf{t}$.
As in the proof of \cite[Lemma 2]{Ros83}, we denote the pair $(s_j,t_j)$ by $S^j$ and set $(z_1,\cdots,z_{2p}):=(s_1,t_1,s_2,t_2,\cdots,s_p,t_p)$.  Then, we split the domain of the integral in $\mathbf{s},\mathbf{t}$ on the right-hand side of \eqref{eq:lpetan} into $T^p\cap \Delta_p(\pi)$ for some permutation $\pi$ of $\{1,\cdots,2p\}$, where
\begin{align}
\Delta_p(\pi):=\{(z_1,\cdots,z_{2p}):z_{\pi(1)}<\cdots<z_{\pi(2p)}\},\label{eq:labelingpermu}
\end{align}
Let us define a partition $\{R_i\}_{i=1}^{2p-1}$ of the interval $[z_{\pi(1)},z_{\pi(2p)}]$ by
\begin{align}
R_i:=[z_{\pi(i)},z_{\pi(i+1)}],\quad 1\leq i\leq 2p-1,\label{eq:Ri}
\end{align}
and let $W(R_i):=\omega_{z_{\pi(i+1)}}-\omega_{z_{\pi (i)}}$. Then it holds that
\begin{align*}
\omega_{t_j}-\omega_{s_j}=\sum_{i; R_i\subset [s_j,t_j]}W(R_i).
\end{align*}
Since $W(R_i)$, $i=1,2,\cdots,2p-1$, are independent, we have \begin{align}
\textrm{Var}\left(\sum_{j=1}^p \lan y_j,\omega_{t_j}-\omega_{s_j}\ran\right)&=\textrm{Var}\left(\sum_{j=1}^p\left\lan	y_j,\sum_{R_i\subset [s_j,t_j]}W(R_i)	\right\ran\right)\notag\\
&=\sum_{i}\left|\overline{y}^i\right|^2|R_i|,\label{eq:varexpand}
\end{align} 
where $\overline{y}^i=\dis \sum_{j;[s_j,t_j]\supset R_i}y_j$ and $|R_j|$ denotes the length of $R_j$. 
For convenience, we let $R_0:= [0,z_{\pi (1)}]$, $R_{2p}:= [z_{\pi(2p)}, 1]$, $\overline{y}^0:=0$ and $\overline{y}^{2p}:=0$. 
We define 
\begin{align}\label{eq:frdef}
f(\ell):= \min \left\{ i| \ R_i \subset [s_\ell ,t_\ell] \right\} , \quad r(\ell):= \max \left\{ i| \ R_i \subset [s_\ell ,t_\ell] \right\} +1
\end{align}
Then, it holds that
\begin{align*}
[s_\ell,t_\ell]=R_{f(\ell)}\cup R_{f(\ell)+1}\cup \cdots\cup R_{r(\ell)-1},
\end{align*}
and hence
\begin{align}
y_\ell=\overline{y}^{f(\ell)}-\overline{y}^{f(\ell)-1}, \quad -y_\ell=\overline{y}^{r(\ell)}-\overline{y}^{r(\ell)-1}.\label{eq:ybary}
\end{align}
We have the following three cases:
\begin{enumerate}[label=(T-\roman*)]
\item\label{item:T1} $\dis \bigcup_{j\in \{1,\dots,p\}}(s_j,t_j)$ is connected.
\item\label{item:T2} $\dis \bigcup_{j\in \{1,\dots,p\}}(s_j,t_j)$ is disconnected and there exists  $j\in \{1,\dots,p\}$ such that $(s_j,t_j)\cap \bigcup_{i\not= j}[s_i,t_i]=\emptyset$.
\item\label{item:T3} $\dis \bigcup_{j\in \{1,\dots,p\}}(s_j,t_j)$ is disconnected and for any $j\in \{1,\dots,p\}$, $(s_j,t_j)\cap \bigcup_{i\not= j}[s_i,t_i]\not=\emptyset$.
\end{enumerate}

\begin{rem}
We can find that for each case, $\left\{\overline{y}^j\right\}_{j=0}^{2p}$ has the following property.
\begin{enumerate} 
\item In the case of {\rm \ref{item:T1}}, there exists $j,k\in \{1,\dots,p\}$ such that $r(j)-1=r(k)$ and $\overline{y}^{r(k)}=y_j$. This is true for the largest $t_j$ and the second-largest $t_k$. 
\item In the case of {\rm \ref{item:T2}}, there exists $j\in \{1,\dots,p\}$ such that
\begin{itemize}
\item $\overline{y}^{f(j)-1}=\overline{y}^{r(j)}=0$.
\item $f(j)=r(j)-1$ and  $\overline{y}^{f(j)}=y_j$. 
\item $\overline{y}^k$ ($k\not= f(j)$) does not contain $y_j$.
\end{itemize}
\end{enumerate}
\end{rem}

Then, we have the following properties, which were first obtained in \cite{Ros83}.

\begin{lemma}\label{lem:RosLem3}\cite[Lemma 3]{Ros83}
For $p\in \N$, $S_1,\dots,S_p\in \{(s,t)\in [0,1]^2:t-s>0\}$, $\mathbf{y}\in \R^{3p}$, we set $\{f(\ell)\}_{\ell=1,\dots,p}$, $\{r(\ell)\}_{\ell=1,\dots,p}$, $\{\overline{y}^j\}_{j=0}^{2p}$ as above. Then,
$\overline{y}^{f(1)},\dots,\overline{y}^{f(p)}$ is a nonsingular set of coordinates for $\R^{3p}$. 
\end{lemma}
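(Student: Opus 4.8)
\textbf{Proof plan for Lemma~\ref{lem:RosLem3}.}
The statement asserts that the $p$ vectors $\overline{y}^{f(1)},\dots,\overline{y}^{f(p)}\in\R^{3p}$ (each of which is a signed sum of some of the coordinate blocks $y_1,\dots,y_p$) are linearly independent for \emph{every} admissible configuration of intervals $S_1,\dots,S_p$ and every permutation $\pi$ labeling the endpoints. Equivalently, using \eqref{eq:ybary}, the linear map $(y_1,\dots,y_p)\mapsto(\overline{y}^{f(1)},\dots,\overline{y}^{f(p)})$ on $(\R^3)^p$ is invertible, and since it acts block-diagonally (the same $\pm1$ integer matrix $A=(a_{\ell k})$ on each of the three scalar coordinates), it suffices to prove that the $p\times p$ matrix $A$ with $\overline{y}^{f(\ell)}=\sum_{k}a_{\ell k}y_k$ has $\det A=\pm1$, in particular is nonsingular. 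The plan is to exhibit an explicit ordering of the indices that makes $A$ triangular with $\pm1$ on the diagonal.

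First I would fix the permutation $\pi$ and the partition $\{R_i\}_{i=0}^{2p}$ from \eqref{eq:Ri}, and recall from \eqref{eq:frdef} that $R_{f(\ell)}$ is the \emph{left-most} subinterval contained in $[s_\ell,t_\ell]$; in particular $z_{\pi(f(\ell))}=s_\ell$, so the map $\ell\mapsto f(\ell)$ is injective (distinct left endpoints $s_\ell$ give distinct blocks). Next I would order the intervals by their left endpoints: relabel so that $s_{\ell_1}<s_{\ell_2}<\dots<s_{\ell_p}$, i.e. $f(\ell_1)<f(\ell_2)<\dots<f(\ell_p)$. The key combinatorial observation is that, by \eqref{eq:varexpand}, $\overline{y}^{\,i}=\sum_{j:\,[s_j,t_j]\supset R_i}y_j$, so $\overline{y}^{f(\ell_m)}$ is a sum of those $y_j$ for which the interval $[s_j,t_j]$ straddles the block $R_{f(\ell_m)}$, which sits immediately to the right of the point $s_{\ell_m}$. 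Such a $j$ must satisfy $s_j\le s_{\ell_m}$, hence $j\in\{\ell_1,\dots,\ell_m\}$; and $j=\ell_m$ does occur since $R_{f(\ell_m)}\subset[s_{\ell_m},t_{\ell_m}]$ by definition of $f$. Therefore, in the basis ordering $(y_{\ell_1},\dots,y_{\ell_p})$, the vector $\overline{y}^{f(\ell_m)}$ is a $\{0,1\}$-combination of $y_{\ell_1},\dots,y_{\ell_m}$ with coefficient exactly $1$ on $y_{\ell_m}$. Writing the rows of $A$ in the order $m=1,\dots,p$ gives a lower-triangular matrix with all diagonal entries equal to $1$, so $\det A=1\ne 0$, which is exactly the claim.

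Finally I would package this: since $A$ is invertible over $\Z$, so is $A\otimes I_3$, and the latter is precisely the coordinate-change matrix sending $\mathbf{y}=(y_1,\dots,y_p)$ to $(\overline{y}^{f(1)},\dots,\overline{y}^{f(p)})$ up to the fixed permutation of rows corresponding to $\ell\mapsto\ell_m$; hence $\{\overline{y}^{f(\ell)}\}_{\ell=1}^p$ is a nonsingular system of coordinates on $\R^{3p}$, as required. The only real subtlety — and the one point I would make sure to nail down carefully — is the claim that every $y_j$ entering $\overline{y}^{f(\ell_m)}$ has $s_j\le s_{\ell_m}$: this uses that $R_{f(\ell_m)}$ is the left-most block of $[s_{\ell_m},t_{\ell_m}]$ and that the $R_i$ are consecutive intervals determined by the sorted endpoint sequence $z_{\pi(1)}<\dots<z_{\pi(2p)}$, so $[s_j,t_j]\supset R_{f(\ell_m)}$ forces $s_j\le z_{\pi(f(\ell_m))}=s_{\ell_m}$. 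Everything else is bookkeeping. (This is the argument of \cite[Lemma~3]{Ros83}, reproduced here in the notation of the present paper; the three cases \ref{item:T1}--\ref{item:T3} do not need to be distinguished for this particular statement, since the triangularity argument is uniform in the configuration.)
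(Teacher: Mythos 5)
Your proof is correct and is precisely Rosen's original argument from \cite[Lemma 3]{Ros83}, which the paper cites without reproducing: ordering the intervals by their left endpoints makes the $\{0,1\}$-coefficient matrix $A$ unipotent lower-triangular (via the identification $z_{\pi(f(\ell))}=s_\ell$, so that $[s_j,t_j]\supset R_{f(\ell_m)}$ forces $s_j\le s_{\ell_m}$), and the passage to $A\otimes I_3$ acting on $(\R^3)^p$ is standard bookkeeping. The only caveat worth recording is that the injectivity of $\ell\mapsto f(\ell)$ and the strict ordering $f(\ell_1)<\dots<f(\ell_p)$ tacitly assume the $2p$ endpoints $z_1,\dots,z_{2p}$ are pairwise distinct; this holds on the open cell $\Delta_p(\pi)$ where the lemma is actually applied, but it is an implicit hypothesis in the statement as written (if, say, $s_1=s_2$, then $f(1)=f(2)$ and the conclusion fails).
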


\begin{lemma}\label{lem:RosLem4}
Let $a\in (0,1)$. For $p\in \N$ with $p\geq 2$, $S_1,\dots,S_p\in \{(s,t)\in [0,1]^2:t-s\geq a\}$, $\mathbf{y}\in \R^{3p}$, we set $\{f(\ell)\}_{\ell=1,\dots,p}$, $\{r(\ell)\}_{\ell=1,\dots,p}$, $\{\overline{y}^j\}_{j=0}^{2p}$ as above. Then, for each $j$, $1\leq j\leq p$, there exists $k\in \{f(j),\dots,r(j)-1\}$ such that $|R_{k}|\geq \frac{a}{2p}$. We denote by $d(j)\in \{f(j),\cdots,r(j)-1\}$ the smallest integer in such $k$.  Moreover, we have 
 $\displaystyle \mathrm{rank}\left(\{\overline{y}^{r(1)},\dots,\overline{y}^{r(p)},\overline{y}^{d(1)},\dots,\overline{y}^{d(p)}\}\right)= 3p$. 
\end{lemma}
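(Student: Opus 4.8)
The plan is to establish the two assertions separately, the first by a counting/pigeonhole argument and the second by showing that the new vectors $\{\overline{y}^{d(j)}\}$ supply exactly the coordinates missing from the span of $\{\overline{y}^{r(j)}\}$, using the structure of the partition $\{R_i\}$ together with Lemma \ref{lem:RosLem3}. For the first statement, fix $j$. The interval $[s_j,t_j]$ has length at least $a$ and, by \eqref{eq:frdef}, is the disjoint union $R_{f(j)}\cup R_{f(j)+1}\cup\cdots\cup R_{r(j)-1}$. Since there are $2p$ endpoints $z_1,\dots,z_{2p}$ in total, the number of blocks $R_i$ contained in $[s_j,t_j]$ is at most $2p-1<2p$; hence $\sum_{k=f(j)}^{r(j)-1}|R_k|\ge a$ forces $|R_k|\ge a/(2p)$ for at least one index $k$ in that range. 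Taking $d(j)$ to be the smallest such $k$ is well-defined, which proves the first claim.

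For the rank assertion, I would argue as follows. First recall from \eqref{eq:ybary} that $y_\ell=\overline{y}^{f(\ell)}-\overline{y}^{f(\ell)-1}$ and $-y_\ell=\overline{y}^{r(\ell)}-\overline{y}^{r(\ell)-1}$. By Lemma \ref{lem:RosLem3} the set $\{\overline{y}^{f(1)},\dots,\overline{y}^{f(p)}\}$ is a basis of $\R^{3p}$ (each $\overline{y}^j\in\R^3$, so this really means that the corresponding $3p\times 3p$ block matrix is invertible), so it suffices to show that each basis vector $\overline{y}^{f(j)}$ lies in the span of $\{\overline{y}^{r(1)},\dots,\overline{y}^{r(p)},\overline{y}^{d(1)},\dots,\overline{y}^{d(p)}\}$. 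The key combinatorial observation is that consecutive partial sums telescope: for indices $i<i'$ lying in a common interval range one has $\overline{y}^{i'}-\overline{y}^{i}=\sum_{\ell:\,i<r(\ell)\le i'\text{ or }i<f(\ell)\le i'}(\pm y_\ell)$, i.e. the difference of any two $\overline{y}$'s is an integer combination of the $y_\ell$'s indexed by the endpoints crossed in between. I would walk along the ordered endpoints $z_{\pi(1)}<\cdots<z_{\pi(2p)}$ and, using that $d(j)$ sits strictly between $f(j)$ and $r(j)$, express $\overline{y}^{f(j)}$ as $\overline{y}^{d(j)}$ plus a telescoping sum of $y_\ell$'s over the blocks $R_{f(j)},\dots,R_{d(j)-1}$; each such $y_\ell$ can in turn be rewritten, via $-y_\ell=\overline{y}^{r(\ell)}-\overline{y}^{r(\ell)-1}$ and downward induction on the position of the right endpoint, in terms of the listed $\overline{y}^{r(\cdot)}$ and $\overline{y}^{d(\cdot)}$. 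Running this over all $j$ shows the $2p$ listed vectors span all of $\{\overline{y}^{f(1)},\dots,\overline{y}^{f(p)}\}$, hence span $\R^{3p}$; since they number exactly $2p$ vectors in $\R^{3}$, i.e. a $6p\times 3p$ matrix — here I should be careful: there are $2p$ vectors each in $\R^3$, so the span lives in $\R^{3p}$ only after grouping, and "rank $=3p$" means the natural $3p\times(2p)$... $(3\cdot 2p)$ arrangement has full column rank $3p$ — the spanning property is precisely the rank statement.

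The main obstacle I anticipate is the bookkeeping in the telescoping argument: one must verify that when $\overline{y}^{f(j)}$ is rewritten in terms of $\overline{y}^{r(\cdot)}$ and $\overline{y}^{d(\cdot)}$, no circularity arises, i.e. that the induction genuinely terminates. The clean way to organize this is to order the intervals $[s_\ell,t_\ell]$ by their right endpoints $t_\ell$ and induct downward: the largest $t_\ell$ has $r(\ell)=2p$, so $\overline{y}^{r(\ell)-1}=\overline{y}^{2p}-(\pm y_\ell)$... wait, $\overline{y}^{2p}=0$ by convention, so $\overline{y}^{r(\ell)-1}=\pm y_\ell$ and in fact (by the Remark following \eqref{eq:frdef}, case \ref{item:T1}/\ref{item:T2}) one frequently has $\overline{y}^{r(\ell)}=0$ or $=\overline{y}^{r(k)}$ for some other $k$, which anchors the recursion. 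I would handle the three topological cases \ref{item:T1}, \ref{item:T2}, \ref{item:T3} uniformly by this right-endpoint ordering rather than case-by-case, invoking the Remark only to identify which $\overline{y}^{r(\ell)}$ vanish. The linear-algebra part once the spanning is set up is immediate, since a spanning set of $\R^{3p}$ consisting of $2p$ vectors of $\R^3$ arranged as columns has, by definition, rank $3p$.
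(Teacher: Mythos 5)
The paper does not prove this lemma; it is cited directly from \cite[Lemma 4]{Ros83}, so your proposal is an independent argument rather than a reproduction of the paper's proof.

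Your first assertion is fine: $[s_j,t_j]=R_{f(j)}\cup\cdots\cup R_{r(j)-1}$ consists of at most $2p-1$ of the blocks $R_i$, so at least one has length $\geq a/(2p-1)>a/(2p)$, and $d(j)$ is well-defined.

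For the rank assertion your plan is the right one: show that each $\overline{y}^{f(j)}$ lies in the span of the $2p$ listed vectors and invoke Lemma~\ref{lem:RosLem3}, by telescoping along the partition together with a downward induction on the right-endpoint index $r(\cdot)$. However, the point you yourself flag as ``the main obstacle'', that the recursion terminates without circularity, is exactly what is not established, and as written the telescope is the wrong one. Telescoping $\overline{y}^{r(\ell)-1}$ back to $\overline{y}^{d(\ell)}$ can cross a boundary $z_{\pi(k)}=t_{\ell'}$ with $r(\ell')=k<r(\ell)$ and $f(\ell')\leq d(\ell)$, contributing a $y_{\ell'}$ that is \emph{not} reached by a downward (decreasing-$r$) induction; invoking the Remark to see which $\overline{y}^{r(\cdot)}$ vanish does not repair this.

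Here is how the induction actually closes, and it is worth recording. Assume $y_{\ell'}$ is already in the span whenever $r(\ell')>r(\ell)$, and write $y_\ell=\overline{y}^{r(\ell)-1}-\overline{y}^{r(\ell)}$; the second term is listed. If $r(\ell)-1$ is itself a marked index (equal to some $r(j')$ or $d(j')$) we are done. Otherwise let $j^*$ be the largest marked index strictly less than $r(\ell)-1$; it exists because $d(\ell)\leq r(\ell)-1$ is marked and cannot equal $r(\ell)-1$. Telescope $\overline{y}^{r(\ell)-1}-\overline{y}^{j^*}=\sum_{k=j^*+1}^{r(\ell)-1}\bigl(\overline{y}^{k}-\overline{y}^{k-1}\bigr)$. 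For each such $k$, the point $z_{\pi(k)}$ cannot be a right endpoint $t_{\ell'}$: if it were, $k=r(\ell')$ would be a marked index strictly between $j^*$ and $r(\ell)-1$, contradicting maximality of $j^*$. Hence $z_{\pi(k)}=s_{\ell'}$ and $\overline{y}^{k}-\overline{y}^{k-1}=y_{\ell'}$ with $f(\ell')=k$. Since $f(\ell')=k>j^*\geq d(\ell)\geq f(\ell)$ we have $\ell'\neq\ell$, and $r(\ell')$ is a marked index with $r(\ell')>j^*$, hence $r(\ell')\geq r(\ell)-1$; since $r(\ell)-1$ is not marked and $r(\ell')\neq r(\ell)$, in fact $r(\ell')>r(\ell)$. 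So every $y_{\ell'}$ in the telescope is covered by the induction hypothesis and the step closes. The base case $r(\ell)=2p$ is immediate because $\overline{y}^{2p}=0$ and $2p-1$ is always marked: either $z_{\pi(2p-1)}$ is a right endpoint $t_{\ell'}$ (so $2p-1=r(\ell')$), or it is $s_\ell$, in which case $f(\ell)=r(\ell)-1=2p-1=d(\ell)$. With this reorganization of the non-circularity argument, the remainder of your proof goes through.
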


\begin{rem}
Lemma \ref{lem:RosLem4}  was given in \cite[Lemma 4]{Ros83}. We also remark that there might be the possibility that $R_{d(i)}=R_{d(j)}$ for $i\not=j$. 
\end{rem}

\subsection{Proof of Lemma \ref{lem:Jacauchy}}\label{sec:proofLemJacauchy}

Now we give the proof of Lemma \ref{lem:Jacauchy}.
We will split the proof into four steps.

\vspace{3mm}\noindent {\bf (Step 1)} We apply the Fourier transformation to the approximated self-intersection local times.

It is easy to see that \begin{align*}
&\left|J_{0,1}^{\e,a}(\omega+uh)-J_{0,1}^{\e,a'}(\omega+uh)\right|\\
&= \left|\int_{T_\e}\int_{\R^3}e^{i\lan y,\omega_t-\omega_s+u(h_t-h_s)\ran}\left(1-e^{-\frac{(a-a')|y|^2}{2}}\right)f_{a'}(y)\dd y\dd s\dd t\right|\\
&\leq \left|\int_{T_\e}\int_{\R^3}e^{i\lan y,\omega_t-\omega_s+u(h_t-h_s)\ran}\left(1-e^{-\frac{(a-a')|y|^2}{2}}\right)f_{a'}(y)1\{|y|\leq a^{-\frac{1}{6}}\}\dd y\dd s\dd t\right|\\
&\quad +\left|\int_{T_\e}\int_{\R^3}e^{i\lan y,\omega_t-\omega_s+u(h_t-h_s)\ran}\left(1-e^{-\frac{(a-a')|y|^2}{2}}\right)f_{a'}(y)1\{|y|>a^{-\frac{1}{6}}\}\dd y\dd s\dd t\right|.
\end{align*}
Since $1-e^{-x}\leq x$ for $x\geq 0$, we have 
\begin{align*}
& \left|\int_{T_\e}\int_{\R^3}e^{i\lan y,\omega_t-\omega_s+u(h_t-h_s)\ran}\left(1-e^{-\frac{(a-a')|y|^2}{2}}\right)f_{a'}(y)1\{|y|\leq a^{-\frac{1}{6}}\}\dd y\dd s\dd t\right|
\leq c(a-a')a^{-\frac{5}{6}}
\end{align*}
for some constant $c>0$. Hence,  it is sufficient to estimate \begin{align*}
\left|\int_{T_\e}\int_{\R^3}e^{i\lan y,\omega_t-\omega_s+u(h_t-h_s)\ran}\left(1-e^{-\frac{(a-a')|y|^2}{2}}\right)f_{a'}(y)1\{|y|>a^{-\frac{1}{6}}\}\dd y\dd s\dd t\right|
\end{align*}
for $p\in 2\N$.

\vspace{3mm}\noindent {\bf (Step 2)}
We can use Proposition \ref{prop:momentexp} by setting $g(y)=f_{a'}(y)1\{|y|>a^{-\frac{1}{6}}\}$ and $T=T_{\e}$, and changing $y_{\frac{p}{2}+1},\dots, y_p\mapsto -y_{\frac{p}{2}+1},\dots,-y_p$. For our convenience, we keep the notation $\{y_j\}_{j=1,\dots,p}$.

We remark that  for $\gamma \in (0,1)$, there exists $C>0$ such that \begin{align*}
1-e^{-x}\leq Cx^\gamma
\end{align*}
for $x\geq 0$.
Combining this with \eqref{eq:labelingpermu}, \eqref{eq:Ri}, and \eqref{eq:varexpand}, we have \begin{align*}
&E\left[\left|\int_{T_\e}\int_{\R^3}e^{i\lan y,\omega_t-\omega_s+u(h_t-h_s)\ran}\left(1-e^{-\frac{(a-a')|y|^2}{2}}\right)f_{a'}(y)1\{|y|>a^{-\frac{1}{6}}\}\dd y\dd s\dd t\right|^p\right]\\
&\leq |a-a'|^{\gamma p}\int_{T^p_\e}\dd \mathbf{s}\dd \mathbf{t}\int_{Y(a^{-\frac{1}{6}})}\dd \mathbf{y}\prod_{j=1}^p |y_j|^{2\gamma}\exp\left(-\frac{1}{2}\var{\sum_{j=1}^p \left\lan y_j,\omega_{t_j}-\omega_{s_j}\right\ran }\right)\\
&= |a-a'|^{\gamma p}\sum_{\pi}\int_{T_{\e}^p\cap \Delta_p(\pi)}\dd {\mathbf{s}}\dd \mathbf{t}\int_{Y(a^{-\frac{1}{6}})}\dd \mathbf{y}\prod_{j=1}^p |y_j|^{2\gamma}e^{-\frac{1}{2}\sum_{i=1}^{2p-1}|R_i|\left|\overline{y}^i\right|^2},
\end{align*}
where we write  $Y(t)=\bigcap_{j=1}^p\{|y_j|\geq t\}$ for $t\geq 0$.

\vspace{3mm}\noindent {\bf (Step 3)} In this step, we will take the integrals in time-variables by changing variables $(\mathbf{s},\mathbf{t})$ into $(z_{\pi(1)},|R_1|,\cdots,|R_{2p-1}|)$. 

Using \eqref{eq:ybary}, we get \begin{align*}
|y_\ell|\leq \left(|\overline{y}^{f(\ell)}|+|\overline{y}^{f(\ell)-1}|\right)^\frac{1}{2}\left(|\overline{y}^{r(\ell)}|+|\overline{y}^{r(\ell)-1}|\right)^\frac{1}{2}.
\end{align*} 
Since each  $\overline{y}^i$ is equal to either  $\overline{y}^{f(\ell)}$ or  $\overline{y}^{r(\ell)}$ for a unique $\ell$, and either  $\overline{y}^{f(q)-1}$ or  $\overline{y}^{r(q)-1}$ for a unique $q$, we have \begin{align*}
\prod_{i=1}^p |y_i|&\leq \prod_{\ell=1}^p \left(|\overline{y}^{f(\ell)}|+|\overline{y}^{f(\ell)-1}|\right)^\frac{1}{2}\left(|\overline{y}^{r(\ell)}|+|\overline{y}^{r(\ell)-1}|\right)^\frac{1}{2}\\
&\leq \prod_{i=1}^{2p-1} (1+|\overline{y}^i|^2)^\frac{1}{2},
\end{align*}
by using \eqref{eq:ybary} and $(a+b)^{\frac{1}{2}}\leq (1+a^2)^\frac{1}{4}(1+b^2)^\frac{1}{4}$ for any  $a,b\geq 0$.

We now change the variable: $J:(S^1,\cdots,S^p)\mapsto (z_{\pi(1)},|R_1|,\cdots,|R_{2p-1}|)$. 
Then, the  Jacobian matrix of $J$ has $\{0,1,-1\}$-entries and the inverse matrix $J^{-1}$ with $\{0,1,-1\}$-entries almost everywhere. 
Thus, we have $|DJ(\mathbf{s},\mathbf{t})|=1$.

We may divide $T_{\e}^p\cap \Delta_p(\pi)$  by the indices $\mathbf{d}=(d(1),\cdots,d(p))$, $T_{\e}^p(\pi,\mathbf{d})$. We remark that $J$ can be written as  the invertible linear transformation of $(S^1,\cdots,S^p)$ on $T_{\e}^p(\pi,\mathbf{d})$. Then, we have \begin{align}
&\int_{T_{\e}^p(\pi,\mathbf{d})}\dd \mathbf{s}\dd \mathbf{t}\int _{Y(a^{-\frac{1}{6}})}\dd \mathbf{y}\exp\left(-\frac{1}{2}\sum_{i=1}^{2p-1}|\overline{y}^i|^2|R_i|\right)\prod_{j=1}^{p}|y_j|^{2\gamma} \notag\\
&\leq C\int_{J(T_{\e}^p(\pi,\mathbf{d}))}\dd z_{\pi(1)}\prod_{i=1}^{2p-1}\dd |R_{i}|\int _{Y(a^{-\frac{1}{6}})}\dd \mathbf{y}\exp\left(-\frac{1}{2}\sum_{i=1}^{2p-1}|\overline{y}^i|^2|R_i|\right)\prod_{j=1}^{2p-1}\left(1+|\overline{y}^j|^2\right)^\gamma \label{eq:bdd1} 
 \end{align} 
The integral in  $ |R_{i}|$ is dominated as follows: If $i=d(k)$ for some $k\in \{1,\dots,p\}$, then there is a constant $C>0$ such that \begin{align*}
\int_{\frac{\e}{2p}}^1 \exp\left({-\frac{|\overline{y}^{d(i)}|^2t}{2}}\right)\dd t\leq \frac{C}{1+|\overline{y}^{d(i)}|^2}\exp\left({-\frac{\e|\overline{y}^{d(i)}|^2}{4p}}\right) \leq C \exp\left({-\frac{\e|\overline{y}^{d(i)}|^2}{4p}}\right).
\end{align*}
If $i\not=d(k)$ for any $k\in \{1,\dots,p\}$, then we have \begin{align*}
\int_0^1 \exp\left({-\frac{|y|^2 t}{2}}\right)\dd t\leq \frac{C}{1+|y|^2},\quad y\in \R^3.
\end{align*}
From these, we can see that the right-hand side of \eqref{eq:bdd1} can be bounded from above by \begin{align}
C\int_{Y(a^{-\frac{1}{6}})}\dd \mathbf{y}\prod_{i=1}^{p}\left(1+|\overline{y}^{f(i)}|^2\right)^{-1+\gamma}\left(1+|\overline{y}^{r(i)}|^2\right)^{-1+\gamma}e^{-\frac{\e|\overline{y}^{d(i)}|^2}{4p^2}},\label{eq:bdd2}
\end{align}
where $p^2$ of the denominator in  the power of $e$ is put to remove the effect of overlaps of $d(i)$.

The product of $2p$-terms seems to appear in \eqref{eq:bdd2} but it is in fact the product of only $(2p-1)$-terms since  $\overline{y}^{r(i)}\equiv 0$ for at least one of $j=1,\dots,p$.

\begin{rem}\label{rem:Rosenmethod}
Rosen's method uses the Fourier transformation and enables us to convert the estimate of integral in time variables into the estimate of the integral in space variables. 
\end{rem}

\vspace{3mm}\noindent {\bf (Step 4)}
{Now, we remark that on $Y(a^{-\frac{1}{6}})$ there are $1\leq i_1<\dots<i_p\leq 2p$ such that $|\overline{y}^{i_j}|\geq \frac{a^{-\frac{1}{6}}}{2}$ for $j=1,\dots,p$. Indeed, for each $i=1,\dots,2p-1$, there exists $k(i)\in \{1,\dots,p\}$ such that $y_{k(i)}=\overline{y}^{i+1}-\overline{y}^i$ or $-\overline{y}^{i+1}+\overline{y}^i$, and hence either $|\overline{y}^{2i-1}|\geq \frac{a^{-\frac{1}{6}}}{2}$ or $|\overline{y}^{2i}|\geq \frac{a^{-\frac{1}{6}}}{2}$ on $Y(a^{-\frac{1}{6}})$.} 
Let
\begin{align*}
A_{(\pi,\mathbf{d})} &:=\left\{ (y_1,\dots,y_p)\in \R^{3p}: \textrm{There exist $1\leq j_1<\dots<j_\frac{p}{2}\leq p$} \phantom{\frac{a^{-\frac{1}{6}}}{2}}\right.\\
&\quad \hspace{4cm} \left. \textrm{such that $\left|\overline{y}^{f(j_k)}\right|\geq \frac{a^{-\frac{1}{6}}}{2}$, $k=1,\dots,\frac{p}{2}$}\right\}.
\end{align*}
By the Cauchy-Schwarz inequality, the right-hand side of \eqref{eq:bdd2} is bounded from above by \begin{align*}
&C\left(\int_{A_{(\pi,\mathbf{d})}} \dd \mathbf{y}\prod_{i=1}^p \left(1+|\overline{y}^{f(i)}|^2\right)^{-2+2\gamma}\right)^\frac{1}{2}\left(\int_{A_{(\pi,\mathbf{d})}} \dd \mathbf{y}\prod_{i=1}^p \left(1+|\overline{y}^{r(i)}|^2\right)^{-2+2\gamma}e^{-\frac{\e|\overline{y}^{d(i)}|^2}{2p^2}}\right)^\frac{1}{2}\\
&+C\left(\int_{A^c_{(\pi,\mathbf{d})}} \dd \mathbf{y}\prod_{i=1}^p \left(1+|\overline{y}^{f(i)}|^2\right)^{-2+2\gamma}\right)^\frac{1}{2} \\
&\quad \hspace{3cm} \times \left(\int_{A^c_{(\pi,\mathbf{d})}\cap Y(a^{-\frac{1}{6}})} \dd \mathbf{y}\prod_{i=1}^p \left(1+|\overline{y}^{r(i)}|^2\right)^{-2+2\gamma}e^{-\frac{\e|\overline{y}^{d(i)}|^2}{2p^2}}\right)^\frac{1}{2}\\
&=:CF_{(\pi,\mathbf{d},1)}^{\frac{1}{2}}R_{(\pi,\mathbf{d},1)}^\frac{1}{2}+CF_{(\pi,\mathbf{d},2)}^{\frac{1}{2}}R_{(\pi,\mathbf{d},2)}^\frac{1}{2}.
\end{align*}
Lemmas \ref{lem:RosLem3} and \ref{lem:RosLem4} allow us to see that $\{\overline{y}^{f(1)},\cdots,{\overline{y}^{f(p)}}\}$ is a (nonsingular) set of coordinates for $\R^{3p}$ and that we can choose a (nonsingular) set of coordinates (denoted by $(\overline{y}^{k_1},\cdots,\overline{y}^{k_p})$) for $\R^{3p}$ from the set
\begin{align*}
\overline{y}^{r(1)},\cdots,\overline{y}^{r(p)},\overline{y}^{d(1)},\cdots,\overline{y}^{d(p)}.
\end{align*}
 Also, it is easy to see that each element in the above Jacobian matrices is a constant.
Therefore, we have for $\gamma\in (0,\frac{1}{4})$
\begin{align*}
F_{(\pi,\mathbf{d},1)}\leq \left(\int_{\R^3}(1+|y|^2)^{-2+2\gamma} dy\right)^{\frac{p}{2}}\left(\int_{|y|\geq \frac{a^{-\frac{1}{6}}}{2}} (1+|y|^2)^{-2+2\gamma} dy \right)^{\frac{p}{2}}\leq Ca^{\frac{(1-4\gamma)p}{12}}.
\end{align*}
Also, it is easy to see that 
 \begin{align*}
R_{(\pi,\mathbf{d},1)}\leq C\e^{-\frac{3}{2}p} , \quad F_{(\pi,\mathbf{d},2)}\leq C.
\end{align*}
Hence, the proof of Lemma \ref{lem:Jacauchy} is completed, once Proposition \ref{prop:R2bdd} below is proved.

\begin{proposition}\label{prop:R2bdd}
We have \begin{align*}
R_{(\pi,\mathbf{d},2)}\leq Ca^{\frac{1-4\gamma}{12}p}\e^{-\frac{3}{2}p}.
\end{align*}
\end{proposition}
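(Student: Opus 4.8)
The plan is to estimate
\[
R_{(\pi,\mathbf{d},2)} = \int_{A^c_{(\pi,\mathbf{d})}\cap Y(a^{-1/6})}\dd \mathbf{y}\prod_{i=1}^p \bigl(1+|\overline{y}^{r(i)}|^2\bigr)^{-2+2\gamma}e^{-\frac{\e|\overline{y}^{d(i)}|^2}{2p^2}}
\]
by combining two sources of smallness: the Gaussian factors coming from the $d(i)$-indices, which by Lemma~\ref{lem:RosLem4} supply (together with the $r(i)$) a full set of $3p$ coordinates, and the lower bound $|y_j|\ge a^{-1/6}$ enforced on the region $Y(a^{-1/6})$. On $A^c_{(\pi,\mathbf{d})}$ fewer than $p/2$ of the $\overline{y}^{f(j)}$ exceed $a^{-1/6}/2$; since (as observed in Step~4) on $Y(a^{-1/6})$ there are always at least $p$ indices $i$ with $|\overline{y}^i|\ge a^{-1/6}/2$, more than $p/2$ of these large indices must come from the complementary family $\{\overline{y}^{r(j)}\}\cup\{\overline{y}^{d(j)}\}$. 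This is the structural fact that makes the bound work, and pinning it down precisely is the main obstacle.

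First I would make the counting explicit: using \eqref{eq:ybary}, each $y_\ell=\overline{y}^{f(\ell)}-\overline{y}^{f(\ell)-1}=-(\overline{y}^{r(\ell)}-\overline{y}^{r(\ell)-1})$, so if $|y_\ell|\ge a^{-1/6}$ then at least one of $\overline{y}^{f(\ell)},\overline{y}^{f(\ell)-1}$ and at least one of $\overline{y}^{r(\ell)},\overline{y}^{r(\ell)-1}$ has norm $\ge a^{-1/6}/2$. On $A^c_{(\pi,\mathbf d)}$ at most $p/2-1$ of the $\overline{y}^{f(j)}$ are large, so after accounting for this, at least $p/2$ of the large indices identified from the $r$-side or from the $f(\ell)-1$ slots must lie among $\{\overline{y}^{r(1)},\dots,\overline{y}^{r(p)},\overline{y}^{d(1)},\dots,\overline{y}^{d(p)}\}$ — the exact bookkeeping here (which indices can coincide, the role of $r(\ell)-1$, possible overlaps $R_{d(i)}=R_{d(j)}$) is where care is needed.

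Next I would change variables to the $3p$ coordinates $(\overline{y}^{k_1},\dots,\overline{y}^{k_p})$ selected from $\overline{y}^{r(1)},\dots,\overline{y}^{r(p)},\overline{y}^{d(1)},\dots,\overline{y}^{d(p)}$ via Lemma~\ref{lem:RosLem4}; the Jacobian is a nonzero constant. In these coordinates the integrand is bounded by a product over the $p$ blocks, where each factor is either $(1+|y|^2)^{-2+2\gamma}$ (from an $r$-index), $e^{-\e|y|^2/(2p^2)}$ (from a $d$-index), or a product of such. I would split the $p$ selected coordinates: on at least $p/2$ of them the integration is restricted to $\{|y|\ge a^{-1/6}/2\}$, and there I use either $\int_{|y|\ge a^{-1/6}/2}(1+|y|^2)^{-2+2\gamma}\dd y\le Ca^{(1-4\gamma)/6}$ (when the large index carries the polynomial weight) or $\int_{|y|\ge a^{-1/6}/2}e^{-\e|y|^2/(2p^2)}\dd y$, which I bound crudely by $\int_{\R^3}e^{-\e|y|^2/(2p^2)}\dd y\le C\e^{-3/2}$ — losing the $a$-gain on those particular coordinates but keeping it on enough of the others. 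On the remaining at most $p/2$ coordinates I use the global bounds $\int_{\R^3}(1+|y|^2)^{-2+2\gamma}\dd y<\infty$ (valid for $\gamma<1/4$) and $\int_{\R^3}e^{-\e|y|^2/(2p^2)}\dd y\le C\e^{-3/2}$.

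Collecting: the $d$-coordinates contribute at most $C\e^{-3p/2}$ in total (one $\e^{-3/2}$ per block, $p^2$ in the exponent absorbing the at-most-$p$ overlaps), the $r$-coordinates restricted to the large region contribute a factor $a^{(1-4\gamma)/6}$ for each of at least $p/2$ of them, hence at least $a^{(1-4\gamma)p/12}$ altogether, and the unrestricted $r$-coordinates contribute a bounded constant. Summing over the finitely many permutations $\pi$ and index choices $\mathbf d$ changes only the constant. This yields $R_{(\pi,\mathbf{d},2)}\le C a^{\frac{1-4\gamma}{12}p}\e^{-\frac{3}{2}p}$, which is exactly the claim. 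I expect the only genuinely delicate point to be the combinatorial claim that at least $p/2$ of the large indices on $A^c_{(\pi,\mathbf d)}\cap Y(a^{-1/6})$ can be found within the coordinate system built from the $r$- and $d$-indices; once that is secured the integral estimates are routine Gaussian/polynomial computations of the type already used for $F_{(\pi,\mathbf d,1)}$ and $R_{(\pi,\mathbf d,1)}$.
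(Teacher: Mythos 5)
Your overall plan---contain $A^c_{(\pi,\mathbf{d})}\cap Y(a^{-1/6})$ in a union over size-$\tfrac{p}{2}$ families of large $r$-indices, pass to the nonsingular coordinate system from Lemma~\ref{lem:RosLem4}, and split the one-variable integrals into Gaussian and polynomial factors---matches the paper's proof, but there is a genuine gap precisely at the point you flag as ``delicate''. You assert that at least $\tfrac{p}{2}$ of the $p$ \emph{selected} coordinates $(\overline{y}^{k_1},\dots,\overline{y}^{k_p})$ are constrained to $\{|y|\geq a^{-1/6}/2\}$, and you derive the $a$-gain from tail integrals over that region. But Lemma~\ref{lem:RosLem4} only guarantees that \emph{some} $p$-subset of $\{\overline{y}^{r(1)},\dots,\overline{y}^{r(p)},\overline{y}^{d(1)},\dots,\overline{y}^{d(p)}\}$ is nonsingular; it does not let you include any prescribed set of $r$-labels (in particular the large ones). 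When a large $\overline{y}^{r(i)}$ is \emph{not} a selected coordinate, the constraint $|\overline{y}^{r(i)}|\geq a^{-1/6}/2$ is not a box constraint in the new variables and you cannot restrict an integral to it, so the number of restricted selected $r$-coordinates can be strictly less than $\tfrac{p}{2}$, and ``at least $a^{(1-4\gamma)p/12}$ altogether'' does not follow as written.

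The paper closes exactly this gap by tracking both possibilities. With $a_1$ the number of large $r$-labels that are selected and $a_2$ the number that are not (so $a_1+a_2=\tfrac{p}{2}$), it uses your tail integral $\int_{|y|\geq a^{-1/6}/2}(1+|y|^2)^{-2+2\gamma}\dd y\leq C(a^{-1/6})^{-1+4\gamma}$ for the $a_1$ selected ones, and the \emph{pointwise} bound $(1+|y|^2)^{-2+2\gamma}\,1\{|y|\geq a^{-1/6}/2\}\leq C(a^{-1/6})^{-4+4\gamma}$ for the $a_2$ unselected ones, the latter being available regardless of whether $\overline{y}^{r(i)}$ is a coordinate. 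The combined exponent $(-1+4\gamma)a_1+(-4+4\gamma)a_2=(-4+4\gamma)\tfrac{p}{2}+3a_1$ is largest (giving the worst bound) at $a_1=\tfrac{p}{2}$, which is the scenario your argument handles; every other value of $a_1$ gives a strictly smaller bound. So your argument amounts to the worst-case branch of the paper's proof, and the missing ingredients are the pointwise bound on unselected large $r$-factors together with the short $a_1,a_2$ bookkeeping that shows the other branches are no worse. One smaller inaccuracy: the large non-$f$-indices are automatically $r$-indices, since $\{1,\dots,2p-1\}$ is partitioned into $f$- and $r$-labels; the family $\{r\}\cup\{d\}$ you invoke can therefore be sharpened to $\{r\}$, which is what the paper uses and what keeps the $A_1\subset\{1,\dots,p\}$ bookkeeping clean.
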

\begin{proof}
First, we remark that \begin{align*}
A_{(\pi,\mathbf{d})}^c\cap Y(a^{-\frac{1}{6}}) 
&\subset \bigcup_{1\leq i_1<\dots<i_\frac{p}{2}\leq p}\left\{(y_1,\dots,y_p)\in \R^{3p}:\left| \overline{y}^{r(i_k)}\right|\geq \frac{a^{-\frac{1}{6}}}{2}, k=1, \dots \frac{p}{2} \right\} \\
&=: \bigcup_{\mathbf{i}}B_{\mathbf{i}},
\end{align*}
where we write $\mathbf{i}=\{i_1,\dots,i_\frac{p}{2}\}$ with $1\leq i_1<\dots<i_\frac{p}{2}\leq p$.
Fix a nonsingular coordinate $\overline{y}^{r(j_1)},\dots,\overline{y}^{r(j_k)},\overline{y}^{d(m_1)},\dots,\overline{y}^{d(m_{p-k})}$.

Let $A_1:=\{i_1,\dots,i_{\frac{p}{2}}\}$ and $A_2:=\{j_1,\dots,j_k\}$. Also, let $a_1:=\sharp (A_1\cap A_2)$ and $a_2:=\sharp (A_1\cap A_2^c)$. Then, \begin{align}
&\int_{B_{\mathbf{i}}} \dd \mathbf{y}\prod_{i=1}^p \left(1+|\overline{y}^{r(i)}|^2\right)^{-2+2\gamma}e^{-\frac{\e|\overline{y}^{d(i)}|^2}{4p^2}}\notag\\
&\leq \int_{\R^{3(p-k)}}\dd \overline{y}^{d(m_1)}\dots \dd \overline{y}^{d(m_{p-k})}\prod_{l=1}^{p-k} e^{-\frac{\e|\overline{y}^{d(m_l)}|^2}{4p^2}}\notag\\
&\times \int_{\R^{3k}}\dd \overline{y}^{r(j_1)}\dots \dd \overline{y}^{r(j_k)}\prod_{i\in A_1\cup A_2} \left(1+|\overline{y}^{r(i)}|^2\right)^{-2+2\gamma}.\label{eq:integralppp}
\end{align}

Since \begin{align*}
&\int_{\R^3}\dd y e^{-\frac{\e |y|^2}{4p^2}}\leq Cp^{3}\e^{-\frac{3}{2}}\\
&\int_{\R^3}\dd y (1+|y|^2)^{-2+2\gamma}\leq C\\
&\int_{|y|\geq \frac{a^{-\frac{1}{6}}}{2}}\dd y (1+|y|^2)^{-2+2\gamma}\leq C\left(\frac{a^{-\frac{1}{6}}}{2}\right)^{-4+4\gamma+3}\\
&(1+|y|^2)^{-2+2\gamma}1\{|y|\geq \frac{a^{-\frac{1}{6}}}{2}\}\leq C\left(\frac{a^{-\frac{1}{6}}}{2}\right)^{(-4+4\gamma)},
\end{align*}
\eqref{eq:integralppp} is dominated by \begin{align*}
C\e^{-\frac{3}{2}(p-k)}\left(\frac{a^{-\frac{1}{6}}}{2}\right)^{(-4+4\gamma+3)a_1+(-4+4\gamma)a_2}.
\end{align*}
 Since $a_1+a_2=\sharp A_1=\frac{p}{2}$ and $a_1\leq  \frac{p}{2}$, we have \begin{align*}
 (-4+4\gamma+3)a_1+(-4+4\gamma)a_2=(-4+4\gamma)\frac{p}{2}+3a_1\leq (-1+4\gamma)\frac{p}{2}.
 \end{align*}
 So, \begin{align*}
 R_{(\pi,\mathbf{d},2)}\leq C\e^{-\frac{3}{2}p}a^{\frac{1-4\gamma}{12}p}.
 \end{align*}
\end{proof}

By Proposition \ref{prop:R2bdd} the proof of Lemma \ref{lem:Jacauchy} is finished.

\begin{rem}
In the argument corresponding  with (Step 4) in \cite{ARZ96}, only the typical cases  has been discussed ($s_1<\dots<s_p<t_i$ ($i=1,\dots,p$)). 
\end{rem}

\subsection{Proof of Lemma \ref{lem:Jcauchy}}\label{sec:ProofLemJcauchy}

Lemma \ref{lem:Jcauchy} follows from Propositions \ref{prop:Jea1} and \ref{prop:Jea2} below.

\begin{proposition}\label{prop:Jea1}
Let $\gamma\in (0,\frac{1}{2})$,  and $h\in K$ be given. 
For any given $p\geq 1$, there is a constant $C\in (0,\infty)$ such that
\begin{align}\label{eq:pmom}
&E\left[\left|\widehat{J}_{\e_m,a_\ell}(u_1,u_2,h)-\widehat{J}_{\e_m,a_k}(u_1,u_2,h)\right|^p\right]\\
\nonumber &\leq C|u_1-u_2|^{p}a_\ell^{\frac{p}{7}}+C|u_1-u_2|^{\gamma p}a_\ell^{\frac{(1-2\gamma )p}{28}}\e_m^{-\frac{3}{4}p}
\end{align}
for all $u_1,u_2\in \R$, $k\geq \ell\geq 1$, and $m\geq 1$.
\end{proposition}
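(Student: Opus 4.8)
The plan is to run the proof of Lemma~\ref{lem:Jacauchy} almost verbatim, the single new feature being that the integrand now carries an extra oscillatory factor depending on $u_1-u_2$. First I would reduce to $p\in 2\N$ by H\"older's inequality. Since $\widehat J_{\e,a}(u_1,u_2,h)=J^{\e,a}_{0,1}(\omega+u_1h)-J^{\e,a}_{0,1}(\omega+u_2h)$ and $a_k<a_\ell$ (because $k\ge\ell$), one has, exactly as in (Step~1) of that proof,
\begin{align*}
&\widehat J_{\e_m,a_k}(u_1,u_2,h)-\widehat J_{\e_m,a_\ell}(u_1,u_2,h)\\
&\qquad=\int_{T_{\e_m}}\dd s\,\dd t\int_{\R^3}e^{i\lan y,\omega_t-\omega_s\ran}\bigl(e^{iu_1\lan y,h_t-h_s\ran}-e^{iu_2\lan y,h_t-h_s\ran}\bigr)\\
&\qquad\qquad\qquad\times\bigl(1-e^{-\frac{(a_\ell-a_k)|y|^2}{2}}\bigr)f_{a_k}(y)\,\dd y .
\end{align*}
I would then split the $y$-integral at $|y|=a_\ell^{-1/7}$, the analogue of the cutoff $a^{-1/6}$ there. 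On $\{|y|\le a_\ell^{-1/7}\}$ the contribution is estimated pathwise: using $|e^{iu_1\theta}-e^{iu_2\theta}|\le|u_1-u_2|\,|\theta|$ with $\theta=\lan y,h_t-h_s\ran$ so that $|\theta|\le|y|\sup_{0\le t\le1}|h'(t)|$ (finite since $h\in K$), together with $1-e^{-x}\le x$, $f_{a_k}\le(2\pi)^{-3}$ and $|T_{\e_m}|\le1$, one gets an almost sure bound of order $|u_1-u_2|(a_\ell-a_k)\int_{|y|\le a_\ell^{-1/7}}|y|^3\,\dd y\le C|u_1-u_2|\,a_\ell\,a_\ell^{-6/7}=C|u_1-u_2|\,a_\ell^{1/7}$, which on raising to the power $p$ gives the first term of \eqref{eq:pmom}.

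On $\{|y|>a_\ell^{-1/7}\}$ I would apply Proposition~\ref{prop:momentexp}, formula \eqref{eq:lpetan}, with $T=T_{\e_m}$ and $g(y)=(1-e^{-(a_\ell-a_k)|y|^2/2})f_{a_k}(y)\,1\{|y|>a_\ell^{-1/7}\}$, after the sign change $y_{p/2+1},\dots,y_p\mapsto-y_{p/2+1},\dots,-y_p$ as in (Step~2). Bounding $|e^{iu_1\lan y_j,h_{\mathbf{s},\mathbf{t}}^{(j)}\ran}-e^{iu_2\lan y_j,h_{\mathbf{s},\mathbf{t}}^{(j)}\ran}|\le C\bigl(|u_1-u_2|\,|y_j|\,\sup_t|h'(t)|\bigr)^{\gamma}$ (interpolating the trivial bound $2$ with $|u_1-u_2|\,|y_j|\,\sup_t|h'(t)|$) and using $1-e^{-\cdots}\le1$, $f_{a_k}\le(2\pi)^{-3}$, the $p$-th moment reduces to a finite sum over permutations $\pi$ and index vectors $\mathbf{d}$ of integrals bounded by
\begin{align*}
&C|u_1-u_2|^{\gamma p}\int_{T_{\e_m}^p(\pi,\mathbf{d})}\dd\mathbf{s}\,\dd\mathbf{t}\int_{Y(a_\ell^{-1/7})}\dd\mathbf{y}\;\Bigl(\prod_{j=1}^p|y_j|^{\gamma}\Bigr)\exp\Bigl(-\tfrac12\var{\sum_{j=1}^p\lan y_j,\omega_{t_j}-\omega_{s_j}\ran}\Bigr).
\end{align*}

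I would then perform the time integrals via the change of variables $(S^1,\dots,S^p)\mapsto(z_{\pi(1)},|R_1|,\dots,|R_{2p-1}|)$ exactly as in (Step~3), use $\prod_j|y_j|^{\gamma}\le\prod_{i=1}^{2p-1}(1+|\overline{y}^i|^2)^{\gamma/2}$, and invoke Lemmas~\ref{lem:RosLem3} and~\ref{lem:RosLem4}. The net exponent attached to each $(1+|\overline{y}^i|^2)$ then becomes $-1+\gamma/2$, so the Cauchy--Schwarz splitting of (Step~4) leaves factors $(1+|\overline{y}^{f(i)}|^2)^{-2+\gamma}$ that are integrable over $\R^3$ precisely because $\gamma<\tfrac12$; the restriction $|\overline{y}^{f(j_k)}|\ge\tfrac12 a_\ell^{-1/7}$ on $p/2$ of these coordinates contributes a factor $a_\ell^{(1-2\gamma)p/14}$, while the time factors $e^{-\e_m|\overline{y}^{d(i)}|^2/(4p^2)}$ contribute $\e_m^{-3p/2}$; taking square roots gives $a_\ell^{(1-2\gamma)p/28}\e_m^{-3p/4}$, and the analogue of Proposition~\ref{prop:R2bdd} (with $2\gamma$ there replaced by $\gamma$ and the cutoff $a^{-1/6}$ by $a_\ell^{-1/7}$) yields the same bound for the complementary term. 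Together with the prefactor $|u_1-u_2|^{\gamma p}$, this produces the second term of \eqref{eq:pmom}.

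The only genuinely new bookkeeping relative to Lemma~\ref{lem:Jacauchy} is the balancing of the extra $|u_1-u_2|$-power against the extra factor $\prod_j|y_j|^{\gamma}$ (instead of $\prod_j|y_j|^{2\gamma}$, which there arose from $1-e^{-\cdots}$), which is why the admissible range of $\gamma$ widens from $(0,\tfrac14)$ to $(0,\tfrac12)$. I expect no conceptual obstruction here, only careful tracking of exponents; the choice of the cutoff $|y|=a_\ell^{-1/7}$ is the single free parameter, tuned precisely so that the pathwise term comes out as $|u_1-u_2|^p a_\ell^{p/7}$ while keeping the large-$|y|$ term controlled.
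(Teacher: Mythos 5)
Your proposal is correct and takes essentially the same route as the paper's own proof of Proposition~\ref{prop:Jea1}, which likewise reduces the estimate to Lemma~\ref{lem:Jacauchy} with the cutoff at $|y|=a_\ell^{-1/7}$, extracts $|u_1-u_2|^{\gamma}|y_j|^{\gamma}$ via the interpolation bound (the paper's \eqref{eq:sincos}), and then reruns Steps~3--4 with $\gamma$ replaced by $\gamma/2$ and $a^{-1/6}$ replaced by $a_\ell^{-1/7}$. The only cosmetic difference is that the paper also retains the factor $|t_j-s_j|^{\gamma}$ (needed later in Proposition~\ref{prop:Jea2} but harmless here), whereas you absorb it into the $h'$ supremum.
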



\begin{proof}
The proof is similar to the one of Lemma \ref{lem:Jacauchy}.
It is sufficient to consider the case $k>\ell$.
The constant $C\in (0,\infty)$ given in this proof is independent of $n,m$, $u_1,u_2$, which can be different from line to line.

First, we note that \begin{align*}
&\widehat{J}_{\e_m,a_{k}}(u_1,u_2,h)-\widehat{J}_{\e_m,a_\ell}(u_1,u_2,h)\\
&=\int_{T_{\e_m}}\dd s\dd t\int_{\R^3}\dd y\left(e^{i\lan y,\omega_t-\omega_s+u_1(h_t-h_s)\ran}-e^{i\lan y,\omega_t-\omega_s+u_2(h_t-h_s)\ran}\right)\left(1-e^{-\frac{a_\ell-a_{k}}{2}|y|^2}\right)f_{a_{k}}(y).
\end{align*}
By a similar argument to (Step 1) in the proof of Lemma \ref{lem:Jacauchy}, we can see that \begin{align*}
&\left|\widehat{J}_{\e_m,a_{k}}(u_1,u_2,h)-\widehat{J}_{\e_m,a_{\ell}}(u_1,u_2,h)\right|\\
&\leq C|u_1-u_2||a_{k}-a_\ell| a_\ell^{-\frac{6}{7}}\\
&\quad + C\left|\int_{T_{\e_m}}\dd s\dd t\int_{|y|>a_\ell^{-\frac{1}{7}}}\dd y\left(e^{i\lan y,\omega_t-\omega_s+u_1(h_t-h_s)\ran}-e^{i\lan y,\omega_t-\omega_s+u_2(h_t-h_s)\ran}\right) \right. \\
&\quad \hspace{8cm} \left. \times \left(1-e^{-\frac{a_\ell-a_{k}}{2}|y|^2}\right)f_{a_k}(y)\right|\\
&=: C|u_1-u_2||a_{k}-a_\ell| a_\ell^{-\frac{6}{7}}+|E_{k,\ell}| .
\end{align*}
We use Proposition \ref{prop:momentexp} by setting 
\begin{align*}
g(y)=g_{k,\ell}(y):=\left(1-e^{-\frac{a_{\ell} -a_k}{2}|y|^2}\right)f_{a_k}(y) 1\{|y|\geq a_\ell^{-\frac{1}{7}}\}
\end{align*}
and $T=T_{\e_m}$, and changing $y_{\frac{p}{2}+1},\dots, y_p\mapsto -y_{\frac{p}{2}+1},\dots,-y_p$. For our convenience, we will keep the notation $\{y_j\}_{j=1,\dots,p}$.
By using \eqref{eq:labelingpermu}, \eqref{eq:Ri}, and \eqref{eq:varexpand}, we have
\begin{align}
&E\left[|E_{k,\ell}|^p\right]\notag\\
&=\int_{T_{\e_m}^p}\dd {\mathbf{s}}\dd \mathbf{t}\int_{\R^{3p}}\dd \mathbf{y}\left(\prod_{j=1}^p\left(e^{i\left\lan y_j,u_1h_{\mathbf{s},\mathbf{t}}^{(j)}\right\ran}-e^{i\left\lan y_j,u_2h_{\mathbf{s},\mathbf{t}}^{(j)}\right\ran}\right)g_{k.\ell}(y_j)\right) e^{-\frac{1}{2}\sum_{j=1}^{2p-1}|R_i|\left|\overline{y}^i\right|^2}\notag\\
&=\sum_{\pi}\int_{T_{\e_m}^p\cap \Delta_p(\pi)}\dd {\mathbf{s}}\dd \mathbf{t}\int_{\R^{3p}}\dd \mathbf{y}
\left(\prod_{j=1}^p\left(e^{i\left\lan y_j,u_1h_{\mathbf{s},\mathbf{t}}^{(j)}\right\ran}-e^{i\left\lan y_j,u_2h_{\mathbf{s},\mathbf{t}}^{(j)}\right\ran}\right)g_{k,\ell}(y_j)\right)
e^{-\frac{1}{2}\sum_{j=1}^{2p-1}|R_i|\left|\overline{y}^i\right|^2}\label{eq:etanmp} .
\end{align}
We remark that for any given $\gamma\in (0,1]$ there is a constant $C\in(0,\infty)$ such that \begin{align}\label{eq:sincos}
|\sin x|\leq C(|x|^\gamma\wedge 1),\quad |1-\cos x|\leq C(|x|^\gamma\wedge 1),\quad x\in \R.
\end{align}
and it follows that \begin{align}
&E\left[|E_{k,\ell}|^p\right]\notag\\
&\leq C|u_1-u_2|^{\gamma p}\sum_{\pi}\int_{T_{\e_m}^p\cap \Delta_p(\pi)}\dd {\mathbf{s}}\dd \mathbf{t}\int_{\R^{3p}}\dd \mathbf{y}e^{-\frac{1}{2}\sum_{j=1}^{2p-1}|R_i|\left|\overline{y}^i\right|^2}\left(\prod_{j=1}^p|y_j|^\gamma |t_j-s_j|^\gamma \widetilde{g}_{\ell}(y_j)\right),\label{eq:lpeta}
\end{align}
where $\widetilde{g}_\ell(y)=1\{|y|\geq a_\ell^{-\frac{1}{7}}\}$.

Then, the statement follows by modifying the argument of (Step 3) and (Step 4) in the proof of Lemma \ref{lem:Jacauchy} by $\gamma\mapsto \frac{\gamma}{2}$, $a^{-\frac{1}{6}}\mapsto a_{\ell}^{-\frac{1}{7}}$. 
\end{proof}

\begin{proposition}\label{prop:Jea2}
Let $\gamma\in (0,\frac{1}{2})$,  and $h\in K$ be given. 
For any given $p\geq 2$, there is a constant $C\in (0,\infty)$ such that
\begin{align} 
&E\left[\left|\widehat{J}_{\e_{n+1},a_n}(u_1,u_2,h)-\widehat{J}_{\e_{n},a_n}(u_1,u_2,h)\right|^{p}\right]\notag\\
&\leq C|u_1-u_2|^{p}a_{n}^{\frac{p}{7}}+C|u_1-u_2|^{\gamma p}a_{n}^{\frac{(1-2\gamma )p}{28}}\e_{n}^{-\frac{3}{4}p}+C|u_1-u_2|^{\gamma p} \e_{n}^{\frac{\gamma p}{4}},\notag
\end{align}
for $u_1,u_2\in [-M,M]$, $n\geq 1$.

\end{proposition}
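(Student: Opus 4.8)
The plan is to reduce the estimate, as far as possible, to the already proven Proposition~\ref{prop:Jea1} by inserting the Dirac ($a=0$) versions. Since $\ve_0>0$ we have $\e_{n+1}<\e_n$, hence $T_{\e_n}\subset T_{\e_{n+1}}$, and
\begin{align*}
\widehat{J}_{\e_{n+1},a_n}(u_1,u_2,h)-\widehat{J}_{\e_{n},a_n}(u_1,u_2,h)
&=\bigl(\widehat{J}_{\e_{n+1},a_n}(u_1,u_2,h)-\widehat{J}_{\e_{n+1},0}(u_1,u_2,h)\bigr)\\
&\quad-\bigl(\widehat{J}_{\e_{n},a_n}(u_1,u_2,h)-\widehat{J}_{\e_{n},0}(u_1,u_2,h)\bigr)\\
&\quad+\bigl(\widehat{J}_{\e_{n+1},0}(u_1,u_2,h)-\widehat{J}_{\e_{n},0}(u_1,u_2,h)\bigr),
\end{align*}
where $\widehat{J}_{\e,0}(u_1,u_2,h):=J_{0,1}^{\e}(\cdot+u_1h)-J_{0,1}^{\e}(\cdot+u_2h)$. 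For the first two brackets I would let $a_k\searrow 0$ in the bound of Proposition~\ref{prop:Jea1} (with $m=n+1$, resp.\ $m=n$, and $a_\ell=a_n$), using Corollary~\ref{cor:JaJ} for the $L^p(\nu_0)$-convergence $\widehat{J}_{\e,a_k}\to\widehat{J}_{\e,0}$ together with Fatou's lemma; since $\e_{n+1}^{-3p/4}\le C\e_n^{-3p/4}$, each of these two brackets is then bounded by $C|u_1-u_2|^{p}a_n^{p/7}+C|u_1-u_2|^{\gamma p}a_n^{(1-2\gamma)p/28}\e_n^{-3p/4}$, which (via $E[|A-B+C|^p]\le 3^{p-1}(E[|A|^p]+E[|B|^p]+E[|C|^p])$) supplies the first two terms of the claimed estimate.

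The third bracket is the genuinely new input. By additivity of the time region (using the jointly continuous versions from Proposition~\ref{prop1} and Corollary~\ref{cor:JaJ}) it equals
\[
\int_{T_{\e_{n+1}}\setminus T_{\e_n}}\Bigl(\delta_0\bigl(\omega_t-\omega_s+u_1(h_t-h_s)\bigr)-\delta_0\bigl(\omega_t-\omega_s+u_2(h_t-h_s)\bigr)\Bigr)\,\dd s\,\dd t,
\]
an integral over the thin strip $\{(s,t):\e_{n+1}\le t-s<\e_n\}$, and I would estimate its $p$-th moment by Rosen's method. It suffices to treat $p\in 2\N$; approximating $\delta_0$ by $f_a$, letting $a\searrow 0$ and applying Proposition~\ref{prop:momentexp} (after the flip $y_{p/2+1},\dots,y_p\mapsto-y_{p/2+1},\dots,-y_p$), one is left with an integral of the shape \eqref{eq:lpetan} but over $(T_{\e_{n+1}}\setminus T_{\e_n})^p$. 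One bounds the oscillatory products by \eqref{eq:sincos},
\[
\Bigl|e^{i\lan y_j,u_1 h^{(j)}_{\mathbf{s},\mathbf{t}}\ran}-e^{i\lan y_j,u_2 h^{(j)}_{\mathbf{s},\mathbf{t}}\ran}\Bigr|\le C\min\Bigl(|u_1-u_2|^{\gamma}|y_j|^{\gamma}\bigl|h^{(j)}_{\mathbf{s},\mathbf{t}}\bigr|^{\gamma},\,1\Bigr)\le C|u_1-u_2|^{\gamma/2}|y_j|^{\gamma/2}\bigl|h^{(j)}_{\mathbf{s},\mathbf{t}}\bigr|^{\gamma/2},
\]
and uses that on the strip every interval has length $t_j-s_j<\e_n$, so that, as $h\in K$, $\bigl|h^{(j)}_{\mathbf{s},\mathbf{t}}\bigr|=|h_{t_j}-h_{s_j}|\le\|h'\|_\infty(t_j-s_j)<\|h'\|_\infty\e_n$, which produces the factor $\e_n^{\gamma p/2}$. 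From here I would run (Step~3)--(Step~4) of the proof of Lemma~\ref{lem:Jacauchy} essentially verbatim with $\gamma$ replaced by $\gamma/2$ throughout: the change of variables $(\mathbf{s},\mathbf{t})\mapsto(z_{\pi(1)},|R_1|,\dots,|R_{2p-1}|)$, the estimates of the $|R_i|$-integrals (which now run up to $\e_n$, and for $i=d(j)$ start above $\e_{n+1}/(2p)$ by Lemma~\ref{lem:RosLem4}), the use of Lemmas~\ref{lem:RosLem3}--\ref{lem:RosLem4}, and the Cauchy--Schwarz splitting into the two coordinate systems $\{\overline{y}^{f(j)}\}$ and $\{\overline{y}^{r(j)},\overline{y}^{d(j)}\}$. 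Rescaling $|R_i|=\e_n\rho_i$ and $\mathbf{y}=\e_n^{-1/2}\mathbf{z}$ makes the powers of $\e_n$ explicit, and since $|R_{d(j)}|/\e_n\ge 2^{-\ve_0}/(2p)$ is bounded below independently of $n$ the resulting ``rescaled'' integral is a fixed finite constant; collecting the powers of $\e_n$ (the $\e_n^{\gamma p/2}$ above, surviving the square root of the Cauchy--Schwarz as $\e_n^{\gamma p/4}$, together with the gain from $|R_i|\le\e_n$) yields $C|u_1-u_2|^{\gamma p}\e_n^{\gamma p/4}$ — in fact a larger power, but $\gamma p/4$ is all that is needed. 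Adding the three contributions finishes the proof.

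The main obstacle is entirely in this third bracket. The point is the competition between the smallness coming from the short intervals $t_j-s_j<\e_n$ and the divergence produced by the integrals over the $\overline{y}^{d(j)}$-directions, which — being Gaussian of width $\sim\e_{n+1}^{-1/2}$ — contribute negative powers of $\e_n$; one must check that the $\e_n^{\gamma p/2}$ retained from the $p$ factors $\bigl(t_j-s_j\bigr)^{\gamma/2}<\e_n^{\gamma/2}$, together with the extra gain from restricting each $|R_i|$ to $[\,0,\e_n\,]$, more than compensates these divergent factors after the Cauchy--Schwarz step, which is exactly the delicate bookkeeping of exponents underlying Lemma~\ref{lem:Jacauchy} and Proposition~\ref{prop:Jea1}. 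A secondary, but necessary, point is justifying the limit $a\searrow 0$ under the $p$-fold integral: here the combination of Lemma~\ref{lem:RosLem4} (the rank-$3p$ family $\{\overline{y}^{r(j)},\overline{y}^{d(j)}\}_j$) with the uniform lower bound $|R_{d(j)}|\ge\e_{n+1}/(2p)$ provides an $a$-independent integrable majorant, so dominated convergence applies; everything else is the same computation as in Sections~\ref{sec:proofLemJacauchy}--\ref{sec:ProofLemJcauchy} up to the rescaling.
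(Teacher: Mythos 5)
Your decomposition of $\widehat{J}_{\e_{n+1},a_n}-\widehat{J}_{\e_n,a_n}$ into two ``$a$-difference'' brackets plus the ``thin-strip'' bracket $\widehat{J}_{\e_{n+1},0}-\widehat{J}_{\e_n,0}$ is exactly the paper's decomposition (the paper uses $a_\ell$ and lets $\ell\to\infty$; you take the $L^p(\nu_0)$-limit directly — same thing), and your dispatch of the first two brackets via Proposition~\ref{prop:Jea1}, Corollary~\ref{cor:JaJ} and Fatou is fine, as is the observation $\e_{n+1}^{-3p/4}\le C_{p,\ve_0}\,\e_n^{-3p/4}$. But your treatment of the third bracket has a genuine gap, and a secondary exponent error.

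The gap is that you implicitly assume all the subinterval lengths $|R_i|$ lie in $[0,\e_n]$ (``the $|R_i|$-integrals, which now run up to $\e_n$''), and then rescale $|R_i|=\e_n\rho_i$, $\mathbf{y}=\e_n^{-1/2}\mathbf{z}$. That assumption holds only when $\bigcup_j(s_j,t_j)$ is \emph{connected} (case \ref{item:T1} in the paper's classification). Although each individual $t_j-s_j$ is in $[\e_{n+1},\e_n]$ on the strip, the intervals can be spread over all of $[0,1]$, and the $R_i$ corresponding to gaps between connected components can have length of order one. More importantly, when some interval $[s_j,t_j]$ is isolated, the bound $\bigl|e^{i\lan y_j,u_1 h^{(j)}\ran}-e^{i\lan y_j,u_2 h^{(j)}\ran}\bigr|\le C|u_1-u_2|^\gamma|y_j|^\gamma|h^{(j)}|^\gamma$ \emph{destroys the cancellation} in the $y_j$-integral and your estimate diverges. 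Concretely, take $p=2$ with two disjoint intervals: your scheme gives, per interval, a factor $\e_n^\gamma\int_{\e_{n+1}\le t-s\le\e_n}(t-s)^{-(\gamma+3)/2}\dd s\,\dd t\sim\e_n^{\gamma}\e_n^{-(\gamma+1)/2}=\e_n^{(\gamma-1)/2}$, hence $\e_n^{\gamma-1}\to\infty$ overall. The paper avoids this by evaluating the isolated-interval $y_j$-integral exactly (case \ref{item:T2}): $\int_{\R^3}\bigl(e^{i\lan y,u_1 h^{(j)}\ran}-e^{i\lan y,u_2 h^{(j)}\ran}\bigr)e^{-\frac{1}{2}(t_j-s_j)|y|^2}\dd y=p_{t_j-s_j}(u_1 h^{(j)})-p_{t_j-s_j}(u_2 h^{(j)})$, whose modulus is $\le C|t_j-s_j|^{-5/2}|u_1^2-u_2^2|\,|h^{(j)}|^2$, then integrating to obtain $CM|u_1-u_2|\,\e_n^{1/2}$ — a bound your argument cannot see. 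Without the case analysis \ref{item:T1}/\ref{item:T2}/\ref{item:T3} and the combination via $\Pi_1,\Pi_2$ at the end, the claimed $\e_n^{\gamma p/4}$ does not follow.

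The secondary issue: the step $\min(a^\gamma,1)\le a^{\gamma/2}$ is unnecessary and lowers the $|u_1-u_2|$-exponent to $\gamma p/2$, which contradicts your own stated conclusion $|u_1-u_2|^{\gamma p}\e_n^{\gamma p/4}$ and is also too weak for the later Kolmogorov continuity argument. Just use $|e^{ix}-e^{iy}|\le C|x-y|^\gamma$ directly, which gives $|u_1-u_2|^\gamma|y_j|^\gamma|h^{(j)}|^\gamma$ and hence $|u_1-u_2|^{\gamma p}\e_n^{\gamma p}$ before the $\mathbf{y}$-integration; that matches \eqref{eq:type1bdd0} in the paper. With that fix, your exponent count does reproduce the paper's \eqref{eq:Type1ineq} in the connected case — but the disconnected cases still need the separate treatment.
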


\begin{proof}
We may assume $p\in 2\mathbb{N}$.
\begin{align*}
&E\left[|\widehat{J}_{\e_{n+1},a_n}(u_1,u_2,h)-\widehat{J}_{\e_{n},a_n}(u_1,u_2,h)|^{p}\right]\\
&\leq 
3^{p-1}E\left[|\widehat{J}_{\e_{n+1},a_n}(u_1,u_2,h)-\widehat{J}_{\e_n,a_\ell}(u_1,u_2,h)|^{p}\right]\\
&\hspace{1em}+3^{p-1}E\left[|\widehat{J}_{\e_{n},a_n}(u_1,u_2,h)-\widehat{J}_{\e_{n},a_\ell}(u_1,u_2,h)|^{p}\right]\\
&\hspace{1em}+3^{p-1}E\left[|\widehat{J}_{\e_{n+1},a_\ell}(u_1,u_2,h)-\widehat{J}_{\e_{n},a_\ell}(u_1,u_2,h)|^{p}\right]\\
&\leq C|u_1-u_2|^{p}a_{n}^{\frac{p}{7}}+C|u_1-u_2|^{p \gamma}a_{n}^{\frac{1-2\gamma}{28}p}\e_{n}^{-\frac{3p}{4}}+3^{p-1}E\left[|\widehat{J}_{\e_{n+1},a_\ell}(u_1,u_2,h)-\widehat{J}_{\e_{n},a_\ell}(u_1,u_2,h)|^{p}\right]
\end{align*}
for $\ell\geq n\geq 1$, and $p\in 2\N$.
In the same manner as for \eqref{eq:etanmp}, we have \begin{align}
&E\left[|\widehat{J}_{\e_{n+1},a_\ell}(u_1,u_2,h)-\widehat{J}_{\e_{n},a_\ell}(u_1,u_2,h)|^{p}\right]\notag\\
&=\sum_{\pi}\int_{T_{\e_{n+1},\e_n}^{p}\cap \Delta_{p}(\pi)}\dd {\mathbf{s}}\dd \mathbf{t}\int_{\R^{3p}}\dd \mathbf{y}
\left(\prod_{j=1}^{p} \left(e^{i\left\lan y_j,u_1h_{\mathbf{s},\mathbf{t}}^{(j)}\right\ran}-e^{i\left\lan y_j,u_2h_{\mathbf{s},\mathbf{t}}^{(j)}\right\ran}\right)f_{a_\ell}(y_j)\right)
e^{-\frac{1}{2}\sum_{j=1}^{2p-1}|R_i|\left|\overline{y}^i\right|^2},
\label{eq:lpeta2}
\end{align}
where we set \begin{align*}
&T_{\e_{n},\e_{n+1}}:=\{(s,t)\in [0,1]^2: \e_{n+1}\leq t-s\leq \e_{n}\}.
\end{align*}
\cite[Lemma 2]{Ros83} allows us to apply the dominated convergence theorem to \eqref{eq:lpeta2} in $\ell$. Thus, we find that
\begin{align*}
&E\left[|\widehat{J}_{\e_{n+1},a_n}(u_1,u_2,h)-\widehat{J}_{\e_{n},a_n}(u_1,u_2,h)|^{p}\right]\\
&\leq C|u_1-u_2|^{p}a_{n}^{\frac{p}{7}}+C|u_1-u_2|^{p\gamma}a_{n}^{\frac{1-2\gamma}{28}p}\e_{n}^{-\frac{3p}{4}}\\
&\quad +3^{p-1}\sum_{\pi} \int_{T_{\e_{n},\e_{n+1}}^{p}\cap \Delta_{p}(\pi)}\dd {\mathbf{s}}\dd \mathbf{t} \int_{\R^{3p}}\dd \mathbf{y}\left(\prod_{j=1}^{p}\left(e^{i\left\lan y_j,u_1h_{\mathbf{s},\mathbf{t}}^{(j)}\right\ran}-e^{i\left\lan y_j,u_2h_{\mathbf{s},\mathbf{t}}^{(j)}\right\ran}\right)\right)
e^{-\frac{1}{2}\sum_{j=1}^{2p-1}|R_j|\left|\overline{y}^j\right|^2}.
\end{align*}
From this inequality we see that it is sufficient to estimate the integrals in 
\begin{align}\label{eq:l4inte}
&\int_{T_{\e_{n},\e_{n+1}}^{p}\cap \Delta_{p}(\pi)}\dd {\mathbf{s}}\dd \mathbf{t} \int_{\R^{3p}}\dd \mathbf{y}\left(\prod_{j=1}^p \left(e^{i\left\lan y_j,u_1h_{\mathbf{s},\mathbf{t}}^{(j)}\right\ran}-e^{i\left\lan y_j,u_2h_{\mathbf{s},\mathbf{t}}^{(j)}\right\ran}\right)\right)
e^{-\frac{1}{2}\sum_{j=1}^{2p-1}|R_j|\left|\overline{y}^j\right|^2}
\end{align}
due to the type of $\pi$.

\vspace{3mm} \noindent \textbf{In the case of \ref{item:T2}:}
We look at the integral in $y_j$ for $j\in \{1,\dots,p\}$ such that $(s_j,t_j)\cap \bigcup_{k\not=j}[s_k,t_k]=\emptyset$.
We only discuss in details the case $j=p$, because the other cases are similar.
Let us observe that we have
\begin{align}
&\int_{\R^{3p}}\dd \mathbf{y}\left(\prod_{j=1}^{p}\left(e^{i\left\lan y_j,u_1h_{\mathbf{s},\mathbf{t}}^{(j)}\right\ran}-e^{i\left\lan y_j,u_2h_{\mathbf{s},\mathbf{t}}^{(j)}\right\ran}\right)\right)
e^{-\frac{1}{2}\sum_{j=1}^{2p-1}|R_j|\left|\overline{y}^j\right|^2}\notag\\
&=\int_{\R^3}\dd {y}_{p}\left(e^{i\left\lan y_{p},u_1h_{\mathbf{s},\mathbf{t}}^{(p)}\right\ran}-e^{i\left\lan y_{p},u_2h_{\mathbf{s},\mathbf{t}}^{(p)}\right\ran}\right)e^{-\frac{1}{2}(t_{p}-s_{p})|y_{p}|^2}\notag\\
&\quad \times \int_{\R^{3p-3}}\dd y_1\dots \dd y_{p-1}\left(\prod_{j=1}^{p-1}\left(e^{i\left\lan y_j,u_1h_{\mathbf{s},\mathbf{t}}^{(j)}\right\ran}-e^{i\left\lan y_j,u_2h_{\mathbf{s},\mathbf{t}}^{(j)}\right\ran}\right)\right) \label{eq:Type2}\\
&\quad \times \exp\left({-\frac{1}{2}\sum_{\begin{smallmatrix}j=1,\dots,{2p-1}\\ j\not=f({p}),r({p})\end{smallmatrix}}|R_j|\left|\overline{y}^j\right|^2}\right). \notag
\end{align} 
It is easy to see that \begin{align*}
&\left|\int_{\R^3}\dd {y}_{p}\left(e^{i\left\lan y_{p},u_1h_{\mathbf{s},\mathbf{t}}^{({p})}\right\ran}-e^{i\left\lan y_{p},u_2h_{\mathbf{s},\mathbf{t}}^{({p})}\right\ran}\right)e^{-\frac{1}{2}(t_{p}-s_{p})|y_{p}|^2}\right|\\
&=\left|p_{t_{p}-s_{p}}\left(u_1\left(h_{t_{p}}-h_{s_{p}}\right)\right)-p_{t_{p}-s_{p}}\left(u_2\left(h_{t_{p}}-h_{s_{p}}\right)\right)\right|\\
&\leq \frac{C}{|t_{p}-s_{p}|^\frac{5}{2}} |u_2^2-u_1^2| |h_{t_{p}}-h_{s_{p}}|^2
\end{align*}
and 
\begin{align}
\int_{(s_p,t_p)\in T_{\e_{n},\e_{n+1}}}\frac{C}{|t_{p}-s_{p}|^\frac{5}{2}} |u_2^2-u_1^2| |h_{t_{p}}-h_{s_{p}}|^2\dd s_p\dd t_p
\leq CM|u_2-u_1|\e_n^\frac{1}{2}\label{eq:Type2inequ}
\end{align}
for $u_1,u_2\in [-M,M]$. Thus, we can reduce it to the case that $p-1$.  By induction, we can reduce the problem to \ref{item:T1} or \ref{item:T3} with smaller $p$.

\vspace{3mm} \noindent \textbf{In the case of \ref{item:T3}:}
The union $\bigcup_{j=1}^{p}[s_j,t_j]$ is decomposed into two disjoint sets as $\bigcup_{j\in J}[s_{j}, t_{j}] $ and $\bigcup_{j\in \{1,\dots,p\}\backslash J}[s_{j}, t_{j}]$ where $J$ is a subset of $ \{ 1,\dots,p\}$ with $2\leq |J|\leq p-2$ such that $\bigcup_{j\in J}[s_{j}, t_{j}]$ is connected, e.g.~$J=1,\dots,q$.
Then, similarly to \eqref{eq:Type2},
\begin{align*}
&\int_{\R^{3p}}\dd \mathbf{y}\left(\prod_{j=1}^{p}\left(e^{i\left\lan y_j,u_1h_{\mathbf{s},\mathbf{t}}^{(j)}\right\ran}-e^{i\left\lan y_j,u_2h_{\mathbf{s},\mathbf{t}}^{(j)}\right\ran}\right)\right)
e^{-\frac{1}{2}\sum_{j=1}^{2p-1}|R_j|\left|\overline{y}^j\right|^2} \\
&= \int \prod_{j=1}^{ q}\dd y_{j}  \left(\prod_{j=1}^q\left(e^{i\left\lan y_{j},u_1 h_{\mathbf{s},\mathbf{t}}^{(j)}\right\ran}-e^{i\left\lan y_{j},u_2h_{\mathbf{s},\mathbf{t}}^{(j)}\right\ran}\right)\right)
e^{-\frac{1}{2}\sum_{k=1}^{2q-1}|R_k|\left|\overline{y}^k\right|^2}\\
&\quad \times \int \prod_{j'=q+1}^p\dd y_{j'} \left(\prod_{j'=q+1}^p\left(e^{i\left\lan y_{j'},u_1 h_{\mathbf{s},\mathbf{t}}^{(j')}\right\ran}-e^{i\left\lan y_{j'},u_2h_{\mathbf{s},\mathbf{t}}^{(j')}\right\ran}\right)\right)
e^{-\frac{1}{2}\sum_{k=2q+1}^{2p-1}|R_k|\left|\overline{y}^k\right|^2},
\end{align*}
which are estimated by those for the cases  $p-q$ and $q$. Inductively, the problem are reduced to \ref{item:T1}.

\vspace{3mm} \noindent \textbf{In the case of \ref{item:T1}:} Here, we consider the estimate of \eqref{eq:l4inte} for $ p\geq 2$. Also, we may assume that $\overline{y}^{r(p)}\equiv 0$.
We apply the same argument as in \eqref{eq:bdd1} so that
\begin{align}
&\left|\int_{T_{\e_{n},\e_{n+1}}^p\cap \Delta_p(\pi)}\dd {\mathbf{s}}\dd \mathbf{t}\int_{\R^{3p}}\dd \mathbf{y}\left(\prod_{j=1}^p\left(e^{i\left\lan y_j,u_1h_{\mathbf{s},\mathbf{t}}^{(j)}\right\ran}-e^{i\left\lan y_j,u_2h_{\mathbf{s},\mathbf{t}}^{(j)}\right\ran}\right)\right)
e^{-\frac{1}{2}\sum_{j=1}^{2p-1}|R_j|\left|\overline{y}^j\right|^2}\right|\notag\\
&\leq \left|\int_{T_{\e_n,\e_{n+1}}^p\cap \Delta_p(\pi)}\dd {\mathbf{s}}\dd \mathbf{t}\int_{\R^{3p}}\dd \mathbf{y}\left(\prod_{j=1}^p\left(e^{i\left\lan y_j,u_1h_{\mathbf{s},\mathbf{t}}^{(j)}\right\ran}-e^{i\left\lan y_j,u_2h_{\mathbf{s},\mathbf{t}}^{(j)}\right\ran}\right)\right)
e^{-\frac{1}{2}\sum_{j=1}^{2p-1}|R_j|\left|\overline{y}^j\right|^2}\right|\notag\\
&\leq C |u_1-u_2|^{\gamma p}\e_n^{p\gamma }\int_{J(T_{\e_n,\e_{n+1}}^p(\pi,\mathbf{d}))}\dd z_{\pi(1)}\prod_{i=1}^{2p-1}\dd |R_{i}| \notag\\
&\quad \hspace{5cm} \times \int _{\R^{3p}}\dd \mathbf{y}\exp\left(-\frac{1}{2}\sum_{i=1}^{2p-1}|\overline{y}^i|^2|R_i|\right)\prod_{j=1}^{2p-1}\left(1+|\overline{y}_j|^2\right)^\frac{\gamma}{2},\label{eq:type1bdd0}
\end{align}
where we have used $|h_{\mathbf{s},\mathbf{t}}^{(j)}|\leq C\e_n$ for $\e_{n+1}\leq t_j-s_j\leq \e_n$ in the last line.

Now, we modify the estimates of the integral in $d|R_i|$ as follows: 
Since $\Delta_p(\pi)$ is connected, we find that $|R_i|\leq \e_n$ on $T_{\e_n,\e_{n+1}}^p$ for $1\leq p\leq 2p-1$. Hence, if $i=d(k)$ for some $k\in \{1,\dots,p\}$, then there is a constant $C>0$ such that \begin{align*}
\int_{\frac{\e_{n+1}}{2p}}^{\e_n} \exp\left(-\frac{|\overline{y}^{d(i)}|^2t}{2}\right)\dd t\leq C\left(\frac{1}{1+|\overline{y}^{d(i)}|^2}\wedge \e_n\right)\exp\left(-\frac{\e_{n+1}|\overline{y}^{d(i)}|^2}{4p}\right).
\end{align*}
If $i\not=d(k)$ for any $k\in \{1,\dots,p\}$, then we have \begin{align*}
\int_0^{\e_n} \exp\left(-\frac{|y|^2t}{2}\right)\dd t\leq C\left(\frac{1}{1+|y|^2}\wedge \e_n\right).
\end{align*}
Then, we  prove that
\begin{align}
&\text{the term on the left in the first inequality in  } \eqref{eq:type1bdd0}\notag\\
&\leq C |u_1-u_2|^{\gamma p} \e_n^{p\gamma } \int_{\R^{3p}}\dd \mathbf{y}\prod_{i=1}^{p}\left(\frac{1}{1+|\overline{y}^{f(i)}|^2}\wedge \e_n\right)\left(1+|\overline{y}^{f(i)}|^2\right)^{\frac{\gamma}{2}}\notag\\
&\hspace{12em} \prod_{j=1}^{p-1}\left(\frac{1}{1+|\overline{y}^{r(j)}|^2}\wedge \e_n\right)\left(1+|\overline{y}^{r(j)}|^2\right)^{\frac{\gamma}{2}}\prod_{k=1}^p\exp\left(-\frac{\e_{n+1}|\overline{y}^{d(k)}|^2}{4p^2}\right)\notag\\
&\leq C |u_1-u_2|^{\gamma p} \e_n^{p\gamma }\notag\\
&\hspace{3em}\times \left( \int_{\R^{3p}}\dd \mathbf{y}\prod_{i=1}^{p}\left(\frac{1}{1+|\overline{y}^{f(i)}|^2}\wedge \e_n\right)^2\left(1+|\overline{y}^{f(i)}|^2\right)^{{\gamma}}\right)^\frac{1}{2}\notag\\
&\hspace{3em}\times \left(\int_{\R^{3p}}\dd \mathbf{y}\prod_{j=1}^{p-1}\left(\frac{1}{1+|\overline{y}^{r(j)}|^2}\wedge \e_n\right)^2\left(1+|\overline{y}^{r(j)}|^2\right)^{\gamma}\prod_{k=1}^p\exp\left(-\frac{\e_{n+1}|\overline{y}^{d(k)}|^2}{2p^2}\right)\right)^\frac{1}{2}.\label{eq:typ1bdd1}
\end{align}
Changing variables $\mathbf{y}\to (\overline{y}^{f(1)},\dots,\overline{y}^{f(p)})$ (where the Jacobian is  a constant), we can see that for $\gamma<\frac{1}{2}$, 
\begin{align*}
&\int_{\R^{3p}}\dd \mathbf{y}\prod_{i=1}^{p}\left(\frac{1}{1+|\overline{y}^{f(i)}|^2}\wedge \e_n\right)^2\left(1+|\overline{y}^{f(i)}|^2\right)^{{\gamma}}\\
&\leq C\left(\int_{\R^3} \left(\frac{1}{1+|y|^2}\wedge \e_n\right)^2 (1+|y|^2)^\gamma \dd y\right)^p\\
&\leq C \left(\int_0^\infty \left(\frac{1}{t}\wedge e_n\right)^2 t^{\frac{1}{2}+\gamma}\dd t\right)^p\\
&=C\left(\int_0^{\frac{1}{\e_n}} \e^2 t^{\frac{1}{2}+\gamma}\dd t+\int_{\frac{1}{\e_n}}^\infty t^{-\frac{3}{2}+\gamma}\dd t\right)^p=C\e_n^{(\frac{1}{2}-\gamma)p}.
\end{align*}
Similarly, we find that \begin{align*}
&\int_{\R^{3p}}\dd \mathbf{y}\prod_{i=1}^{p-1}\left(\frac{1}{1+|\overline{y}^{r(i)}|^2}\wedge \e_n\right)^2\left(1+|\overline{y}^{r(i)}|^2\right)^{\gamma}\prod_{j=1}^p\exp\left(-\frac{\e_{n+1}|\overline{y}^{d(j)}|^2}{2p^2}\right)\\
&\leq C\left(\int_{\R^3} \left(\frac{1}{1+|y|^2}\wedge \e_n\right)^2 (1+|y|^2)^\gamma \dd y\right)^R \\
&\hspace{5em}\times \left(\sup_{x>0} \left(\left(\frac{1}{x}\wedge \e_n\right)^2x^\gamma\right) \right)^{(p-1-R)}\left(\int_{\R^{3}}\dd y
\exp\left(-\frac{\e_{n+1}|y|^2}{2p^2}\right)\right)^{p-R},
\end{align*}
where $R$ is a number of $\overline{y}^{r(*)}$ in a basis of $\left\{\overline{y}^{r(1)},\dots,\overline{y}^{r(p)},\overline{y}^{d(1)},\dots,\overline{y}^{d(p)}\right\}$. Using estimates \begin{align*}
&\left(\frac{1}{x}\wedge \e\right)x^{\frac{\gamma}{2}}\leq \e^{1-\frac{\gamma}{2}},\quad \text{for all $x>0$ and $\e>0$}\\
&\int_{\R^3}\exp\left(-\frac{\e_{n+1}}{2p^2}|y|^2\right)\dd y\leq C\e_{n}^{-\frac{3}{2}},
\end{align*}
we have \begin{align}
&\text{the term on the left in the first inequality in  }\eqref{eq:typ1bdd1}\notag\\
&\leq C|u_1-u_2|^{\gamma p} \e_n^{p\gamma }\cdot \e_n^{(\frac{1}{2}-\gamma)\frac{p}{2}}\cdot \e_n^{(\frac{1}{2}-\gamma)\frac{R}{2}} \cdot \e_n^{(1-\frac{\gamma}{2})(p-1-R)}\e_n^{-\frac{3}{2}\frac{p-R}{2}}\leq C|u_1-u_2|^{\gamma p} \e_n^{\frac{p}{2}-\left(1-\frac{\gamma}{2}\right)}.\label{eq:Type1ineq}
\end{align}
Finally, for each $\pi$, we divide $\bigcup_{j\in \{1,\dots,p\}}(s_j,t_j)$ into the disjoint open intervals  $\bigcup_{k=1}^P(a_k,b_k)$. Then, for each $k=1,\dots,P$, there exists $J_k\subset \{1,\dots, p\}$ such that $(a_k,b_k)=\bigcup_{j\in J_k}(s_j,t_j)$. Let $\Pi_1=\{k:|J_k|\geq 2\}$ and $\Pi_2=\{k:|J_k|=1\}$.
In particular, we can regard $\bigcup_{j\in J_k}(s_j,t_j)$ as \ref{item:T1} if $k\in\Pi_1$ or \ref{item:T2} if $k\in \Pi_2$, and hence the above estimates \eqref{eq:Type2inequ} and \eqref{eq:Type1ineq}  yield that \begin{align*}
&\text{the term on the left in the first inequality in } \eqref{eq:l4inte}\\
&\leq C|u_1-u_2|^{|\Pi_2|+\gamma (p-|\Pi_2|)}\e_n^{\frac{|\Pi_2|}{2}+\sum_{k\in \Pi_1}\left(\frac{|J_k|}{2}-\left(1-\frac{\gamma}{2}\right)\right)}\notag\\
 &\leq C|u_1-u_2|^{\gamma p}\e_n^{\frac{\gamma p}{4}}
\end{align*}
for $u_1,u_2\in [-M,M]$.

\end{proof}

\subsection{Remarks on the treatment of Dirac functions }\label{subsec:delta}

In this section, we will look at the self-intersection local time for pinned Brownian motion. In particular, its continuity will allow us to use the notation
\begin{align*}
E\left[e^{-\overline{J}^{\e,\lambda}_{0,1}}\Phi\prod_{i=1}^n \delta_{x_i}(\omega_{t_i})\right]
\end{align*}
(see Remark \ref{rem:deltaprod}) where $\overline{J}^{\e,\lambda}_{0,1}$ is given after \eqref{eq:kappa2}.

Similarly to \cite{Bol93}, we define a self-intersection local time of conditioned Brownian motion.
Fix $\e>0$. For each $0<u<v<1$,  $x,y\in \R^3$, $a>0$, we define \begin{align*}
H_{u,v,x,y}^{\e,a}:=\int_\e^1\dd t\int_0^{t-\e}\dd sp_{a}\left(\Psi_{u,v,x,y}(\omega)(t)-\Psi_{u,v,x,y}(\omega)(s)\right),
\end{align*}
where for $\omega\in C([0,1],\R^3)$\begin{align*}
\Psi_{u,v,x,y}(\omega)(\tau)=\begin{cases}
\dis x\frac{\tau}{u}+\omega_\tau-\frac{\tau}{u}\omega_u,\quad &0\leq \tau\leq u\\
\dis x\left(1-\frac{\tau-u}{v-u}\right)+y\frac{\tau-u}{v-u}+\left(\omega_\tau-\omega_u-\frac{\tau-u}{v-u}(\omega_v-\omega_u)\right),\quad &u\leq \tau\leq v\\
\dis y+\omega_\tau-\omega_v,\quad &v\leq \tau.
\end{cases}
\end{align*}

By modifying the proof of Proposition \ref{prop:Jea1} or \cite[Proposition 2.1]{Bol93}, we can prove the following.
\begin{lemma}\label{lem:Jspcont}
Let $\e>0$,  $0\leq u<v\leq 1$, and $p\geq 2$ be given. Then, there exists a version of $H_{u,v,x,y}^{\e,a}$ in $(X,\mathcal{B},\nu_0)$ which is jointly continuous in $x,y\in \R^3$, and $a> 0$.Moreover, $\dis\lim_{a\searrow 0}H_{u,v,x,y}^{\e,a}=:H_{u,v,x,y}^{\e}$ exists $\nu_0$-a.s.~and in $L^p(\nu_0)$ and also it is jointly continuous in all variables $x,y\in \R^3$ and the parameter $\e>0$.
\end{lemma}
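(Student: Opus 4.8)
The plan is to follow the Fourier-transform (Rosen's method) approach of Lemma~\ref{lem:Jacauchy} and Proposition~\ref{prop:Jea1}, adapted to the pinned path $\Psi_{u,v,x,y}$, and then to invoke the Kolmogorov continuity theorem in the parameters $(x,y,a)$ together with a Fatou argument for the limit $a\searrow 0$. As in Section~\ref{sec:2}, write
\[
H^{\e,a}_{u,v,x,y}=\int_{T_\e}\dd s\,\dd t\int_{\R^3}e^{i\lan y',\,\Psi_{u,v,x,y}(\omega)(t)-\Psi_{u,v,x,y}(\omega)(s)\ran}f_a(y')\,\dd y',\qquad T_\e=\{(s,t)\in[0,1]^2:t-s\ge\e\},
\]
and decompose the increment as $\Psi_{u,v,x,y}(\omega)(t)-\Psi_{u,v,x,y}(\omega)(s)=d_{s,t}(x,y)+\bigl(\widetilde\omega_t-\widetilde\omega_s\bigr)$, where $\widetilde\omega:=\Psi_{u,v,0,0}(\omega)$ is the centered Gaussian fluctuation — the independent concatenation of a Brownian bridge from $0$ to $0$ on $[0,u]$, a Brownian bridge from $0$ to $0$ on $[u,v]$, and a Brownian motion on $[v,1]$, so that $\widetilde\omega_u=\widetilde\omega_v=0$ — and $d_{s,t}(x,y)$ is the deterministic piecewise-linear part, which is $\R$-linear in $(x,y)$ with coefficients in $[-1,1]$, hence bounded uniformly in $(s,t)$ (the constants below may depend on $u,v$). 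Note that $H^{\e,a}_{u,v,x,y}\le(2\pi a)^{-3/2}$, so it lies in every $L^p(\nu_0)$ and only its continuity is at issue.

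Next, I would expand the $p$-th moments ($p\in 2\N$, after the substitution $y_{p/2+1},\dots,y_p\mapsto -y_{p/2+1},\dots,-y_p$) exactly as in Proposition~\ref{prop:momentexp}. For the increment in $a$ one inserts the factor $1-e^{-\frac{a-a'}{2}|y'|^2}$ and splits $\{|y'|\le a^{-1/6}\}$ against $\{|y'|>a^{-1/6}\}$ precisely as in (Step 1)--(Step 2) of the proof of Lemma~\ref{lem:Jacauchy}. For the increment in $(x,y)$ one bounds the phase difference by $\bigl|e^{i\theta_1}-e^{i\theta_2}\bigr|\le C|\theta_1-\theta_2|^{\gamma}$ and uses the boundedness of the coefficients of $d_{s,t}$ to produce a factor $\prod_j|y_j|^{\gamma}\,(|x-x'|+|y-y'|)^{\gamma}$. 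In either case, after taking the Gaussian expectation one is reduced to estimating
\[
\sum_\pi\int_{T_\e^{p}\cap\Delta_p(\pi)}\dd\mathbf s\,\dd\mathbf t\int_{\R^{3p}}\dd\mathbf y\ \Bigl(\prod_{j=1}^{p}|y_j|^{\gamma}g_a(y_j)\Bigr)\,e^{-\frac12\var{\sum_{j=1}^p\lan y_j,\widetilde\omega_{t_j}-\widetilde\omega_{s_j}\ran}},
\]
with $g_a=f_a$ (for the $(x,y)$-bound) or $g_a(y)=1\{|y|>a^{-1/6}\}f_a(y)$ times the small factor (for the $a$-bound); this is the analogue of \eqref{eq:bdd1}.

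The crux is the variance $\var{\sum_j\lan y_j,\widetilde\omega_{t_j}-\widetilde\omega_{s_j}\ran}$. I would refine the Rosen partition by adjoining the pinning times $u,v$ to the breakpoints $\{s_j,t_j\}$, writing $\{R_i\}$ for the resulting partition of $[0,1]$ and $W(R_i)$ for the (independent) increments of $\omega$. Since $\widetilde\omega_u=\widetilde\omega_v=0$, any interval $[s_j,t_j]$ straddling $u$ or $v$ splits into sub-increments, each supported in a single one of the blocks $[0,u],[u,v],[v,1]$; on each block $\widetilde\omega$ is a bridge or a Brownian motion, so $\sum_j\lan y_j,\widetilde\omega_{t_j}-\widetilde\omega_{s_j}\ran=\sum_i\lan c_i,W(R_i)\ran$, where the $c_i$ are $\R$-linear in the $y_j$'s (on a bridge block $[0,u]$ one gets $c_i=\overline{y}^{\,i}-\mu$ for a vector $\mu$, linear in the $y_j$'s, not depending on $i$), and hence $\var{\sum_j\lan y_j,\widetilde\omega_{t_j}-\widetilde\omega_{s_j}\ran}=\sum_i|R_i|\,|c_i|^2$. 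One then has to re-prove the rank statements of Lemmas~\ref{lem:RosLem3}--\ref{lem:RosLem4} for the family $\{c_i\}$: because $\e>0$ and $u,v$ are fixed, each block still contains partition intervals of length bounded below, and a nonsingular system of coordinates for $\R^{3p}$ can be extracted from $\{c_i\}$ (on the Brownian block $[v,1]$ this is Rosen's lemma verbatim, on the bridge blocks it is a perturbation of it). With this in hand, (Step 3)--(Step 4) of the proof of Lemma~\ref{lem:Jacauchy} carry over and give, for $\gamma\in(0,\tfrac14)$ small and $C=C(\e,u,v,p,\gamma,M)$ on $|x|,|x'|,|y|,|y'|\le M$,
\[
E\bigl[|H^{\e,a}_{u,v,x,y}-H^{\e,a}_{u,v,x',y'}|^p\bigr]\le C\bigl(|x-x'|+|y-y'|\bigr)^{\gamma p},\qquad
E\bigl[|H^{\e,a}_{u,v,x,y}-H^{\e,a'}_{u,v,x,y}|^p\bigr]\le C|a-a'|^{p}a^{-\frac{5}{6}p}+C|a-a'|^{\gamma p}a^{c p}.
\]

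Choosing $p$ large with $\gamma p>7$, the Kolmogorov continuity theorem applied to $(x,y,a)\mapsto H^{\e,a}_{u,v,x,y}$ yields the jointly continuous version; letting $a'\searrow 0$ in the second estimate, which is $L^p(\nu_0)$-Cauchy since $|a-a'|^pa^{-5p/6}\le a^{p/6}\to 0$ and $|a-a'|^{\gamma p}a^{cp}\to 0$, and applying Fatou's lemma as in Corollary~\ref{cor:JaJ} gives the $\nu_0$-a.s.\ and $L^p(\nu_0)$ limit $H^\e_{u,v,x,y}$, jointly continuous in $(x,y)$ and satisfying the same type of moment bound. Continuity of $H^\e_{u,v,x,y}$ in $\e$ follows by applying the same expansion to $H^{\e'}_{u,v,x,y}-H^\e_{u,v,x,y}$, whose difference integrates only over $T_{\e'}\triangle T_\e\subset\{(s,t):\e\wedge\e'\le t-s\le\e\vee\e'\}$: since $\e$ stays bounded away from $0$ there is no near-diagonal singularity, so the corresponding moment is $O(|\e-\e'|^{\delta})$ for some $\delta>0$, which together with the $(x,y)$-continuity gives joint continuity in $(x,y,\e)$. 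The main obstacle is the variance step — re-establishing the nondegeneracy conclusions of Lemmas~\ref{lem:RosLem3}--\ref{lem:RosLem4} for the pinned process: the bridge corrections perturb the clean increment structure of Brownian motion, one must carefully track which intervals $(s_j,t_j)$ straddle $u,v$ and split them accordingly, and one must control the degenerate configurations in which an interval nearly exhausts a whole bridge block (where the Gaussian factor degenerates but the deterministic part $d_{s,t}(x,y)$ is near an endpoint value); the fixedness of $\e,u,v$, giving uniform lower bounds on interval lengths, is exactly what makes this manageable.
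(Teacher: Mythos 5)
Your proposal follows essentially the same route the paper gestures at: modify Proposition~\ref{prop:Jea1}/Bolthausen's Proposition~2.1 via Rosen's method, establish the H\"older moment bound in $(x,y,a)$, and conclude by Kolmogorov continuity plus a Fatou/$L^p$-Cauchy argument for $a\searrow0$. The paper itself omits the proof entirely and merely asserts the moment estimate, so the substantive comparison is with what you add: the decomposition of $\Psi_{u,v,x,y}(\omega)=d_{\cdot}(x,y)+\widetilde\omega$ into a deterministic piecewise-linear part and the fluctuation $\Psi_{u,v,0,0}(\omega)$, identified correctly as the independent concatenation of two bridges and a Brownian motion; and, crucially, the observation that the pinning replaces Rosen's coefficients $\overline{y}^i$ by $c_i=\overline{y}^i-\mu$ with $\mu$ itself linear in the $y_j$'s, so that the nondegeneracy conclusions of Lemmas~\ref{lem:RosLem3}--\ref{lem:RosLem4} must be re-proved and genuinely fail on the codimension-positive set where an interval $[s_j,t_j]$ exhausts a whole bridge block. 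You are right that this is the real obstacle and that the paper passes over it silently. Your sketch of how to close it — trading loss of Gaussian damping in $y_j$ against the smallness of the $(s_j,t_j)$-volume near the degenerate set — is the correct mechanism (for a bridge of length $u$ the fluctuation variance near the degeneracy scales like $s_j+(u-t_j)$, and $\iint_{s_j+(u-t_j)<\epsilon}(s_j+u-t_j)^{-3/2}\,\dd s_j\dd t_j=O(\sqrt{\epsilon})$, so the 2D time integral absorbs the $y_j$ blow-up), but the argument "fixedness of $\e,u,v$ gives uniform lower bounds on interval lengths" is a little misleading: fixing $\e$ keeps $t_j-s_j\ge\e$ but does not bound the subinterval lengths $|R_i|$ from below, and the degeneracy is about $s_j\to0,\,t_j\to u$ rather than about $t_j-s_j$ being small. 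The resolution is the volume trade above, not a uniform lower bound. With that clarification the proposal is sound, and it is in fact considerably more informative than what the paper records.
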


\begin{proof}
We omit the proof since it follows by the arguments of \cite[Proposition 2.1]{Bol93}.
In fact we have the estimate that there exists $\gamma\in (0,\frac{1}{4})$ such that for any $p\geq 2$, \begin{align*}
E\left[\left|H_{u,v,x,y}^{\e,a}-H_{u,v,x',y'}^{\e,a'}\right|^p\right]\leq C\left(|x-x'|^{\gamma p}+|y-y'|^{\gamma p}+|a-a'|^\frac{\gamma p}{2}\right).
\end{align*}  
\end{proof}

\begin{rem}
The map $\Psi_{u,v,x,y}$ maps a Brownian motion to conditional Brownian motion which bridges between $(0,0)$, $(u,x)$, $(v,y)$. Lemma \ref{lem:Jspcont} can be generalized for any $n$ intermediate points.  For $0\leq t_1<\dots<t_n\leq 1$, $x_1,\dots,x_n\in \R^3$, we denote by $H^{\e,a}_{(t_i,x_i)}$ a modified version of $H$. We omit the upper index $a$ for $a=0$. 
\end{rem}

\begin{cor}\label{cor:patodelta}
Let $\Phi$ be any bounded measurable function. Then, we have \begin{align*}
&\lim_{a\to 0}E\left[e^{-\overline{J}^{\e,\lambda}_{0,1}}\Phi \prod_{i=1}^np_a(\omega_{t_i},x_i)\right]\\
&= E\left[\left.e^{-\overline{J}^{\e,\lambda}_{0,1}}\Phi \right|\omega_{t_i}=x_i,i=1,\dots,n\right]p_{t_1}(0,x_1)\prod_{j=2}^{n}p_{t_j-t_{j-1}}(x_{j-1},x_j)
\end{align*}
for $0\leq t_1<\dots<t_n\leq 1$, $x_1,\dots,x_n\in\R^3$.
\end{cor}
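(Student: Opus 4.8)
The plan is to condition the Brownian path on its values $(\omega_{t_1},\dots,\omega_{t_n})$ at the pinning times, to realize the conditioned law as the law of the pinned process $\Psi_{(t_i,y_i)}$ (the $n$-point version of $\Psi$ introduced after Lemma~\ref{lem:Jspcont}) under $\nu_0$, and thereby to recognize the claimed limit as the convergence of a mollification of a \emph{continuous} function of the pinning positions. Since $\prod_{i=1}^n p_a(\omega_{t_i}-x_i)$ is $\sigma(\omega_{t_1},\dots,\omega_{t_n})$-measurable and $e^{-\overline{J}^{\e,\lambda}_{0,1}}\Phi$ is bounded ($\overline{J}^{\e,\lambda}_{0,1}=\lambda J^\e_{0,1}-\lambda\kappa_1(\e)+\lambda^2\kappa_2(\e)$ with $J^\e_{0,1}\ge 0$, so $e^{-\overline{J}^{\e,\lambda}_{0,1}}\le e^{\lambda\kappa_1(\e)-\lambda^2\kappa_2(\e)}$), Fubini together with the disintegration of $\nu_0$ along $(\omega_{t_i})_i$ gives
\[
E\Big[e^{-\overline{J}^{\e,\lambda}_{0,1}}\Phi\prod_{i=1}^n p_a(\omega_{t_i}-x_i)\Big]
=\int_{\R^{3n}}\Big(\prod_{i=1}^n p_a(y_i-x_i)\Big)\,G(\mathbf{y})\,\dd \mathbf{y},
\]
where $G(\mathbf{y}):=E\big[e^{-\overline{J}^{\e,\lambda}_{0,1}}\Phi\mid \omega_{t_i}=y_i\big]\,\varrho(\mathbf{y})$ and $\varrho(\mathbf{y}):=p_{t_1}(0,y_1)\prod_{j=2}^n p_{t_j-t_{j-1}}(y_{j-1},y_j)$.

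The next step is to identify $G$ explicitly. The regular conditional law of $\omega$ under $\nu_0$ given $\{\omega_{t_i}=y_i,\ i=1,\dots,n\}$ is the Brownian bridge through $(0,0),(t_1,y_1),\dots,(t_n,y_n)$, which is exactly the $\nu_0$-law of $\Psi_{(t_i,y_i)}(\omega)$; this provides a version of the conditional expectation which is defined for every $\mathbf{y}$. Since $J^{\e,a}_{0,1}\circ\Psi_{(t_i,y_i)}=H^{\e,a}_{(t_i,y_i)}$ and hence, letting $a\searrow 0$, $J^{\e}_{0,1}\circ\Psi_{(t_i,y_i)}=H^{\e}_{(t_i,y_i)}$ $\nu_0$-a.s. by Lemma~\ref{lem:Jspcont}, one obtains
\[
G(\mathbf{y})=e^{\lambda\kappa_1(\e)-\lambda^2\kappa_2(\e)}\,\varrho(\mathbf{y})\,E_{\nu_0}\Big[e^{-\lambda H^{\e}_{(t_i,y_i)}(\omega)}\,\Phi\big(\Psi_{(t_i,y_i)}(\omega)\big)\Big],
\]
so that $G(\mathbf{x})$ is precisely the right-hand side of the corollary. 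It remains to prove that $G$ is continuous on $\R^{3n}$ and then let $a\searrow 0$.

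The continuity of $G$ is the heart of the argument. The factor $\varrho$ is continuous and strictly positive, and $|E_{\nu_0}[\cdots]|\le\|\Phi\|_\infty$ since $H^{\e}_{(t_i,y_i)}\ge 0$, so in particular $|G(\mathbf{y})|\le C\,\varrho(\mathbf{y})$, which is integrable on $\R^{3n}$. For the expectation itself I would argue by dominated convergence: by the $n$-point version of Lemma~\ref{lem:Jspcont} the map $\mathbf{y}\mapsto H^{\e}_{(t_i,y_i)}(\omega)$ has a version which is, for $\nu_0$-a.e.\ $\omega$, jointly continuous in $\mathbf{y}$ (and $e^{-\lambda H^{\e}_{(t_i,y_i)}}\le 1$); moreover $\mathbf{y}\mapsto\Psi_{(t_i,y_i)}(\omega)$ is continuous into $X$, since $\Psi$ is affine in the pinning positions and $\Psi_{(t_i,y'_i)}(\omega)-\Psi_{(t_i,y_i)}(\omega)$ equals the deterministic piecewise-linear ``tent'' path through the points $(t_i,y'_i-y_i)$, which tends to $0$ uniformly as $\mathbf{y}'\to\mathbf{y}$. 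Hence, for bounded continuous $\Phi$, the integrand converges pointwise in $\omega$ along $\mathbf{y}'\to\mathbf{y}$ and is dominated by $\|\Phi\|_\infty$; for a merely bounded measurable $\Phi$ one reduces to this case by approximation. This yields continuity of $\mathbf{y}\mapsto E_{\nu_0}[\cdots]$, hence of $G$.

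Finally, since $\mathbf{y}\mapsto\prod_{i=1}^n p_a(y_i-x_i)$ is a probability density forming an approximate identity as $a\searrow 0$, and $G$ is continuous with $|G|\le C\varrho\in L^1(\R^{3n})$, the convolution $\int_{\R^{3n}}(\prod_i p_a(y_i-x_i))\,G(\mathbf{y})\,\dd \mathbf{y}$ converges to $G(\mathbf{x})$, which is the assertion. The only genuinely delicate point is the continuity of the conditional expectation in the pinning values; everything else is bookkeeping with the disintegration, and that continuity rests entirely on Lemma~\ref{lem:Jspcont} (and its $n$-point generalization) furnishing a version of the pinned self-intersection local time $H^\e$ that is jointly continuous in the positions $x_i$, together with the elementary continuity of $\mathbf{y}\mapsto\Psi_{(t_i,y_i)}$ in $X$.
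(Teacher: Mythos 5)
Your argument follows essentially the same route as the paper's proof: disintegrate the expectation through the regular conditional law given $(\omega_{t_1},\dots,\omega_{t_n})$, realize that conditional expectation via the bridge map $\Psi_{(t_i,y_i)}$ and the pinned local time $H^\e_{(t_i,y_i)}$ (Lemma~\ref{lem:Jspcont}), and then let $a\searrow 0$ by an approximate-identity argument. You spell out, more explicitly than the paper does, exactly why that last limit goes through — namely the joint continuity in the pinning positions supplied by Lemma~\ref{lem:Jspcont} together with the affine (``tent-shift'') dependence of $\Psi_{(t_i,y_i)}$ on $\mathbf{y}$, and the $L^1$-dominating Gaussian kernel $\varrho$ — where the paper simply writes ``Letting $a\searrow 0$, we see that the statement holds.''

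One caveat worth flagging: the reduction ``for merely bounded measurable $\Phi$ one reduces to this case by approximation'' is the only genuinely delicate point and should not be waved through. For arbitrary bounded measurable $\Phi$ the map $\mathbf{y}\mapsto E_{\nu_0}\bigl[\Phi(\Psi_{(t_i,y_i)}(\omega))\bigr]$ need not be continuous (consider $\Phi=1_{\{\omega_{t_1}=x_1\}}$: then the left side of the corollary vanishes while the right side does not), so the statement is implicitly about the particular, continuous version of the conditional expectation furnished by the bridge construction rather than an identity valid for every $\nu_0$-version and every Borel $\Phi$. This is a limitation already present in the corollary's formulation and in the paper's equally terse proof; in the body of the paper the corollary is only invoked for $\Phi$'s (integrals of continuous functions of $\omega$) for which the continuity you establish is unproblematic.
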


\begin{proof}
From construction, we  have that \begin{align*}
&\int_{\R^{3n}}\dd \mathbf{y}E\left[\left.e^{-\overline{J}^{\e,\lambda}_{0,1}}\Phi \prod_{i=1}^np_a(\omega_{t_i},x_i)\right|\omega_{t_i}=y_i,i=1,\dots,n\right]p_{t_1}(0,y_1)\prod_{j=2}^{n}p_{t_j-t_{j-1}}(y_{j-1},y_j)\\
&=\int_{\R^{3n}}\dd \mathbf{y}E\left[e^{-\lambda {H}^{\e}_{(t_i,y_i)}+\lambda \kappa_1(\e)-\lambda^2\kappa_2(\e)}\Phi \right]\prod_{i=1}^np_a(y_i,x_i)p_{t_1}(0,y_1)\prod_{j=2}^{n}p_{t_j-t_{j-1}}(y_{j-1},y_j).
\end{align*}
Letting $a\searrow 0$, we see that the statement holds.
\end{proof}

Corollary \ref{cor:patodelta} allows us to use the formal expression.

\begin{rem}\label{rem:deltaprod}
Let us use the following notation: 
\begin{align*}
&E\left[e^{-\overline{J}^{\e,\lambda}_{0,1}}\Phi\prod_{i=1}^n \delta_{x_i}(\omega_{t_i})\right]\\
&:=E\left[\left.e^{-\overline{J}^{\e,\lambda}_{0,1}}\Phi \right|\omega_{t_i}=x_i,i=1,\dots,n\right]p_{t_1}(0,x_1)\prod_{j=2}^{n}p_{t_j-t_{j-1}}(x_{j-1},x_j).
\end{align*}
Also, we set
\begin{align*}
&E\left[e^{-\overline{J}^{\e,\lambda}_{0,1}}\Phi\prod_{i=1}^n \delta_0(\omega_{s_i}-\omega_{t_i})\right] :=\int_{\R^{3n}}E\left[e^{-\overline{J}^{\e,\lambda}_{0,1}}\Phi\prod_{i=1}^n \left(\delta_{x_i}(\omega_{s_i})\delta_{x_i}(\omega_{t_i})\right)\right]\dd x_1\dots \dd x_n
\end{align*}
for $s_1,\dots,s_n,t_1,\dots,t_n\in [0,1]$. 
\end{rem}

Similarly to Corollary \ref{cor:patodelta}, we can show that \begin{align*}
&E\left[e^{-\overline{J}^{\e,\lambda}_{0,1}}\prod_{i=1}^n \delta_{x_i}(\omega_{r_i})\prod_{j=1}^mJ_{s_j,t_j}^{\e} \Phi\right]=\int_{T_\e^m}\dd \mathbf{s}\dd \mathbf{t} E\left[e^{-\overline{J}^{\e,\lambda}_{0,1}}\prod_{i=1}^n \delta_{x_i}(\omega_{r_i})\prod_{j=1}^m\delta_0(\omega_{s_j}-\omega_{t_j}) \Phi\right]
\end{align*}
for $r_1,\dots,r_n\in [0,1]$ and $x_1,\dots,x_n\in \R^3$. We omit the detail of the proof.

\subsection{Differentiability}\label{subsec:diff}

In this subsection, we will look at the differentiability of \begin{align}
\rho(\e):=E\left[\exp\left(-\overline{J}_{0,1}^{\e,\lambda}\right)\Phi\right]\quad \label{dfn:rho2}
\end{align}
in $\e\in (0,1)$, where $\Phi$ is any bounded random variable.

\begin{lemma}\label{lem:diffrho}
$\rho$ is differentiable at $\e\in (0,1)$ for any bounded random variable $\Phi$ and satifies
\begin{align*}
\frac{\dd}{\dd \e}\rho(\e)&=\lambda \int_0^{1-\e}\dd sE\left[e^{-\overline{J}^{\e,\lambda}_{0,1}}\Phi\delta _0 (\omega_s-\omega_{s+\e})\right]+\left(\lambda\frac{\dd}{\dd \e}\kappa_1(\e)-\lambda^2 \frac{\dd}{\dd \e}\kappa_2(\e)\right)\rho(\e) .
\end{align*}
\end{lemma}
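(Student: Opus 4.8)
The plan is to differentiate a smooth regularization of $\rho$ and then pass to the limit. For $a\in(0,1)$ set $\overline{J}_{0,1}^{\e,\lambda,a}:=\lambda J_{0,1}^{\e,a}-\lambda\kappa_1(\e)+\lambda^2\kappa_2(\e)$ and $\rho_a(\e):=E\!\left[\exp\!\left(-\overline{J}_{0,1}^{\e,\lambda,a}\right)\Phi\right]$. By the display preceding Lemma \ref{lem:Jacauchy}, $J_{0,1}^{\e,a}=\int_{T_\e}p_a(\omega_t-\omega_s)\,\dd s\,\dd t=\int_0^{1-\e}\dd\sigma\int_{\sigma+\e}^{1}\dd\tau\,p_a(\omega_\sigma-\omega_\tau)$, so Leibniz' rule gives, for each fixed continuous path and $a>0$ fixed, $\frac{\dd}{\dd\e}J_{0,1}^{\e,a}=-\int_0^{1-\e}p_a(\omega_\sigma-\omega_{\sigma+\e})\,\dd\sigma$ (the boundary contribution at $\sigma=1-\e$ vanishes). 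Moreover $\e\mapsto\exp(-\overline{J}_{0,1}^{\e,\lambda,a}(\omega))\Phi(\omega)$ is then $C^1$ with $\e$-derivative bounded, uniformly in $\omega$ and locally uniformly in $\e$, by $e^{\lambda\kappa_1(\e)}\|\Phi\|_\infty\bigl(\lambda p_a(0)+|\lambda\kappa_1'(\e)-\lambda^2\kappa_2'(\e)|\bigr)$ (using $J_{0,1}^{\e,a}\ge0$, $\kappa_2(\e)\ge0$, $\int_0^{1-\e}p_a(\cdot)\,\dd\sigma\le p_a(0)=(2\pi a)^{-3/2}$, and that $\kappa_1,\kappa_2$ are $C^1$ on $(0,1)$). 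Hence one may differentiate under $E$, and Fubini (legitimate since $E[p_a(\omega_\sigma-\omega_{\sigma+\e})]=(2\pi(\e+a))^{-3/2}$) yields
\[
\rho_a'(\e)=\lambda\int_0^{1-\e}E\!\left[\exp\!\left(-\overline{J}_{0,1}^{\e,\lambda,a}\right)\Phi\,p_a(\omega_\sigma-\omega_{\sigma+\e})\right]\dd\sigma+\bigl(\lambda\kappa_1'(\e)-\lambda^2\kappa_2'(\e)\bigr)\rho_a(\e).
\]

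Next I would let $a\searrow0$. That $\rho_a(\e)\to\rho(\e)$ follows from $J_{0,1}^{\e,a}\to J_{0,1}^\e$ $\nu_0$-a.s.\ (Proposition \ref{prop1}) and bounded convergence, since $\exp(-\overline{J}_{0,1}^{\e,\lambda,a})\le e^{\lambda\kappa_1(\e)}$; likewise the last term tends to $(\lambda\kappa_1'(\e)-\lambda^2\kappa_2'(\e))\rho(\e)$. The heart of the matter — and the step I expect to be the main obstacle — is to show, for each fixed $\sigma\in(0,1-\e)$,
\[
E\!\left[\exp\!\left(-\overline{J}_{0,1}^{\e,\lambda,a}\right)\Phi\,p_a(\omega_\sigma-\omega_{\sigma+\e})\right]\;\longrightarrow\;E\!\left[\exp\!\left(-\overline{J}_{0,1}^{\e,\lambda}\right)\Phi\,\delta_0(\omega_\sigma-\omega_{\sigma+\e})\right]
\]
(the right-hand side in the sense of Remark \ref{rem:deltaprod}); this is exactly the integrand of the first term in the claimed identity, after relabeling $\sigma$ as $s$. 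I would prove it by the argument of Corollary \ref{cor:patodelta}: conditioning on $(\omega_\sigma,\omega_{\sigma+\e})=(y,z)$ turns $J_{0,1}^{\e,a}$ into the pinned self-intersection local time $H_{(\sigma,y),(\sigma+\e,z)}^{\e,a}$ of Lemma \ref{lem:Jspcont} (in its two-point version) and $\Phi$ into $\Phi\circ\Psi_{\sigma,\sigma+\e,y,z}$, so the left-hand side equals the double integral over $(y,z)\in(\R^3)^2$ of $E[\exp(-\lambda H_{(\sigma,y),(\sigma+\e,z)}^{\e,a}+\lambda\kappa_1(\e)-\lambda^2\kappa_2(\e))(\Phi\circ\Psi_{\sigma,\sigma+\e,y,z})]$ against $p_a(y-z)\,p_\sigma(0,y)\,p_\e(y,z)$. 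As $a\searrow0$, the factor $H_{(\sigma,y),(\sigma+\e,z)}^{\e,a}$ converges to $H_{(\sigma,y),(\sigma+\e,z)}^{\e}$ with a rate uniform in $(y,z)$ by the estimate in the proof of Lemma \ref{lem:Jspcont}, while $p_a(y-z)$ concentrates on the diagonal $y=z$; the integrand is dominated by $e^{\lambda\kappa_1(\e)}\|\Phi\|_\infty\,p_\sigma(0,y)\,p_\e(y,z)$ and is continuous in $(y,z)$ — the continuity in $(y,z)$ of the conditioned weight and of $\Phi$ along the bridge being handled by the Cameron--Martin argument underlying Corollary \ref{cor:patodelta} — so dominated convergence identifies the limit with $\int_{\R^3}E[\exp(-\overline{J}_{0,1}^{\e,\lambda})\Phi\mid\omega_\sigma=y,\omega_{\sigma+\e}=y]\,p_\sigma(0,y)\,p_\e(y,y)\,\dd y$, i.e.\ with $E[\exp(-\overline{J}_{0,1}^{\e,\lambda})\Phi\,\delta_0(\omega_\sigma-\omega_{\sigma+\e})]$.

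To pass this limit through the $\sigma$-integral, observe that $\bigl|E[\exp(-\overline{J}_{0,1}^{\e,\lambda,a})\Phi\,p_a(\omega_\sigma-\omega_{\sigma+\e})]\bigr|\le e^{\lambda\kappa_1(\e)}\|\Phi\|_\infty\,(2\pi(\e+a))^{-3/2}\le e^{\lambda\kappa_1(\e)}\|\Phi\|_\infty\,(2\pi\e)^{-3/2}$, which is integrable in $\sigma\in(0,1-\e)$ and independent of $a$; hence dominated convergence gives $\rho_a'(\e)\to g(\e)$ for every $\e\in(0,1)$, where $g(\e)$ denotes the right-hand side of the asserted identity. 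The same bounds show $|\rho_a'(\e)|\le\lambda e^{\lambda\kappa_1(\e)}\|\Phi\|_\infty(2\pi\e)^{-3/2}(1-\e)+|\lambda\kappa_1'(\e)-\lambda^2\kappa_2'(\e)|\,e^{\lambda\kappa_1(\e)}\|\Phi\|_\infty$, a bound independent of $a$ and locally bounded in $\e$.

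Finally, for $0<\e_1<\e_2<1$ one has $\rho_a(\e_2)-\rho_a(\e_1)=\int_{\e_1}^{\e_2}\rho_a'(\e)\,\dd\e$, and letting $a\searrow0$ — using $\rho_a\to\rho$ on the left and the $a$-uniform local bound on $\rho_a'$ with dominated convergence on the right — gives $\rho(\e_2)-\rho(\e_1)=\int_{\e_1}^{\e_2}g(\e)\,\dd\e$ for all such $\e_1<\e_2$. Since $\rho$ is continuous (again Proposition \ref{prop1} and bounded convergence) and $g$ is continuous on $(0,1)$ — its $\kappa$-part because $\kappa_1,\kappa_2$ are $C^1$ on $(0,1)$, its integral part from the joint continuity of the pinned self-intersection local times (Lemma \ref{lem:Jspcont} and its multi-point extension) and a dominated convergence argument — the fundamental theorem of calculus gives $\rho\in C^1(\,(0,1)\,)$ with $\rho'=g$, which is the assertion. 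Throughout, the delicate point is the control of the Dirac-type factor $p_a(\omega_\sigma-\omega_{\sigma+\e})$ against the measure-like weight $\exp(-\overline{J}_{0,1}^{\e,\lambda,a})$, uniformly enough in $\sigma$ to survive the $\sigma$-integration; the remaining steps are routine.
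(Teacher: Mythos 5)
Your proof takes a genuinely different route from the paper's. The paper works directly with the unregularized $\rho$: it bounds the difference quotient $(\rho(\e+\tilde\e)-\rho(\e))/\tilde\e$ from above and below using $1-x\leq e^{-x}\leq 1-x+\tfrac12 x^2$ with $x=\lambda(J^{\e}_{0,1}-J^{\e+\tilde\e}_{0,1})\geq 0$, and controls the remainder using Rosen's moment bound $E[|J^{\e+\tilde\e}_{0,1}-J^{\e}_{0,1}|^k]\leq C\tilde\e^{k/p}$ to show that the tail event $\{J^\e_{0,1}-J^{\e+\tilde\e}_{0,1}>\delta\}$ contributes $o(\tilde\e)$; right and left differentiability come out of the bracketing. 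You instead differentiate the $a$-regularized $\rho_a$ exactly by Leibniz (which is elementary because $p_a$ is smooth and $J^{\e,a}$ is literally a double time integral of $p_a$), obtain a closed formula for $\rho_a'$, send $a\searrow 0$ using the pinned local-time continuity and the representation underlying Corollary~\ref{cor:patodelta}, and then reconstruct $\rho'$ from the uniform-in-$a$ integral identity via the fundamental theorem of calculus. Both routes reach the same identity; yours trades the need for Rosen's moment bound and the difference-quotient sandwich for a slightly heavier limit-interchange argument in $a$.

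Two places where your argument could be tightened, both of which you essentially flag yourself. First, the claim that $(y,z)\mapsto E\bigl[\exp(-\lambda H^{\e,a}_{\sigma,\sigma+\e,y,z})\,(\Phi\circ\Psi_{\sigma,\sigma+\e,y,z})\bigr]$ is jointly continuous, and that the $a\searrow0$ convergence is uniform on compacts, relies on two separate facts: joint continuity of the pinned local time (Lemma~\ref{lem:Jspcont}), and continuity in $(y,z)$ of $E[\Phi\circ\Psi_{\sigma,\sigma+\e,y,z}]$ for merely bounded measurable $\Phi$. The latter does hold, but not by continuity of $\Psi$ alone — one should really invoke the Cameron–Martin theorem: the laws $\nu_0\circ\Psi_{\sigma,\sigma+\e,y,z}^{-1}$ are mutually absolutely continuous with a density jointly continuous in $(y,z)$, so $(y,z)\mapsto E[\Phi\circ\Psi_{\sigma,\sigma+\e,y,z}]$ is continuous for every $\Phi\in L^\infty$ by dominated convergence on the density. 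You gesture at this but it deserves to be spelled out, since the pointwise evaluation $F_0(y,y)p_\e(y,y)$ in your delta-limit needs true continuity, not just measurability, and the paper's own proof of Corollary~\ref{cor:patodelta} is equally terse here. Second, for the FTC step you conclude $\rho\in C^1$ from $\rho(\e_2)-\rho(\e_1)=\int_{\e_1}^{\e_2}g$; this is fine, but it requires continuity of $g$ on $(0,1)$, which you assert via the joint continuity of the pinned local times and a dominated convergence argument in $(s,\e)$ — again correct, but the same Cameron–Martin continuity of $E[\Phi\circ\Psi]$ is being used silently. With these two points made explicit, your proof is a valid and rather cleaner alternative to the paper's bracketing argument.
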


\begin{rem}
In \cite{Bol93}, $\rho(\e)$ with some $\Phi$ is used to construct the polymer measure $\nu_\lambda$. 
The author focused on a lower bound for
\begin{align*}
\rho(\e_1)-\rho(\e_2).
\end{align*}
for $0<\e_1<\e_2$. He avoided the discussion of differentiability of $\rho$ and used the lower bound derived from the derivative of the modified function in which ${J}_{0,1}^\e$ is replaced by $J_{0,1}^{\e,a}$.
\end{rem}

\begin{proof}
We may assume that $\Phi$ is a positive random variable.
Fix $\e\in (0,1)$. Let $\widetilde{\e}\in(0,1-\e)$ be small enough.  Then, we have \begin{align*}
\rho(\e+\widetilde{\e})-\rho(\e)=-E\left[\exp\left(-\overline{J}^{\e+\widetilde{\e},\lambda}_{0,1}\right)\left(\exp\left(\overline{J}^{\e+\widetilde{\e},\lambda}_{0,1}-\overline{J}^{\e,\lambda}_{0,1}\right)-1\right)\Phi\right].
\end{align*}
We can cope with the deterministic part in $\overline{J}^{\cdot,\lambda}_{0,1}$ easily. Thus, it is enough to focus on \begin{align*}
E\left[\exp\left(-\overline{J}^{\e+\widetilde{\e},\lambda}_{0,1}\right)\left(\exp\left(\lambda \left({J}^{\e+\widetilde{\e}}_{0,1}-{J}^{\e}_{0,1}\right)\right)-1\right)\Phi\right].
\end{align*}
Since $\lambda \left({J}^{\e+\widetilde{\e}}_{0,1}-{J}^{\e}_{0,1}\right)$ is negative, we have \begin{align*}
&E\left[\exp\left(-\overline{J}^{\e+\widetilde{\e},\lambda}_{0,1}\right)\left(\exp\left(\lambda \left({J}^{\e+\widetilde{\e}}_{0,1}-{J}^{\e}_{0,1}\right)\right)-1\right)\Phi\right]\\
&\geq \lambda E\left[\exp\left(-\overline{J}^{\e+\widetilde{\e},\lambda}_{0,1}\right)\left({J}^{\e+\widetilde{\e}}_{0,1}-{J}^{\e}_{0,1}\right)\Phi\right]\\
&=-\lambda \int_{T_\e\backslash T_{\e+\widetilde{\e}}}\dd u\dd v E\left[\exp\left(-\overline{J}^{\e+\widetilde{\e},\lambda}_{0,1}\right)\delta_{0}(\omega_u-\omega_v)\Phi\right].
\end{align*}
It is easy to see that \begin{align*}
&\lim_{\widetilde{\e}\to 0}\frac{1}{\widetilde{\e}}\int_{T_\e\backslash T_{\e+\widetilde{\e}}}\dd u\dd v E\left[\exp\left(-\overline{J}^{\e+\widetilde{\e},\lambda}_{0,1}\right)\delta_{0}(\omega_u-\omega_v)\Phi\right]\\
&= \int_0^{1-\e}\dd sE\left[e^{-\overline{J}^{\e,\lambda}_{0,1}}\Phi\delta _0 (\omega_s-\omega_{s+\e})\right].
\end{align*}

On the other hand, we can use the following bound obtained by Rosen \cite[p.336, just below (4.4)]{Ros83}: For any $k\geq 2$ and $p>4 $, there exists a constant $C>0$ such that 
\begin{align}
E\left[\left|{J}^{\e+\widetilde{\e}}_{0,1}-{J}^{\e}_{0,1}\right|^k\right]\leq C|T_\e\backslash T_{\e+\widetilde{\e}}|^{\frac{k}{p}}\leq C\widetilde{\e}^{\frac{k}{p}},\label{eq:Jdiffmoment}
\end{align}
where $|T_\e\backslash T_{\e+\widetilde{\e}}|$ is the Lebesgue measure of  $T_\e\backslash T_{\e+\widetilde{\e}}$ in $\R^2$. 
Therefore, for any $\delta>0$, \begin{align*}
\varlimsup_{\widetilde{\e}\to 0}\frac{1}{\widetilde{\e}}E\left[\exp\left(-\overline{J}^{\e+\widetilde{\e},\lambda}_{0,1}\right)\left(\exp\left({J}^{\e+\widetilde{\e}}_{0,1}-{J}^{\e}_{0,1}\right)-1\right)\Phi:{J}^{\e}_{0,1}-{J}^{\e+\widetilde{\e}}_{0,1}>\delta\right]=0.
\end{align*}
Let $r>0$ be a small constant and take $\delta>0$ such that $1-e^{-x}\geq (1-r)x$ for $0\leq x\leq \delta$. Then, \begin{align*}
&E\left[\exp\left(-\overline{J}^{\e+\widetilde{\e},\lambda}_{0,1}\right)\left(\exp\left(\lambda \left({J}^{\e+\widetilde{\e}}_{0,1}-{J}^{\e}_{0,1}\right)\right)-1\right)\Phi\right]\\
&\leq -(1-r)\lambda E\left[\exp\left(-\overline{J}^{\e+\widetilde{\e},\lambda}_{0,1}\right)\left({J}^{\e}_{0,1}-{J}^{\e+\widetilde{\e}}_{0,1}\right)\Phi:{J}^{\e}_{0,1}-{J}^{\e+\widetilde{\e}}_{0,1}\leq \delta\right]+o(\widetilde{\e})\\
&\leq  -(1-r)\lambda E\left[\exp\left(-\overline{J}^{\e+\widetilde{\e},\lambda}_{0,1}\right)\left({J}^{\e}_{0,1}-{J}^{\e+\widetilde{\e}}_{0,1}\right)\Phi\right]+o(\widetilde{\e}),
\end{align*}
where we have used \eqref{eq:Jdiffmoment} in the last line.
Thus, we have \begin{align*}
&\varlimsup_{\widetilde{\e}\to 0}\frac{1}{\widetilde{\e}}E\left[\exp\left(-\overline{J}^{\e+\widetilde{\e},\lambda}_{0,1}\right)\left(\exp\left(\lambda \left({J}^{\e+\widetilde{\e}}_{0,1}-{J}^{\e}_{0,1}\right)\right)-1\right)\Phi\right]\\
&\leq (1-r)\lambda \int_0^{1-\e}\dd sE\left[e^{-\overline{J}^{\e,\lambda}_{0,1}}\Phi\delta _0 (\omega_s-\omega_{s+\e})\right].
\end{align*}
Since $0<r<1$ is arbitrary, the right differentiability follows. The left differentiability follows by the same kind of arguments.
\end{proof}

\section{Some estimates with respect to $\nu_{\e,\lambda}$ }\label{sec:3}

In this section, we see that  $|\rho(\e)-\rho(\e')|$ decays polynomially in $\e,\e'\searrow 0$ for $\Phi$ in a suitable class of functions, where $\rho(\e)$ is defined in \eqref{dfn:rho2}. Thus, we find that $\lim_{\e\searrow 0}\rho(\e)$ exists.

\subsection{Recursive estimates}
 
For $x\in\R^3$, we set $p_t(x):=(2\pi t)^{-\frac{3}{2}}\exp\left(-\frac{|x|^2}{2t}\right)$, and
\begin{align*}
g_t^{\e,\lambda}(x):=E\left[\exp\left(-\overline{J}_{0,t}^{\e,\lambda}\right)\delta_{x}(\omega_t )\right] .
\end{align*}
In \cite[Proposition 3.1]{Bol93} and \cite[Proposition 3.4]{AZ98} it was shown that for any given $\lambda\in [0,\infty)$ there is a constant $C\in (0,\infty)$ such that
\begin{align}
\left|g_{t}^{\e,\lambda}(x)-p_t(x)\right|\leq Ct^{\frac{1}{2}}p_{2t}(x)\label{eq:partpest}
\end{align}
for all $t\in [0,1]$ and $x\in \R^3$.
For $n\geq 1$, we define an $n$-points ``transition density" by
\begin{align*}
g_{\mathbf{t}}^{T,\e,\lambda}(\mathbf{x}):=E\left[\exp\left(-\overline{J}_{0,T}^{\e,\lambda}\right)\delta_{x_1}(\omega_{t_1}) \cdots \delta_{x_n}(\omega_{t_n}) \right],
\end{align*}
for $\mathbf{t}=(t_1,\cdots,t_n)$ with $0<t_1<\cdots<t_n\leq T\leq 1$ and $\mathbf{x}=(x_1,\dots,x_n)\in (\R^{3})^{n}$.
The following lemma is provided in \cite[Lemma 4.1]{AZ98}.

\begin{lemma}\label{lem:finitetra}
For any $n\geq 1$ and $\lambda\in [0,\infty)$, there exists a constant $C\in (0,\infty)$ such that \begin{align*}
g_{\mathbf{t}}^{T,\e,\lambda}(\mathbf{x})\leq Cp_{2t_1}(x_1)p_{2(t_2-t_1)}(x_2-x_1)\cdots p_{2(t_n-t_{n-1})}(x_n-x_{n-1}).
\end{align*}
for $\mathbf{x}=(x_1,\dots,x_n)\in (\R^3)^n$, $\lambda\in [0,\infty)$ and $\e\in (0,1)$.
\end{lemma}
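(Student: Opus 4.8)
The statement is quoted from \cite[Lemma~4.1]{AZ98}; here is the plan of a self-contained argument. The idea is to reduce the $n$-point bound to the one-point estimate \eqref{eq:partpest} by combining the superadditivity of the self-intersection local time with the Markov property of Brownian motion. First I would record the one-point consequence of \eqref{eq:partpest}: since $p_{s}(z)\le 2^{3/2}p_{2s}(z)$ for all $s>0$ and $z\in\R^{3}$, \eqref{eq:partpest} yields a constant $C_{0}$ with $g_{s}^{\e,\lambda}(z)\le C_{0}\,p_{2s}(z)$ for $s\in(0,1]$, $z\in\R^{3}$, $\e\in(0,1)$, and, integrating over $z$, $\int_{\R^{3}}g_{s}^{\e,\lambda}(z)\,\dd z\le 1+Cs^{1/2}\le C_{1}$ for $s\in(0,1]$.

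Next, put $t_{0}:=0$, $t_{n+1}:=T$, $x_{0}:=0$. Because the heat kernels $p_{a}$ are nonnegative and the time-domains $\{(\sigma,\tau):t_{i-1}\le\sigma\le\tau\le t_{i},\ \tau-\sigma\ge\e\}$ are pairwise disjoint and contained in $T_{\e}$ for $[0,T]$, the approximants obey $J_{0,T}^{\e,a}\ge\sum_{i=1}^{n+1}J_{t_{i-1},t_{i}}^{\e,a}$, and letting $a\searrow0$ gives $J_{0,T}^{\e}\ge\sum_{i=1}^{n+1}J_{t_{i-1},t_{i}}^{\e}$. Since $\sum_{i=1}^{n+1}(t_{i}-t_{i-1})=T$ and $\lambda\ge0$, the renormalisation constants add up exactly, so
\[
\overline{J}_{0,T}^{\e,\lambda}\ \ge\ \sum_{i=1}^{n+1}\overline{J}_{t_{i-1},t_{i}}^{\e,\lambda},\qquad\text{hence}\qquad e^{-\overline{J}_{0,T}^{\e,\lambda}}\ \le\ \prod_{i=1}^{n+1}e^{-\overline{J}_{t_{i-1},t_{i}}^{\e,\lambda}}.
\]

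I would then use the representation of $g_{\mathbf t}^{T,\e,\lambda}$ analogous to Remark~\ref{rem:deltaprod},
\[
g_{\mathbf t}^{T,\e,\lambda}(\mathbf x)=E\!\left[e^{-\overline{J}_{0,T}^{\e,\lambda}}\,\big|\,\omega_{t_{i}}=x_{i},\ i=1,\dots,n\right]\ \prod_{i=1}^{n}p_{t_{i}-t_{i-1}}(x_{i}-x_{i-1}),
\]
together with the fact, encoded in the $\Psi_{u,v,x,y}$-construction of Section~\ref{subsec:delta}, that under this conditioning the pieces $(\omega_{s})_{s\in[t_{i-1},t_{i}]}$ are independent Brownian bridges and $\overline{J}_{t_{i-1},t_{i}}^{\e,\lambda}$ is a measurable functional of the $i$-th piece alone. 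Plugging the product bound above into the conditional expectation and factorising gives
\[
E\!\left[e^{-\overline{J}_{0,T}^{\e,\lambda}}\,\big|\,\omega_{t_{i}}=x_{i}\right]\le\Big(\prod_{i=1}^{n}E\!\left[e^{-\overline{J}_{t_{i-1},t_{i}}^{\e,\lambda}}\,\big|\,\omega_{t_{i-1}}=x_{i-1},\,\omega_{t_{i}}=x_{i}\right]\Big)\,E\!\left[e^{-\overline{J}_{t_{n},T}^{\e,\lambda}}\,\big|\,\omega_{t_{n}}=x_{n}\right].
\]
By translation invariance the last factor equals $\int g_{T-t_{n}}^{\e,\lambda}(y)\,\dd y\le C_{1}$ (and equals $1$ when $t_{n}=T$), while each middle factor equals $g_{t_{i}-t_{i-1}}^{\e,\lambda}(x_{i}-x_{i-1})/p_{t_{i}-t_{i-1}}(x_{i}-x_{i-1})\le C_{0}\,p_{2(t_{i}-t_{i-1})}(x_{i}-x_{i-1})/p_{t_{i}-t_{i-1}}(x_{i}-x_{i-1})$ by the one-point bound. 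Multiplying back by $\prod_{i=1}^{n}p_{t_{i}-t_{i-1}}(x_{i}-x_{i-1})$ cancels the denominators and produces $g_{\mathbf t}^{T,\e,\lambda}(\mathbf x)\le C_{0}^{\,n}C_{1}\prod_{i=1}^{n}p_{2(t_{i}-t_{i-1})}(x_{i}-x_{i-1})$, i.e.\ the claim with $C:=C_{0}^{\,n}C_{1}$.

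The step needing the most care is the conditional factorisation: one must verify, inside the rigorous framework of Lemma~\ref{lem:Jspcont}, Corollary~\ref{cor:patodelta} and Remark~\ref{rem:deltaprod}, that conditioning on $\omega_{t_{1}}=x_{1},\dots,\omega_{t_{n}}=x_{n}$ genuinely makes the increments on the intervals $[t_{i-1},t_{i}]$ independent and that $e^{-\overline{J}_{t_{i-1},t_{i}}^{\e,\lambda}}$ may be replaced by its conditional expectation given the two neighbouring endpoints. Everything else is the elementary Gaussian inequality $p_{s}\le 2^{3/2}p_{2s}$ together with the monotonicity in $\lambda\ge0$; the hypothesis $T\le1$ is used only to guarantee $t_{i}-t_{i-1}\le1$ and $T-t_{n}\le1$, which is the range in which \eqref{eq:partpest} is available.
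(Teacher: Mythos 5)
Your proof is correct. The paper itself gives no argument for this lemma — it simply cites \cite[Lemma~4.1]{AZ98} — so there is no internal proof to compare against; your reconstruction via superadditivity of the self-intersection local time over the diagonal blocks of $[0,T]^2$, exact additivity of the counterterms $\lambda(t_i-t_{i-1})\kappa_1(\e)-\lambda^2(t_i-t_{i-1})\kappa_2(\e)$ over the partition, conditional independence of the bridge pieces, and the one-point estimate \eqref{eq:partpest} is precisely the standard route for such multi-point transition bounds, and every step (the sign of $\lambda(J_{0,T}^\e-\sum_i J_{t_{i-1},t_i}^\e)\ge 0$, the identification of each pinned factor with $g_{t_i-t_{i-1}}^{\e,\lambda}(x_i-x_{i-1})/p_{t_i-t_{i-1}}(x_i-x_{i-1})$, the unpinned tail bound $\int g_s^{\e,\lambda}\le 1+Cs^{1/2}$, and the resulting constant $C_0^nC_1$ depending only on $n$ and $\lambda$) checks out.
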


Let $\psi:[0,1]\times \R^3\to \R$ be bounded and infinitely often differentiable with bounded derivatives of all orders, and for $0\leq s<t\leq 1$, we define $\Psi_{s,t}$ by \begin{align}
\Psi_{s,t}:=\exp\left(\int_s^t \psi(u,\omega_u)\dd u\right).\label{eq:psidef}
\end{align}
As was shown in \cite[\S 4]{Bol93}, \eqref{eq:partpest} implies that for any such $\psi$ and $\lambda\in [0,\infty)$ there is a constant $C\in (0,\infty)$ such that \begin{align}
\left|E\left[\exp\left(-\overline{J}_{0,1}^{\e_1,\lambda}\right)\Psi_{0,1}\right]-E\left[\exp\left(-\overline{J}_{0,1}^{\e_2,\lambda}\right)\Psi_{0,1}\right]\right|\leq C(\e_1^{\delta}+\e_2^{\delta})
\label{eq:psiest}
\end{align} 
for all $\e_1,\e_2\in (0,1)$, where $\delta\in (0,1)$. This implies that the limit
\begin{align}
N(\lambda)=\lim_{\e\to 0}E\left[\exp\left(-\overline{J}_{0,1}^{\e,\lambda}\right)\right] \in (0,\infty ) \label{eq:Nlam}
\end{align} 
exists for all $\lambda\in[0,\infty)$. Strict positivity follows from the fact that the constructed measure corresponds to the one constructed by Westwater  \cite[Theorem 1.2]{AZ98}.
From \eqref{eq:psiest} and \eqref{eq:Nlam}, the convergence of the approximated polymer measure  $\nu_{\varepsilon,\lambda}$ follows. Thus, Bolthausen constructed the three-dimensional polymer measure in \cite{Bol93}. 
In this section, we will give a similar estimate for more general functions $\Phi$.

Let $f:\mathbb{R}^3\to \R$ be a bounded measurable function. Let $\Delta_{u,v;s,t}:=\{(\sigma,\tau):u\leq \sigma\leq v,s\leq \tau\leq t\}$, and for any fixed $a\in (0,1)$ set
\begin{align}
\Phi_{u,v;s,t}:=\int_{T_{a}\cap \Delta_{u,v;s,t}}f(\omega_\sigma-\omega_\tau)\dd \sigma \dd \tau \label{eq:PhiF}	
\end{align}
where $T_a := \{ (s,t)\in [0,1]^2: t-s \geq a\}$ as above. 
For short, we set $\Phi:=\Phi_{0,1;0,1}$ and $\Phi_{s,t}:=\Phi_{s,t;s,t}$. For this $\Phi$, we set \begin{align}
\rho(\e)=E\left[\exp\left(-\overline{J}_{0,1}^{\e,\lambda}\right)\Phi\right]\label{eq:phidef}
\end{align}
in \eqref{dfn:rho2}.

The main aim of this section is to prove the following lemma, that will be used in the proof of Theorem \ref{thm:rho} below.

\begin{lemma}\label{lem:phiest}
There are constants $C\in (0,\infty)$ and $\widetilde{\delta} \in (0,1)$ such that
\begin{align}
|\rho(\e_1)-\rho({\e_2})|\leq C\sup_{x\in\mathbb{R}^3}|f(x)|(\e_1^{\widetilde{\delta}}+\e_2^{\widetilde{\delta}})\label{eq:phiest}
\end{align}
for $\e_1,\e_2\in (0,1)$.
\end{lemma}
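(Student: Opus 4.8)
The plan is to mimic Bolthausen's argument that yielded \eqref{eq:psiest}, but carried out for the more singular functional $\Phi=\Phi_{0,1;0,1}$ of \eqref{eq:PhiF} in place of the smooth multiplicative functional $\Psi_{0,1}$ of \eqref{eq:psidef}. The first step is to reduce the statement to a differentiability-and-integration estimate: by Lemma~\ref{lem:diffrho} the function $\rho$ is differentiable on $(0,1)$ with
\[
\frac{\dd}{\dd\e}\rho(\e)=\lambda\int_0^{1-\e}\dd s\,E\!\left[e^{-\overline J^{\e,\lambda}_{0,1}}\Phi\,\delta_0(\omega_s-\omega_{s+\e})\right]+\left(\lambda\kappa_1'(\e)-\lambda^2\kappa_2'(\e)\right)\rho(\e),
\]
so it suffices to show $|\rho'(\e)|\le C\sup_x|f(x)|\,\e^{\widetilde\delta-1}$ for some $\widetilde\delta\in(0,1)$; then \eqref{eq:phiest} follows by integrating from $\e_2$ to $\e_1$ (say $\e_2<\e_1$) and using that $\rho$ is bounded (indeed $|\rho(\e)|\le \sup_x|f(x)|\,E[e^{-\overline J^{\e,\lambda}_{0,1}}]$, which is bounded uniformly in $\e$ by \eqref{eq:psiest}--\eqref{eq:Nlam} with $\psi\equiv 0$). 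Note that the divergent constants $\kappa_1'(\e)\sim \e^{-3/2}$ and $\kappa_2'(\e)\sim \e^{-1}$ are exactly the counterterms: the point is that the first term of $\rho'(\e)$, after expanding $\Phi$ and the ``$\delta_0(\omega_s-\omega_{s+\e})$'' insertion into the $n$-point functions $g^{T,\e,\lambda}_{\mathbf t}$, produces divergences in $\e$ that cancel against $\lambda\kappa_1'(\e)-\lambda^2\kappa_2'(\e)$ acting on $\rho(\e)$, leaving a remainder of order $\e^{\widetilde\delta-1}$.

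The main work is therefore the analysis of
\[
\lambda\int_0^{1-\e}\dd s\,E\!\left[e^{-\overline J^{\e,\lambda}_{0,1}}\Phi\,\delta_0(\omega_s-\omega_{s+\e})\right]+\left(\lambda\kappa_1'(\e)-\lambda^2\kappa_2'(\e)\right)\rho(\e).
\]
Here I would use the conditioning/pinning formalism of Section~\ref{subsec:delta} (Remark~\ref{rem:deltaprod} and Corollary~\ref{cor:patodelta}) together with the bounds \eqref{eq:partpest} and Lemmas~\ref{lem:finitetra}: writing $\Phi=\int_{T_a}f(\omega_\sigma-\omega_\tau)\,\dd\sigma\dd\tau$, the expectation becomes an integral over $(\sigma,\tau)\in T_a$, $s\in[0,1-\e]$, and over the spatial positions of $\omega$ at the times $s,s+\e,\sigma,\tau$, of a product of (perturbed) heat kernels $g^{\cdots,\e,\lambda}_{\cdots}$ times $f$ of a position difference. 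Using \eqref{eq:partpest} to replace each $g$ by $p$ up to an error of relative order $t^{1/2}$, and Lemma~\ref{lem:finitetra} to dominate the full $n$-point densities by Gaussians, one isolates the ``leading'' contribution — the one where the pinning pair $(s,s+\e)$ is far (in the time ordering) from the pair $(\sigma,\tau)$ — which factorizes as $\bigl(\int_{T_a}E[e^{-\overline J^{\e,\lambda}_{0,1}}f(\omega_\sigma-\omega_\tau)]\,\dd\sigma\dd\tau\bigr)\cdot\int_0^{1-\e}p_\e(0)\,\dd s$, i.e. essentially $\rho(\e)\cdot(1-\e)p_\e(0)$; and $\lambda(1-\e)p_\e(0)=\lambda(2\pi\e)^{-3/2}$ matches $\lambda\kappa_1'(\e)$ (from \eqref{eq:kappa1a}) up to lower order. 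The subleading configurations, where the two time-pairs overlap or are adjacent, produce the $\lambda^2\kappa_2'(\e)$-type $\log$-divergence and again cancel against the $-\lambda^2\kappa_2'(\e)\rho(\e)$ term. What remains after these cancellations is a sum of integrals each of which, by an explicit computation of Gaussian integrals in the spatial variables followed by an integration over the time simplex (exactly the kind of computation ``omitted in \cite{ARZ96}'' referred to after the lemma), is bounded by $C\sup_x|f(x)|\,\e^{\widetilde\delta-1}$ for a suitable small $\widetilde\delta>0$; the factor $\sup_x|f(x)|$ is immediate since $f$ enters only through $\|f\|_\infty$ after all spatial integrations are done against integrable Gaussian densities.

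The hard part is bookkeeping the cancellation of the divergent terms: one must show that the divergences generated by the $\delta_0(\omega_s-\omega_{s+\e})$ insertion (together with the various relative positions of $(s,s+\e)$ with respect to the $\Phi$-pair $(\sigma,\tau)$ and to all the self-intersection times implicit in $\overline J^{\e,\lambda}_{0,1}$) are \emph{exactly} those already subtracted in $\overline J^{\e,\lambda}_{0,1}$ via $\kappa_1,\kappa_2$, uniformly in $a$ and in the bounded function $f$. Concretely this means re-running Bolthausen's renormalization bookkeeping from \cite[\S\S 3--4]{Bol93} / \cite[\S 4]{AZ98} with the extra insertion present, keeping careful track of which time configurations are ``dangerous'' (those making some inter-point time gap shrink to $0$ as $\e\to 0$) and checking that in each such configuration the leading singular behaviour is $\e$-independent of $f$ and is absorbed by the counterterms; the remaining $\e^{\widetilde\delta-1}$ bound then comes from the ``safe'' configurations where all gaps stay bounded below, plus the $\e^{1/2}$-type error from \eqref{eq:partpest}. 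This is routine in spirit but delicate, and it is the content the paper flags as needing ``precise computations of integrals, which were omitted in \cite{ARZ96}.''
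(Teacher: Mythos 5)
Your proposal is correct and follows essentially the same route as the paper: reduce via Lemma~\ref{lem:diffrho} to the bound $|\rho'(\e)|\le C\sup|f|\,\e^{\widetilde\delta-1}$, then cancel the $\kappa_1'$- and $\kappa_2'$-divergences produced by the $\delta_0(\omega_s-\omega_{s+\e})$ insertion against the counterterms by a Markov-property factorization (the paper's Proposition~\ref{lem:Phi1st}) together with the Gaussian domination of $n$-point densities (Lemma~\ref{lem:finitetra}) and the comparison \eqref{eq:partpest}. The paper's bookkeeping is somewhat more explicit than your ``leading vs.\ subleading time configurations'' description: it first splits $\overline J^{\e,\lambda}_{0,1}$ via \eqref{eq:jtilde} into a safe part $\widetilde J^{s,\e,\lambda}_{0,1}$ plus the two dangerous wedges $J^\e_{0,s;s,s+\e}$ and $J^\e_{s,s+\e;s+\e,1}$, then Taylor-expands the exponential in the dangerous parts to second order to obtain the three quantities $I^{(1)},I^{(2)},I^{(3)}$ whose time-integrals deliver the $\kappa_1'$-, $\kappa_2'$- and remainder contributions respectively, but the underlying cancellation mechanism is the one you describe.
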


The proof of Lemma \ref{lem:phiest} will be provided in Section \ref{sec:3.2} below.
The estimate \eqref{eq:phiest}, with $|\Phi|$ replaced by $\Psi_{0,1}$ in \eqref{eq:psidef}, was already proved by Bolthausen in \cite[\S 4]{Bol93}. By modifying his argument one can see that the discussion in \cite[\S 4]{Bol93} is also suitable for the present case. Since $|\Phi_{0,{t+s}}|$ is not equal to $|\Phi_{0,s}||\Phi_{s,s+t}|$, some arguments given in the proof of Lemma \ref{lem:phiest} below are different from those given in \cite[\S 4]{Bol93}.

\begin{rem}\label{rem:lemphiest}\begin{itemize}

\item Lemma \ref{lem:phiest} is true for $|\Phi|$ where $\Phi=\Phi_1-\Phi_2$ for $\Phi_1,\Phi_2$ of the form of the $\Phi$ entering in \eqref{eq:PhiF}.
A typical example of $\Phi$ to which Lemma \ref{lem:phiest} applies is:\begin{align*}
\Phi:=\widetilde{J}_{\e_n,a_n}(u,h)-\widetilde{J}_{\e_{n-1},a_{n-1}}(u,h)\quad h\in K
\end{align*}
where $\widetilde{J}$ was defined just after Corollary \ref{cor:JaJ}.

\item We are able to generalize \eqref{eq:phiest} as follows: 
Let $h$ be a given function in $K_0$ (with $K_0$ in \eqref{eq:Kodfn}  and $K$ in \eqref{eq:Kdfn}) and $U_{s,t}(h) :=h'_t\omega_t-h'_s\omega_s-\int_s^t \omega_\tau h''_\tau\dd \tau$ with $U_1(h) :=U_{0,1}(h)$.
Let $F$ be an element of $C_b^1(\R^{n+2})$, the set of continuously differentiable functions $F$ with $\dis \sup_{x\in \R^{n+2}}|F(x)|+\sum_{|\alpha|=1}\sup_{x\in \R^{n+2}}|\partial _{\alpha}F(x)|<\infty$. 
Then, we have (with $\overline{J}^{\e,\lambda}_{0,1}$ as given after \eqref{eq:kappa2})
\begin{align*}
&\left| E\left[\exp\left(-\overline{J}_{0,1}^{\e_1,\lambda}\right)F(\Phi,U_1,\omega_{t_1},\cdots,\omega_{t_n})\right]-E\left[\exp\left(-\overline{J}_{0,1}^{\e_2,\lambda}\right)F(\Phi,U_1,\omega_{t_1},\cdots,\omega_{t_n})\right]\right|\\
&\leq C\left(\sup_{x\in \R^{n+2}} |F(x)|+\max|f|\sup_{x\in \R^{n+2}}|\partial _{x_1}F(x)|+\sum_{i=2}^{n+2}\sup_{x\in \R^{n+2}}|\partial _{x_i}F(x)|\right)(\e_1^{\delta_0}+\e_2^{\delta_0}),
 \end{align*}
for any $0\leq t_1<\cdots<t_n\leq 1$.

\item We also have another generalization:
For $i=1,\cdots,m$, let $f_i:\R^3\to \R$ be any bounded measurable function. For $a_i\in (0,1)$ and $i=1,\cdots,m$, set
\begin{align*}
\Psi^{(i)}_{u,v;s,t}:=\int_{T_{a_i}\cap \Delta_{u,v;s,t}}f_i(\omega_\tau-\omega_{\sigma})\dd \sigma\dd \tau .
\end{align*} 
Then, we have that for any given $\ell\geq 1$, there are constants $C\in (0,\infty)$ and $\widetilde{\delta} \in (0,1)$ such that (with $\overline{J}^{\e,\lambda}_{0,1}$ as given after \eqref{eq:kappa2})
\begin{align*}
&\left|E\left[\exp\left(-\overline{J}_{0,1}^{\e_1,\lambda}\right) \prod_{i=1}^m\Psi^{(i)}_{0,1;0,1}\right]-E\left[\exp\left(-\overline{J}_{0,1}^{\e_2,\lambda}\right)\prod_{i=1}^m\Psi^{(i)}_{0,1;0,1}\right]\right|\\
&\leq C\prod_{i=1}^m \max_{x\in R^3}|f_i(x)|\left(\e_1^{\widetilde{\delta} }+\e_2^{\widetilde{\delta} }\right) 
\end{align*}
for $\e_1,\e_2\in (0,1)$.
For the proof, we remark that the same argument as the proof of Lemma \ref{lem:phiest} can be applied in view of the fact that
\begin{align*}
\dis \left|\prod_{i=1}^n (x_i+v_i)-\prod_{i=1}^n x_i\right|\leq \sum_{i=1}^n |v_i|\prod_{j\not= i} |x_j+v_j|\vee |x_j|
\end{align*}
for any $x_i, v_i \in {\mathbb R}$ ($i=1,2,\dots ,n$).

\end{itemize}\end{rem} 

\begin{rem}
Lemma \ref{lem:phiest} is the same as the one in \cite{ARZ96} and the idea of the proof is based on the one in \cite{ARZ96} (where the detailed estimation of some integrals was omitted, but we confirm that it can be provided).
\end{rem}

\subsection{Proof of Lemma \ref{lem:phiest}}\label{sec:3.2}

\begin{proof}[Proof of Lemma \ref{lem:phiest}.]
For short, we will denote $\sup_{x\in \R^3}|f(x)|$ by $\max|f|$. In addition, we remark that $\rho(\e)\leq C\max|f|$ for $0<\e<\frac{1}{2}$.
From Lemma \ref{lem:diffrho},  we have the following relation:
\begin{align}
\frac{\dd}{\dd \e}\rho(\e)&=\lambda \int_0^{1-\e}\dd sE\left[e^{-\overline{J}^{\e,\lambda}_{0,1}}\Phi\delta _0 (\omega_s-\omega_{s+\e})\right]+\left(\lambda\frac{\dd}{\dd \e}\kappa_1(\e)-\lambda^2 \frac{\dd}{\dd \e}\kappa_2(\e)\right)\rho(\e) .\label{eq:derofrho}
\end{align}
We remark that in \cite{Bol93} the differentiability of $\rho(\e)$ is not provided.

We decompose $\overline{J}$ as follows (see Figure \ref{fig:DecomJ01}):
\begin{align}
\overline{J}_{0,1}^{\e,\lambda}&=\lambda J_{0,1}^\e-\lambda \kappa_1(\e)+\lambda^2\kappa_2(\e)\notag\\
&=\lambda J_{0,s}^\e-\lambda s \kappa_1(\e)+\lambda^2 s\kappa_2(\e)\notag\\
&\quad +\lambda J_{s+\e,1}^\e-\lambda (1-s-\e) \kappa_1(\e)+\lambda^2 (1-s-\e)\kappa_2(\e)\notag\\
&\quad +\lambda J_{0,s;s+\e,1}^\e+\lambda J_{0,s;s,s+\e}^\e+\lambda J_{s,s+\e;s+\e,1}^\e-\e (\lambda \kappa_1(\e)-\lambda^2\kappa_2(\e)) \notag\\
&=\overline{J}_{0,s}^{\e,\lambda}+\overline{J}_{s+\e,1}^{\e,\lambda}+\lambda J_{0,s;s+\e,1}^{\e}+\lambda J_{0,s;s,s+\e}^\e+\lambda J_{s,s+\e;s+\e,1}^\e-\e (\lambda \kappa_1(\e)-\lambda^2\kappa_2(\e))\label{eq:jtilde}
 \end{align} 
and  set \begin{align*}
\widetilde{J}_{0,1}^{s,\e,\lambda}:=\overline{J}_{0,s}^{\e,\lambda}+\overline{J}_{s+\e,1}^{\e,\lambda}+\lambda J_{0,s;s+\e,1}^{\e}.
\end{align*}

\begin{rem}
The readers have to be careful not to confuse it with the similar notation $\widetilde{J}_{\e,a}(u,h)$ given just after Corollary \ref{cor:JaJ}.
\end{rem}

\begin{figure}[ht]
\begin{center}
\includegraphics[width=4in,pagebox=cropbox,clip]{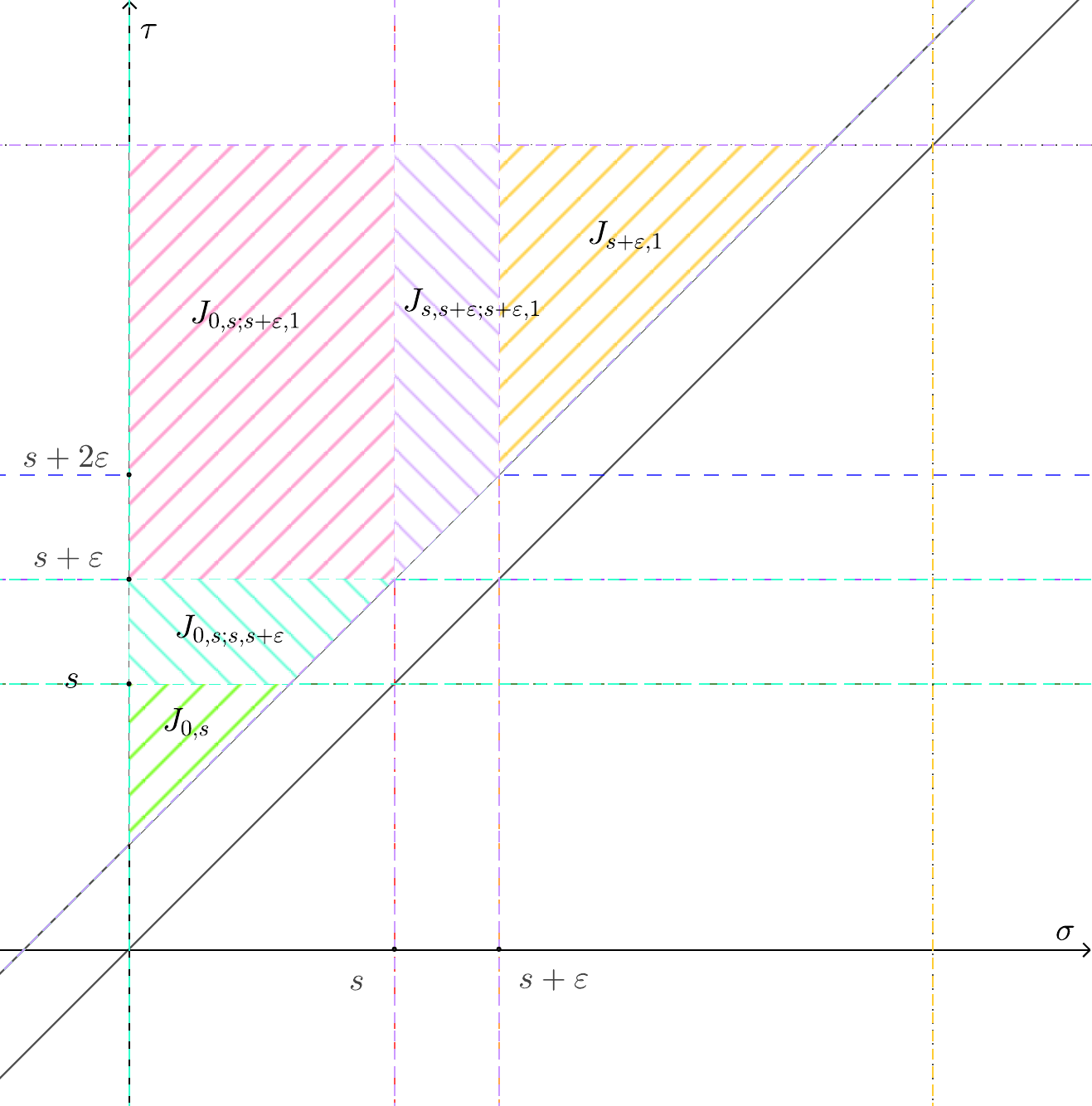}
\caption{A decomposition of $J_{0,1}^\e$.}\label{fig:DecomJ01}
\end{center}
\end{figure}

It is obvious from the definition of $\kappa_1$ in \eqref{eq:kappa1a} and $\kappa _2$ entering \eqref{eq:kappa2} that
\begin{align}
K_{\e,\lambda}:=\e (\lambda \kappa_1(\e)-\lambda^2\kappa_2(\e)) =O(\e^{\frac{1}{2}}).\label{eq:Kelam}
\end{align}
Using the fact that $1-x\leq e^{-x}\leq 1-x+\frac{1}{2}x^2$ for $x>0$, one shows that \begin{align}
E\left[e^{-\overline{J}_{0,1}^{\e,\lambda}}\delta_0 (\omega_s-\omega_{s+\e}) \Phi\right]
&=E\left[e^{-\widetilde{J}_{0,1}^{s,\e,\lambda}}\delta_0 (\omega_s-\omega_{s+\e}) \Phi\right]e^{K_{\e,\lambda}}\notag\\
&\quad -\lambda E\left[e^{-\widetilde{J}_{0,1}^{s,\e,\lambda}}\delta_0 (\omega_s-\omega_{s+\e}) (J_{0,s;s,s+\e}^{\e}+J_{s,s+\e;s+\e,1}^{\e})\Phi\right]e^{K_{\e,\lambda}}\notag\\
&\quad +R_{\lambda,s,\e},\label{eq:Jdecom}
\end{align}
where the remainder term $R_{\lambda,s,\e}$ is dominated by
\begin{align*}
&\frac{\lambda^2}{2}E\left[e^{-\widetilde{J}_{0,1}^{s,\e,\lambda}}\delta_0 (\omega_s-\omega_{s+\e}) (J_{0,s;s,s+\e}^{\e}+J_{s,s+\e;s+\e,1}^{\e})^2|\Phi|\right]e^{K_{\e,\lambda}}.
\end{align*}
Substituting this into \eqref{eq:derofrho}, we have \begin{align}
\frac{\dd }{\dd \e}\rho(\e)&=\lambda\int_0^{1-\e}\dd s E\left[e^{-\widetilde{J}_{0,1}^{s,\e,\lambda}}\delta_0 (\omega_s-\omega_{s+\e}) \Phi\right]e^{K_{\e,\lambda}}\notag\\
&-\lambda^2 \int_0^{1-\e  }\dd s E\left[e^{-\widetilde{J}_{0,1}^{s,\e,\lambda}}\delta_0 (\omega_s-\omega_{s+\e}) (J_{0,s;s,s+\e}^{\e}+J_{s,s+\e;s+\e,1}^{\e})\Phi\right]e^{K_{\e,\lambda}}\notag\\
&\quad +\lambda \int_0^{1-\e}\dd s R_{\lambda,s,\e}+\left(\lambda\frac{\dd}{\dd \e}\kappa_1(\e)-\lambda^2 \frac{\dd}{\dd \e}\kappa_2(\e)\right)\rho(\e). \label{eq:rhoexpand}
\end{align}
We set 
\begin{align*}
&I_{\e,s}^{(1)}:=E\left[e^{-\widetilde{J}_{0,1}^{s,\e,\lambda}}\delta_0(\omega_s-\omega_{s+\e})\Phi\right] ,\\
&I_{\e,s}^{(2)}:=E\left[e^{-\widetilde{J}_{0,1}^{s,\e,\lambda}}\delta_0(\omega_s-\omega_{s+\e})(J_{0,s;s,s+\e}^{\e}+J_{s,s+\e;s+\e,1}^{\e})\Phi\right] ,\\
&I_{\e,s}^{(3)}:=E\left[e^{-\widetilde{J}_{0,1}^{s,\e,\lambda}}\delta_0(\omega_s-\omega_{s+\e})(J_{0,s;s,s+\e}^{\e}+J_{s,s+\e;s+\e,1}^{\e})^2\Phi\right].
\end{align*}
and will prove that there exists $\widetilde{\delta} \in (0,1)$ such that \begin{align}
\int_0^{1-\e}I_{\e,s}^{(1)}\dd s&=-(1-\e)\rho(\e)\frac{\dd}{\dd \e}\kappa_1(\e)+\lambda(1-\e)\e\kappa_1(\e)\frac{\dd}{\dd \e}\kappa_1(\e)\rho(\e) \notag\\ 
&\quad +O(1)\max|f|\e^{\widetilde{\delta}-1}\label{eq:1st}\\
&=-(1-\e)(1-\e\lambda\kappa_1(\e))\rho(\e)\frac{\dd }{\dd\e}\kappa_1(\e)+O(1)\max|f|\e^{\widetilde{\delta}-1} \notag\\
\int_0^{1-\e}I_{\e,s}^{(2)}\dd s&=-\frac{\dd }{\dd \e}\kappa_2(\e)\rho(\e) +O(1)\max|f|\e^{\widetilde{\delta}-1} \label{eq:2nd}\\
\int_0^{1-\e}I_{\e,s}^{(3)}\dd s&\leq  O(1)\max|f|\e^{-\frac{1}{2}} . \label{eq:3rd}
\end{align}
\eqref{eq:1st}-\eqref{eq:3rd} will be proven in Sections \ref{sec:eq3rd}--\ref{sec:eq2nd} below.
Once we obtain \eqref{eq:1st}-\eqref{eq:3rd}, we deduce from \eqref{eq:rhoexpand} that 
\begin{align*}
\frac{\dd }{\dd \e}\rho(\e)=
&-\lambda\left((1-\e)(1-\e\lambda\kappa_1(\e))e^{K_{\e,\lambda}}-1\right)\rho(\e)\frac{\dd }{\dd\e}\kappa_1(\e)\\
&+ \lambda^2\left(e^{K_{\e,\lambda}}-1\right)\frac{\dd }{\dd \e}\kappa_2(\e)\rho(\e)\\
&+O (1)\max|f|\e^{\widetilde{\delta}-1}e^{K_{\e,\lambda}}+		O(1)\max|f|\e^{-\frac{1}{2}}e^{K_{\e,\lambda}}.
\end{align*}
On the other hand, we find from \eqref{eq:kappa1}, \eqref{eq:kappa2}, and \eqref{eq:Kelam} that 
\begin{align*}
(1-\e)(1-\e\lambda\kappa_1(\e))e^{K_{\e,\lambda}}-1&=(1-\e)(1-\e\lambda \kappa _1(\e))\left(1+\e \left(\lambda\kappa _1(\e)-\lambda^2\kappa_2(\e)\right)+O(\e)\right)-1\\
&=O\left(\e\log \frac{1}{\e}\right)\\
e^{K_{\e,\lambda}}-1&=\left(1+O\left(\e^\frac{1}{2}\right)\right)-1=O\left(\e^\frac{1}{2}\right),
\end{align*}
and hence we have
\begin{align*}
\frac{\dd }{\dd \e}\rho(\e)&= O \left(\e\log \frac{1}{\e}\right) \rho(\e)\frac{\dd}{\dd \e}\kappa_1(\e)+\lambda^2 O (1) \e^\frac{1}{2} \rho(\e)\frac{\dd }{\dd \e}\kappa_2(\e)+O(1)\max|f|\e^{-\max\{\frac{1}{2},1-\widetilde{\delta}\}}\\
&=O(1)\max|f|\e^{\widehat{\delta}-1},
\end{align*} 
for some $\widehat{\delta}\in \left(0,\min\left\{\frac{1}{2},\widetilde{\delta}\right\}\right)$, where we have used \eqref{eq:kappa2order}.
From this, we complete the proof of Lemma \ref{lem:finitetra}.
\end{proof}

\subsubsection{Proof of \eqref{eq:3rd}}\label{sec:eq3rd}

\begin{proof}
The Cauchy-Schwarz inequality implies that
\begin{align*}
\left|I_{\e,s}^{(3)}\right| &\leq E\left[e^{-\widetilde{J}_{0,1}^{s,\e,\lambda}}\delta_0(\omega_s-\omega_{s+\e})(J_{0,s;s,s+\e}^{\e}+J_{s,s+\e;s+\e,1}^{\e})^2|\Phi|\right]\\
&\leq 2E\left[e^{-\widetilde{J}_{0,1}^{s,\e,\lambda}}\delta_0 (\omega_s-\omega_{s+\e}) (|J_{0,s;s,s+\e}^{\e}|^2+|J_{s,s+\e;s+\e,1}^{\e}|^2)|\Phi|\right].
\end{align*}
By definition, we know that \begin{align*}
|J_{0,s;s,s+\e}^{\e}|^2=\int_{([0,s]\times [s,s+\e])^2}\prod_{i=1}^2 1\{\tau_i-\sigma_i\geq \e\}\delta _0 (\omega_{\sigma_i}-\omega_{\tau_i})\dd \tau_i\dd \sigma_i
\end{align*}
and by using  Lemma \ref{lem:finitetra}, we obtain that 
\begin{align}
&E\left[e^{-\widetilde{J}_{0,1}^{s,\e,\lambda}}\delta _0(\omega_s-\omega_{s+\e})\delta _0 (\omega_{\sigma_1}-\omega_{\tau_1})\delta _0 (\omega_{\sigma_2}-\omega_{\tau_2})\right]\notag\\
&\leq 
\begin{cases}
\displaystyle C\int_{\R^3}\dd x\int_{\R^3}\dd y\int_{\R^3}\dd z p_{2\sigma_1}(x)p_{2(\sigma_2-\sigma_1)}(x,y)p_{2(s-\sigma_{2})}(y,z) \\
\hspace{7em} \times p_{2(\tau_1-s)}(z,x)p_{2(\tau_2-\tau_1)}(x,y)p_{2(s+\e-\tau_2)}(y,z)\\
\hspace{10em}\text{if }0<\sigma_1<\sigma_2<s<\tau_1<\tau_2<s+\e ; \\[2mm]
\displaystyle C\int_{\R^3}\dd x\int_{\R^3}\dd y\int_{\R^3}\dd z p_{2\sigma_1}(x)p_{2(\sigma_2-\sigma_1)}(x,y)p_{2(s-\sigma_{2})}(y,z)\\
\hspace{7em} \times p_{2(\tau_2-s)}(z,y)p_{2(\tau_1-\tau_2)}(y,x)p_{2(s+\e-\tau_1)}(x,z)\\
\hspace{10em}\text{if }0<\sigma_1<\sigma_2<s<\tau_2<\tau_1<s+\e .
\end{cases}\label{eq:elbound}
\end{align}
By the relations\begin{align*}
&p_s(x)p_t(x)=\frac{1}{(2\pi (s+t))^{3/2}}p_{\frac{st}{s+t}}(x),\\
&\int_{\R^3}p_s(x,y)p_t(y,z)\dd y=p_{s+t}(x,z),
\end{align*}
the right-hand side in \eqref{eq:elbound} is given by \begin{align*}
&C({(\sigma_2-\sigma_1)(\tau_2-\tau_1)}({s+\e-\tau_2+s-\sigma_2})+{({\sigma_2-\sigma_1+\tau_2-\tau_1})(s+\e-\tau_2)(s-\sigma_2)}\\
&+({\sigma_2-\sigma_1+\tau_2-\tau_1})({s+\e-\tau_2+s-\sigma_2})(\tau_1-s))^{-\frac{3}{2}}
\intertext{for $0<\sigma_1<\sigma_2<s<\tau_1<\tau_2<s+\e$ and }
&C{(\tau_2-\sigma_2)(\sigma_2-\sigma_1)(\tau_1-\tau_2)}+{(\sigma_2-\sigma_1+\tau_1-\tau_2)(s-\sigma_2)(\tau_2-s)}\\
&+(\tau_2-\sigma_2)(\sigma_2-\sigma_1+\tau_1-\tau_2)(s+\e-\tau_1))^{-\frac{3}{2}}
\end{align*}
for $0<\sigma_1<\sigma_2<s<\tau_2<\tau_1<s+\e$.
\begin{figure}[ht]
\begin{center}
\includegraphics[width=3in,pagebox=cropbox,clip]{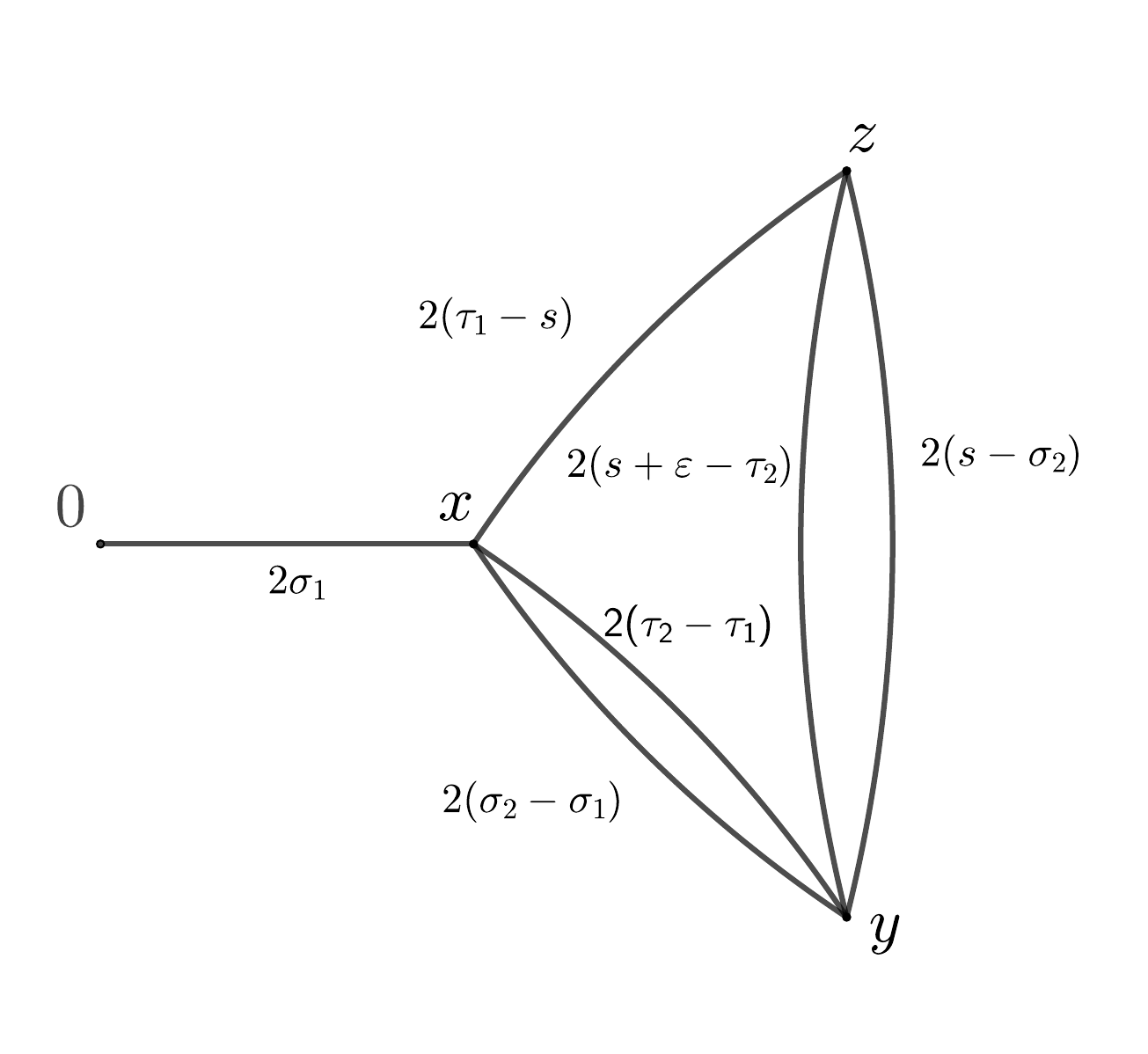}
\caption{The transitions for the case that $0<\sigma_1<\sigma_2<s<\tau_1<\tau_2<s+\e$. When we regard it as an electrical circuit, the computation technique of the resistance helps us to compute the integrals with respect to  the space variables $x,y,z$.}
\end{center}
\end{figure}

Hence, we have
\begin{align*}
&E\left[e^{-\widetilde{J}_{0,1}^{s,\e,\lambda}}\delta_0 (\omega_s-\omega_{s+\e}) |J_{0,s;s,s+\e}^{\e}|^2 |\Phi|\right] \\
&\leq C \max |f| \int _{0<\sigma _1< \sigma _2 <s <\tau _1 <\tau _2 <s+\e} \dd \sigma _1 \dd \tau _1 \dd \sigma _2 \dd \tau _2\\
&\quad \hspace{4cm}[ {(\sigma_2-\sigma_1)(\tau_2-\tau_1)}({s+\e-\tau_2+s-\sigma_2})\\
&\quad \hspace{4cm}+{({\sigma_2-\sigma_1+\tau_2-\tau_1})(s+\e-\tau_2)(s-\sigma_2)}\\
&\quad \hspace{4cm}+({\sigma_2-\sigma_1+\tau_2-\tau_1})({s+\e-\tau_2+s-\sigma_2})(\tau_1-s)]^{-\frac{3}{2}}\\
&\quad + C \max |f| \int _{0<\sigma_1<\sigma_2<s<\tau_2<\tau_1<s+\e}\dd \sigma _1 \dd \tau _1 \dd \sigma _2 \dd \tau _2\\
&\quad \hspace{4cm} [ (\tau_2-\sigma_2)(\sigma_2-\sigma_1)(\tau_1-\tau_2)\\
&\quad \hspace{4cm}+(\sigma_2-\sigma_1+\tau_1-\tau_2)(s-\sigma_2)(\tau_2-s) \\
&\quad \hspace{4cm}+(\tau_2-\sigma_2)(\sigma_2-\sigma_1+\tau_1-\tau_2)(s+\e-\tau_1)]^{-\frac{3}{2}}.
\end{align*}
Changing variables $\tau_i-s=\e t_i$, $s-\sigma_i=\e s_i$ ($i=1,2$), we have
\begin{align}
&E\left[e^{-\widetilde{J}_{0,1}^{s,\e,\lambda}}\delta _0 (\omega_s-\omega_{s+\e})|J_{0,s;s,s+\e}|^2|\Phi|\right]\notag\\
&\leq C\max |f| {\e^{-\frac{1}{2}}} \left( \iint _{0<s_2<s_1 <\infty, 0<t_1 <t_2 < 1}\dd s_1 \dd s_2 \dd t_1 \dd t_2 \right. \notag\\
&\quad \hspace{4cm} [ (s_1-s_2)(t_2-t_1)(1-t_2+s_2) +(s_1-s_2+t_2-t_1)(1-t_2)s_2 \notag\\
&\quad \hspace{8cm}+(s_1-s_2+t_2-t_1)(1-t_2+s_2)t_1]^{-\frac{3}{2}}\notag\\
&\quad + \iint _{0<s_2<s_1<\infty, 0<t_2<t_1<1} \dd s_1 \dd s_2 \dd t_1 \dd t_2 \notag\\
&\quad \hspace{4cm} [ (t_2+s_2)(s_1-s_2)(t_1-t_2)+(s_1-s_2+t_1-t_2) s_2 t_2 \notag\\
&\quad \hspace{7cm} \left. \phantom{\int}+(t_2+s_2)(s_1-s_2+t_1-t_2)(1-t_1)]^{-\frac{3}{2}} \right) \notag\\
& = C{\e^{-\frac{1}{2}}}.\label{eq:I3estimate}
\end{align}
For details see Appendix \ref{app:I3esti}. 

Similarly, we have that for some constant $C>0$,
\begin{align*}
E\left[e^{-\widetilde{J}_{0,1}^{s,\e,\lambda}}\delta _0 (\omega_s-\omega_{s+\e})  |J_{s,s+\e;s+\e,1}^{\e}|^2 |\Phi|\right]\leq C{\e^{-\frac{1}{2}}}.
\end{align*}
In this way we then obtain \eqref{eq:3rd}.
\end{proof}

\subsubsection{Proof of \eqref{eq:1st}}\label{sec:eq1st}

Before starting the proof of \eqref{eq:1st}, we give two propositions.

\begin{proposition}\label{lem:Phi1st}
We take $(s,t)$ and $u,v$ such as \begin{alignat*}{2}
&0\leq s\leq 1-\e,\quad &&1-\e\leq t\leq 1\\
&0\leq u\leq s\leq v\leq t,\quad&& v-u\leq \e.
\end{alignat*}
Then we have \begin{align}
&E\left[\delta _0 (\omega_{v}-\omega_{u})e^{-\overline{J}_{0,u}^{\e,\lambda}-\overline{J}_{v,t}^{\e,\lambda}-\lambda J_{0,u;v,t}^{\e}}\Phi_{0,t} \right]\notag\\
&=p_{v-u}(0)E\left[e^{-\overline{J}_{0,t-(v-u)}^{{\e,\lambda}}-\lambda Y_{u,v}}\Phi_{0,t-(v-u)}\right]+O(1)\max |f|(v-u)^{-\frac{1}{2}},
\end{align}
where
\begin{align*}
&Y_{u,v}:=Y_{u,v}^{t,\e}=\iint_{\left\{\begin{smallmatrix}(u-\e)\vee 0<\sigma<u\\
u<\tau<(u+\e)\wedge (t-(v-u))\\
u-(v-\e)<\tau-\sigma<\e
\end{smallmatrix}\right\}} \dd \sigma\dd\tau \delta _0 (\omega_\tau - \omega_\sigma) .
\end{align*}
\end{proposition}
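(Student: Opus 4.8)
The plan is to realise the formal conditioning $\delta_0(\omega_v-\omega_u)$ as the operation of \emph{deleting} the short window $[u,v]$ from the path and gluing the two surviving pieces together. This turns the left-hand side into an ordinary $\nu_0$-expectation of a functional of a Brownian motion run for time $t-(v-u)$, at the cost of the explicit prefactor $p_{v-u}(0)$, and the correction $Y_{u,v}$ appears automatically.

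First I would introduce the pathwise change of variables $\omega\mapsto(\widetilde\omega,w,\beta)$, where $w:=\omega_v-\omega_u$, the bridge $\beta$ is obtained from $\omega|_{[u,v]}$ by subtracting the linear interpolation $r\mapsto\frac{r-u}{v-u}w$, and the glued path $\widetilde\omega$ on $[0,t-(v-u)]$ is defined by $\widetilde\omega_r:=\omega_r$ for $r\in[0,u]$ and $\widetilde\omega_r:=\omega_{r+(v-u)}-w$ for $r\in[u,t-(v-u)]$. By the classical splitting properties of Brownian motion (removal of an independent time interval together with the bridge decomposition), $\widetilde\omega$, $w$, $\beta$ are mutually independent, $\widetilde\omega$ is a Brownian motion, and $w$ has density $p_{v-u}$. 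Hence, interpreting $\delta_0(\omega_v-\omega_u)$ as in Remark~\ref{rem:deltaprod} and using this independence, the left-hand side equals $p_{v-u}(0)$ times the expectation over $(\widetilde\omega,\beta)$ of the integrand evaluated on $\{w=0\}$, where $\omega_r=\widetilde\omega_r$ on $[0,u]$, $\omega_{r+(v-u)}=\widetilde\omega_r$ on $[u,t-(v-u)]$ and $\omega_{u+r}=\widetilde\omega_u+\beta_r$ on $[0,v-u]$.

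The key step is the bookkeeping of the exponent on $\{w=0\}$. There $\overline{J}_{0,u}^{\e,\lambda}$ and $\overline{J}_{v,t}^{\e,\lambda}$ become $\overline{J}_{0,u}^{\e,\lambda}(\widetilde\omega)$ and $\overline{J}_{u,t-(v-u)}^{\e,\lambda}(\widetilde\omega)$ (the renormalisation lengths match since $t-v=(t-(v-u))-u$), while the cross term becomes
\[
J_{0,u;v,t}^{\e}=\int_{0\le\sigma\le u\le\tau\le t-(v-u)}1\{\tau-\sigma\ge\e-(v-u)\}\,\delta_0(\widetilde\omega_\sigma-\widetilde\omega_\tau)\,\dd\sigma\,\dd\tau,
\]
because the original constraint $\tau-\sigma\ge\e$ reads $\tau-\sigma\ge\e-(v-u)$ in glued time. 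Combining this with the decomposition of $J_{0,t-(v-u)}^{\e}(\widetilde\omega)$ into its two diagonal pieces and a cross piece carrying the constraint $\tau-\sigma\ge\e$, and recalling $\e-(v-u)=u-(v-\e)$, one obtains exactly
\[
\overline{J}_{0,u}^{\e,\lambda}+\overline{J}_{v,t}^{\e,\lambda}+\lambda J_{0,u;v,t}^{\e}=\overline{J}_{0,t-(v-u)}^{\e,\lambda}(\widetilde\omega)+\lambda Y_{u,v}(\widetilde\omega)\qquad\text{on }\{w=0\}.
\]
Likewise, I would split $\Phi_{0,t}$ into the part in which neither time variable lies in $(u,v)$ and a remainder. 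On $\{w=0\}$ the former equals $\Phi_{0,t-(v-u)}(\widetilde\omega)$ plus a ``thin-strip'' term coming from the shift $a\mapsto a-(v-u)$ of the constraint in the cross region, whose domain of integration has Lebesgue measure $\le C(v-u)$; and the remainder, although it involves $\beta$, is bounded pointwise by $2\max|f|(v-u)$, being an integral over a set of measure $\le 2(v-u)$.

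Finally I would assemble the estimate: after the disintegration the main term is $p_{v-u}(0)\,E[\exp(-\overline{J}_{0,t-(v-u)}^{\e,\lambda}-\lambda Y_{u,v})\Phi_{0,t-(v-u)}]$, and each of the two correction contributions is $p_{v-u}(0)$ times the expectation of $\exp(-\overline{J}_{0,t-(v-u)}^{\e,\lambda}(\widetilde\omega)-\lambda Y_{u,v}(\widetilde\omega))$ against a quantity bounded by $C\max|f|(v-u)$. Dropping the nonnegative $Y_{u,v}$ and using that $\int_{\R^3}g_{T}^{\e,\lambda}(x)\,\dd x\le C$ uniformly in $T=t-(v-u)\in(0,1]$ and $\e\in(0,1)$ — which follows from \eqref{eq:partpest}, or from Lemma~\ref{lem:finitetra} with $n=1$ — we get $E[\exp(-\overline{J}_{0,t-(v-u)}^{\e,\lambda})]\le C$, so each correction is $\le Cp_{v-u}(0)\max|f|(v-u)=C\max|f|(v-u)^{-1/2}$, which is the claimed formula. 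The main obstacle is precisely the bookkeeping in the key step: identifying the correction as exactly $Y_{u,v}$, with its correct domain (including the boundary truncations $(u-\e)\vee0$ and $(u+\e)\wedge(t-(v-u))$ coming from $\sigma\ge0$ and $\tau\le t-(v-u)$), and checking that both error terms genuinely carry a factor $(v-u)$, so that against $p_{v-u}(0)$ (of order $(v-u)^{-3/2}$) they contribute only at order $(v-u)^{-1/2}$.
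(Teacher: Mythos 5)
Your proposal is correct and follows essentially the same route as the paper's proof: both split off the pieces of $\Phi$ that involve the window $(u,v)$ or the thin shifted cross strip, bound them by $C\max|f|(v-u)$ against $p_{v-u}(0)$, and reduce the main term via the Markov property to the glued process, with $Y_{u,v}$ arising as the discrepancy in the near-diagonal constraint $\tau-\sigma\geq\e$ versus $\tau-\sigma\geq\e-(v-u)$ for the cross term. Your explicit pathwise decomposition $\omega\mapsto(\widetilde\omega,w,\beta)$ is a constructive reformulation of the equality-in-distribution that the paper states abstractly after conditioning on $\omega_u=\omega_v$.
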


\begin{figure}[ht]
    \begin{tabular}{cc}
      \begin{minipage}[t]{0.45\hsize}
        \centering
        \includegraphics[keepaspectratio, scale=0.22]{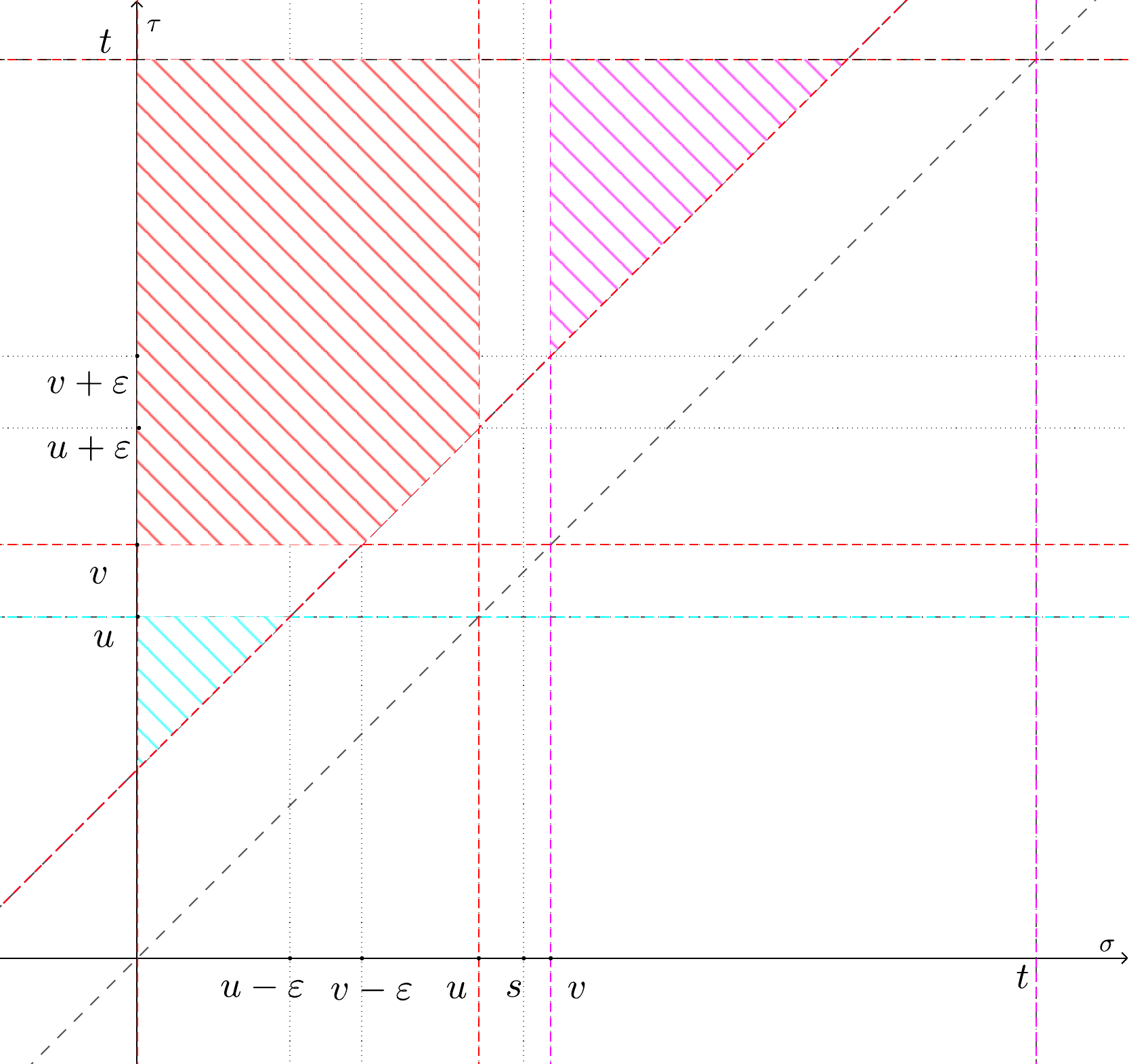}
        \caption{The domains in the integrals  $\overline{J}_{0,u}^{\e,\lambda}$, $\overline{J}_{v,t}^{\e,\lambda}$, and $J_{0,u;v,t}^{\e}$.}
        \label{fig1}
      \end{minipage} &
      \begin{minipage}[t]{0.45\hsize}
        \centering
        \includegraphics[keepaspectratio, scale=0.22]{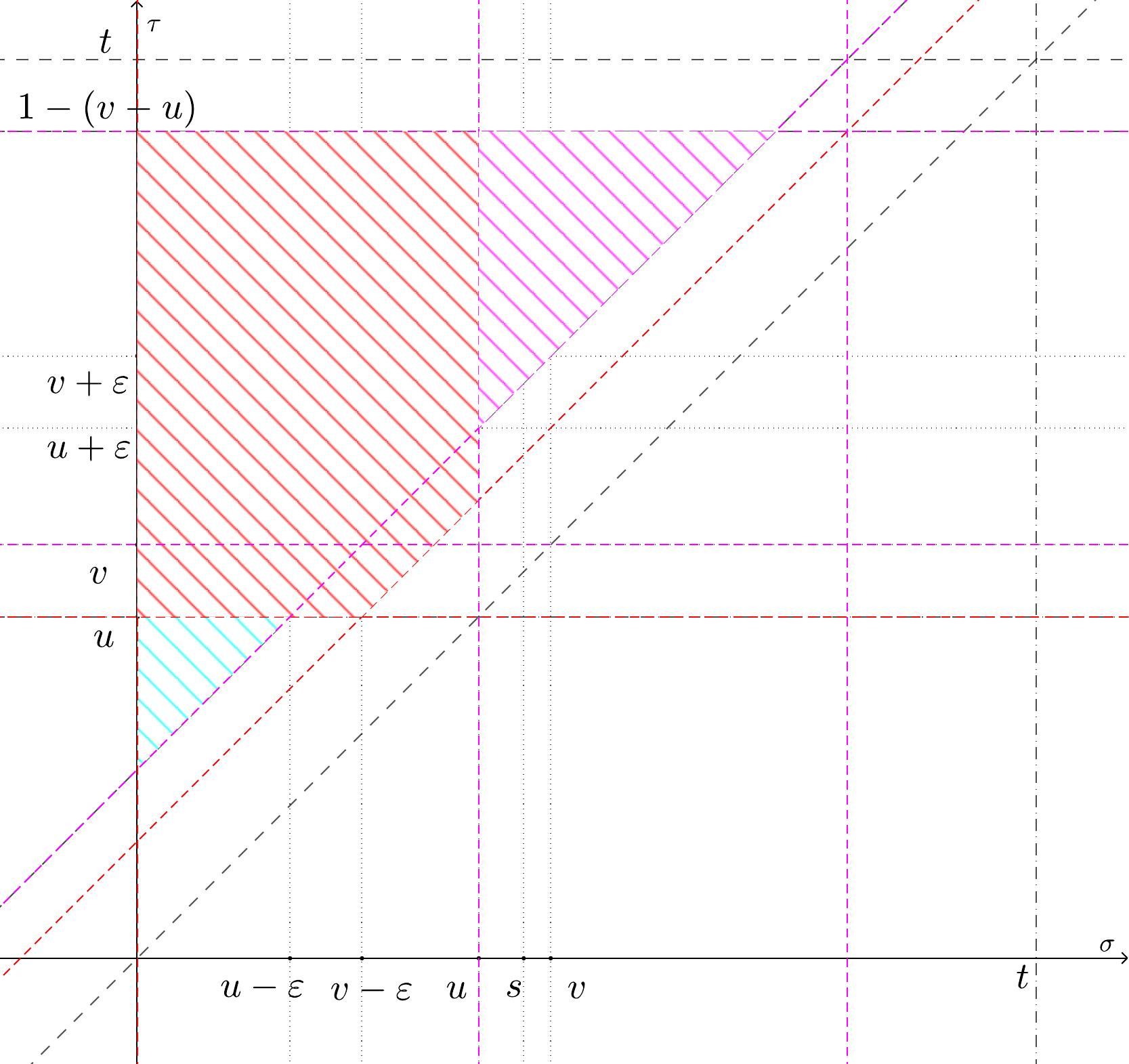}
        \caption{The domains in the integral is shifted by the Markov property of Brownian motion.}
        \label{fig2}
      \end{minipage}
    \end{tabular}
\end{figure}

\begin{proof}
By definition we know that \begin{align*}
|\Phi_{0,u;u,v}|+|\Phi_{u,v;v,t}|+\left|\Phi_{u,v;u,v}\right|\leq C\max|f|(v-u).
\end{align*}
This and Lemma \ref{lem:finitetra} imply that
\begin{align*}
&E\left[\delta _0 (\omega_{v}-\omega_{u})e^{-\overline{J}_{0,u}^{\e,\lambda}-\overline{J}_{v,t}^{\e,\lambda}-\lambda J_{0,u;v,t}^{\e}}\left(|\Phi_{0,u;u,v}|+|\Phi_{u,v;v,t}|+\left|\Phi_{u,v;u,v}\right|\right)\right]\\
&\leq  C\max|f|(v-u) E\left[\delta _0 (\omega_{v}-\omega_{u})e^{-\overline{J}_{0,u}^{\e,\lambda}-\overline{J}_{v,t}^{\e,\lambda}} \right] \\
&\leq  C\max|f|(v-u) \int _{\R ^3} \int _{\R ^3} \dd x \dd y g_u(x) p_{v-u}(0) g_{t-v}(y-x) \\
&\leq  C\max |f|(v-u)^{-\frac{1}{2}}.
\end{align*}
Similarly we have
\begin{align*}
&E\left[\delta _0 (\omega_{v}-\omega_{u})e^{-\overline{J}_{0,u}^{\e,\lambda}-\overline{J}_{v,t}^{\e,\lambda}-\lambda J_{0,u;v,t}^{\e}}\left|\int_{T_a\cap \left\{\begin{smallmatrix}(u-a)\vee 0<\sigma<u\\ v<\tau<(v+a)\wedge t\\
\tau-\sigma\leq v-u+a\end{smallmatrix}\right\}}f(\omega_\tau-\omega_\sigma)\dd \tau\dd \sigma\right|\right]\\
&\leq C\max |f| (v-u)^{-\frac{1}{2}}. 
\end{align*}
We denote $\Theta_{a,u,v}:=\int_{T_a\cap \left\{\begin{smallmatrix}(u-a)\vee 0<\sigma<u\\ v<\tau<(v+a)\wedge t\\
\tau-\sigma\leq v-u+a\end{smallmatrix}\right\}}f(\omega_\tau-\omega_\sigma)\dd \tau\dd \sigma$. (See Figure \ref{fig:DecPhi} below.)
\begin{figure}[ht]
\begin{center}
\includegraphics[width=4in,pagebox=cropbox,clip]{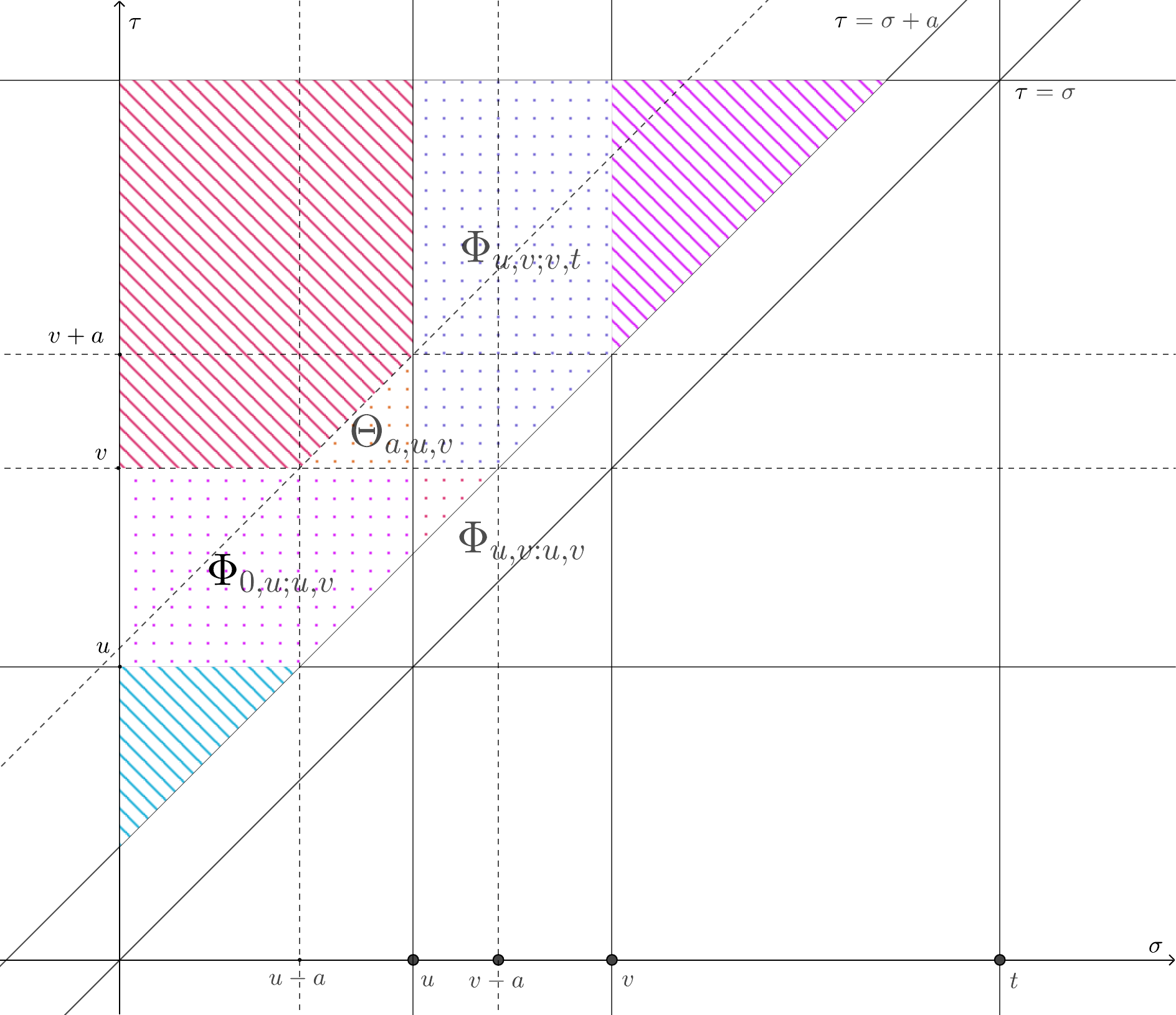}
\caption{Decomposition of the integral of $\Phi$.}\label{fig:DecPhi}
\end{center}
\end{figure}
This shows that
\begin{align*}
&E\left[\delta _0 (\omega_{v}-\omega_{u})e^{-\overline{J}_{0,u}^{\e,\lambda}-\overline{J}_{v,t}^{\e,\lambda}-\lambda J_{0,u;v,t}^{\e}}\Phi_{0,t} \right]\\
&= O(1)\max |f|(v-u)^{-\frac{1}{2}}\\
&\quad +E\left[e^{-\overline{J}_{0,u}^{\e,\lambda}-\overline{J}_{v,t}^{\e,\lambda}-\lambda J_{0,u;v,t}^{\e}}\delta _0 (\omega_v-\omega_{u})\left(\Phi_{0,t}-\Phi_{0,u;u,v}-\Phi_{u,v;v,t}-\Phi_{u,v;u,v}-\Theta_{a,u,v}\right)\right]\\
&= O(1)\max |f|(v-u)^{-\frac{1}{2}}\\
&\quad +E\left[e^{-\overline{J}_{0,u}^{\e,\lambda}-\overline{J}_{v,t}^{\e,\lambda}-\lambda J_{0,u;v,t}^{\e}}\delta _0 (\omega_v-\omega_{u})\left(\Phi_{0,u}+\Phi_{v,t}+\Phi_{0,u;v,t}-\Theta_{a,u,v}\right)\right].
\end{align*}
By the Markov property of Brownian motion and the shift invariance of $J$ we have that conditioned on $\omega_v=\omega_u$, the following equality in distribution holds:
\begin{align*}
&(\overline{J}_{0,u}^{\e,\lambda}+\overline{J}_{v,t}^{\e,\lambda}+\lambda J_{0,u;v,t}^{\e}, \Phi_{0,u},\Phi_{v,t},\Phi_{0,u;v,t}-\Theta_{a,u,v})\\
&\stackrel{d}{=}(\overline{J}_{0,t-(v-u)}^{\e,\lambda}+\lambda Y_{u,v}, \Phi_{0,u},\Phi_{u,t-(v-u)},\Phi_{0,u;u,t-(v-u)}).
\end{align*}
From this we have
\begin{align*}
&E\left[e^{-\overline{J}_{0,u}^{\e,\lambda}-\overline{J}_{v,t}^{\e,\lambda}-\lambda J_{0,u;v,t}^{\e}}\delta _0 (\omega_v-\omega_{u})\left(\Phi_{0,u}+\Phi_{v,t}+\Phi_{0,u;v,t}-\Theta_{a,u,v}\right)\right]\\
&=p_{v-u}(0)E\left[e^{-\overline{J}_{0,t-(v-u)}^{\e,\lambda}-\lambda Y_{u,v}} ( \Phi_{0,u}+\Phi_{u,t-(v-u)}+\Phi_{0,u;u,t-(v-u)} ) \right]\\
&=p_{v-u}(0)E\left[e^{-\overline{J}_{0,t-(v-u)}^{\e,\lambda}-\lambda Y_{u,v}} \Phi_{0,t-(v-u)} \right].
\end{align*}
From this equality and the inequalities above we obtain the desired estimate of this proposition. 
\end{proof}

\begin{lemma}[{cf. \cite[(4.6)]{Bol93}}]\label{lem:Phir}
For any $\lambda>0$, there exist $C>0$ and $\widetilde{\delta} >0$ such that for sufficiently small $\e >0$ and $0<r\leq 2\e$,
\begin{align*}
\left|E\left[e^{-\bar{J}_{0,1-r}^{\e ,\lambda}}\Phi\right]-E\left[e^{-\bar{J}_{0,1}^{\e ,\lambda}}\Phi\right]\right| \leq C\max|f|\e^{\widetilde{\delta} -\frac{1}{2}}.
\end{align*}
\end{lemma}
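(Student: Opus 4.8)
The plan is to mimic the scaling argument used in Bolthausen's estimate $(4.6)$, adapting it to the class of functionals $\Phi=\Phi_{0,1;0,1}$ of the form \eqref{eq:PhiF}. First I would write the difference as an integral of a derivative in the endpoint: setting $\rho_r(\e):=E[e^{-\bar J_{0,1-r}^{\e,\lambda}}\Phi_{0,1-r;0,1-r}]$ is awkward because $\Phi$ depends on the endpoint too, so instead I would compare directly by inserting the ``missing'' time-strip. Concretely, the Markov property of Brownian motion together with the shift-invariance of $J$ lets us relate $e^{-\bar J_{0,1}^{\e,\lambda}}$ to $e^{-\bar J_{0,1-r}^{\e,\lambda}}$ after splicing in a bridge of length $r$ near time $1-r$; the extra factor produced is $e^{-\lambda(\text{interaction between the new strip and the rest})}$ up to the deterministic renormalization $\lambda r\kappa_1(\e)-\lambda^2 r\kappa_2(\e)=O(r\,\e^{-1/2})=O(\e^{1/2})$ since $r\le 2\e$. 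One then expands $e^{-x}=1-x+O(x^2)$ in this new interaction term, exactly as in the derivation of \eqref{eq:Jdecom}, and controls each resulting expectation using Lemma \ref{lem:finitetra} (the Gaussian transition-density bound) and the crude bound $|\Phi_{0,1;0,1}|\le C\max|f|$, together with the finer bounds $|\Phi_{u,v;s,t}|\le C\max|f|(v-u)$ for short strips that already appeared in the proof of Proposition \ref{lem:Phi1st}.

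The key steps, in order: (1) Express $E[e^{-\bar J_{0,1-r}^{\e,\lambda}}\Phi]-E[e^{-\bar J_{0,1}^{\e,\lambda}}\Phi]$ by conditioning on $\omega_{1-r}$ and using the Markov property to realize $\bar J_{0,1}^{\e,\lambda}$ on the path $[0,1]$ as $\bar J_{0,1-r}^{\e,\lambda}$ (on the time-changed path) plus a correction consisting of the self-interaction inside the strip $[1-r,1]$, the cross-interaction between $[1-r,1]$ and $[0,1-r]$ with the $\e$-diagonal removed, and the deterministic term $\lambda r\kappa_1(\e)-\lambda^2 r\kappa_2(\e)$. (2) Since $r\le 2\e$ the self-interaction inside the strip vanishes (the set $T_\e$ contains no pair with $t-s\le r\le 2\e$ once $r$ is small; more precisely the relevant contribution is $O(\e)$ and is handled by Lemma \ref{lem:finitetra}), so only the cross term $\lambda J_{0,1-r;1-r,1}^{\e}$ and the deterministic term survive. (3) Write $e^{-\lambda J_{0,1-r;1-r,1}^{\e}}-1=-\lambda J_{0,1-r;1-r,1}^{\e}+O((J^{\e})^2)$; the zeroth-order term gives, by the deterministic bound, a factor $e^{O(\e^{1/2})}-1=O(\e^{1/2})$ times $\rho(\e)\le C\max|f|$; the first-order term, after integrating the Dirac-type contributions against the transition densities of Lemma \ref{lem:finitetra} and using that the strip has length $r\le2\e$, produces a bound $O(1)\max|f|\,\e^{1/2}$ — the same computation that yields \eqref{eq:3rd} but now over a strip of length $O(\e)$, which actually improves the exponent; the second-order remainder is bounded similarly and is lower order. (4) Collecting, the total is $O(1)\max|f|\,\e^{\widetilde\delta-1/2}$ for a suitable $\widetilde\delta\in(0,1)$ (one may even take $\widetilde\delta$ close to $1$ here, but $\widetilde\delta-1/2>0$ suffices and matches the statement).

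The main obstacle I expect is step (1)–(2): making the Markov/shift-invariance splicing rigorous when the $\e$-diagonal is removed from $J$. The set $T_\e=\{t-s\ge\e\}$ does not decompose cleanly under a time shift of length $r$, so the cross-interaction term between $[0,1-r]$ and $[1-r,1]$ after shifting is not literally $J^{\e}_{0,1-r;1-r,1}$ but a slightly modified region, just as the term $Y_{u,v}$ in Proposition \ref{lem:Phi1st} is a modified (not a plain) self-intersection integral. One has to identify this modified region, check that its ``size'' in the pair-time variables is $O(r)=O(\e)$, and then run the Gaussian estimate of Lemma \ref{lem:finitetra} on it; this is the bookkeeping that \cite{ARZ96} omitted. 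A secondary technical point is handling the $O((J^\e)^2)$ remainder: here I would invoke the Cauchy–Schwarz step exactly as in Section \ref{sec:eq3rd}, reducing to integrals of products of heat kernels over the short strip, which by the same resistance-network computation as in \eqref{eq:I3estimate} give a bound of order $\e^{1/2}$ rather than $\e^{-1/2}$ because the relevant time-interval now has length $r\le2\e$ instead of being of order $1$. Once these two points are in place, the remaining estimates are routine applications of \eqref{eq:kappa1}, \eqref{eq:kappa2}, and \eqref{eq:Kelam} together with $\rho(\e)\le C\max|f|$.
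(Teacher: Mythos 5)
Your decomposition is the right one and your bound is correct as far as it goes, but the route is genuinely different from the paper's, and there is a subtlety you should be aware of. First, a small point of hygiene: the ``Markov property/splicing in a bridge'' framing in steps (1)--(2) is unnecessary and somewhat misleading. Both expectations are over the same Wiener path on $[0,1]$; one simply writes
$\overline{J}_{0,1}^{\e,\lambda}-\overline{J}_{0,1-r}^{\e,\lambda}
=\lambda\bigl(J_{0,1}^{\e}-J_{0,1-r}^{\e}\bigr)
-\lambda r\,\kappa_1(\e)+\lambda^2 r\,\kappa_2(\e)$,
where $J_{0,1}^{\e}-J_{0,1-r}^{\e}\ge 0$ is the cross-plus-strip interaction with one time variable in $[1-r,1]$, so that
$E[e^{-\overline{J}_{0,1-r}^{\e,\lambda}}\Phi]-E[e^{-\overline{J}_{0,1}^{\e,\lambda}}\Phi]
=E\bigl[e^{-\overline{J}_{0,1-r}^{\e,\lambda}}\bigl(1-e^{-\lambda(J_{0,1}^{\e}-J_{0,1-r}^{\e})+K_r}\bigr)\Phi\bigr]$
with $K_r:=\lambda r\kappa_1(\e)-\lambda^2 r\kappa_2(\e)=O(\e^{1/2})$; no time change, shift, or splicing enters at this stage. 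The Markov property is used only afterwards, to push $\delta_0(\omega_v-\omega_u)$ down to $p_{v-(1-r)}(\omega_{1-r}-\omega_u)$ and then invoke Lemma~\ref{lem:finitetra}.

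Second, and more substantively, your estimate bounds the random cross term $\lambda E[e^{-\overline{J}_{0,1-r}}J_{\mathrm{cross}}|\Phi|]$ and the deterministic term $K_r E[e^{-\overline{J}_{0,1-r}}|\Phi|]$ \emph{separately}, each by $O(\max|f|\,\e^{1/2})$, via the crude one-sided bound of Lemma~\ref{lem:finitetra}. That does establish the inequality with $\widetilde\delta=1$, which literally satisfies ``there exists $\widetilde\delta>0$''. But the paper's pointer to Bolthausen's (4.6) and, crucially, to \eqref{eq:partpest} rather than Lemma~\ref{lem:finitetra} is a signal that the intended argument uses the \emph{signed} estimate $|g_t^{\e,\lambda}(x)-p_t(x)|\le Ct^{1/2}p_{2t}(x)$ to cancel the leading part of $\lambda E[e^{-\overline{J}_{0,1-r}}J_{\mathrm{cross}}\Phi]$ against $\lambda r\kappa_1(\e)E[e^{-\overline{J}_{0,1-r}}\Phi]$, yielding a bound $o(\e^{1/2})$. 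This matters downstream: in the proof of \eqref{eq:1st}, the error from $A_\e^{(1)}$ is multiplied by $p_\e(0)\sim\e^{-3/2}$, so an $O(\e^{1/2})$ error becomes $O(\e^{-1})=\e^{0-1}$, which does \emph{not} yield the $\e^{\widetilde\delta-1}$ remainder with $\widetilde\delta>0$ that \eqref{eq:1st} asserts. So while your proof is a valid proof of the lemma \emph{as stated}, the one-sided bound you use is not sharp enough to serve the role the lemma plays two pages later; to recover the argument you would need to carry out the cancellation via \eqref{eq:partpest} rather than the upper bound of Lemma~\ref{lem:finitetra}, exactly as the paper's one-line proof instructs.

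Finally, a minor point on step (3): the $O\bigl((J^\e)^2\bigr)$ remainder is not needed at all if you simply write $1-e^{K_r}e^{-\lambda J}=(1-e^{K_r})+e^{K_r}(1-e^{-\lambda J})$ and use $0\le 1-e^{-\lambda J}\le\lambda J$; the comparison with the computation for \eqref{eq:3rd} (which treats $(J^\e)^2$ pinned by an additional Dirac) is not really the same calculation and is not what is being invoked.
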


\begin{proof}
This estimate is proved similarly to \cite[(4.6)]{Bol93} by using \eqref{eq:partpest} instead of \cite[Proposition 3.1]{Bol93}. 
We omit the detailed proof.
\end{proof}

\begin{proof}[Proof of \eqref{eq:1st}]
Applying Proposition \ref{lem:Phi1st} for $u=s$, $v=s+\e$, and $t=1$, we have 
\begin{align*}
I_{\e,s}^{(1)}
&=O(1)\max|f|\e^{-\frac{1}{2}}+p_\e(0)E\left[e^{-\overline{J}_{0,1-\e}^{\e,\lambda}-\lambda Y_{s,s+\e}} \Phi_{0,1-\e}
\right]\\
&=O(1)\max|f|\e^{-\frac{1}{2}} +p_\e(0)\left(E\left[e^{-\overline{J}_{0,1-\e}^{\e,\lambda}}\Phi_{0,1-\e}\right]-\lambda E\left[Y_{s,s+\e}e^{-\overline{J}_{0,1-\e}^{\e,\lambda}}\Phi_{0,1-\e}\right] \right. \\
&\hspace{8cm} \left. +O(1)\lambda^2 E\left[Y_{s,s+\e}^2e^{-\overline{J}_{0,1-\e}^{\e,\lambda}}\Phi_{0,1-\e}\right]\right)\\
&=:O(1)\max|f|\e^{-\frac{1}{2}}+p_\e(0)\left(A_{\e}^{(1)}-\lambda A_{\e,s}^{(2)}+O(1)\lambda^2 A_{\e,s}^{(3)}\right).
\end{align*}
Here, we note that $p_\e(0)=-\frac{\dd}{\dd \e}\kappa_1(\e)$.
Since  \begin{align*}
A_{\e}^{(1)}=E\left[e^{-\overline{J}_{0,1-\e}^{\e,\lambda}}\Phi_{0,1}\right]+O(1)\max|f| \e,
\end{align*}
Lemma \ref{lem:Phir} gives that
\begin{align*}
A_{\e}^{(1)}=\rho(\e)+O(1)\max |f|\e^{\frac{1}{2}}.
\end{align*}
By Lemma \ref{lem:finitetra}, we have
\begin{align}
A_{\e,s}^{(3)}=E\left[Y_{s,s+\e}^2e^{-\overline{J}_{0,1-\e}^{\e,\lambda}} \Phi_{0,1-\e}\right]\leq C\max|f|\e \label{eq:A3}
\end{align}
(see Appendix \ref{app:A3}) and hence, 
\begin{align*}
p_\e(0)\lambda^2 A_{\e,s}^{(3)}\leq C\max|f|\e^{-\frac{1}{2}}
\end{align*}
for $0\leq s\leq 1-\e$.

In the rest, we will prove that \begin{align*}
A_{\e,s}^{(2)}=\begin{cases}
O(1)\max |f|\e^\frac{1}{2}, &1-2\e\leq s\leq 1-\e\\
\e\left(\kappa_1(\e)+\frac{2}{(2\pi)^{\frac{3}{2}}}\right)\rho(\e)+O(1)\max|f|\e,\quad &\e\leq s\leq 1-2\e\\
\frac{2}{(\sqrt{2\pi})^3}\left(2\sqrt{s}-\frac{s}{\sqrt{\e}}\right)\rho(\e)+O(1)\max|f|\e,\quad &0\leq s\leq \e
\end{cases}
\end{align*}
which implies that
\begin{align*}
\int_0^{1-\e}A_{\e,s}^{(2)}\dd s=(1-\e)\e\kappa_1(\e)\rho(\e)+O(1)\max|f|\e,
\end{align*}
which then completes the proof of \eqref{eq:1st}.

Since \begin{align*}
E\left[Y_{s,s+\e}e^{-\overline{J}_{0,1-\e}^{\e,\lambda}}|\Phi_{0,1-\e}-\Phi_{0,1}|\right]\leq C\max |f|\e^{\frac{3}{2}},
\end{align*}
we have \begin{align*}
&A_{\e,s}^{(2)}=E\left[Y_{s,s+\e}e^{-\overline{J}_{0,1-\e}^{\e,\lambda}}\Phi_{0,1-\e}\right]\\
&=E\left[Y_{s,s+\e}e^{-\overline{J}_{0,1-\e}^{\e,\lambda}}\Phi_{0,1}\right]+O(1)\max|f|\e^{\frac{3}{2}}\\
&=E\left[\int\int_{0\leq u\leq s\leq v\leq 1-\e,v-u\leq \e}\dd u\dd v\delta_0(\omega_v-\omega_u)e^{-\overline{J}_{0,1-\e}^{\e,\lambda}}\Phi_{0,1}\right]+O(1)\max|f|\e^{\frac{3}{2}}\\
&=:A_{\e,s}^{(2,1)}+O(1)\max|f|\e^{\frac{3}{2}}.
\end{align*}

As in \eqref{eq:Jdecom}, we have
\begin{align*}
&A_{\e,s}^{(2,1)}\\
&=E\left[\int\int_{0\leq u\leq s\leq v\leq 1-\e,v-u\leq \e}\dd u\dd v\delta_0(\omega_v-\omega_u)e^{-\overline{J}_{0,1-\e}^{\e,\lambda}}\Phi_{0,1}\right]\\
&=E\left[\int\int_{0\leq u\leq s\leq v\leq 1-\e,v-u\leq \e}\dd u\dd v\delta_0(\omega_{v}-\omega_{u})e^{-\overline{J}_{0,u}^{\e,\lambda}-\overline{J}_{v,1-\e}^{\e,\lambda}-\lambda J_{0,u;v,1-\e}^{\e}}\Phi_{0,1} e^{K_{\e,v-u,\lambda}} \right]\\
&\quad -O(1)\lambda E\left[\int\int_{ 0\leq u\leq s\leq v\leq 1-\e, v-u\leq \e}\dd u\dd v\delta_0(\omega_{v}-\omega_{u})e^{-\overline{J}_{0,u}^{\e,\lambda}-\overline{J}_{v,1-\e}^{\e,\lambda}-\lambda J_{0,u;v,1-\e}^{\e}}\Phi_{0,1}\right. \\
&\hspace{25em} \times  (J_{0,u;u,v}^\e+J_{u,v;v,1-\e}^\e)e^{K_{\e,v-u,\lambda}} \bigg]\\
&=:A_{\e,s}^{(2,1,1)}-O(1)\lambda A_{\e,s}^{(2,1,2)},
\end{align*}
where \begin{align*}
K_{\e,\theta,\lambda}:=\theta (\lambda \kappa_1(\e)-\lambda^2\kappa_2(\e)).
\end{align*}

Using the same argument as in the proof of \eqref{eq:3rd}, one proves that
\begin{align}\label{eq:A212}
&A_{\e,s}^{(2,1,2)}
= O(1)\max |f|\e
\end{align}
for $0\leq s\leq 1-\e$ (see Appendix \ref{A212}).

Applying Proposition \ref{lem:Phi1st}, we get that
\begin{align*}
&A_{\e,s}^{(2,1,1)}\\
&=E\left[\iint_{0\leq u\leq s\leq v\leq 1-\e,v-u\leq \e}\delta_0(\omega_{v}-\omega_{u})e^{-\overline{J}_{0,u}^{\e,\lambda}-\overline{J}_{v,1-\e}^{\e,\lambda}-\lambda J_{0,u;v,1-\e}^{\e}}\Phi_{0,1}  e^{K_{\e,v-u,\lambda}}\right]\\
&=\iint_{0\leq u\leq s\leq v\leq 1-\e,v-u\leq \e}p_{v-u}(0)E\left[e^{-\overline{J}_{0,1-\e-(v-u)}^{{\e,\lambda}}-\lambda Y_{u,v}}\Phi_{0,1-\e-(v-u)}\right]e^{K_{\e,v-u,\lambda}}\dd u\dd v\\
&\hspace{5em}+O(1)\max|f|\e^{\frac{3}{2}}\\
&=\iint_{0\leq u\leq s\leq v\leq 1-\e,v-u\leq \e}p_{v-u}(0)E\left[e^{-\overline{J}_{0,1-\e-(v-u)}^{{\e,\lambda}}}\Phi_{0,1-\e-(v-u)}\right]e^{K_{\e,v-u,\lambda}}\dd u\dd v\\
&-O(1)\lambda\iint_{0\leq u\leq s\leq v\leq 1-\e,v-u\leq \e}p_{v-u}(0)E\left[e^{-\overline{J}_{0,1-\e-(v-u)}^{{\e,\lambda}}}\Phi_{0,1-\e-(v-u)}Y_{u,v}\right]e^{K_{\e,v-u,\lambda}}\dd u\dd v\\
&+O(1)\max|f|\e^{\frac{3}{2}}\\
&=:A_{\e,s}^{(2,1,1,1)}-O(1)\lambda A_{\e,s}^{(2,1,1,2)}
+O(1)\max|f|\e^{\frac{3}{2}}.
\end{align*}
By Lemma \ref{lem:finitetra}, we know that \begin{align*}
A_{\e,s}^{(2,1,1,2)}&\leq C\iint_{0\leq u\leq s\leq v\leq 1-\e,v-u\leq \e}p_{v-u}(0)E\left[e^{-\overline{J}_{0,1-\e-(v-u)}^{{\e,\lambda}}}\Phi_{0,1-\e-(v-u)}Y_{u,v}\right]\dd u\dd v\\
&=O(1)\max|f|\e
\end{align*}
for $0\leq s\leq 1-\e$.
Since \begin{align*}
\iint_{0\leq u\leq s\leq v\leq 1-\e,v-u\leq \e}p_{v-u}(0)\dd u\dd v=\begin{cases}
4\sqrt{1-\e-s}-2\frac{1-\e-s}{\sqrt{\e}},\quad &1-2\e\leq s\leq 1-\e\\
\e\kappa_1(\e)+\frac{2}{(2\pi)^{\frac{3}{2}}}\e,\quad &\e\leq s\leq 1-2\e\\
\frac{2}{(\sqrt{2\pi})^3}\left(2\sqrt{s}-\frac{s}{\sqrt{\e}}\right),\quad &0\leq s\leq \e
\end{cases}
\end{align*}
and \begin{align*}
E\left[e^{-\overline{J}_{0,1-\e-(v-u)}^{{\e,\lambda}}}\left|\Phi_{0,1-\e-(v-u)}-\Phi_{0,1}\right|\right]=O(1)\max |f|\e,\quad \text{for }0<v-u<\e,
\end{align*}
Lemma \ref{lem:Phir} yields that \begin{align*}
A_{\e,s}^{(2,1,1,1)}=\begin{cases}
O(1)\max|f|\e^\frac{1}{2}&1-2\e\leq s\leq 1-\e\\
\e\left(\kappa_1(\e)+\frac{2}{(2\pi)^{\frac{3}{2}}}\right)\rho(\e)+O(1)\max|f|\e,\quad &\e\leq s\leq 1-2\e\\
\frac{2}{(\sqrt{2\pi})^3}\left(2\sqrt{s}-\frac{s}{\sqrt{\e}}\right)\rho(\e)+O(1)\max|f|\e\quad &0\leq s\leq \e
\end{cases}
\end{align*}
and \begin{align*}
A_{\e,s}^{(2,1,1)}=\begin{cases}
O(1)\max|f|\e^\frac{1}{2},&1-2\e\leq s\leq 1-\e\\
\e\left(\kappa_1(\e)+\frac{2}{(2\pi)^{\frac{3}{2}}}\right)\rho(\e)+O(1)\max|f|\e,\quad &\e\leq s\leq 1-2\e\\
\frac{2}{(\sqrt{2\pi})^3}\left(2\sqrt{s}-\frac{s}{\sqrt{\e}}\right)\rho(\e)+O(1)\max|f|\e\quad &0\leq s\leq \e.
\end{cases}
\end{align*}
Putting things together, we obtain that \begin{align*}
A^{(2)}_{\e,s}=\begin{cases}
O(1)\max|f|\e^\frac{1}{2},&1-2\e\leq s\leq 1-\e\\
\e\left(\kappa_1(\e)+\frac{2}{(2\pi)^{\frac{3}{2}}}\right)\rho(\e)+O(1)\max|f|\e,\quad &\e\leq s\leq 1-2\e\\
\frac{2}{(\sqrt{2\pi})^3}\left(2\sqrt{s}-\frac{s}{\sqrt{\e}}\right)\rho(\e)+O(1)\max|f|\e \quad &0\leq s\leq \e.
\end{cases}
\end{align*}
\end{proof}

\subsubsection{Proof of \eqref{eq:2nd}}\label{sec:eq2nd}

Let \begin{align*}
&I_{\e,s}^{(2,1)}:=E\left[e^{-\widetilde{J}_{0,1}^{s,\e,\lambda}}\delta_0 (\omega_s-\omega_{s+\e}) J_{0,s;s,s+\e}^{\e} \Phi\right] ,\\
&I_{\e,s}^{(2,2)}:=E\left[e^{-\widetilde{J}_{0,1}^{s,\e,\lambda}}\delta_0 (\omega_s-\omega_{s+\e}) J_{0,s+\e;s+\e,1}^{\e} \Phi\right] .
\end{align*}
As in \cite[\S 4]{Bol93}, we choose $1>\beta>\gamma>\frac{1}{2}>\alpha>0$. 
Letting 
\begin{align*}
T^1(\e^\alpha)&:=\{(s,u,v):s<\e^\alpha,0<u<s<v<s+\e,v-u\geq \e\} \\
&\qquad \cup \{(s,u,v):s>1-\e^\alpha,0<u<s<v<s+\e,v-u\geq \e\} , \\
T^2(\e^\alpha,\e^\beta) &:=\{(s,u,v):\e^\alpha<s<1-\e^\alpha,0<u<s-\e^\beta<s<v<s+\e,v-u\geq \e\} , 
\end{align*}
we find that
\begin{align*}
&\int_{T^1(\e^\alpha)}\dd s\dd u\dd v E\left[e^{-\widetilde{J}_{0,1}^{s,\e,\lambda}}\delta_0(\omega_s-\omega_{s+\e}) \delta_0 (\omega_u-\omega_v) |\Phi|\right]\\
&\leq C\max|f|\int_{T^1(\e^\alpha)}\dd s\dd u\dd v \int \dd x\dd yp_{2u}(x)p_{2(s-u)}(x,y)p_{2(v-s)}(y,x)p_{2(s+\e-v)}(x,y)\\
&\leq C\max|f| \e^{\alpha-1}
\end{align*}
and
\begin{align*}
&\int_{T^2(\e^\alpha,\e^\beta)}\dd s\dd u \dd v E\left[ e^{-\widetilde{J}_{0,1}^{s,\e,\lambda}} \delta_0 (\omega_s-\omega_{s+\e}) \delta_0 (\omega_v-\omega_u) |\Phi | \right] \\
&\leq C\max|f|\int_{T^2(\e^\alpha,\e^\beta)}\dd s\dd u \dd v \int \dd x\dd yp_{2u}(x)p_{2(s-u)}(x,y)p_{2(v-s)}(y,x)p_{2(s+\e-v)}(x,y)\\
&\leq C\max|f|\e^{-\frac{\beta+1}{2}}.
\end{align*}  
Therefore, we have 
\begin{align*}
\int_0^{1-\e}\dd sI_{\e,s}^{(2,1)} &=\int_{0}^{1-\e}\dd s\int_{u<s<v<s+\e,v-u>\e}\dd u\dd vE\left[ e^{-\widetilde{J}_{0,1}^{s,\e,\lambda}} \delta_0 (\omega_s-\omega_{s+\e}) \delta_0 (\omega_u-\omega_v) \Phi\right]\\
&=\int_{\e^\alpha}^{1-\e^\alpha}\dd s\int_{s-\e^\beta}^s\dd u\int_{s\vee (u+\e)}^{s+\e}\dd v E\left[ e^{-\widetilde{J}_{0,1}^{s,\e,\lambda}} \delta_0 (\omega_s-\omega_{s+\e}) \delta_0 (\omega_u-\omega_v) \Phi\right] \\
&\quad +\max|f|O\left(\e^{(\alpha-1)\wedge \left(-\frac{\beta+1}{2}\right)}\right).
\end{align*}
We write
\begin{align*}
T^3 (\e^\alpha,\e^\beta) &:=\{(s,u,v):\e^\alpha<s<1-\e^\alpha,s-\e^\beta<u<s<v<s+\e,v-u\geq \e\} \\
&=\{(s,u,v):\e^\alpha<s<1-\e^\alpha,s-\e^\beta<u<s\vee (u+\e)<v<s+\e\}.
\end{align*}
For $\e^\gamma\leq s\leq 1-\e^\gamma$, we set
\begin{align*}
\check{J}_{0,1}^{s,\e,\lambda} := \overline{J}_{0,s-\e^\gamma}^{\e,\lambda} +\overline{J}_{s+\e^\gamma,1}^{\e,\lambda} +\lambda J_{0,s-\e^\gamma;s+\e^\gamma,1}^{\e}.
\end{align*}
Let us recall that $1>\beta>\gamma>\frac{1}{2}>\alpha$.
We note then that the deterministic part in 
$\widetilde{J}_{0,1}^{s,\e,\lambda} - \check{J}_{0,1}^{s,\e,\lambda} $ derived from $\kappa_1$ and $\kappa_2$ is of order $\e^{\gamma-\frac{1}{2}} $ and the random part comes from the intersection local time is positive and dominated by \begin{align}
\lambda \left(J_{0,1}^\e -J_{0,s-\e^\gamma}^{\e}- J_{s+\e^\gamma,1}^\e-J^\e_{0,s-\e^\gamma;s+\e^\gamma,1}\right)&=\lambda \left(J^\e_{0,s-\e^\gamma;s-\e^\gamma,s+\e^\gamma}+J^\e_{s-\e^\gamma,s+\e^\gamma;s+\e^\gamma,1}+J_{s-\e^\gamma,s+\e^\gamma}^\e\right)\notag\\
&= \lambda \left(J^\e_{0,s+\e^\gamma;s-\e^\gamma,s+\e^\gamma}+J^\e_{s-\e^\gamma,s+\e^\gamma;s+\e^\gamma,1}\right)\label{eq:JJhatdiff}
\end{align}
Thus, we have \begin{align*}
&\int_{T^3_{s,u,v}(\e^\alpha,\e^\beta)}\dd s\dd u\dd v E\left[e^{-\widetilde{J}_{0,1}^{s,\e,\lambda}} \delta_0(\omega_s-\omega_{s+\e})\delta_{0}(\omega_u-\omega_v)\Phi\right]
\\
&=\int_{T^3_{s,u,v}(\e^\alpha,\e^\beta)}\dd s\dd u\dd v E\left[e^{-\check{J}_{0,1}^{s,\e,\lambda}} \delta_0(\omega_s-\omega_{s+\e})\delta_0(\omega_u-\omega_v)\Phi\right]\\
&\quad+O\left(\e^{\gamma-\frac{1}{2}}\right)\int_{T^3_{s,u,v}(\e^\alpha,\e^\beta)}\dd s\dd u\dd v E\left[e^{-\check{J}_{0,1}^{s,\e,\lambda}} \delta_0(\omega_s-\omega_{s+\e})\delta_0(\omega_u-\omega_v)|\Phi|\right]\\
&\quad+O(1) \int_{T^3_{s,u,v}(\e^\alpha,\e^\beta)}\dd s\dd u\dd v E\left[e^{-\check{J}_{0,1}^{s,\e,\lambda}} J_{0,s+\e^\gamma;s-\e^\gamma,s+\e^\gamma}^\e\delta_0({\omega_s-\omega_{s+\e}})\delta_0({\omega_u-\omega_v})|\Phi|\right]\\
&\quad +O(1) \int_{T^3_{s,u,v}(\e^\alpha,\e^\beta)}\dd s\dd u\dd v E\left[e^{-\check{J}_{0,1}^{s,\e,\lambda}} J_{s-\e^\gamma,s+\e^\gamma;s+\e^\gamma,1}^\e\delta_0({\omega_s-\omega_{s+\e}})\delta_0({\omega_u-\omega_v})|\Phi|\right].
\end{align*}
Moreover, we can see from Appendix \ref{app:Rem} that there exists $\delta\in (0,1)$ such that
\begin{align}
\int_{T^3_{s,u,v}(\e^\alpha,\e^\beta)}\dd s\dd u\dd v E\left[e^{-\check{J}_{0,1}^{s,\e,\lambda}} J^\e_{0,s+\e^\gamma;s-\e^\gamma,s+\e^\gamma}\delta_0({\omega_s-\omega_{s+\e}})\delta_0({\omega_u-\omega_v})|\Phi|\right]=O(\e^{\delta-1})\label{eq:Remainder1}\\
\int_{T^3_{s,u,v}(\e^\alpha,\e^\beta)}\dd s\dd u\dd v E\left[e^{-\check{J}_{0,1}^{s,\e,\lambda}} J^\e_{s-\e^\gamma,s+\e^\gamma;s-\e^\gamma,1}\delta_0({\omega_s-\omega_{s+\e}})\delta_0({\omega_u-\omega_v})|\Phi|\right]=O(\e^{\delta-1}).\label{eq:Remainder2}
\end{align}
Thus, we have 
\begin{align}
\int_0^{1-\e}\dd sI_{\e,s}^{(2,1)} &=\left(1+O\left(\e^{\gamma-\frac{1}{2}}\right)\right) \int_{T^3(\e^\alpha,\e^\beta)}\dd s\dd u\dd v E\left[ e^{-\check{J}_{0,1}^{s,\e,\lambda}} \delta_0 (\omega_s-\omega_{s+\e}) \delta_0 (\omega_u-\omega_v) \Phi\right] \label{eq:I3-1}\\
&\quad +\max|f|O(\e^{\widetilde{\delta}-1}). \notag
\end{align}
for some $\widetilde{\delta} >0$.

Also, we can see from \eqref{eq:kappa2order2} that 
\begin{align}
&\int_{T^3(\e^\alpha,\e^\beta)}\dd s\dd u\dd v E\left[ e^{-\check{J}_{0,1}^{s,\e,\lambda}} \delta_0 (\omega_s-\omega_{s+\e}) \delta_0 (\omega_u-\omega_v) \Phi\right] \notag\\
&=\int_{T^3(\e^\alpha,\e^\beta)}\dd s\dd u\dd v E\left[ e^{-\check{J}_{0,1}^{s,\e,\lambda}} \delta_0 (\omega_s-\omega_{s+\e}) \delta_0 (\omega_u-\omega_v) \left(\Phi_{0,s-\e^\gamma}+\Phi_{s+\e^\gamma,1}+\Phi_{0,s-\e^\gamma;s+\e^\gamma,1}\right)\right] \notag\\
&\quad +\max|f|O(\e^{\gamma-1})\notag\\
&=\int_{T^3(\e^\alpha,\e^\beta)}\dd s\dd u\dd v\iint \dd x\dd y E\left[ e^{-\check{J}_{0,1}^{s,\e,\lambda}} \delta_0 (\omega_s-\omega_{s+\e}) \delta_0 (\omega_u-\omega_v) \right.\notag\\
&\hspace{7em}\left. \phantom{e^{-\check{J}_{0,1}^{s,\e,\lambda}}} \times \delta_0(x-\omega_{s-\e^\gamma})\delta_0({y}-\omega_{s+\e^\gamma}) \left(\Phi_{0,s-\e^\gamma}+\Phi_{s+\e^\gamma,1}+\Phi_{0,s-\e^\gamma;s+\e^\gamma,1}\right)\right] \notag\\
&\quad +\max|f|O(\e^{\gamma-1})\notag\\
&=\int_{T^3(\e^\alpha,\e^\beta)}\dd s\dd u\dd v \iint \dd x\dd yE\left[\left.  e^{-\check{J}_{0,1}^{s,\e,\lambda}} \delta_0(x-\omega_{s-\e^\gamma}) \right. \right. \notag\\
&\quad \hspace{7em}\left. \left. \phantom{e^{-\check{J}_{0,1}^{s,\e,\lambda}}} \times \left(\Phi_{0,s-\e^\gamma}+\Phi_{s+\e^\gamma,1}+\Phi_{0,s-\e^\gamma;s+\e^\gamma,1}\right) \right|\omega_{s+\e^\gamma}=y\right]q_{u,s,v}(x,y)\notag\\
&\quad +\max|f|O(\e^{\gamma-1}),\label{eq:I3-2}
\end{align}
where
\begin{align*}
q_{u,s,v}(x,y) &:=\int \dd z\int \dd w p_{u-(s-\e^\gamma)}(x,z)p_{s-u}(z,w)p_{v-s}(w,z)p_{s+\e-v}(z,w)p_{(s+\e^\gamma)-(s+\e)}(w,y).
\end{align*}
An explicit calculation implies
\begin{align*}
q_{u,s,v}(x,y)=\tau p_{2\e^\gamma-\e-(s-u)+\sigma}(x,y),
\end{align*}
where 
\begin{align*}
\tau &:=\frac{1}{(2\pi)^3}((s+\e-v)(v-s)+(s+\e-v)(s-u)+(v-s)(s-u))^{-\frac{3}{2}},\\
\sigma&:=\frac{(s+\e-v)(v-s)(s-u)}{(s+\e-v)(v-s)+(s+\e-e)(s-u)+(v-s)(s-u)}\leq s-u.
\end{align*} 
In particular, we have $ 2\e^\gamma-\e^\beta-\e\leq 2\e^\gamma-\e-(s-u)+\sigma\leq 2\e^\gamma-\e$.
Let
\begin{align*}
r_{u,s,,v}(x,y):=q_{u,s,v}(x,y)-\tau p_{2\e^\gamma}(x,y).
\end{align*}
Then, we derive in a similar way to \eqref{eq:Remainder1}-\eqref{eq:Remainder2} that
\begin{align}
&\int_{T^3(\e^\alpha,\e^\beta)}\dd s\dd u\dd v \iint \dd x\dd yE\left[\left.  e^{-\check{J}_{0,1}^{s,\e,\lambda}} \delta_0(x-\omega_{s-\e^\gamma}) \right. \right. \notag\\
&\quad \hspace{5em}\left. \left. \phantom{e^{-\check{J}_{0,1}^{s,\e,\lambda}}} \times \left(\Phi_{0,s-\e^\gamma}+\Phi_{s+\e^\gamma,1}+\Phi_{0,s-\e^\gamma;s+\e^\gamma,1}\right) \right|\omega_{s+\e^\gamma}=y\right]\tau p_{2\e^\gamma}(x,y)\notag\\
&=\int_{T^3(\e^\alpha,\e^\beta)}\dd s\dd u\dd v \tau E\left[ e^{-\check{J}_{0,1}^{s,\e,\lambda}}
 \left(\Phi_{0,s-\e^\gamma}+\Phi_{s+\e^\gamma,1}+\Phi_{0,s-\e^\gamma;s+\e^\gamma,1}\right) \right] \notag\\
&=\int_{T^3(\e^\alpha,\e^\beta)}\dd s\dd u\dd v \tau E\left[e^{-\overline{J}_{0,1}^{\e,\lambda}}\Phi_{0,1}\right]+O\left(\e^{\gamma -\frac{3}{2}}\right)\max|f|\label{eq:l3-2}\\
&=-\frac{1}{2}\frac{\dd \kappa_2(\e)}{\dd \e}\rho(\e)+O\left(\e^{\gamma-\frac{3}{2}}\right)\max|f|,\label{eq:I3-3}
\end{align}
where we have used \eqref{eq:kappa2order2} in the last line. 
For the proof of \eqref{eq:l3-2}, we note that from \eqref{eq:JJhatdiff} we have
\begin{align*}
&E\left[ e^{-\check{J}_{0,1}^{s,\e,\lambda}}    \left(\Phi_{0,s-\e^\gamma}+\Phi_{s+\e^\gamma,1}+\Phi_{0,s-\e^\gamma;s+\e^\gamma,1}\right) \right] \\
&=E\left[ e^{-\overline{J}_{0,1}^{\e,\lambda}}    \left(\Phi_{0,s-\e^\gamma}+\Phi_{s+\e^\gamma,1}+\Phi_{0,s-\e^\gamma;s+\e^\gamma,1}\right) \right]\\
&+O\left(E\left[ e^{-\check{J}_{0,1}^{s,\e,\lambda}} \left|\overline{J}_{0,1}^{\e,\lambda}-\check{J}_{0,1}^{s,\e,\lambda}\right|  \left(\Phi_{0,s-\e^\gamma}+\Phi_{s+\e^\gamma,1}+\Phi_{0,s-\e^\gamma;s+\e^\gamma,1}\right) \right]\right)
\end{align*}
where we have used the fact that
$\overline{J}_{0,1}^{\e,\lambda}-\check{J}_{0,1}^{s,\e,\lambda}\leq 2\lambda \e^\gamma \kappa_1(\e)-\lambda^2\e^\gamma \kappa_2(\e)\leq 1$ for $\e>0$ small enough
and $\left|e^x-e^y\right|\leq e^{x+1}|x-y|$ for $x+1>y$. Then, it is easy to see that the last term is $O\left(\e^{\gamma-\frac{1}{2}}\right)\max|f|$ and the first term is $E\left[ e^{-\overline{J}_{0,1}^{\e,\lambda}}    \Phi \right]+O(\e^\gamma)\max|f|$.

Also, we show that
\begin{align}
&\int_{T^3(\e^\alpha,\e^\beta)}\dd s\dd u\dd v \iint \dd x\dd yE\left[\left.  e^{-\check{J}_{0,1}^{s,\e,\lambda}} \delta_0(x-\omega_{s-\e^\gamma}) \right. \right. \notag\\
&\quad \hspace{5em}\left. \left. \phantom{e^{-\check{J}_{0,1}^{s,\e,\lambda}}} \times \left(\Phi_{0,s-\e^\gamma}+\Phi_{s+\e^\gamma,1}+\Phi_{0,s-\e^\gamma;s+\e^\gamma,1}\right) \right|\omega_{s+\e^\gamma}=y\right] r_{u,s,v}(x,y)\notag\\
&=O\left(\e^{\beta-\gamma-1}\right)\max|f|.\label{eq:I3-4}
\end{align}
Indeed, it is easy to see that
\begin{align*}
\left|q_{u,s,v}(x,y)-\tau p_{2\e^\gamma}(x,y)\right|&\leq  C\tau \frac{s+\e-u-\sigma}{2\e^\gamma-\e-(s-u)+\sigma}p_{2\e^\gamma-\e-(s-u)+\sigma}(x,y)\\
&\quad +C\tau \frac{(s+\e-u-\sigma)|x-y|^2}{(2\e^\gamma-\e-(s-u)+\sigma)(2\e^\gamma)}p_{2\e^\gamma}(x,y) \\
&\leq C\e^{\beta-\gamma}\tau \left(1+\frac{|x-y|^2}{\e^\gamma}\right)p_{2\e^\gamma}(x,y)\\
&=:\widetilde{R}_{u,s,v}(x,y) 
\end{align*}
and $\check{J}_{0,1}^{s,\e,\lambda}\geq \overline{J}_{0,s-\e^\gamma}^{\e,\lambda}+\overline{J}_{s+\e^\gamma,1}^{\e,\lambda}$.
Therefore, in view of Lemma \ref{lem:finitetra} we have
\begin{align*}
&\left| \int_{T^3(\e^\alpha,\e^\beta)}\dd s\dd u\dd v \iint \dd x\dd yE\left[\left.  e^{-\check{J}_{0,1}^{s,\e,\lambda}} \delta_0(x-\omega_{s-\e^\gamma}) \right. \right. \right. \\
&\quad \hspace{4em} \left. \phantom{\int} \left. \left. \phantom{e^{-\check{J}_{0,1}^{s,\e,\lambda}}} \times \left(\Phi_{0,s-\e^\gamma}+\Phi_{s+\e^\gamma,1}+\Phi_{0,s-\e^\gamma;s+\e^\gamma,1}\right) \right|\omega_{s+\e^\gamma}=y\right] r_{u,s,v}(x,y) \right| \\
&\leq \max|f|\int_{T^3 (\e^\alpha,\e^\beta)} \dd s\dd u\dd v \\
&\quad \times \iint \dd x\dd y E\left[ \left.e^{-\overline{J}_{0,s-\e^\gamma}^{\e,\lambda} -\overline{J}_{s+\e^\gamma,1}^{\e,\lambda}} \delta_0(x-\omega_{s-\e^\gamma}) \right|\omega_{s+\e^\gamma}=y\right]\widetilde{R}_{u,s,v}(x,y)\\
&\leq C\max|f|\e^{\beta-\gamma}\int_{T^3 (\e^\alpha,\e^\beta)} \dd s\dd u\dd v  \tau  \\
&\leq O(\e^{\beta-\gamma-1})\max |f|.
\end{align*}
Thus, we obtain from \eqref{eq:I3-1}-\eqref{eq:I3-4} that 
\begin{align*}
\int_{0}^{1-\e}\dd s I_{\e,s}^{(2,1)}=-\frac{1}{2}\frac{\dd}{\dd \e}\kappa_2(\e)\rho(\e)+\max|f|O(\e^{\widetilde{\delta}-1})
\end{align*}
for some $\widetilde{\delta} \in (0,1)$.

Similarly, we can prove that
\begin{align*}
\int_{0}^{1-\e}\dd s I_{\e,s}^{(2,2)}=-\frac{1}{2}\frac{\dd}{\dd \e}\kappa_2(\e)\rho(\e)+\max|f|O(\e^{\widetilde{\delta}-1})
\end{align*}
for some $\widetilde{\delta} \in (0,1)$.

\section{Some estimates for the self-intersection local times with respect to $\nu_\lambda$}\label{sec:4}

In Section \ref{sec:2}, we actually proved that $\left\{\widehat{J}_{2^{-\ve_0n},2^{-n}}(u_1,u_2,h)\right\}_{n\geq 1}$ is a Cauchy sequence in $L^p(\nu_0)$ for any $p\geq 1$. The aim of this section is  to prove that this sequence is also  a Cauchy sequence in $L^{p}(\nu_\lambda)$ for $\lambda\in (0,\infty)$. Lemma \ref{lem:phiest} will  help us to achieve it. The main result in this section is the following theorem, which will play an important role in the proof that $\nu_\lambda$ is $h$-quasi-invariant for $h\in K_0$.

\begin{theorem}\label{thm:rho}
Let $\gamma\in \left(\frac{1}{3},\frac{1}{2}\right)$ be the constant in Lemma \ref{lem:Jcauchy}. Then,  the followings hold:

\begin{enumerate}[label=(\roman*)]
\item\label{item:4.1-1} For any given $u_1,u_2\in\R$, $\lambda>0$, $p\geq 1$ and $h\in K$ (with $K$ as in Section \ref{sec:Main}), there is  a random variable $\rho_{\lambda}(u_1,u_2,h)\in L^p(\nu_\lambda)$ such that \begin{align*}
\lim_{n\to\infty}E_{\nu_\lambda}\left[\left|\widehat{J}_{\e_n,a_n}(u_1,u_2,h)-\rho_{\lambda}(u_1,u_2,h)\right|^p\right]=0,
\end{align*}
where $\widehat{J}$ is defined in \eqref{eq:Jhatuuh}.

\item\label{item:4.1-2} For any given $u_1,u_2\in [-M,M]$ ($M>0$),  $\lambda>0$, $h\in K$, and  $p\geq 1$, there is a constant $C\in(0,\infty)$ such that \begin{align*}
E_{\nu_\lambda}\left[\left|\rho_{\lambda}(u_1,u_2,h)\right|^p\right] \leq C|u_1-u_2|^{\gamma p}. 
\end{align*}
\end{enumerate}
\end{theorem}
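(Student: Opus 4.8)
The plan is to combine the two convergence results we already have: Corollary~\ref{cor:lemJcauchy} tells us that $\{\widehat{J}_{\e_n,a_n}(u_1,u_2,h)\}_{n\ge1}$ is $L^p(\nu_0)$-Cauchy, and Lemma~\ref{lem:phiest} (together with its generalizations in Remark~\ref{rem:lemphiest}) lets us transfer moment estimates from $\nu_0$-type expectations to $\nu_\lambda$-type expectations via the approximating measures $\nu_{\e,\lambda}$. First I would reduce to the case $p\ge2$, $p$ even, by Jensen/H\"older, and I would establish \ref{item:4.1-1} by proving the Cauchy estimate \eqref{eq:JCauchyoutline}, i.e.
\begin{align*}
E_{\nu_\lambda}\left[\left|\widehat{J}_{\e_{n+1},a_{n+1}}(u_1,u_2,h)-\widehat{J}_{\e_n,a_n}(u_1,u_2,h)\right|^p\right]\le C|u_1-u_2|^{\gamma p}2^{-apn}
\end{align*}
for some $a>0$ uniform over $u_1,u_2\in[-M,M]$. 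Summing a geometric series then gives the Cauchy property in $L^p(\nu_\lambda)$ and hence the limit $\rho_\lambda(u_1,u_2,h)$; and part \ref{item:4.1-2} follows immediately by taking $n\to\infty$ in the telescoped bound $E_{\nu_\lambda}[|\widehat{J}_{\e_n,a_n}|^p]\le C|u_1-u_2|^{\gamma p}$ and applying Fatou's lemma (note $\widehat{J}_{\e_0,a_0}$ may be taken to contribute a bounded term, or one starts the telescoping from $n=1$ where the $n=1$ term is also $\le C|u_1-u_2|^{\gamma p}$ by Lemma~\ref{lem:Jcauchy} with $m=1$ after identifying with $\nu_0$ via Lemma~\ref{lem:phiest}).

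The core of the argument is the passage from $\nu_\lambda$ to $\nu_{\widetilde\e_n,\lambda}$ and then to $\nu_0$. Write $\Delta_n:=\widehat{J}_{\e_{n+1},a_{n+1}}(u_1,u_2,h)-\widehat{J}_{\e_n,a_n}(u_1,u_2,h)$, which is (a difference of) a quantity of the form $\Phi$ in \eqref{eq:PhiF} with $f=f_{a_n}$ or $f_{a_{n+1}}$ applied to a shifted argument; by the shift-covariance built into $\widetilde J$ and Remark~\ref{rem:lemphiest} (first and second bullets), $|\Delta_n|^p$ — or rather $\Delta_n^p$ since $p$ is even — is exactly of the class of functionals $F(\Phi_1,\dots)$ to which the generalized Lemma~\ref{lem:phiest} applies, with $\max|f|$ controlled by $\sup|f_{a_n}|+\sup|f_{a_{n+1}}|\le Ca_n^{-3/2}=C2^{3n/2}$ (this polynomial-in-$2^n$ loss is affordable because the $\nu_0$-estimate \eqref{eq:WieenlambdaJ2} decays exponentially with a rate $\beta$ we may take as large as we like by increasing $p$, i.e. by first proving the bound for large $p$ and interpolating). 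Concretely: $E_{\nu_\lambda}[\Delta_n^p]=N(\lambda)^{-1}\lim_{\e\to0}E[e^{-\bar J_{0,1}^{\e,\lambda}}\Delta_n^p]$, and choosing $\widetilde\e_n=2^{-n}$ (or any sequence tending to $0$ polynomially) Lemma~\ref{lem:phiest} gives
\begin{align*}
\left|E_{\nu_\lambda}[\Delta_n^p]-E_{\nu_{\widetilde\e_n,\lambda}}[\Delta_n^p]\right|\le C\,(\text{poly in }2^n)\cdot\widetilde\e_n^{\widetilde\delta},
\end{align*}
which is $\le C2^{-cn}$ for suitable $c>0$ once $\widetilde\e_n$ is chosen small enough relative to the polynomial loss. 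Then, since $\nu_{\widetilde\e_n,\lambda}=Z_{\widetilde\e_n,\lambda}^{-1}e^{-\bar J_{0,1}^{\widetilde\e_n,\lambda}}\nu_0$ with $e^{-\bar J_{0,1}^{\widetilde\e_n,\lambda}}\le e^{\lambda\kappa_1(\widetilde\e_n)}=e^{O(\widetilde\e_n^{-1/2})}$ and $\kappa_2\ge0$ asymptotically, H\"older's inequality against \eqref{eq:WieenlambdaJ2} yields
\begin{align*}
E_{\nu_{\widetilde\e_n,\lambda}}[\Delta_n^p]\le C\,e^{O(\widetilde\e_n^{-1/2})/q}\,E_{\nu_0}\left[\Delta_n^{pq'}\right]^{1/q'}\le C\,e^{O(\widetilde\e_n^{-1/2})/q}\,|u_1-u_2|^{\gamma p}2^{-\beta pq'n/q'}
\end{align*}
with the exponents balanced so that the exponential-in-$\widetilde\e_n^{-1/2}$ factor is dominated by the exponential-in-$n$ gain — here it is essential that $\widetilde\e_n\to0$ only polynomially (e.g. $2^{-n}$), so $\widetilde\e_n^{-1/2}=2^{n/2}$ is a \emph{subexponential} nuisance that cannot be beaten; rather one must be more careful, and the right choice is $\widetilde\e_n$ tending to $0$ like $n^{-2}(\log n)^{-1}$ or $\e_n=2^{-\ve_0 n}$ with $\ve_0$ small so that $\widetilde\e_n^{-1/2}=2^{\ve_0 n/2}$ competes against $2^{\beta pn}$ and loses once $\beta p>\ve_0/2$. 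This is precisely the bookkeeping encoded in the constraint $\ve_0\in(0,\frac{1-2\gamma}{21})$ in Lemma~\ref{lem:Jcauchy}, so I would track the exponents through this three-term decomposition $E_{\nu_\lambda}[\Delta_n^p]\le |E_{\nu_\lambda}-E_{\nu_{\widetilde\e_n,\lambda}}|+E_{\nu_{\widetilde\e_n,\lambda}}[\Delta_n^p]$ and verify each is $\le C|u_1-u_2|^{\gamma p}2^{-apn}$.

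The main obstacle is exactly this balancing of the exponential factor $e^{c\e_n^{-1/2}}$ coming from the renormalization constant $\kappa_1(\e_n)$ against the exponential decay $2^{-\beta p n}$ from the Wiener-space estimate — it forces a careful, coupled choice of the three scales $\e_n=2^{-\ve_0 n}$, $a_n=2^{-n}$, and the auxiliary $\widetilde\e_n$, and it is the reason one must first prove \eqref{eq:WieenlambdaJ2} for \emph{all} large $p$ with a rate $\beta$ independent of $p$ (so that $\beta p\to\infty$) rather than for fixed $p$. A secondary technical point is checking that $\Delta_n^p$, after expanding the $p$-th power, really does decompose into finitely many terms each of the form $\prod_i\Psi^{(i)}_{0,1;0,1}$ times a smooth function of increments $\omega_{t_i}$ and the stochastic-integral surrogate $U_1(h)$, so that the generalized Lemma~\ref{lem:phiest} (third bullet of Remark~\ref{rem:lemphiest}, together with the product-expansion inequality stated there) applies with $\max|f|$ replaced by $\prod_i\sup|f_i|$; this is routine but must be done to legitimize the $\nu_\lambda\to\nu_{\widetilde\e_n,\lambda}$ comparison. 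Once these two points are settled, parts \ref{item:4.1-1} and \ref{item:4.1-2} both drop out of the single estimate above by summation and Fatou.
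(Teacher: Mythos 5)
Your overall strategy matches the paper's: transfer the moment estimate from $\nu_\lambda$ to $\nu_{\widetilde\e_n,\lambda}$ via Lemma~\ref{lem:phiest} and its generalizations, then from $\nu_{\widetilde\e_n,\lambda}$ to $\nu_0$ via H\"older and the Wiener-space bound of Lemma~\ref{lem:Jcauchy}. The first transfer is handled correctly in spirit (the paper bounds $\max|\widetilde f_1|+\max|\widetilde f_2|\le C|u_1-u_2|\,2^{2n}$ rather than $2^{3n/2}$, but that is bookkeeping). The gap is in the second transfer.

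You estimate the Radon--Nikodym density by the crude pointwise bound
$e^{-\overline{J}_{0,1}^{\widetilde\e_n,\lambda}}\le e^{\lambda\kappa_1(\widetilde\e_n)}=e^{O(\widetilde\e_n^{-1/2})}$,
and then spend the final paragraph trying to beat the resulting factor. You cannot: with $\widetilde\e_n=2^{-cn}$ the factor is $e^{O(2^{cn/2})}$, doubly exponential in $n$, and no choice of $q$ helps; while with $\widetilde\e_n$ decaying only polynomially in $n$, the Lemma~\ref{lem:phiest} error $|u_1-u_2|^p\,2^{2pn}\widetilde\e_n^{\widetilde\delta}$ no longer decays. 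Your claim that ``$\widetilde\e_n^{-1/2}=2^{\ve_0n/2}$ competes against $2^{\beta pn}$ and loses once $\beta p>\ve_0/2$'' ignores that this quantity sits \emph{inside} the exponential; you have also conflated the auxiliary scale $\widetilde\e_n$ with the regularization scale $\e_n=2^{-\ve_0n}$, whose constraint $\ve_0<(1-2\gamma)/21$ is a hypothesis of Lemma~\ref{lem:Jcauchy} and plays no role here.

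The missing idea is the renormalization identity \eqref{eq:eJmoment},
\begin{align*}
\exp\!\left(-\tfrac{q}{q-1}\overline{J}_{0,1}^{\e,\lambda}\right)
=\exp\!\left(-\overline{J}_{0,1}^{\e,\frac{q}{q-1}\lambda}\right)
\exp\!\left(\tfrac{q\lambda^2\kappa_2(\e)}{(q-1)^2}\right),
\end{align*}
which exploits the cancellation between $J_{0,1}^\e\ge0$ and the divergent counterterm $\lambda\kappa_1(\e)$ that your pointwise bound discards. Applying H\"older to $E[e^{-\overline{J}_{0,1}^{\widetilde\e_n,\lambda}}|\Phi_n|^p]$ with conjugates $(\tfrac{q}{q-1},q)$ and then using this identity, the density factor becomes
$e^{\lambda^2\kappa_2(\widetilde\e_n)/(q-1)}E\big[e^{-\overline{J}_{0,1}^{\widetilde\e_n,\frac{q\lambda}{q-1}}}\big]^{(q-1)/q}$,
and the second expectation is bounded uniformly in $\widetilde\e_n$ (this is exactly the content of \eqref{eq:partpest}--\eqref{eq:Nlam} and the reason the construction works at all). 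Only $\kappa_2(\widetilde\e_n)\sim-\tfrac{1}{(2\pi)^2}\log\widetilde\e_n$ survives, which is \emph{logarithmic} in $\widetilde\e_n$, hence linear in $n$ for $\widetilde\e_n=2^{-cn}$. Choosing $\widetilde\e_n=2^{-3p\widetilde\delta^{-1}n}$ as the paper does makes the Lemma~\ref{lem:phiest} error $2^{2pn}\widetilde\e_n^{\widetilde\delta}=2^{-pn}$, and then a fixed $q>1$ (large enough that $\lambda^2\kappa_2(\widetilde\e_n)/(q-1)<\tfrac{p\widetilde\delta'n}{2}\log2$) makes the $\kappa_2$-factor absorbable into half of the Wiener-space decay $2^{-p\widetilde\delta'n}$ from Lemma~\ref{lem:Jcauchy}. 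Without this identity your three-term decomposition cannot close; with it, both parts of the theorem follow by summation and Fatou exactly as you outline.
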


\begin{rem}\label{rem:thmrho}
When $u_1=u$ and $u_2=0$, we have that
\begin{align*}
\widetilde{J}_{\e_n,a_n}(u,h)\to \rho(u,0,h)=:\rho(u,h)\quad \mathrm{in ~}L^p(\nu_\lambda)
\end{align*}
as $n\to \infty$, where $\widetilde{J}$ is defined in \eqref{eq:Jtideuh}.
\end{rem}

\begin{proof}
For $u_1,u_2\in \R$ and $h\in K$, we set
\begin{align}
\Phi_n = \Phi_n(u_1,u_2,h) := \widehat{J}_{\e_n,a_n}(u_1,u_2,h)-\widehat{J}_{\e_{n-1},a_{n-1}}(u_1,u_2,h),\quad n\in \N.\label{eq:PhiDef}
\end{align}
It follows that
\begin{align*}
\Phi_n&=\int_{T_{\e_n}\cap \Delta_{0,1,0,1}}\dd \sigma\dd \tau\int e^{i\langle y,\omega_\sigma-\omega_\tau\rangle}\left(e^{i\langle y,u_1(h_\sigma-h_\tau)\rangle}-e^{i\langle y,u_2(h_\sigma-h_\tau)\rangle}\right)f_{a_n}(y)\dd y
\\
&\hspace{3em}- \int_{T_{\e_{n-1}}\cap \Delta_{0,1,0,1}}\dd \sigma\dd \tau\int e^{i\langle y,\omega_\sigma-\omega_\tau\rangle}\left(e^{i\langle y,u_1(h_\sigma-h_\tau)\rangle}-e^{i\langle y,u_2(h_\sigma-h_\tau)\rangle}\right)f_{a_{n-1}}(y)\dd y.
\end{align*}
Hence, $\Phi_n$ has the form $\Phi^1-\Phi^2$, where $\Phi^1$ and $\Phi^2$ are functions defined by \eqref{eq:PhiF} with replacement of $f$ by $\widetilde{f}_1$ and $\widetilde{f}_2$, respectively (see Remark \ref{rem:lemphiest}). Since $|e^{ix}-e^{iy}|\leq |x-y|$ for any $x,y\in \R$, there exists a constant $C>0$ which is independent of $\ve_0$ such that  $\max |\widetilde{f}_1|+\max|\widetilde{f}_2|\leq C|u_1-u_2|2^{2n}$. 
Therefore,  Lemma \ref{lem:phiest} and Remark \ref{rem:lemphiest} imply that
\begin{align*}
&\left|E\left[\exp\left(-\overline{J}_{0,1}^{\e_1,\lambda}\right)|\Phi_n|^p\right]-E\left[\exp\left(-\overline{J}_{0,1}^{\e_2,\lambda}\right)|\Phi_n|^p\right]\right|\leq C|u_1-u_2|^{p}2^{2pn}(\e_1^{\widetilde{\delta}}+\e_2^{\widetilde{\delta}}),\\
&\hspace{28em} \e_1,\e_2\in (0,1).
\end{align*}
Letting $\e_1\to 0$, in view of \eqref{eq:nulambda} we have that for $\e \in (0,1)$
\begin{align}
\left|N(\lambda)E_{\nu_\lambda}\left[|\Phi_n|^p\right]-E\left[\exp\left(-\overline{J}_{0,1}^{\e,\lambda}\right)|\Phi_n|^p\right]\right|\leq C|u_1-u_2|^p 2^{2pn} \e^{\widetilde{\delta}}. \label{eq:cauchylambda}
\end{align}
We choose
\begin{align}
\widetilde{\e}_n=2^{-3 p \widetilde{\delta}^{-1}n}.\label{eq:tildeen}
\end{align}
Then, it holds that
\begin{align}
\left|N(\lambda)E_{\nu_\lambda}\left[|\Phi_n|^p\right]-E\left[\exp\left(-\overline{J}_{0,1}^{\widetilde{\e}_n,\lambda}\right)|\Phi_n|^p\right]\right|\leq C|u_1-u_2|^p2^{-pn}.\label{eq:nulambdaJphi}
\end{align}
By Lemma \ref{lem:Jcauchy}, we know that there is a constant $\widetilde{\delta}' \in (0,1)$ such that
\begin{align*}
E\left[|\Phi_n|^{pq}\right]\leq C_p|u_1-u_2|^{pq\gamma}2^{-pq \widetilde{\delta}' n}
\end{align*}
for $q\geq 1$. 
From the definition of $\overline{J}_{u,v}^{\e,\lambda}$, we see that for $q\geq 1$ and $0\leq u< v\leq 1$,
\begin{align}
\exp\left(-\frac{q}{q-1}\overline{J}_{u,v}^{{\e},\lambda}\right)&=\exp\left(-\overline{J}_{u,v}^{{\e},\frac{q}{q-1}\lambda}+\left(\frac{q^2}{(q-1)^2}-\frac{q}{q-1}\right)\lambda^2(v-u)\kappa_2(\e)\right)\notag\\
&=\exp\left(-\overline{J}_{u,v}^{{\e},\frac{q}{q-1}\lambda}\right)\exp\left(\frac{q\lambda^2\kappa_2({\e})}{(q-1)^2}(v-u)\right) . \label{eq:eJmoment}
\end{align}

This together with \eqref{eq:partpest} implies that
\begin{align}
E\left[\exp\left(-\overline{J}_{0,1}^{\widetilde{\e}_n,\lambda}\right)|\Phi_n|^p\right]&\leq E\left[\exp\left(-\frac{q}{q-1}\overline{J}_{0,1}^{\widetilde{\e}_n,\lambda}\right)\right]^\frac{q-1}{q}E\left[|\Phi_n|^{pq}\right]^\frac{1}{q}\notag\\
&\leq C|u_1-u_2|^{p\gamma}2^{-p\widetilde{\delta}' n}\exp\left( \frac{\lambda^2\kappa_2(\widetilde{\e}_n)}{q-1}\right)E\left[\exp\left(-\overline{J}_{0,1}^{\widetilde{\e}_n,\frac{q\lambda}{q-1}}\right)\right] ^\frac{q-1}{q} \notag\\
&\leq C|u_1-u_2|^{p\gamma}2^{-p\widetilde{\delta}' n}\exp\left( \frac{\lambda^2\kappa_2(\widetilde{\e}_n)}{q-1}\right)\notag\\
&\leq C|u_1-u_2|^{p\gamma}2^{-\frac{p\widetilde{\delta}' n}{2}},\label{eq:jlambdaphi}
\end{align}
if $q>1$ is large  enough such that $\dis \frac{\lambda^2\kappa_2(\widetilde{\e}_n)}{q-1}< \frac{p\widetilde{\delta}' n}{2}\log 2$.
By \eqref{eq:nulambdaJphi} and \eqref{eq:jlambdaphi}, we then have that
\begin{align*}
N(\lambda)E_{\nu_\lambda}[|\Phi_n|^p]\leq C|u_1-u_2|^{p\gamma }\left(2^{-pn}+2^{-\frac{p\widetilde{\delta}' n}{2}}\right),
\end{align*}
which proves that $(\widehat{J}_{\e_n,a_n}(u_1,u_2,h))_{n\geq 1}$ is a Cauchy sequence in $L^p(\nu_\lambda)$. Therefore, there exists a $\rho_\lambda(u_1,u_2,h)$ such that
\begin{align*}
\lim_{n\to\infty}E_{\nu_{\lambda}}\left[\left|\widehat{J}_{\e_n,a_n}(u_1,u_2,h)-\rho_\lambda(u_1,u_2,h)\right|^p\right]=0
\end{align*}
and assertion \ref{item:4.1-1} is proved.

From \ref{item:4.1-1} and \eqref{eq:Nlam} we see that
\begin{align*}
&N(\lambda)E_{\nu_\lambda}\left[|\rho_\lambda(u_1,u_2,h)|^p\right] ^{1/p} \\
&\leq N(\lambda)E_{\nu_\lambda}\left[|\widetilde{J}_{\e_1,a_1}(u_1,h)-\widetilde{J}_{\e_1,a_1}(u_2,h)|^p\right] ^{1/p} +\sum_{n\geq 2}N(\lambda)E_{\nu_\lambda}\left[|\Phi_n|^p\right] ^{1/p} \\
&\leq C|u_1-u_2|^{\gamma}.
\end{align*}
Thus, we obtain \ref{item:4.1-2}.
\end{proof}

By a similar argument as in the proof of Theorem \ref{thm:rho}, we obtain the following corollary.

\begin{cor}\label{cor:jphilp}
Let $\lambda\geq 0$. Suppose  that a random variable $\Phi_n$ satisfies that for any $q>1$, $E[|\Phi_n|^q]\leq C2^{-bqn}$ for some $b>0$ and $C>0$. We set $\widetilde{\e}_n=2^{-cn}$ for some $c>0$.
If $q>1$ large enough so that $\frac{ c}{(q-1)(2\pi)^2}\lambda^2<\frac{b}{2}$, then
\begin{align*}
E\left[\exp\left(-\overline{J}_{0,1}^{\widetilde{\e}_n,\lambda}\right)\left|\Phi_n\right|\right] \leq C2^{-\widetilde{\delta}n}
\end{align*}
for some $C>0$ and $\widetilde{\delta}>0$.

\end{cor}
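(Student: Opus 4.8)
The plan is to reproduce, almost verbatim, the chain of estimates leading to \eqref{eq:jlambdaphi} in the proof of Theorem~\ref{thm:rho}; that computation never uses the specific form of the $\Phi_n$ appearing there, only the $L^q$-moment bound. First I would apply H\"older's inequality with conjugate exponents $q$ and $q/(q-1)$ (for $q>1$ as in the hypothesis) to get
\begin{align*}
E\left[\exp\left(-\overline{J}_{0,1}^{\widetilde{\e}_n,\lambda}\right)|\Phi_n|\right]\leq E\left[\exp\left(-\frac{q}{q-1}\overline{J}_{0,1}^{\widetilde{\e}_n,\lambda}\right)\right]^{(q-1)/q}E\left[|\Phi_n|^q\right]^{1/q},
\end{align*}
so that the second factor is at most $C\,2^{-bn}$ by assumption.

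For the first factor I would use the identity \eqref{eq:eJmoment} with $(u,v)=(0,1)$, namely
\begin{align*}
\exp\left(-\frac{q}{q-1}\overline{J}_{0,1}^{\widetilde{\e}_n,\lambda}\right)=\exp\left(-\overline{J}_{0,1}^{\widetilde{\e}_n,\frac{q}{q-1}\lambda}\right)\exp\left(\frac{q\lambda^2\kappa_2(\widetilde{\e}_n)}{(q-1)^2}\right),
\end{align*}
and then bound $E[\exp(-\overline{J}_{0,1}^{\widetilde{\e}_n,\frac{q}{q-1}\lambda})]$ by a constant uniform in $n$: integrating \eqref{eq:partpest} in $x$ (with $t=1$) gives $E[\exp(-\overline{J}_{0,1}^{\e,\mu})]\leq 1+C$ for all $\e\in(0,1)$ and each fixed $\mu\geq 0$ (one could also invoke \eqref{eq:psiest} with $\Psi_{0,1}\equiv 1$ together with \eqref{eq:Nlam}). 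Raising to the power $(q-1)/q$, the first factor is $\leq C\exp\!\big(\tfrac{\lambda^2\kappa_2(\widetilde{\e}_n)}{q-1}\big)$.

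It then remains to insert the asymptotics of $\kappa_2$: by \eqref{eq:kappa2} and $\widetilde{\e}_n=2^{-cn}$ one has $\kappa_2(\widetilde{\e}_n)=-\frac{1}{(2\pi)^2}\log\widetilde{\e}_n+O(1)=\frac{cn\log 2}{(2\pi)^2}+O(1)$, whence $\exp\!\big(\tfrac{\lambda^2\kappa_2(\widetilde{\e}_n)}{q-1}\big)\leq C\,2^{c\lambda^2 n/((q-1)(2\pi)^2)}$. Multiplying the two factors and setting $\widetilde{\delta}:=b-\frac{c\lambda^2}{(q-1)(2\pi)^2}$, which is $>\frac b2>0$ precisely by the hypothesis $\frac{c\lambda^2}{(q-1)(2\pi)^2}<\frac b2$, we obtain
\begin{align*}
E\left[\exp\left(-\overline{J}_{0,1}^{\widetilde{\e}_n,\lambda}\right)|\Phi_n|\right]\leq C\,2^{-\widetilde{\delta}n}
\end{align*}
(the constant $C$ changing from line to line), which is the assertion. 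There is no genuine obstacle here beyond bookkeeping, as the statement is a direct analogue of the corresponding step in Theorem~\ref{thm:rho}; the only point deserving a word of care is the uniform-in-$n$ bound on $E[\exp(-\overline{J}_{0,1}^{\widetilde{\e}_n,\frac{q}{q-1}\lambda})]$, which is why one invokes \eqref{eq:partpest} (equivalently \eqref{eq:psiest} together with \eqref{eq:Nlam}) rather than the mere existence of the limit $N$, and the elementary remark that the constraint on $q$ relaxes as $q\to\infty$ and so can always be satisfied.
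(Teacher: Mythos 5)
Your argument is correct and reproduces exactly the chain of estimates the paper has in mind when it writes ``by a similar argument as in the proof of Theorem~\ref{thm:rho}''; you apply H\"older's inequality, the identity \eqref{eq:eJmoment}, the uniform bound on $E[\exp(-\overline{J}_{0,1}^{\e,\mu})]$ coming from \eqref{eq:partpest}, and the logarithmic blow-up \eqref{eq:kappa2} of $\kappa_2$, precisely as in \eqref{eq:jlambdaphi}. The only cosmetic difference is that you make the final exponent $\widetilde{\delta}=b-\tfrac{c\lambda^2}{(q-1)(2\pi)^2}$ explicit, whereas the paper just records the bound $\widetilde{\delta}\geq b/2$ implicit in its condition on $q$; the substance is identical.
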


\section{Proof of Theorem \ref{thm:thm3}}\label{sec:5}

We provide the proof of Theorem \ref{thm:thm3}. As mentioned in Section \ref{sub:strategy}, we may expect that \begin{align*}
\frac{\dd \nu_\lambda \circ \tau_{uh}^{-1}}{\dd \nu_\lambda}=\exp\left(-\lambda (J^{0}_{0,1}(\omega-uh)-J^{0}_{0,1}(\omega))+u\int_0^1h'_s\dd \omega_s-\frac{u^2}{2}\int_0^1 (h_s')^2\dd s\right).
\end{align*} 
Moreover, the result in Section \ref{sec:4} allows us to expect $J^{0}_{0,1}(\omega-uh)-J^{0}_{0,1}(\omega)$ is given by $\rho_{\lambda}(-u,h)$. We verify that this expectations is true. In this section, we fix $\lambda\geq 0$ and we omit the subscript of $\rho_\lambda$.

Let \begin{align*}
\mathcal{J}_{n,\lambda}&:=\lambda J_{0,1}^{\e_n,a_n}-\lambda\kappa_1(\e_n)+\lambda^2\kappa_2(\e_n),\\
\mu_{n,\lambda}&:=E\left[\exp\left( -\mathcal{J}_{n,\lambda}' \right) \right]^{-1}\exp\left(-\mathcal{J}_{n,\lambda}'\right)\nu_0,
\end{align*}
where $\mathcal{J}_{n,\lambda}':=\mathcal{J}_{n,\lambda} 1\left\{\left|J_{0,1}^{\e_n,a_n}-J_{0,1}^{\e_n}\right|\leq 2^{-\widetilde{\delta}_1n}\right\}$ for $\widetilde{\delta}_1\in(0,\frac{1}{200})$ which is chosen later. 

In this section, $\ve_0\in (0,1)$ will be retaken and made smaller again and again, but only finitely many times. 
Let us first prove the following lemma.

\begin{lemma}\label{lem:munlambda}
For $\lambda\in [0,\infty)$ and for $\widetilde{\delta}_1\in \left(0,\frac{1}{200}\right)$ small enough,
\begin{align*}
\mu_{n,\lambda}\Rightarrow\nu_\lambda\quad \text{in the weak sense as $n\to\infty$}.
\end{align*}
\end{lemma}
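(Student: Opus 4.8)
The plan is to show that $\mu_{n,\lambda}$ is a ``harmless'' modification of the approximation $\nu_{\e_n,\lambda}$ appearing in \eqref{eq:nuelambdadfn} and then to use the already established fact that $\nu_{\e,\lambda}\Rightarrow\nu_\lambda$. More precisely, first observe that $\mu_{n,\lambda}$ and $\nu_{\e_n,\lambda}$ differ in two respects: (i) in $\mathcal{J}_{n,\lambda}$ we use the regularised self-intersection local time $J_{0,1}^{\e_n,a_n}$ with $a_n=2^{-n}>0$ rather than $J_{0,1}^{\e_n}=J_{0,1}^{\e_n,0}$, and (ii) we truncate $\mathcal J_{n,\lambda}$ on the event $\{|J_{0,1}^{\e_n,a_n}-J_{0,1}^{\e_n}|\leq 2^{-\widetilde{\delta}_1 n}\}$. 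The strategy is to control both modifications in $L^1(\nu_0)$ (or, after testing against a bounded continuous cylinder function $F$, in expectation under $\nu_0$), using the exponential-decay estimates from Sections \ref{sec:2}--\ref{sec:4}.

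For (i): by Corollary \ref{cor:JaJ} we have the bound $E[|J_{0,1}^{\e_n,a_n}-J_{0,1}^{\e_n}|^p]\leq C a_n^{p/6}+C a_n^{(1+20\gamma)p/24}\e_n^{-3p/4}$, and since $a_n=2^{-n}$ and $\e_n=2^{-\ve_0 n}$ with $\ve_0$ small, the right-hand side is $\leq C 2^{-bpn}$ for some $b>0$ (after shrinking $\ve_0$). Hence $J_{0,1}^{\e_n,a_n}\to J_{0,1}^{\e_n}$ sufficiently fast that, by an application of Corollary \ref{cor:jphilp} with $\Phi_n = J_{0,1}^{\e_n,a_n}-J_{0,1}^{\e_n}$ and $\widetilde\e_n$ chosen as a power of $2^{-n}$, one gets $E[e^{-\overline J_{0,1}^{\widetilde\e_n,\lambda}}|J_{0,1}^{\e_n,a_n}-J_{0,1}^{\e_n}|]\leq C 2^{-\widetilde\delta n}$; combined with Lemma \ref{lem:phiest} (transferring estimates from $\nu_{\widetilde\e_n,\lambda}$ to $\nu_\lambda$, resp.\ to $\nu_{\e_n,\lambda}$) this shows that replacing $J_{0,1}^{\e_n,a_n}$ by $J_{0,1}^{\e_n}$ in the exponent changes expectations of bounded test functions by $o(1)$. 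For (ii): on the complementary (small-probability) event the exponent $\mathcal J_{n,\lambda}$ is killed, so one must check that $\nu_0(|J_{0,1}^{\e_n,a_n}-J_{0,1}^{\e_n}|> 2^{-\widetilde\delta_1 n})$ is super-polynomially small (Chebyshev applied to the moment bound above, valid for all $p$, provided $\widetilde\delta_1 < b$), and moreover that the contribution of the truncation to $E[e^{-\mathcal J_{n,\lambda}'} F]$ is negligible; here one uses the uniform exponential integrability of $e^{-\overline J_{0,1}^{\e,\lambda}}$ in $\e$ (which follows from \eqref{eq:psiest}--\eqref{eq:Nlam} and \eqref{eq:eJmoment}) together with Hölder, exactly as in the proof of Theorem \ref{thm:rho}. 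Finally one normalises: the normalising constants $E[e^{-\mathcal J_{n,\lambda}'}]$ converge to $N(\lambda)>0$ by the same estimates, so no degeneracy occurs.

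Putting these together: for any bounded continuous cylinder function $F$ on $X$,
\begin{align*}
\int_X F\,\dd\mu_{n,\lambda}
&=\frac{E[e^{-\mathcal J_{n,\lambda}'}F]}{E[e^{-\mathcal J_{n,\lambda}'}]}
=\frac{E[e^{-\overline J_{0,1}^{\e_n,\lambda}}F]+o(1)}{E[e^{-\overline J_{0,1}^{\e_n,\lambda}}]+o(1)}
=\int_X F\,\dd\nu_{\e_n,\lambda}+o(1)\longrightarrow\int_X F\,\dd\nu_\lambda,
\end{align*}
where the last convergence is the weak convergence $\nu_{\e,\lambda}\Rightarrow\nu_\lambda$ recalled in Section \ref{subsec:1.1} after \eqref{eq:nuelambdadfn}. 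Since bounded continuous functions on $X$ are determined (for the purpose of weak convergence on the Polish space $X$) by bounded continuous cylinder functions, this yields $\mu_{n,\lambda}\Rightarrow\nu_\lambda$.

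The main obstacle I expect is a clean verification that the cut-off event $\{|J_{0,1}^{\e_n,a_n}-J_{0,1}^{\e_n}|>2^{-\widetilde\delta_1 n}\}$ truly has negligible weight \emph{after} multiplication by the (unbounded) Radon--Nikodym density $e^{-\mathcal J_{n,\lambda}}$: one cannot simply bound this under $\nu_0$, since $\nu_\lambda$ is singular w.r.t.\ $\nu_0$, so one must route the estimate through an intermediate approximation $\nu_{\widetilde\e_n,\lambda}$ (absolutely continuous w.r.t.\ $\nu_0$) and invoke Lemma \ref{lem:phiest}, choosing the exponents $\ve_0$, $\widetilde\delta_1$, $\widetilde\e_n$, and the Hölder conjugate $q$ consistently — this bookkeeping is the delicate part, but it is entirely parallel to the argument already carried out in the proof of Theorem \ref{thm:rho} and uses no new ideas.
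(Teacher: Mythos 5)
Your overall strategy---compare $\mu_{n,\lambda}$ to the Bolthausen approximation $\nu_{\e_n,\lambda}$ under $\nu_0$, then invoke the already established weak convergence $\nu_{\e_n,\lambda}\Rightarrow\nu_\lambda$---is exactly the paper's route, and the two sources of discrepancy you identify (the $a_n$-regularisation of $J_{0,1}^{\e_n}$ and the truncation on $A_n^c$) are the right ones, as is the final display. However, your invocation of Lemma~\ref{lem:phiest} to ``transfer estimates from $\nu_{\widetilde\e_n,\lambda}$ to $\nu_\lambda$, resp.\ to $\nu_{\e_n,\lambda}$'' is both unnecessary and, as stated, inapplicable. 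It is unnecessary because Corollary~\ref{cor:jphilp} already allows an arbitrary exponent $c>0$ in $\widetilde\e_n=2^{-cn}$, so one can simply take $c=\ve_0$ and obtain $E\bigl[e^{-\overline{J}_{0,1}^{\e_n,\lambda}}|J_{0,1}^{\e_n,a_n}-J_{0,1}^{\e_n}|\bigr]\leq C2^{-\widetilde\delta n}$ directly (for $q$ large enough), which is exactly the comparison against $\nu_{\e_n,\lambda}$ that you need; no transfer to $\nu_\lambda$ is required for this lemma because the known convergence $\nu_{\e_n,\lambda}\Rightarrow\nu_\lambda$ is used in the final step. It is inapplicable because Lemma~\ref{lem:phiest} is stated for $\Phi$ of the form \eqref{eq:PhiF} with a \emph{bounded} kernel $f$ and its conclusion scales with $\max|f|$; the variable $J_{0,1}^{\e_n,a_n}-J_{0,1}^{\e_n}$ corresponds formally to $f=p_{a_n}-\delta_0$, and approximating $J_{0,1}^{\e_n}$ by $J_{0,1}^{\e_n,a'}$ with $a'\downarrow 0$ makes $\max|f|$ diverge, so the bound degenerates.

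The paper's proof avoids this entirely: it tests weak convergence against the bounded exponential functionals $\Psi_{0,1}$ of \eqref{eq:psidef} (the class in which Bolthausen's convergence is phrased), compares $E[\Psi_{0,1}e^{-\mathcal J_{n,\lambda}'}]$ to $E[\Psi_{0,1}e^{-\overline{J}_{0,1}^{\e_n,\lambda}}]$ via the elementary pointwise inequality \eqref{eq:JJerr}, controls the contribution from $A_n^c$ by H\"older using the exponential-moment identity \eqref{eq:eJmoment} together with the tail bound \eqref{eq:Eventtail}, and controls the contribution from $A_n$ by the bound $|\mathcal J_{n,\lambda}-\overline{J}_{0,1}^{\e_n,\lambda}|\leq\lambda 2^{-\widetilde\delta_1 n}$. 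Your argument becomes a faithful rendering of the paper's once you drop the Lemma~\ref{lem:phiest} detour and apply Corollary~\ref{cor:jphilp} with $\widetilde\e_n=\e_n$.
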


\begin{proof}

First we note from Corollary \ref{cor:JaJ} that there exists $C>0$ such that
\begin{align}
E\left[\left(1_{\{\left|J_{0,1}^{\e_n,a_n}-J_{0,1}^{\e_n}\right|\geq 2^{-\widetilde{\delta}_1n}\}}\right)^q\right]\leq C\left(2^{\left(\widetilde{\delta}_1-\frac{1}{6}\right)qn}+2^{\left(\widetilde{\delta}_1-\frac{(1+20\gamma)}{24}+\frac{3\ve_0}{4}\right)qn}\right)\label{eq:Eventtail}
\end{align}
for $n\geq 1$ and $q\geq 1$

Let $\Xi$  be a nonnegative measurable function on the space $(X,\mathcal{B})$. Then, for any given $\lambda\in (0,\infty)$,
\begin{align}
&\left|E\left[\Xi\exp\left(-\mathcal{J}_{n,\lambda}'\right)\right]-E\left[\Xi\exp\left(-\overline{J}_{0,1}^{\e_n,\lambda}\right)\right]\right|\notag\\
&\leq 2\lambda 2^{-\widetilde{\delta}_1n}E\left[\Xi\exp\left(-\overline{J}_{0,1}^{\e_n,\lambda}\right)\right]+E\left[\Xi\left(1+\exp\left(-\overline{J}_{0,1}^{\e_n,\lambda}\right)\right):\left|J_{0,1}^{\e_n,a_n}-J_{0,1}^{\e_n}\right|\geq 2^{-\widetilde{\delta}_1n}\right]\label{eq:JJerr}
\end{align}
as long as $\lambda 2^{-\widetilde{\delta}_1n}\leq 1$ is sufficiently small.
Taking $\Xi=\Psi_{0,1}$ which is defined as in \eqref{eq:psidef}, we can see from Corollary \ref{cor:JaJ}, Lemma \ref{lem:finitetra},  Corollary \ref{cor:jphilp}, \eqref{eq:Eventtail}, and \eqref{eq:JJerr} that 
\begin{align}
&\left|E\left[\Psi_{0,1}\exp\left(-\mathcal{J}_{n,\lambda}'\right)\right]-E\left[\Psi_{0,1}\exp\left(-\overline{J}_{0,1}^{\e_n,\lambda}\right)\right]\right|
\leq C2^{-\widetilde{\delta}_2n}\label{eq:JJest} 
\end{align}
for some $\widetilde{\delta}_2\in (0,1)$ if  $\widetilde{\delta}_1$ is sufficiently small.
By definition of $\nu_\lambda$, we have \begin{align*}
\lim_{n \to0 }E\left[\exp\left(-\overline{J}_{0,1}^{\e_n,\lambda}\right)\right]^{-1}E\left[\Psi_{0,1}\exp\left(-\overline{J}_{0,1}^{\e_n,\lambda}\right)\right]=E_{\nu_\lambda}\left[\Psi_{0,1}\right].
\end{align*}
Thus, it follows from \eqref{eq:JJest} that 
\begin{align*}
\lim_{n \to0 }E\left[\exp\left(-\mathcal{J}_{n,\lambda}'\right)\right]^{-1}E\left[\Psi_{0,1}\exp\left(-\mathcal{J}_{n,\lambda}'\right)\right]=E_{\nu_\lambda}\left[\Psi_{0,1}\right].
\end{align*}
\end{proof}

Let 
\begin{align}
A_n:=\left\{\omega:\left|J_{0,1}^{\e_n,a_n}(\omega)-J_{0,1}^{\e_n}(\omega)\right|<2^{-\widetilde{\delta}_1n}\right\}\label{eq:eventAndfn}
\end{align}
and let $\widetilde{\delta}_1\in (0,\frac{1}{200})$ be the small constant  which is given in Lemma \ref{lem:munlambda} and will be retaken and made smaller in the proof of Lemma \ref{lem:RNder} below.
Set
\begin{align}
D_{n,\lambda}(\omega):=
\exp\left(-\mathcal{J}'_{n,\lambda}(\omega-uh)+\mathcal{J}'_{n,\lambda}(\omega)\right)\exp\left(u\int_0^1h_t'\dd \omega_t-\frac{1}{2}u^2\int_0^1 (h_t')^2\dd t\right)\label{eq:Dnlambda}
\end{align}
with $\int_0^1h_t'\dd \omega_t$ understood as an It\^o stochastic integral. It is clear that $D_{n,\lambda}=\frac{\dd (\mu_{n,\lambda}\circ \tau_{uh}^{-1})}{\dd \mu_{n,\lambda}}$. Since we do not know at present whether $(\omega_t)_{t\in [0,1]}$, as coordinate process under $\nu_\lambda$, is a semimartingale or not, the quantity $\int_0^1 h'_t\dd \omega_t$ is not guaranteed to be well-defined under the polymer measure $\nu_\lambda$.
Moreover, it is known that $\nu_\lambda$ is singular with respect to $\nu_0$ (see \cite{Wes82}).
However, if $h\in K_0$, by It\^o's formula we have
\begin{align}
\int_0^1 h'_t\dd \omega_t = h'_1\omega_1-\int_0^1 \omega_t\dd h'_t.\label{eq:stoint}
\end{align} 
The right-hand side of the above equality is well-defined under the measure $\nu_\lambda$.
From now on we assume that $h\in K_0$, and we always regard the left-hand side of \eqref{eq:stoint} under the measure $\nu_\lambda$ as the term defined by the right-hand side of \eqref{eq:stoint}.
Let 
\begin{align*}
a_{uh}:=e^{-\lambda \rho_\lambda(-u,h)+V(u,h)},
\end{align*}
where 
\begin{align*}
V(u,h) = V(\omega,u,h) :=uh'_1\omega_1-u\int_0^1 \omega_t h''_t\dd t-\frac{1}{2}u^2 \int_0^1 |h_t'|^2\dd t,
\end{align*}
and $\rho_\lambda(u,h)$ is as defined in Theorem \ref{thm:rho}.

\begin{lemma}\label{lem:RNder}
For any given $u\in\R$, $h\in K_0$ and for any $\lambda\geq 0$, it  holds that 
\begin{align*}
\frac{\dd{(\nu_\lambda\circ \tau_{{uh}}^{-1})}}{\dd \nu_\lambda}=a_{uh},\quad \nu_\lambda\text{-a.e.}
\end{align*}
\end{lemma}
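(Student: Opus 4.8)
\textbf{Proof plan for Lemma \ref{lem:RNder}.}
The plan is to pass to the limit $n\to\infty$ in the elementary identity $D_{n,\lambda}=\frac{\dd(\mu_{n,\lambda}\circ\tau_{uh}^{-1})}{\dd\mu_{n,\lambda}}$, using Lemma \ref{lem:munlambda} ($\mu_{n,\lambda}\Rightarrow\nu_\lambda$) to control the measures and Theorem \ref{thm:rho} (together with \eqref{eq:stoint}) to identify the limit of the densities. Concretely, for a bounded continuous test function $G$ on $X$ one has, by the change of variables $\omega\mapsto\omega+uh$ and the definition of $D_{n,\lambda}$,
\begin{align*}
\int_X G(\omega+uh)\,\mu_{n,\lambda}(\dd\omega)=\int_X G(\omega)\,D_{n,\lambda}(\omega)\,\mu_{n,\lambda}(\dd\omega).
\end{align*}
The left-hand side converges to $\int_X G(\omega+uh)\,\nu_\lambda(\dd\omega)=\int_X G\,\dd(\nu_\lambda\circ\tau_{uh}^{-1})$ since $\omega\mapsto G(\omega+uh)$ is bounded continuous and $\mu_{n,\lambda}\Rightarrow\nu_\lambda$. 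So it remains to show the right-hand side converges to $\int_X G(\omega)\,a_{uh}(\omega)\,\nu_\lambda(\dd\omega)$; combined with the injectivity of the pairing against bounded continuous $G$ this yields $\frac{\dd(\nu_\lambda\circ\tau_{uh}^{-1})}{\dd\nu_\lambda}=a_{uh}$.

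For the convergence of the right-hand side, first I would observe that the Gaussian factor $\exp(u\int_0^1 h_t'\dd\omega_t-\tfrac12 u^2\int_0^1(h_t')^2\dd t)$ appearing in $D_{n,\lambda}$ equals $\exp(V(u,h))$ once $\int_0^1 h_t'\dd\omega_t$ is rewritten via \eqref{eq:stoint} as $h_1'\omega_1-\int_0^1\omega_t\dd h_t'$ — a genuinely continuous functional of $\omega\in X$, hence the same $\nu_0$-a.s.\ (thus $\mu_{n,\lambda}$- and $\nu_\lambda$-a.s.) under all these measures. Next I would handle the self-intersection part $\exp(-\mathcal{J}_{n,\lambda}'(\omega-uh)+\mathcal{J}_{n,\lambda}'(\omega))$: on the event $A_n\cap(A_n-uh)$ the truncation is inactive and the exponent is $-\lambda(J_{0,1}^{\e_n,a_n}(\omega-uh)-J_{0,1}^{\e_n,a_n}(\omega))=-\lambda\widetilde J_{\e_n,a_n}(-u,h)$ up to the deterministic $\kappa$-terms which cancel; by Theorem \ref{thm:rho}\ref{item:4.1-1} and Remark \ref{rem:thmrho}, $\widetilde J_{\e_n,a_n}(-u,h)\to\rho_\lambda(-u,h)$ in $L^p(\nu_\lambda)$, and via the estimate \eqref{eq:cauchylambda}-type comparison between $\nu_\lambda$ and $\nu_{\widetilde\e_n,\lambda}$ (i.e.\ Lemma \ref{lem:phiest} and Corollary \ref{cor:jphilp}) also in $\mu_{n,\lambda}$-probability along a suitable subsequence. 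Therefore $D_{n,\lambda}\to a_{uh}$ in $\mu_{n,\lambda}$-probability. To upgrade this to convergence of the integrals $\int G\,D_{n,\lambda}\,\dd\mu_{n,\lambda}$, I would establish uniform integrability of $\{D_{n,\lambda}\}$ under $\{\mu_{n,\lambda}\}$: bound $\mathbb E_{\mu_{n,\lambda}}[D_{n,\lambda}^{1+\delta}]$ by splitting off the Gaussian factor with Hölder (its exponential moments are uniformly bounded since $h\in K_0$) and controlling $\mathbb E_{\mu_{n,\lambda}}[\exp((1+\delta')(-\mathcal{J}_{n,\lambda}'(\omega-uh)+\mathcal{J}_{n,\lambda}'(\omega)))]$ using the sign of $J$, the shift-quasi-invariance of $\nu_0$, the moment bound \eqref{eq:partpest}/Lemma \ref{lem:finitetra}, and the identity \eqref{eq:eJmoment} to absorb the $\kappa_2(\e_n)$ blow-up into a slightly larger coupling constant — this is exactly the kind of estimate already used in the proof of Theorem \ref{thm:rho}. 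Uniform integrability plus convergence in probability gives $\int_X G\,D_{n,\lambda}\,\dd\mu_{n,\lambda}\to\int_X G\,a_{uh}\,\dd\nu_\lambda$, and also shows $\int_X a_{uh}\,\dd\nu_\lambda=\lim_n\int_X D_{n,\lambda}\,\dd\mu_{n,\lambda}=\lim_n 1=1$, so that $a_{uh}$ is indeed a probability density.

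The main obstacle I anticipate is the joint control of the shifted and unshifted quantities under the \emph{moving} reference measures $\mu_{n,\lambda}$: the truncation indicator $1_{A_n}$ enters $\mathcal{J}_{n,\lambda}'(\omega)$ evaluated at both $\omega$ and $\omega-uh$, and one must show the symmetric-difference event $A_n\,\triangle\,(A_n-uh)$ is negligible in the limit — this needs the quantitative tail estimate \eqref{eq:Eventtail} for $|J_{0,1}^{\e_n,a_n}-J_{0,1}^{\e_n}|$ applied after the shift $\omega\mapsto\omega-uh$, where the Girsanov/Cameron–Martin density $\exp(u\int_0^1 h'_t\dd\omega_t-\tfrac12 u^2\int_0^1(h'_t)^2\dd t)$ must be inserted and its moments bounded uniformly in $n$. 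Getting the exponents $\widetilde\delta_1$, $\ve_0$ small enough that all the error terms in \eqref{eq:Eventtail} and in the Hölder splittings decay is the delicate bookkeeping step, and it is precisely here that $\ve_0$ (and $\widetilde\delta_1$) must be retaken smaller finitely many times as flagged before the statement of Lemma \ref{lem:RNder}. Once this is in place, the argument is a routine weak-convergence-plus-uniform-integrability passage to the limit.
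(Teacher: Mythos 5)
Your overall strategy matches the paper's: you pass to the limit in the identity $D_{n,\lambda}=\tfrac{\dd(\mu_{n,\lambda}\circ\tau_{uh}^{-1})}{\dd\mu_{n,\lambda}}$, identify the Gaussian factor via \eqref{eq:stoint} as a genuinely continuous functional of $\omega\in X$, and control the self-intersection part through Theorem~\ref{thm:rho} after peeling off the truncation events via \eqref{eq:Eventtail}. This is exactly what the paper does in Lemmas~\ref{lem:step1}--\ref{lem:step4}, including your recognition that the symmetric-difference event $A_n\,\triangle\,A_n(\cdot-uh)$ must be shown negligible and that $\widetilde J_{\e_n,a_n}$ has to be replaced by $\widetilde J_{\e_{m(n)},a_{m(n)}}$ along a slower subsequence before switching from $\mu_{n,\lambda}$ to $\nu_\lambda$. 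The use of general bounded continuous $G$ rather than cylinder functions is a harmless cosmetic difference (the paper works with $\mathcal F_0$ and then invokes equality of finite-dimensional distributions at the end).

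The one place where your proposal diverges in a way that would not survive closer scrutiny is the uniform integrability step. You want $\sup_n E_{\mu_{n,\lambda}}[D_{n,\lambda}^{1+\delta}]<\infty$, which after H\"older and a Cameron--Martin shift reduces to a uniform-in-$n$ bound on $E_{\nu_0}\bigl[\exp\bigl(-\lambda J_{0,1}^{\e_n,a_n}+\delta'\lambda\widetilde J_{\e_n,a_n}(u,h)\bigr)\cdot(\text{Gaussian})\bigr]$, i.e.\ a uniform \emph{exponential} moment of the shifted-minus-unshifted local time under the approximating polymer measure. Nothing in Sections~\ref{sec:2}--\ref{sec:4} supplies such an estimate: Theorem~\ref{thm:rho} gives $L^p(\nu_\lambda)$ moments for every finite $p$, but the constants are not tracked as $p\to\infty$, and the identity \eqref{eq:eJmoment} you invoke to "absorb the $\kappa_2(\e_n)$ blow-up" produces a factor $\exp(c\,\kappa_2(\e_n))\sim 2^{c'n}$ that diverges for any fixed H\"older exponent; in the proof of Theorem~\ref{thm:rho} this is only killed because $q$ is allowed to grow with $n$, which is incompatible with a fixed $\delta>0$. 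The paper instead proves only $\sup_n E_{\mu_{n,\lambda}}\bigl[|\widetilde J_{\e_n,a_n}(u,h)|\bigr]<\infty$ (Lemma~\ref{lem:step2-1}), converts this by Markov into a logarithmic tightness bound $\mu_{n,\lambda}\circ\tau_{uh}^{-1}(B_n(M))\le C/\log M$ (Lemma~\ref{lem:step2}), and then inserts a smooth cutoff $\varphi_M(\lambda\widetilde J_n,V)$ so that the density is capped by $M^2$; the remainder is $O(1/\log M)$ uniformly in $n$, and $M\to\infty$ is taken last. This is weaker than uniform integrability in $L^{1+\delta}$ but is all that is needed, and it only relies on a first-moment estimate. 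If you replace your UI claim by this $L^1$-plus-cutoff device, the rest of your argument goes through as written.
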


This lemma follows from the following lemma.

\begin{lemma}\label{lem:step1}
Let $\mathcal{F}_0$ be the set of bounded continuous functions $f:\mathbb{R}^{3p}\to \R$ for some $p\geq 1$. Then, for any given $u\in\R$, $h\in K_0$ and for any $\lambda\in [0,\infty)$, the following holds
\begin{align}
\lim_{n\to\infty} \int f(\omega_{t_1},\dots,\omega_{t_p})\frac{\dd (\mu_{n,\lambda}\circ \tau_{uh}^{-1})}{\dd \mu_{n,\lambda}}\dd \mu_{n,\lambda}=\int f(\omega_{t_1},\dots,\omega_{t_p})a_{uh}\dd \nu_\lambda\label{eq:RNmuconvauk}
\end{align}
for any $f\in \mathcal{F}_0 $ and any $0\leq t_1< \dots<t_p\leq 1$ with $p\geq 1$.
\end{lemma}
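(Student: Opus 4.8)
\textbf{Proof proposal for Lemma \ref{lem:step1}.}

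The plan is to pass to the limit $n\to\infty$ in \eqref{eq:RNmuconvauk} by writing out both sides as Wiener-space integrals and using the approximation results already established. First I would observe that, by the definition of $D_{n,\lambda}=\frac{\dd(\mu_{n,\lambda}\circ\tau_{uh}^{-1})}{\dd\mu_{n,\lambda}}$ in \eqref{eq:Dnlambda} together with the change of variables $\omega\mapsto\omega+uh$ (which transforms $\nu_0$ by the Cameron--Martin formula), the left-hand side of \eqref{eq:RNmuconvauk} equals
\begin{align*}
E\left[\exp\left(-\mathcal{J}_{n,\lambda}'\right)\right]^{-1}E\left[f(\omega_{t_1}+uh_{t_1},\dots,\omega_{t_p}+uh_{t_p})\exp\left(-\mathcal{J}_{n,\lambda}'(\omega)\right)\right],
\end{align*}
so that both sides are expressed as expectations over $\nu_0$ with the singular interaction cut off. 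By Lemma \ref{lem:munlambda} the normalizing constant $E[\exp(-\mathcal{J}_{n,\lambda}')]$ converges to $N(\lambda)$, so the task reduces to showing $E[f(\omega_{\mathbf{t}}+u h_{\mathbf{t}})\exp(-\mathcal{J}_{n,\lambda}'(\omega))]\to N(\lambda)\,E_{\nu_\lambda}[f(\omega_{\mathbf{t}})a_{uh}]$. Writing the left-hand side of \eqref{eq:RNmuconvauk} directly in terms of $\mu_{n,\lambda}$ rather than shifting, it equals $E_{\mu_{n,\lambda}}[f(\omega_{\mathbf{t}})D_{n,\lambda}]$ with $D_{n,\lambda}$ as in \eqref{eq:Dnlambda}; here I would exploit the representation \eqref{eq:stoint} for the It\^o integral $\int_0^1 h_t'\,\dd\omega_t = h_1'\omega_1-\int_0^1\omega_t\,\dd h_t'$, valid pathwise for $h\in K_0$, to rewrite $u\int_0^1 h_t'\,\dd\omega_t-\tfrac12 u^2\int_0^1(h_t')^2\,\dd t = V(u,h)$ up to a term that is already incorporated correctly by It\^o's formula, so that $D_{n,\lambda}=\exp(-\mathcal{J}_{n,\lambda}'(\omega-uh)+\mathcal{J}_{n,\lambda}'(\omega))e^{V(u,h)}$ modulo the It\^o correction.

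The key analytic step is then the convergence of the exponents. By Corollary \ref{cor:JaJ} and Corollary \ref{cor:lemJcauchy}, on the event $A_n$ (and its analogue for the shifted path, which has overwhelming $\nu_0$-probability by \eqref{eq:Eventtail}) we have $\mathcal{J}_{n,\lambda}'(\omega-uh)-\mathcal{J}_{n,\lambda}'(\omega)\to \lambda\rho_\lambda(-u,h)$ in $L^p(\nu_\lambda)$; indeed $J_{0,1}^{\e_n,a_n}(\omega-uh)-J_{0,1}^{\e_n,a_n}(\omega)=\widetilde{J}_{\e_n,a_n}(-u,h)$, the renormalization constants $\kappa_1(\e_n),\kappa_2(\e_n)$ cancel in the difference, and Theorem \ref{thm:rho}\ref{item:4.1-1}--\ref{item:thmrho} (see Remark \ref{rem:thmrho}) gives $\widetilde{J}_{\e_n,a_n}(-u,h)\to\rho_\lambda(-u,h)$ in $L^p(\nu_\lambda)$ for every $p\geq 1$. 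Combined with the continuity of $f$ and $h$ (so $f(\omega_{\mathbf{t}}+u h_{\mathbf{t}})\to f(\omega_{\mathbf{t}}+u h_{\mathbf{t}})$ is a fixed bounded continuous functional) and the $\nu_0$-a.s.\ convergence $\mathbf{1}_{A_n}\to 1$, we get $\nu_\lambda$-a.s.\ (along a subsequence, then for the full sequence by uniform integrability) convergence of the integrand $f(\omega_{\mathbf{t}})D_{n,\lambda}$ to $f(\omega_{\mathbf{t}})a_{uh}$. To promote this to convergence of the integrals I would establish uniform integrability of $\{f(\omega_{\mathbf{t}})D_{n,\lambda}\}_n$ under $\mu_{n,\lambda}$: this is where I must control the moments $E_{\mu_{n,\lambda}}[|D_{n,\lambda}|^{1+\theta}]$ for some $\theta>0$, using the exponential-moment bound \eqref{eq:eJmoment}, the transition-density estimate \eqref{eq:partpest} / Lemma \ref{lem:finitetra}, and the $L^p(\nu_\lambda)$-boundedness of $\rho_\lambda(-u,h)$ from Theorem \ref{thm:rho}\ref{item:4.1-2}; the Gaussian factor $e^{V(u,h)}$ has exponential moments of all orders since $\omega_1,\int_0^1\omega_t\,\dd h_t'$ are Gaussian under the (cutoff) measures with uniformly bounded variance.

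The main obstacle I expect is precisely this uniform-integrability / moment estimate: because $\nu_\lambda$ is singular with respect to $\nu_0$, one cannot simply dominate $D_{n,\lambda}$ under $\nu_0$, and the cutoff $\mathbf{1}_{A_n}$ in $\mathcal{J}_{n,\lambda}'$ has to be used carefully to keep $\exp(-\mathcal{J}_{n,\lambda}'(\omega-uh))$ bounded by $\exp(2\lambda 2^{-\widetilde\delta_1 n})\exp(-\overline{J}_{0,1}^{\e_n,\lambda}(\omega-uh))$ up to small errors, after which the Cameron--Martin shift produces a controllable extra Gaussian density. One then invokes \eqref{eq:JJerr}, \eqref{eq:JJest}, Corollary \ref{cor:jphilp}, and \eqref{eq:Eventtail} — with $\widetilde\delta_1$ taken sufficiently small — to absorb all the error terms at a geometric rate in $n$. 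Once uniform integrability is in hand, \eqref{eq:RNmuconvauk} follows from Vitali's convergence theorem, and Lemma \ref{lem:RNder} is then immediate since the family $\{f(\omega_{t_1},\dots,\omega_{t_p}):f\in\mathcal{F}_0,\ 0\le t_1<\dots<t_p\le 1\}$ is measure-determining on $(X,\mathcal{B})$ and $\mu_{n,\lambda}\Rightarrow\nu_\lambda$ by Lemma \ref{lem:munlambda}, so the left-hand side of \eqref{eq:RNmuconvauk} also converges to $\int f(\omega_{\mathbf{t}})\,\dd(\nu_\lambda\circ\tau_{uh}^{-1})$.
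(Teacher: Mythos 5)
The proposal correctly identifies the broad strategy — shift the exponent, reduce the constant via Lemma \ref{lem:munlambda}, and try to pass to the limit in the remaining integrals — but it has a genuine gap at the heart of the argument, and it does not match the mechanism the paper actually uses.

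The central difficulty you underestimate is that the measure of integration $\mu_{n,\lambda}$ itself changes with $n$, and $\mu_{n,\lambda}$ is mutually singular with $\nu_\lambda$ (for $\lambda > 0$). You propose to prove $\nu_\lambda$-a.s.\ (or $L^p(\nu_\lambda)$) convergence of the integrand $f(\omega_{\mathbf t})D_{n,\lambda}$ to $f(\omega_{\mathbf t})a_{uh}$, and then obtain convergence of $\int f D_{n,\lambda}\,\dd\mu_{n,\lambda}$ by a Vitali-type argument using uniform integrability under $\mu_{n,\lambda}$. This combination is not coherent: Vitali's theorem needs pointwise convergence and uniform integrability \emph{with respect to the same measure}, whereas you have pointwise control under $\nu_\lambda$ and are integrating against $\mu_{n,\lambda}$. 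Moreover $\rho_\lambda(-u,h)$ is \emph{only} defined as an $L^p(\nu_\lambda)$-limit; it does not have a $\nu_0$-a.s.\ or $\mu_{n,\lambda}$-a.s.\ meaning, so even writing $D_{n,\lambda}\to a_{uh}$ pointwise under a measure equivalent to $\nu_0$ is not legitimate. Finally, the uniform integrability itself — a bound on $E_{\mu_{n,\lambda}}\bigl[|D_{n,\lambda}|^{1+\theta}\bigr]$ — is never established and appears to require exponential-moment control of $\mathcal{J}'_{n,\lambda}(\cdot-uh)-\mathcal{J}'_{n,\lambda}(\cdot)$ under $\mu_{n,\lambda}$ that is nowhere in reach; the paper deliberately avoids any such bound.

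What the paper actually does is different in two essential respects. First, it truncates: it introduces a cutoff $\widetilde\varphi_M$ based on the events $B_n(M)$ of Lemma \ref{lem:step2} and splits $\int f D_{n,\lambda}\,\dd\mu_{n,\lambda}$ into a ``bounded'' part and a tail that is $O(1/\log M)$ uniformly in $n$. This replaces uniform integrability with a cheap a priori truncation error. Second — and this is the ingredient your proposal misses entirely — it transfers from $\mu_{n,\lambda}$ to $\nu_\lambda$ by inserting the \emph{intermediate index} $m(n)=\tfrac{\widetilde\delta\ve_0}{2}n$: one first replaces $\widetilde J_{\e_n,a_n}$ by $\widetilde J_{\e_{m(n)},a_{m(n)}}$ inside the Lipschitz, truncated functional $F_M$ using Corollary \ref{cor:lemJcauchy} and the bound \eqref{eq:jlambdaphi} (both under $\mu_{n,\lambda}$, so this is a single-measure approximation); then, with $\widetilde J_{\e_{m(n)},a_{m(n)}}$ now a \emph{fixed} functional whose sup-norm grows only like $2^{2m(n)}$, Lemma \ref{lem:phiest} and the argument around \eqref{eq:cauchylambda}--\eqref{eq:nulambdaJphi} allow one to pass from $\mu_{n,\lambda}$ to $\nu_\lambda$ with a decaying error; only after this measure-change does one invoke Theorem \ref{thm:rho} to replace $\widetilde J_{\e_{m(n)},a_{m(n)}}$ by $\rho_\lambda$, which is an $L^p(\nu_\lambda)$-statement and hence in the right space. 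The slow growth rate $m(n)$ is precisely what makes the $\max|f|\sim 2^{2m(n)}$ factor in Lemma \ref{lem:phiest} compatible with the $\e_n^{\widetilde\delta}$-decay. Without this two-stage approximation, the singular-vs-singular measure change cannot be controlled, and your argument has no way to connect $E_{\mu_{n,\lambda}}[\cdot]$ to $E_{\nu_\lambda}[\cdot]$.
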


\begin{proof}[Proof of Lemma \ref{lem:RNder}]
By lemma \ref{lem:munlambda}, we also know that for any given $u\in \R$ and $h\in K_0$ \begin{align*}
\mu_{n,\lambda}\circ \tau_{uh}^{-1}\Rightarrow \nu_\lambda\circ \tau_{uh}^{-1} \,\,\textrm{ in the weak sense},
\end{align*}
as $n\to\infty$. 
Hence, we obtain that for any given $f\in\mathcal{F}_0$ and $0\leq t_1<\dots<t_p\leq 1$ ($p\geq 1$)
\begin{align}
\int f(\omega_{t_1},\dots,\omega_{t_p})\frac{\dd (\mu_{n,\lambda}\circ \tau_{uh}^{-1})}{\dd \mu_{n,\lambda}}\dd \mu_{n,\lambda} \to  \int f(\omega_{t_1},\dots,\omega_{t_p})\dd (\nu_\lambda\circ \tau_{uh}^{-1}),\label{eq:weakconv}
\end{align} 
as $n\to\infty$.
Combining \eqref{eq:RNmuconvauk} and \eqref{eq:weakconv}, we have that
\begin{align*}
\int f(\omega_{t_1},\dots,\omega_{t_p})\dd (\nu_\lambda\circ \tau_{uh}^{-1}) &=\int f(\omega_{t_1},\dots,\omega_{t_p})a_{uh}\dd \nu_\lambda, \\
& \textrm{for all\, }f\in \mathcal{F}_0,  0\leq t_1<\dots<t_p\leq 1
\end{align*}
This implies that both $\dd \left(\nu_\lambda\circ \tau_{uh}^{-1}\right)$ and $a_{uh}\dd \nu_\lambda$ are probability measures on  the Wiener space and that their finite-dimensional distributions coincide.
Therefore, we obtain the desired result.
\end{proof}

We postpone the proof of Lemma \ref{lem:step1} to Section \ref{subsec:prooflemmas}, and now prove Theorem \ref{thm:thm3}.

\begin{proof}[Proof of Theorem \ref{thm:thm3}] We have to show that for each $h\in K_0$, there is $\{a_{uh}',u\in \R\}$ such that \begin{align}
&\nu_\lambda(a_{uh}'=a_{uh})=1,\quad u\in \R,\label{eq:contver}\\
&\nu_\lambda(a_{uh}'\text{ is continuous with respect to $u\in \R$})=1.\label{eq:contver2}
\end{align}
We fix $\lambda\geq 0$.
By Lemma \ref{lem:RNder}, we know that $a_{uh}=\frac{\dd \tau_{uh}(\nu_\lambda)}{\dd \nu_{\lambda}}$, $\nu_\lambda$-almost everywhere.
By Theorem \ref{thm:rho} and the Kolmogorov continuity theorem, we know that there is $\rho'_\lambda(u,h)$ such that
\begin{align*}
&\nu_\lambda(\rho'_\lambda(u,h)=\rho_\lambda(u,h))=1,\quad u\in \R\\
&\nu_\lambda(\rho'_\lambda(u,h) \text{ is continuous with respect to $u\in \R$})=1.
\end{align*}
For $h\in K_0$, we set \begin{align*}
a_{uh}'=e^{-\lambda\rho'_\lambda(-u,h)+V(u,h)}.
\end{align*}
Then, $\{a_{uh}';u\in \R\}$ satisfies \eqref{eq:contver} and \eqref{eq:contver2}.

\end{proof}

\subsection{Proof of Lemma \ref{lem:step1}}\label{subsec:prooflemmas}

We prove Lemma \ref{lem:step1}. Hereafter, we fix $f\in \mathcal{F}_0$ and $0\leq t_1<\dots<t_p\leq 1$.
Let
\begin{align*}
m(n):=\frac{\widetilde{\delta}\ve_0}{2}n,
\end{align*}
where $\widetilde{\delta}>0$ is a constant  given in Lemma \ref{lem:phiest} and $\ve_0$ is given in Lemma \ref{lem:Jcauchy}. By Lemma \ref{lem:munlambda}, we expect that  $\dis \left(\widetilde{J}_{\e_n,a_n}(u,h),V(u,h)\right)$ under $\mu_{n,\lambda}$ converges to $(\rho_\lambda(u,h),V(u,h))$ under $\nu_\lambda$. In Lemma \ref{lem:step2}, we prove the tightness of them, which implies the convergence of $V(u,h)$. After that, we  prove the convergence of $\dis \widetilde{J}_{\e_n,a_n}(u,h)$, and then we complete the proof of Lemma \ref{lem:step1}.

For the proof of Lemma \ref{lem:step1}, we prepare following lemmas.

\begin{lemma}\label{lem:step2-1}
For any given $u\in\R$, $h\in K_0$ and for any $\lambda\geq 0$
 \begin{align}
\sup_{n\geq 1} E_{\mu_{n,\lambda}}\left[\left|\widetilde{J}_{\e_n,a_n}(u,h)\right|\right]<\infty.\label{eq:munlambdaetabdd}
\end{align}
\end{lemma}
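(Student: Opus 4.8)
The plan is to reduce \eqref{eq:munlambdaetabdd} to a uniform estimate against the Wiener measure. Writing $\mu_{n,\lambda}=E[\exp(-\mathcal{J}'_{n,\lambda})]^{-1}\exp(-\mathcal{J}'_{n,\lambda})\nu_0$, one first observes that the normalizing constant stays bounded away from $0$: by the estimate \eqref{eq:JJest} used in the proof of Lemma~\ref{lem:munlambda}, applied with $\Psi_{0,1}\equiv 1$, one gets $E[\exp(-\mathcal{J}'_{n,\lambda})]\to N(\lambda)>0$ (cf.\ \eqref{eq:Nlam}), so $\inf_n E[\exp(-\mathcal{J}'_{n,\lambda})]>0$. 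Next I would split according to the event $A_n$ of \eqref{eq:eventAndfn}. On $A_n^{c}$ the exponent $\mathcal{J}'_{n,\lambda}$ vanishes, so Cauchy--Schwarz together with the $L^2(\nu_0)$-boundedness of $\{\widetilde{J}_{\e_n,a_n}(u,h)\}_n$ (Corollary~\ref{cor:lemJcauchy}) and the tail bound \eqref{eq:Eventtail}, which makes $\nu_0(A_n^{c})\to 0$ after shrinking $\ve_0$ and $\widetilde{\delta}_1$, shows that the $A_n^{c}$-contribution tends to $0$. On $A_n$ one has $\mathcal{J}'_{n,\lambda}=\mathcal{J}_{n,\lambda}=\overline{J}^{\e_n,\lambda}_{0,1}+\lambda(J^{\e_n,a_n}_{0,1}-J^{\e_n}_{0,1})$ with $|J^{\e_n,a_n}_{0,1}-J^{\e_n}_{0,1}|\le 2^{-\widetilde{\delta}_1 n}\le 1$, whence $\exp(-\mathcal{J}'_{n,\lambda})\,1_{A_n}\le e^{\lambda}\exp(-\overline{J}^{\e_n,\lambda}_{0,1})$. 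It therefore suffices to prove $\sup_{n\ge 1}E[\exp(-\overline{J}^{\e_n,\lambda}_{0,1})\,|\widetilde{J}_{\e_n,a_n}(u,h)|]<\infty$; the case $\lambda=0$ is trivial, since then $\mu_{n,0}=\nu_0$ and the claim is Corollary~\ref{cor:lemJcauchy}.

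The difficulty here is that, although $E[\exp(-\overline{J}^{\e_n,\lambda}_{0,1})]\to N(\lambda)<\infty$, the higher exponential moments $E[\exp(-q'\overline{J}^{\e_n,\lambda}_{0,1})]$ with $q'>1$ blow up exponentially in $n$: by \eqref{eq:eJmoment} and \eqref{eq:kappa2} they are of order $\exp((q'^2-q')\lambda^2\kappa_2(\e_n))$ with $\kappa_2(\e_n)$ linear in $n$, so a direct Hölder estimate is useless. To get around this I would telescope
\[
\widetilde{J}_{\e_n,a_n}(u,h)=\widetilde{J}_{\e_{m(n)},a_{m(n)}}(u,h)+\sum_{k=m(n)+1}^{n}\Phi_k,\qquad \Phi_k:=\widetilde{J}_{\e_k,a_k}(u,h)-\widetilde{J}_{\e_{k-1},a_{k-1}}(u,h)
\]
($\Phi_k=\Phi_k(u,0,h)$ in the notation of \eqref{eq:PhiDef}), with splitting level $m(n):=\lfloor cn\rfloor$ for a small constant $c>0$ to be fixed. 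The head is kept at a level so low that its supremum norm is harmless, while the tail increments decay fast enough to absorb the moment blow-up.

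For the head: for fixed $n$, $\widetilde{J}_{\e_{m(n)},a_{m(n)}}(u,h)$ is a bounded function of the type treated in Lemma~\ref{lem:phiest} (a difference of integrals as in \eqref{eq:PhiF}, see Remark~\ref{rem:lemphiest}) whose kernel is uniformly bounded by $C\,2^{c_0 m(n)}$ for a universal constant $c_0$ (with $c_0=2$ from the kernel bound used in the proof of Theorem~\ref{thm:rho}). Applying Lemma~\ref{lem:phiest} and letting the approximation parameter tend to $0$ gives
\[
\bigl| E[\exp(-\overline{J}^{\e_n,\lambda}_{0,1})\,|\widetilde{J}_{\e_{m(n)},a_{m(n)}}(u,h)|]-N(\lambda)\,E_{\nu_\lambda}[|\widetilde{J}_{\e_{m(n)},a_{m(n)}}(u,h)|]\bigr|\le C\,2^{c_0 m(n)}\e_n^{\widetilde{\delta}};
\]
choosing $c<\ve_0\widetilde{\delta}/c_0$ makes this error tend to $0$, while $E_{\nu_\lambda}[|\widetilde{J}_{\e_{m(n)},a_{m(n)}}(u,h)|]$ is bounded uniformly in $n$ because, by Theorem~\ref{thm:rho}, $\widetilde{J}_{\e_m,a_m}(u,h)$ converges in $L^1(\nu_\lambda)$ to $\rho_\lambda(u,h)$.

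For the tail: Lemma~\ref{lem:Jcauchy} gives $E_{\nu_0}[|\Phi_k|^q]^{1/q}\le C|u|^{\gamma}2^{-\beta k}$ for every $q\ge 2$, with $\beta>0$ independent of $q$. Hence Hölder with exponents $q/(q-1)$ and $q$, combined with \eqref{eq:eJmoment}--\eqref{eq:kappa2} and the boundedness of $E[\exp(-\overline{J}^{\e_n,q'\lambda}_{0,1})]$, yields $E[\exp(-\overline{J}^{\e_n,\lambda}_{0,1})\,|\Phi_k|]\le C|u|^{\gamma}\,2^{\lambda^2\ve_0 n/((q-1)(2\pi)^2)}\,2^{-\beta k}$, so that $\sum_{k=m(n)+1}^{n}E[\exp(-\overline{J}^{\e_n,\lambda}_{0,1})\,|\Phi_k|]\le C|u|^{\gamma}\,2^{(\lambda^2\ve_0/((q-1)(2\pi)^2)-\beta c)n}\to 0$ once $q$ is taken large enough that $\lambda^2\ve_0/((q-1)(2\pi)^2)<\beta c$. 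Adding the head and tail estimates gives the required uniform bound, hence \eqref{eq:munlambdaetabdd}. The step I expect to be the main obstacle is exactly this balancing act: since the density of the approximate polymer measure has $L^1(\nu_0)$-norm of order $1$ but $L^{q'}(\nu_0)$-norms growing exponentially in $n$, Hölder cannot be used crudely, and the telescoping must be cut so that the head sits at a level that is only a small fraction of $n$, with $q$ then chosen large in terms of that fraction.
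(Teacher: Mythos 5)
Your proof is correct and follows essentially the same strategy as the paper's: split $\widetilde{J}_{\e_n,a_n}$ at a level $m(n)$ linear in $n$, control the tail increments via Lemma~\ref{lem:Jcauchy} together with the H\"older/\eqref{eq:eJmoment} machinery (Corollary~\ref{cor:jphilp}), control the head via Lemma~\ref{lem:phiest} with the $\max|f|\asymp 2^{c_0 m(n)}$ bookkeeping, and transfer from $\overline{J}_{0,1}^{\e_n,\lambda}$ to $\mathcal{J}'_{n,\lambda}$ via \eqref{eq:JJerr} and \eqref{eq:Eventtail}. The only cosmetic difference is that you bound the head by comparing directly with $N(\lambda)E_{\nu_\lambda}\bigl[|\widetilde{J}_{\e_{m(n)},a_{m(n)}}(u,h)|\bigr]$ (using the $L^1(\nu_\lambda)$-convergence from Theorem~\ref{thm:rho}), whereas the paper shows that the Wiener-space head expectations form a Cauchy sequence (splitting consecutive differences into $P_n^1+P_n^2$) without invoking Theorem~\ref{thm:rho} at this point---both routes give the required uniform bound.
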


\begin{rem}
In \cite{ARZ96}, the statement of Lemma \ref{lem:step2-1} is mentioned but no proof is given. 
They referred to the equation number (6.2), which did not exist in \cite{ARZ96}.
\end{rem}

\begin{proof}
We find from Lemma \ref{lem:Jcauchy} and   Corollary \ref{cor:jphilp} that 
\begin{align}
\sup_{n\geq 1}E\left[\exp\left(-\overline{J}_{0,1}^{\e_n,\lambda}\right)\left|\widetilde{J}_{\e_n,a_n}(u,h)-\widetilde{J}_{\e_{m(n)},a_{m(n)}}(u,h)\right|\right]\leq C.\label{eq:supejnj}
\end{align}
Moreover, we will see later that
\begin{align}
\left\{E\left[\exp\left(-\overline{J}_{0,1}^{\e_n,\lambda}\right)\left|\widetilde{J}_{\e_{m(n)},a_{m(n)}}(u,h)\right|\right]\right\}\text{ is a Cauchy sequence.}\label{eq:JJecauchy}
\end{align}
Combining \eqref{eq:supejnj} and \eqref{eq:JJecauchy} yields
\begin{align}
E\left[\exp\left(-\overline{J}_{0,1}^{\e_n,\lambda}\right)\left|\widetilde{J}_{\e_n,a_n}(u,h)\right|\right]\leq C \quad \label{eq:JJenbdd}.
\end{align}
Taking  $\Xi= |\widetilde{J}_{\e_n,a_n}(u,h)|$ in \eqref{eq:JJerr} and recalling that $\widetilde{\delta}_1 \in (0,\frac{1}{200})$, we see  that 
\begin{align*}
&\left|E\left[\exp\left(-\mathcal{J}_{n,\lambda}'\right)\left|\widetilde{J}_{\e_n,a_n}(u,h)\right|\right]-E\left[\exp\left(-\overline{J}_{0,1}^{\e_n,\lambda}\right)\left|\widetilde{J}_{\e_n,a_n}(u,h)\right|\right]\right|\\
&\leq \lambda 2^{-\widetilde{\delta}_1n}e^{\lambda 2^{-\widetilde{\delta}_1n}}E\left[\left|\widetilde{J}_{\e_n,a_n}(u,h)\right|\exp\left(-\overline{J}_{0,1}^{\e_n,\lambda}\right)\right]\\
&\quad +E\left[\left|\widetilde{J}_{\e_n,a_n}(u,h)\right|\left(1+\exp\left(-\overline{J}_{0,1}^{\e_n,\lambda}\right)\right):\left|J_{0,1}^{\e_n,a_n}-J_{0,1}^{\e_n}\right|\geq 2^{-\widetilde{\delta}_1n}\right]\\
&\leq C2^{-\widetilde{\delta}_3n}
\end{align*}
for some $\widetilde{\delta}_3>0$ by Corollaries \ref{cor:JaJ}, \ref{cor:lemJcauchy}, and \ref{cor:jphilp},  \eqref{eq:eJmoment}, and \eqref{eq:Eventtail}. 
Hence, we obtain  \eqref{eq:munlambdaetabdd} for any  $\lambda>0$ by \eqref{eq:JJenbdd}.

We complete the proof by showing that  \eqref{eq:JJecauchy} holds. We have
 \begin{align*}
&\left|E\left[\exp\left(-\overline{J}_{0,1}^{\e_n,\lambda}\right)\left|\widetilde{J}_{\e_{m(n)},a_{m(n)}}(u,h)\right|\right]-E\left[\exp\left(-\overline{J}_{0,1}^{\e_{n-1},\lambda}\right)\left|\widetilde{J}_{\e_{m(n-1)},a_{m(n-1)}}(u,h)\right|\right]\right|\\
&\leq \left|E\left[\exp\left(-\overline{J}_{0,1}^{\e_n,\lambda}\right)\left|\widetilde{J}_{\e_{m(n)},a_{m(n)}}(u,h)\right|\right]-E\left[\exp\left(-\overline{J}_{0,1}^{\e_{n-1},\lambda}\right)\left|\widetilde{J}_{\e_{m(n)},a_{m(n)}}(u,h)\right|\right]\right|\\
&\quad +E\left[\exp\left(-\overline{J}_{0,1}^{\e_{n-1},\lambda}\right)\left|\widetilde{J}_{\e_{m(n)},a_{m(n)}}(u,h)-\widetilde{J}_{\e_{m(n-1)},a_{m(n-1)}}(u,h)\right|\right]\\
&=:P_n^1+P_n^2.
\end{align*}
Then, we see from Corollaries \ref{cor:JaJ}, \ref{cor:lemJcauchy} and \ref{cor:jphilp} that there exist $C>0$ and $\widetilde{\delta}_{4}\in (0,1)$ such that
\begin{align*}
P_n^2\leq 2^{-\widetilde{\delta}_{4}n} 
\end{align*}
for $n\geq 1$ for any $\lambda>0$.
Noting the above definition of $m(n)$ and applying Lemma \ref{lem:phiest} (see Remark \ref{rem:lemphiest}) to $P_n^1$, we see that there exist $C>0$ and $\widetilde{\delta}_{5}\in (0,1)$ such that \begin{align*}
P_n^1\leq C2^{-\widetilde{\delta}_{5}n}
\end{align*}
for $n\geq 1$. 
Thus, we obtain \eqref{eq:JJecauchy}.
\end{proof}

For $M>1$ large enough, we set
\begin{align}
B_n(M):=&\left\{\left|\lambda \widetilde{J}_{\e_n,a_n}(-u,h)\right|\geq \log M\right\}
\cup \left\{|V(u,h)|\geq \log M\right\}.\label{eq:Bndfn}
\end{align}

\begin{lemma}\label{lem:step2}
For any given $u\in\R$, $h\in K_0$ and for any $\lambda \in (0,\infty)$, there exists a constants $C>0$ such that
\begin{align*}
\mu_{n,\lambda}\circ \tau_{uh}^{-1}(B_n(M))=\mu_{n,\lambda}\left(\{\omega+uh\in B_n(M)\}\right) \leq \frac{C}{\log M}
\end{align*}
for any $n\geq 1$.
\end{lemma}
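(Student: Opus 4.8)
\textbf{Proof plan for Lemma~\ref{lem:step2}.}
The plan is to estimate the $\mu_{n,\lambda}\circ\tau_{uh}^{-1}$-probability of each of the two events appearing in the union \eqref{eq:Bndfn} separately, using the Chebyshev--Markov inequality together with uniform-in-$n$ moment bounds. For the first event $\{|\lambda\widetilde J_{\e_n,a_n}(-u,h)|\ge \log M\}$, I would first pass from the measure $\mu_{n,\lambda}\circ\tau_{uh}^{-1}$ to $\mu_{n,\lambda}$ by noting that $\{\omega+uh\in B_n(M)\}$ is exactly the event whose $\mu_{n,\lambda}$-probability involves $\widetilde J_{\e_n,a_n}(u,h)$ (rather than $(-u,h)$), since $J^{\e_n,a_n}_{0,1}((\omega+uh)+(-u)h)=J^{\e_n,a_n}_{0,1}(\omega)$. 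Then I would apply Markov's inequality and invoke Lemma~\ref{lem:step2-1}, which already gives $\sup_{n\ge1}E_{\mu_{n,\lambda}}[|\widetilde J_{\e_n,a_n}(u,h)|]<\infty$; this immediately yields a bound of the form $C/\log M$ for this piece.

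For the second event $\{|V(u,h)|\ge\log M\}$, recall $V(\omega,u,h)=uh_1'\omega_1-u\int_0^1\omega_t h_t''\,\dd t-\tfrac12 u^2\int_0^1|h_t'|^2\,\dd t$, which is (for $h\in K_0$) a bounded linear functional of the path $\omega$ plus a deterministic constant. So it suffices to bound $E_{\mu_{n,\lambda}\circ\tau_{uh}^{-1}}[\,|uh_1'\omega_1-u\int_0^1\omega_t h_t''\,\dd t|\,]$ uniformly in $n$ and apply Markov. Shifting back to $\mu_{n,\lambda}$, one needs $\sup_n E_{\mu_{n,\lambda}}[\,|\ell(\omega)|\,]<\infty$ for the relevant linear functional $\ell$; this in turn follows from the fact that $\mu_{n,\lambda}\Rightarrow\nu_\lambda$ (Lemma~\ref{lem:munlambda}) combined with a uniform integrability / uniform second-moment estimate. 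The second moment $E_{\mu_{n,\lambda}}[\ell(\omega)^2]$ can be controlled via the explicit density of $\mu_{n,\lambda}$ with respect to $\nu_0$: one writes $E_{\mu_{n,\lambda}}[\ell(\omega)^2]=E[\ell(\omega)^2\exp(-\mathcal J'_{n,\lambda})]/E[\exp(-\mathcal J'_{n,\lambda})]$, uses that the normalizing constant is bounded below uniformly in $n$ (again by Lemma~\ref{lem:munlambda} and \eqref{eq:Nlam}), and applies H\"older's inequality together with the exponential-moment bound \eqref{eq:eJmoment} on $\exp(-\overline J^{\e_n,\lambda}_{0,1})$ (after removing the indicator $1_{A_n}$, which only decreases the exponent) and the Gaussian moments of $\ell(\omega)$ under $\nu_0$.

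Combining the two estimates and using $\mu_{n,\lambda}\circ\tau_{uh}^{-1}(B_n(M))\le \mu_{n,\lambda}\circ\tau_{uh}^{-1}(\{|\lambda\widetilde J_{\e_n,a_n}(-u,h)|\ge\log M\})+\mu_{n,\lambda}\circ\tau_{uh}^{-1}(\{|V(u,h)|\ge\log M\})$ gives the claimed bound $C/\log M$. The main obstacle I anticipate is the bookkeeping in passing between $\mu_{n,\lambda}\circ\tau_{uh}^{-1}$ and $\mu_{n,\lambda}$ and making sure the shift $\omega\mapsto\omega+uh$ is handled consistently in both terms of the union (in particular that $\widetilde J_{\e_n,a_n}(-u,h)$ evaluated at $\omega+uh$ really does reduce to $\widetilde J_{\e_n,a_n}(u,h)$ evaluated at $\omega$, up to a sign in the argument), rather than any deep estimate; the moment bounds themselves are either already available (Lemma~\ref{lem:step2-1}) or routine consequences of the uniform lower bound on the normalization and the exponential moment estimates established earlier.
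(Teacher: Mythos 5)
Your plan matches the paper's proof of Lemma~\ref{lem:step2} essentially step for step: split $B_n(M)$ via the union bound; for the $\widetilde J$-event, use the identity $\widetilde J_{\e_n,a_n}(-u,h)(\omega+uh)=-\widetilde J_{\e_n,a_n}(u,h)(\omega)$ to rewrite the $\mu_{n,\lambda}\circ\tau_{uh}^{-1}$-probability as a $\mu_{n,\lambda}$-probability, then apply Markov's inequality with Lemma~\ref{lem:step2-1}; for the $V$-event, compute $V(\omega+uh,u,h)=uh_1'\omega_1-u\int_0^1\omega_t h_t''\,\dd t+\text{const.}$ and apply Markov against a uniform-in-$n$ first-moment bound. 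The paper obtains that moment bound more directly by citing Lemma~\ref{lem:finitetra} (yielding $\sup_n E_{\mu_{n,\lambda}}[|\omega_1|+(\int_0^1|\omega_t|^2\dd t)^{1/2}]<\infty$), whereas you propose a H\"older/exponential-moment route via \eqref{eq:eJmoment}; both are adequate, and your route is essentially what would be needed to fully justify the paper's terse invocation.

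One small inaccuracy: your parenthetical ``after removing the indicator $1_{A_n}$, which only decreases the exponent'' is not right as a blanket monotonicity claim. One has $\exp(-\mathcal J'_{n,\lambda})=1$ on $A_n^c$ (since $\mathcal J'_{n,\lambda}=0$ there), while $\exp(-\mathcal J_{n,\lambda})$ can be larger or smaller than $1$ on $A_n^c$ depending on the sign of $\mathcal J_{n,\lambda}$. The clean way to control $\exp(-\mathcal J'_{n,\lambda})$ is to split on $A_n$ versus $A_n^c$: on $A_n$, $\exp(-\mathcal J'_{n,\lambda})\le e^{\lambda 2^{-\widetilde\delta_1 n}}\exp(-\overline J^{\e_n,\lambda}_{0,1})$ by the very definition of $A_n$, and on $A_n^c$ it is identically $1$, which is harmless. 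This is a bookkeeping correction, not a structural gap; the overall proof goes through.
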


\begin{proof}

From Lemma \ref{lem:finitetra} we easily obtain that
\begin{align}
&\sup_{n\geq 1} E_{\mu_{n,\lambda}}\left[|\omega_1|+\left(\int_0^1 |\omega_t|^2\dd t\right)^\frac{1}{2}\right]\leq C<\infty.\label{eq:omega1expbdd}
\end{align}
Since it holds that
\begin{align*}
V(\omega+uh,u,h)=uh_1'\omega_1-u\int_0^1\omega_t h_t''\dd t + \frac{u^2}{2} \int_0^1 |h_t'|^2\dd t+ u^2 h_0 h'_0,
\end{align*}
from \eqref{eq:omega1expbdd} we have
\begin{align}
&\sup_{n\geq 1}\mu_{n,\lambda}\left( \left| V(\cdot +uh, u,h)\right|\geq \log M\right) \notag\\
&\leq \sup_{n\geq 1}\mu_{n,\lambda}\left(\left|uh'_1\omega_1-u\int_0^1 \omega_th''_t\dd t\right|+\frac{u^2}{2}\left| \int_0^1 |h_t'|^2\dd t + h_0 h'_0\right| \geq \log M\right) \notag\\
&\leq \frac{C}{\log M}.\label{eq:munk}
\end{align}
From Lemma \ref{lem:step2-1} we also have 
\begin{align}
\mu_{n,\lambda}\circ \tau_{uh}^{-1}\left(\left|\widetilde{J}_{\e_n,a_n}(-u,h)\right|\geq \log M\right) =\mu_{n,\lambda}\left(\left|\widetilde{J}_{\e_n,a_n}(u,h)\right|\geq \log M\right)\leq \frac{C}{\log M}\label{eq:tildeJbdd}
\end{align}
for $n\geq 1$.
The assertion follows from \eqref{eq:munk} and \eqref{eq:tildeJbdd}.
\end{proof}

\begin{lemma}\label{lem:step3}
For any $R>0$, any given $u\in[-R,R]$, $h\in K_0$ and for any $\lambda >0$, there exist constants $C>0 $ and $\widetilde{\delta}_{6}>0$ such that\begin{align}
&\mu_{n,\lambda}\circ \tau_{uh}^{-1}(A_n^c)=\mu_{n,\lambda}(\{\omega+{u}h\in A_n^c\})\leq {C2^{-{\widetilde{\delta}_{6}n}}}\label{eq:Jac}
\end{align}
for $n\geq 1$, where $A_n$ is define by \eqref{eq:eventAndfn}.
\end{lemma}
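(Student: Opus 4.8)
The claim is that under the shifted approximating measure $\mu_{n,\lambda}\circ\tau_{uh}^{-1}$ the ``bad event'' $A_n^c$, on which $J_{0,1}^{\e_n,a_n}$ and $J_{0,1}^{\e_n}$ differ by more than $2^{-\widetilde\delta_1 n}$, has mass decaying exponentially in $n$. The natural route is the same chain of reductions already used for Lemma~\ref{lem:munlambda} and Lemma~\ref{lem:step2-1}: pass from $\mu_{n,\lambda}$ (whose density involves the truncated functional $\mathcal{J}'_{n,\lambda}$) to the Gibbs-type weight $\exp(-\overline J_{0,1}^{\e_n,\lambda})$ at the cost of an error controlled by \eqref{eq:JJerr} and \eqref{eq:Eventtail}, and then exploit that $\exp(-\overline J_{0,1}^{\e_n,\lambda})$ is absolutely continuous with respect to $\nu_0$ with moments controlled via \eqref{eq:eJmoment} and the one-point density bound \eqref{eq:partpest}. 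Concretely, writing $\Xi_n:=1_{\{\omega+uh\in A_n^c\}}$, I would first observe that the shift $\tau_{uh}$ acts on the event by replacing $\omega$ with $\omega+uh$, so
\[
\mu_{n,\lambda}\circ\tau_{uh}^{-1}(A_n^c)=\frac{E\big[\exp(-\mathcal{J}'_{n,\lambda})\,1_{\{\omega+uh\in A_n^c\}}\big]}{E[\exp(-\mathcal{J}'_{n,\lambda})]},
\]
and the denominator is bounded below by a positive constant uniformly in $n$ by Lemma~\ref{lem:munlambda}. So it suffices to bound the numerator.

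\textbf{Key steps.} First, apply \eqref{eq:JJerr} with $\Xi=1_{\{\omega+uh\in A_n^c\}}$ (or rather with the shifted indicator, using that $\mathcal{J}'_{n,\lambda}(\omega)$ and $\mathcal{J}'_{n,\lambda}(\omega-uh)$ differ from $\overline J_{0,1}^{\e_n,\lambda}$ only by the truncation and deterministic terms) to reduce to estimating $E\big[(1+\exp(-\overline J_{0,1}^{\e_n,\lambda}))\,1_{\{\omega+uh\in A_n^c\}}\big]$. Second, by the Cameron--Martin/Girsanov change of variables for the Wiener measure (shifting by $uh$ with $h\in K_0\subset H$), the expectation under $\nu_0$ of a functional of $\omega+uh$ equals the expectation of that functional times the Cameron--Martin density $\exp(u\int_0^1 h'_t\,d\omega_t-\tfrac{u^2}{2}\int_0^1|h'_t|^2dt)$; this density has all $\nu_0$-moments bounded uniformly since $h\in K_0$. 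Third, use Hölder with exponents $q,q'$: the density factor and the weight $\exp(-\overline J_{0,1}^{\e_n,\lambda})$ together have bounded $L^{q'}(\nu_0)$ norm (using \eqref{eq:eJmoment} and \eqref{eq:partpest} to control the negative exponential moments of $\overline J$, exactly as in \eqref{eq:jlambdaphi}), while the indicator $1_{A_n^c}$ contributes $\nu_0(A_n^c)^{1/q}$. Fourth, invoke Corollary~\ref{cor:JaJ}: since $E_{\nu_0}[|J_{0,1}^{\e_n,a_n}-J_{0,1}^{\e_n}|^p]\leq C a_n^{p/6}+C a_n^{\frac{1+20\gamma}{24}p}\e_n^{-3p/4}$ with $a_n=2^{-n}$, $\e_n=2^{-\ve_0 n}$, Chebyshev gives
\[
\nu_0(A_n^c)\leq 2^{p\widetilde\delta_1 n}E_{\nu_0}\big[|J_{0,1}^{\e_n,a_n}-J_{0,1}^{\e_n}|^p\big]\leq C\,2^{(p\widetilde\delta_1-\frac{p}{6})n}+C\,2^{(p\widetilde\delta_1-\frac{(1+20\gamma)p}{24}+\frac{3\ve_0 p}{4})n},
\]
which decays exponentially provided $\widetilde\delta_1<\tfrac16$ and $\ve_0$ is small enough that $\widetilde\delta_1+\tfrac{3\ve_0}{4}<\tfrac{1+20\gamma}{24}$ (both already assumed, as $\widetilde\delta_1\in(0,\tfrac1{200})$ and $\ve_0$ may be retaken small). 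Raising to the power $1/q$ and combining with the bounded factors yields the bound $C2^{-\widetilde\delta_6 n}$ for a suitable $\widetilde\delta_6>0$, possibly after shrinking $\widetilde\delta_1$ once more so that the error from \eqref{eq:JJerr}--\eqref{eq:Eventtail} (which is $C2^{-\widetilde\delta_2 n}$ type) does not dominate.

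\textbf{Main obstacle.} The delicate point is the bookkeeping of how the shift $\tau_{uh}$ interacts with the truncation defining $\mathcal{J}'_{n,\lambda}$: the event $\{|J_{0,1}^{\e_n,a_n}-J_{0,1}^{\e_n}|\le 2^{-\widetilde\delta_1 n}\}$ appearing inside $\mathcal{J}'_{n,\lambda}(\omega-uh)$ is evaluated at the un-shifted path while $A_n^c$ is evaluated at the shifted path, so one must be careful to keep the two truncations straight and to control $J_{0,1}^{\e_n,a_n}(\omega+uh)-J_{0,1}^{\e_n}(\omega+uh)$ rather than the un-shifted difference; this is precisely why Corollary~\ref{cor:JaJ} is stated for $J_{0,1}^{\e,a}(\omega+uh)$ with general $u$, and the bound there is exactly what is needed. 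The only real arithmetic is checking that the exponents work out, i.e. that the gain $a_n^{1/6}$ (and $a_n^{(1+20\gamma)/24}\e_n^{-3/4}$) beats the loss $2^{p\widetilde\delta_1 n}$ after the $q$-th root and the Hölder split, which forces the stated smallness of $\widetilde\delta_1$ and the (finitely many) further reductions of $\ve_0$.
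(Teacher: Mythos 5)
Your high-level plan correctly identifies all the tools the proof needs: the comparison \eqref{eq:JJerr} to pass from $\mathcal{J}'_{n,\lambda}$ to $\overline J_{0,1}^{\e_n,\lambda}$, H\"older together with \eqref{eq:eJmoment}/\eqref{eq:partpest} to control the weight $\exp(-\overline J_{0,1}^{\e_n,\lambda})$, the moment bound of Corollary~\ref{cor:JaJ}, and Chebyshev. The paper does indeed combine exactly these ingredients. However, your plan inserts a Cameron--Martin/Girsanov change of variables at Step~2 to remove the shift from the indicator, and this creates a gap that you do not address: the Cameron--Martin change of measure transforms \emph{every} functional in the expectation, not just the indicator. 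Concretely, $E_{\nu_0}[\exp(-\overline J_{0,1}^{\e_n,\lambda}(\omega))\,1_{\{\omega+uh\in A_n^c\}}]$ becomes $E_{\nu_0}[\exp(-\overline J_{0,1}^{\e_n,\lambda}(\omega-uh))\,1_{A_n^c}(\omega)\,CM_{uh}(\omega)]$, so the weight is now the \emph{shifted} $\exp(-\overline J_{0,1}^{\e_n,\lambda}(\omega-uh))$, whose negative exponential moments are not directly covered by \eqref{eq:eJmoment} and \eqref{eq:partpest}; those estimates are written for the unshifted $\overline J$. Your Step~3 then asserts the $L^{q'}$ norm of "the weight $\exp(-\overline J_{0,1}^{\e_n,\lambda})$" is bounded as if it were unshifted, which elides this issue. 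One can repair it — e.g.\ write $\overline J_{0,1}^{\e_n,\lambda}(\omega-uh)=\overline J_{0,1}^{\e_n,\lambda}(\omega)+\lambda\widetilde J_{\e_n,a_n}(-u,h)$ and use Cauchy--Schwarz again — but this is extra work your proposal does not carry out.

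The paper sidesteps the problem by making a choice you actually hint at in your "Main obstacle" paragraph but do not commit to: it applies \eqref{eq:JJerr} with $\Xi=\big|J_{0,1}^{\e_n,a_n}(\cdot+uh)-J_{0,1}^{\e_n}(\cdot+uh)\big|$ (the shifted difference, \emph{not} the indicator), so the shift stays in $\Xi$ while the weight $\exp(-\overline J_{0,1}^{\e_n,\lambda})$ remains unshifted. The resulting terms $E[\Xi\exp(-\overline J_{0,1}^{\e_n,\lambda})]$, etc., are then exactly of the form handled by Corollary~\ref{cor:jphilp} with $\Phi_n=\Xi$, whose moment bound $E_{\nu_0}[|\Phi_n|^q]\le C2^{-bqn}$ is precisely Corollary~\ref{cor:JaJ} \emph{with the shift} — which you note yourself is the reason that corollary is stated for general $u$. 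Having bounded $E[\Xi\exp(-\mathcal{J}'_{n,\lambda})]\le C2^{-\widetilde\delta_7 n}$, the paper concludes by Markov's inequality under $\mu_{n,\lambda}$ (the indicator $1_{A_n^c(\cdot-uh)}\le 2^{\widetilde\delta_1 n}\Xi$), using $\widetilde\delta_1<\widetilde\delta_7$. This is a cleaner arrangement: Chebyshev is applied at the $\mu_{n,\lambda}$-level rather than the $\nu_0$-level, no Girsanov transformation is invoked, and the $\overline J$-weight never acquires a shift.
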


\begin{proof}

Taking $\Xi=\left|J_{0,1}^{\e_n,a_n}(\omega+uh)-J_{0,1}^{\e_n}(\omega+uh)\right|$ in \eqref{eq:JJerr},  we have
\begin{align}
&E\left[\left|J_{0,1}^{\e_n,a_n}(\cdot+uh)-J_{0,1}^{\e_n}(\cdot+uh)\right|\exp\left(-\mathcal{J}_{n,\lambda}'\right)\right] \notag\\
&\leq E\left[\left|J_{0,1}^{\e_n,a_n}(\cdot+uh)-J_{0,1}^{\e_n}(\cdot+uh)\right|\exp\left (-\overline{J}_{0,1}^{\e_n,\lambda}\right)\right]\notag\\
&\quad +\left|E\left[\left|J_{0,1}^{\e_n,a_n}(\cdot+uh)-J_{0,1}^{\e_n}(\cdot+uh)\right|\exp\left(-\mathcal{J}_{n,\lambda}'\right)\right] \right.\notag \\
&\quad \hspace{3cm}\left .-E\left[\left|J_{0,1}^{\e_n,a_n}(\cdot+uh)-J_{0,1}^{\e_n}(\cdot+uh)\right|\exp\left (-\overline{J}_{0,1}^{\e_n,\lambda}\right)\right]\right|	\notag\\
&\leq E\left[\left|J_{0,1}^{\e_n,a_n}(\cdot+uh)-J_{0,1}^{\e_n}(\cdot+uh)\right|\exp\left (-\overline{J}_{0,1}^{\e_n,\lambda}\right)\right]\notag\\
&\quad+2\lambda 2^{-\widetilde{\delta}_1n}E\left[\left|J_{0,1}^{\e_n,a_n}(\cdot+uh)-J_{0,1}^{\e_n}(\cdot+uh)\right|\exp\left (-2\overline{J}_{0,1}^{\e_n,\lambda}\right)\right]\notag\\
&\quad +E\left[\left|J_{0,1}^{\e_n,a_n}(\cdot+uh)-J_{0,1}^{\e_n}(\cdot+uh)\right|\left(1+\exp\left (-\overline{J}_{0,1}^{\e_n,\lambda}\right)\right)\right].\label{eq:PsiJJ}
\end{align}
Then, 
we see  from Corollaries \ref{cor:JaJ} and \ref{cor:jphilp} that there exists $\widetilde{\delta}_{7}>0$ such that
\begin{align*}
&\textrm{the right-hand side of }\eqref{eq:PsiJJ}\leq C2^{-\widetilde{\delta}_{7}n}.
\end{align*}
Hence, there exist $C>0 $ and $\widetilde{\delta}_{7}>0$ such that
\begin{align*}
&\mu_{n,\lambda}(A_n^c(\cdot+uh))\leq {C2^{-{\widetilde{\delta}_{7}n}}},
\end{align*}
for $n\geq 1$.
\end{proof}

For $A\in \mathcal{B}$, $h\in K_0$, we set $\{\omega+h\in A\}$ by $A(\cdot-h)$.

\begin{lemma}\label{lem:step4}
For $M>1$  large enough, any given $u\in\R$, $h\in K_0$, and any $\lambda\in (0,\infty)$, there exist $C>0$ and $\widetilde{\delta}_{8}>0$ such that
 \begin{align}
\left|\int_{B_n^c(M)\cap  \left(A_n(\cdot-uh)\cap A_n(\cdot)\right)^c}f(\omega)e^{-\lambda\widetilde{J}_{\e_n,a_n}(-u,h)+V(u,h)}\dd \mu_{n,\lambda}\right|\leq C2^{-\widetilde{\delta}_{8} n}M^2\max|f|.\label{eq:munlambdaanan}
\end{align}
\end{lemma}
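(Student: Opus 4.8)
\textbf{Proof proposal for Lemma \ref{lem:step4}.}

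The plan is to bound the integral over the ``bad'' set $B_n^c(M)\cap(A_n(\cdot-uh)\cap A_n(\cdot))^c$ by splitting the complement of the intersection event, using that on $B_n^c(M)$ the integrand is controlled: on $B_n^c(M)$ we have $|\lambda\widetilde{J}_{\e_n,a_n}(-u,h)|<\log M$ and $|V(u,h)|<\log M$, so that $|f(\omega)e^{-\lambda\widetilde J_{\e_n,a_n}(-u,h)+V(u,h)}|\leq M^2\max|f|$ on that set (here we must be a little careful, since $V$ and $\widetilde J$ are evaluated along $\omega$, not $\omega+uh$; but $B_n(M)$ in \eqref{eq:Bndfn} is precisely defined in terms of these shifted-then-evaluated quantities, and the set that actually matters in \eqref{eq:munlambdaanan} is $B_n^c(M)$ after the change of variables $\omega\mapsto\omega+uh$). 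Thus the left-hand side of \eqref{eq:munlambdaanan} is at most
\[
M^2\max|f|\;\mu_{n,\lambda}\bigl((A_n(\cdot-uh)\cap A_n(\cdot))^c\bigr)
\le M^2\max|f|\Bigl(\mu_{n,\lambda}(A_n(\cdot-uh)^c)+\mu_{n,\lambda}(A_n^c)\Bigr).
\]

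First I would handle the term $\mu_{n,\lambda}(A_n(\cdot-uh)^c)$. Here I would change variables using the Radon--Nikodym derivative $D_{n,\lambda}=\frac{\dd(\mu_{n,\lambda}\circ\tau_{uh}^{-1})}{\dd\mu_{n,\lambda}}$ from \eqref{eq:Dnlambda}, which writes $\mu_{n,\lambda}(A_n(\cdot-uh)^c)=\mu_{n,\lambda}\circ\tau_{uh}^{-1}(A_n^c)$, reducing to exactly the estimate of Lemma \ref{lem:step3}, which gives a bound $C2^{-\widetilde\delta_6 n}$ for $u$ in a fixed compact. For the term $\mu_{n,\lambda}(A_n^c)$, which is the $u=0$ case, I would either invoke Lemma \ref{lem:step3} directly with $u=0$, or argue as in the proof of Lemma \ref{lem:munlambda}: taking $\Xi=\mathbf 1_{A_n^c}$ in \eqref{eq:JJerr} and using \eqref{eq:Eventtail} together with $E[\exp(-\overline J_{0,1}^{\e_n,\lambda})]\to N(\lambda)>0$ (from \eqref{eq:Nlam}) shows $\mu_{n,\lambda}(A_n^c)\leq C2^{-\widetilde\delta n}$ for $\widetilde\delta_1$ small enough. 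Combining, the right-hand side is bounded by $CM^2\max|f|\,2^{-\widetilde\delta_8 n}$ with $\widetilde\delta_8:=\min\{\widetilde\delta_6,\widetilde\delta\}$, possibly after shrinking $\ve_0$ once more so that the exponents coming from \eqref{eq:Eventtail}, namely $\widetilde\delta_1-\tfrac16$ and $\widetilde\delta_1-\tfrac{1+20\gamma}{24}+\tfrac{3\ve_0}{4}$, are both strictly negative.

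The main obstacle I anticipate is bookkeeping rather than analysis: one has to be careful about \emph{which} path ($\omega$ or $\omega+uh$) each functional is evaluated along when unwinding the definition of $B_n(M)$ in \eqref{eq:Bndfn}, and correspondingly which change of variables ($\tau_{uh}$ or its inverse) is applied, so that the two probabilities $\mu_{n,\lambda}(A_n(\cdot-uh)^c)$ and $\mu_{n,\lambda}(A_n^c)$ come out matching the precise statements of Lemmas \ref{lem:step3} and \ref{lem:munlambda}. A secondary point is checking that the factor $M^2$ is genuinely the correct power: both $\lambda\widetilde J$ and $V$ contribute one factor of $\log M$ to the exponent, so $e^{-\lambda\widetilde J+V}\le M^2$ on $B_n^c(M)$, which is exactly what appears on the right-hand side of \eqref{eq:munlambdaanan}; no further care with higher moments is needed because the $M^2$ is allowed to stand outside the decaying $2^{-\widetilde\delta_8 n}$ factor. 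Everything else is a direct application of the corollaries of Section \ref{sec:2} and Lemma \ref{lem:phiest} already invoked in the preceding lemmas.
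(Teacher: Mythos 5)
Your proposal is correct and follows essentially the same route as the paper: bound the integrand by $M^2\max|f|$ on $B_n^c(M)$, then bound the $\mu_{n,\lambda}$-measure of $(A_n(\cdot-uh)\cap A_n)^c$ by the union bound and two applications of Lemma \ref{lem:step3} (with parameter $u$ and with parameter $0$), which is exactly what the paper's terse ``the statement follows from Lemma \ref{lem:step3}'' compresses. One small remark: you do not need the Radon--Nikodym derivative $D_{n,\lambda}$ to pass from $\mu_{n,\lambda}(A_n(\cdot-uh)^c)$ to $\mu_{n,\lambda}\circ\tau_{uh}^{-1}(A_n^c)$ --- that identity is purely set-theoretic (the definition of the pushforward measure and the notation $A(\cdot-h)=\{\omega+h\in A\}$), so invoking $D_{n,\lambda}$ there is an unnecessary detour, though it does no harm.
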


\begin{proof}
We have
\begin{align*}
&\left|\int_{B_n^c(M)\cap  \left(A_n(\cdot-uh)\cap A_n(\cdot)\right)^c}f(\omega)e^{-\lambda\widetilde{J}_{\e_n,a_n}(-u,h)+V(u,h)}\dd \mu_{n,\lambda}\right|\\
&\leq \max |f|M^2 \left|\int_{  \left(A_n(\cdot-uh)\cap A_n(\cdot)\right)^c}\dd \mu_{n,\lambda}\right| ,
\end{align*}
and hence, the statement follows from Lemma \ref{lem:step3}.
\end{proof}

Now we prove Lemma \ref{lem:step1} by using Lemmas \ref{lem:step2}, \ref{lem:step3} and \ref{lem:step4}.

\begin{proof}[Proof of Lemma \ref{lem:step1}]

We note that $B_n(M)^c$, the complement of the event $B_n(M)$ defined in \eqref{eq:Bndfn}, can be written as
\begin{align*}
&\left\{|V(u,h)|\leq  {\log M}\right\}\cap \left\{\left|\lambda\widetilde{J}_{\e_n,a_n}(-u,h)\right| \leq \log M\right\} .
\end{align*}
Let $\varphi_M\in C_b^2(\R^2)$ be a continuous function which satisfies
\begin{alignat*}{2}
&0\leq \varphi_M(x_1,x_2)\leq 1,\quad &&\textrm{if }(x_1,x_2)\in \R^2,\\
&\varphi_M(x_1,x_2)=1,\quad &&\textrm{for} \ |x_1|\leq \log M, \ |x_2|\leq \log M,\\
&\varphi_M(x_1,x_2)=0,\quad &&\textrm{for}\ |x_1|\geq 2\log M, \text{ or }\ |x_2|\geq 2\log M,
\end{alignat*}
and we set $\widetilde{\varphi}_M(\omega) := \varphi_M(\lambda \widetilde{J}_{n}(-u,h),V(u,h))$, where we recall the definition of  $D_{n,\lambda}$ \eqref{eq:Dnlambda}. 
Then, we have
\begin{align*}
\int f(\omega)D_{n,\lambda}\dd \mu_{n,\lambda}=\int \widetilde{\varphi}_M(\omega)f(\omega)D_{n,\lambda}\dd \mu_{n,\lambda} +\int \left(1-\widetilde{\varphi}_M(\omega)\right)f(\omega)D_{n,\lambda}\dd \mu_{n,\lambda}. 
\end{align*}
for $f(\omega)=f(\omega_{t_1},\dots,\omega_{t_p})$.

It is easy to see from  Lemma \ref{lem:step2} that for some constant $C>0$
\begin{align}
\left|\int \left(1-\widetilde{\varphi}_M(\omega)\right)f(\omega)D_{n,\lambda}\dd \mu_{n,\lambda}\right|\leq \sup|f| \mu_{n,\lambda}\circ \tau_{uh}^{-1}\left(B_n(M)\right)\leq \frac{C}{\log M}\max |f|. \label{eq:pfLem5.3-01}
\end{align}
We remark that
\begin{align*}
D_{n,\lambda}&=\exp\left(-\lambda\widetilde{J}_{n}(-u,h)1_{A_{n}(\omega-uh)\cap A_{n}(\omega)}+V(u,h)\right)\\
&\times \exp\left(-\left(\lambda{J}_{0,1}^{\e_n,a_n}(\omega-uh)-\lambda \kappa_1(\e_n)+\lambda^2 \kappa_2(\e_n)\right)1_{A_{n}(\omega-uh)\backslash A_{n}(\omega)}\right)\\
&\times \exp\left(\left(\lambda{J}_{0,1}^{\e_n,a_n}(\omega)-\lambda \kappa_1(\e_n)+\lambda^2 \kappa_2(\e_n)\right)1_{A_{n}(\omega)\backslash A_{n}(\omega-uh)}\right) ,
\end{align*}
and hence on $A_n(\cdot-uh)\cap A_n(\cdot)$,
\begin{align*}
D_{n,\lambda}&=\exp\left(-\lambda\widetilde{J}_{\e_n,a_n}(-u,h)+V(u,h)\right).
\end{align*}
Applying Lemma \ref{lem:step3} for $v=u,0$ and Lemma \ref{lem:step4}, we obtain that \begin{align}
\left|\int \widetilde{\varphi}_{M}(\omega)f(\omega){D_{n,\lambda}}\dd \mu_{n,\lambda}-\int \widetilde{\varphi}_{M}(\omega)f(\omega)e^{-\lambda\widetilde{J}_{n}(-u,h)+V(u,h)}\dd \mu_{n,\lambda}\right|\to 0\label{eq:ezetagap}
\end{align}
as $n\to\infty$.

Let $F_M:=\varphi_{M}(x_1,x_2)e^{-x_1+x_2}f(x_3,\cdots,x_{p+2})\in C_b^1(\R^{p+2})$.
Note that $F_M$ is Lipschitz continuous for each $M$.
Then, we can see  from the same argument as in the proof of \eqref{eq:jlambdaphi} and Corollary \ref{cor:lemJcauchy} that
\begin{align*}
&\left| \int \left( F_M\left( \lambda \widetilde{J}_{\e_n,a_n}(-u,h),V(u,h),\omega_{t_1},\cdots,\omega_{t_p})\right) \right. \right. \\
&\quad \left. \left. \hspace{4cm} - F_M\left( \lambda \widetilde{J}_{\e_{m(n)},a_{m(n)}} (-u,h),V(u,h),\omega_{t_1},\cdots,\omega_{t_p})\right)\right)\dd \mu_{n,\lambda} \right| \\
&\leq C_{M,f} \int \left| \widetilde{J}_{\e_n,a_n}(-u,h) - \widetilde{J}_{\e_{m(n)},a_{m(n)}} (-u,h) \right| \dd \mu_{n,\lambda} \to 0.
\end{align*}
Moreover, similar arguments  to \eqref{eq:cauchylambda} and \eqref{eq:nulambdaJphi} yields that \begin{align*}
&\left|\int F_M\left( \lambda \widetilde{J}_{\e_{m(n)},a_{m(n)}}(-u,h),V(u,h),\omega_{t_1},\cdots,\omega_{t_p})\right)\dd \mu_{n,\lambda} \right. \\
&\quad \left. \hspace{3cm} -\int F_M\left( \lambda \widetilde{J}_{\e_{m(n)},a_{m(n)}}(-u,h),V(u,h),\omega_{t_1},\cdots,\omega_{t_p})\right)\dd \nu_{\lambda}\right| \to0 
\end{align*}
as  $n\to\infty$.
Theorem \ref{thm:rho} (see Remark \ref{rem:thmrho}) implies that
\begin{align*}
& \int F_M\left( \lambda \widetilde{J}_{\e_{m(n)},a_{m(n)}}(-u,h),V(u,h),\omega_{t_1},\cdots,\omega_{t_p})\right)\dd \nu_{\lambda} \\
&\to \int F_M( \lambda \rho_\lambda(-u,h),V(u,h),\omega_{t_1},\cdots,\omega_{t_p})\dd \nu_{\lambda},
\end{align*}
as $N\to\infty$.
Putting these together, we obtain
\begin{align*}
&\varlimsup_{n\to \infty}\left|\int F_M\left( \lambda \widetilde{J}_{\e_n,a_n}(-u,h),V(u,h),\omega_{t_1},\cdots,\omega_{t_p})\right))\dd \mu_{n,\lambda} \right. \\
&\quad \hspace{3cm} \left. -\int F_M\left( \lambda \rho_\lambda(-u,h),V(u,h),\omega_{t_1},\cdots,\omega_{t_p}\right)\dd \nu_{\lambda}\right|=0.
\end{align*}
This inequality, \eqref{eq:pfLem5.3-01} and \eqref{eq:ezetagap} yield
\begin{align*}
&\varlimsup_{n\to \infty}\left|\int
f(\omega_{t_1},\dots,\omega_{t_p}) D_{n,\lambda} \dd \mu_{n,\lambda} \right. \\
&\quad \hspace{3cm} \left. - \int \widetilde{\varphi}_M(\omega)e^{-\lambda \rho_\lambda(-u,h)+V(u,h)}f(\omega_{t_1},\dots,\omega_{t_p})\dd \nu_{\lambda} \right|\leq \frac{C}{\log M}  \sup |f| .
\end{align*}
Since \begin{align*}
\lim_{M\to\infty}\int \widetilde{\varphi}_M(\omega)e^{-\lambda \rho_\lambda(-u,h)+V(u,h)}f(\omega_{t_1},\dots,\omega_{t_p})\dd \nu_{\lambda}=\int e^{-\lambda \rho(-u,h)+V(u,h)}f(\omega_{t_1},\dots,\omega_{t_p})\dd \nu_{\lambda},
\end{align*} we then obtain Lemma \ref{lem:step1}.

\end{proof}

This implies that also Theorem \ref{thm:thm3} has a complete proof.

\appendix

\section{Some estimates of integrals }\label{app}

In this appendix, we give some calculations of integrals, that have been used in the course of various proofs given above.

\subsection{Proof of \eqref{eq:I3estimate}}\label{app:I3esti}
Observe that 
\begin{align*}
& \iint _{0<s_2<s_1 <\infty, 0<t_1 <t_2 < 1}\dd s_1 \dd s_2 \dd t_1 \dd t_2 \notag\\
&\quad \hspace{4cm} [ (s_1-s_2)(t_2-t_1)(1-t_2+s_2) +(s_1-s_2+t_2-t_1)(1-t_2)s_2 \notag\\
&\quad \hspace{8cm}+(s_1-s_2+t_2-t_1)(1-t_2+s_2)t_1]^{-\frac{3}{2}}\\
&=  2\iint _{0<s_2<\infty, 0<t_1 <t_2 < 1} \dd s_2 \dd t_1 \dd t_2 \frac{1}{s_2+t_2(1-t_2)}\frac{1}{\sqrt{(t_2-t_1)((1-t_2+s_2)t_1+(1-t_2)s_2)}}\\
&\leq C\iint _{0<s_2<\infty, 0<t_2 < 1} \dd s_2  \dd t_2 \frac{1}{s_2+t_2(1-t_2)}\frac{1}{\sqrt{1-t_2+s_2}}\\
&\leq C\int _{0<t_2 < 1} \dd t_2 \frac{1}{\sqrt{t_2(1-t_2)}}<\infty,
\end{align*}
where in the first inequality we have used 
\begin{align*}
\int_a^b \frac{1}{\sqrt{(x-s)(t-x)+u}}\dd x=\arcsin \frac{b-\frac{(s+t)}{2}}{\sqrt{u+\frac{(t-s)^2}{4}}}-\arcsin \frac{a-\frac{(s+t)}{2}}{\sqrt{u+\frac{(t-s)^2}{4}}}
\end{align*}
when $s<t$, $u>0$ and $a,b\in [s,t]$. 

Similarly, 
\begin{align*}
& \iint _{0<s_2<s_1<\infty, 0<t_2<t_1<1} \dd s_1 \dd s_2 \dd t_1 \dd t_2 \notag\\
& \hspace{4cm} [ (t_2+s_2)(s_1-s_2)(t_1-t_2)+(s_1-s_2+t_1-t_2) s_2 t_2 \notag\\
& \hspace{7cm} \phantom{\int}+(t_2+s_2)(s_1-s_2+t_1-t_2)(1-t_1)]^{-\frac{3}{2}}\\
& =2\iint _{0<s_2<\infty, 0<t_2<t_1<1}  \dd s_2 \dd t_1 \dd t_2 \frac{1}{s_2+t_2(1-t_2)}\frac{1}{\sqrt{(t_1-t_2)(s_2t_2+(s_2+t_2)(1-t_1))}}\notag\\
&\leq C\iint _{0<s_2<\infty, 0<t_2<1}  \dd s_2 \dd t_2 \frac{1}{s_2+t_2(1-t_2)}\frac{1}{\sqrt{s_2+t_2}}\\
&\leq C\int_{0<t_2<1}\dd t_2 \frac{1}{\sqrt{t_2(1-t_2)}}<\infty.
\end{align*}

\subsection{Proof of \eqref{eq:A3}}\label{app:A3}
Observe that
\begin{align*}
&E\left[Y_{s,s+\e}^2e^{-\overline{J}_{0,1-\e}}\Phi_{0,1-\e}\right]\\
&\leq 
C\max|f|\int_{\begin{smallmatrix}
\sigma_1<\sigma_2<s<\tau_1<\tau_2<s+\e\\
\tau_1-\sigma_1\leq \e, \tau_2-\sigma_2\leq \e
\end{smallmatrix}}\dd \sigma_1\dd \tau_1\dd \sigma_2\dd \tau_2\\
&\hspace{7em}((\tau_2-\tau_1)(\tau_1-\sigma_1)+(\tau_1-\sigma_2)(\sigma_2-\sigma_1))^{-\frac{3}{2}}\\
&+C\max|f|\iint_{\begin{smallmatrix}
\sigma_1<\sigma_2<s<\tau_2<\tau_1<s+\e\\
\tau_1-\sigma_1\leq \e,
\tau_2-\sigma_2\leq \e
\end{smallmatrix}
}\dd \sigma_1\dd \tau_1\dd \sigma_2\dd \tau_2\\
&\hspace{7em}((\tau_1-\tau_2+\sigma_2-\sigma_1)(\tau_2-\sigma_2))^{-\frac{3}{2}}\\
&=:Y_1+Y_2.
\end{align*}
Taking the integral of $Y_1$  in $\tau_2\in [\tau_1,\sigma_2+\e]$, the integrand is given by \begin{align*}
&\frac{1}{(\tau_1-\sigma_1)}\left(\frac{1}{\sqrt{(\tau_1-\sigma_2)(\sigma_2-\sigma_1)}}-\frac{1}{\sqrt{(\sigma_2+\e-\tau_1)(\tau_1-\sigma_1)+(\tau_1-\sigma_1)(\sigma_2-\sigma_1)}}\right)\\
&\leq C\frac{1}{(\tau_1-\sigma_1)}\frac{1}{\sqrt{(\tau_1-\sigma_2)(\sigma_2-\sigma_1)}}.
\end{align*}
Taking the integral of $Y_1$ in $\tau_1\in [s,\sigma_1+\e]$ gives an upper bound \begin{align*}
\frac{2C}{\sigma_2-\sigma_1}\arctan \frac{\sqrt{\sigma_2-\sigma_1}(\sqrt{\sigma_1+\e -\sigma _2}-\sqrt{s-\sigma _2})}{\sigma_2-\sigma_1+\sqrt{(\sigma_1+\e -\sigma _2)(s-\sigma _2)}}\leq \frac{C}{\sqrt{(\sigma_2-\sigma_1)(s
-\sigma _2)}},\end{align*}
where we have used the integral \begin{align}
\int_d^e \frac{\dd x}{(x+a)\sqrt{bx+c}}=\frac{2}{\sqrt{ab-c}}\arctan\frac{\sqrt{ab-c}(\sqrt{be+c}-\sqrt{bd+c})}{ab-c+\sqrt{(be+c)(bd+c)}}\label{eq:integralarctan}
\end{align}
when $ab-c>0$. Then, taking the integrals in $s-\e<\sigma_1<\sigma_2<s$, we obtain an upper bound of $Y_1$, $C\max|f|\e$.

The integral of $Y_2$  in $\tau_1\in [\tau_2,\sigma_1+\e]$ is equal to \begin{align*}
&\frac{2}{(\sigma_2-\sigma_1)^{\frac{1}{2}}(\tau_2-\sigma_2)^\frac{3}{2}}-\frac{2}{(\sigma_2+\e-\tau_2)^\frac{1}{2}(\tau_2-\sigma_2)^{\frac{3}{2}}}\\
&\leq \frac{2}{(\sigma_2-\sigma_1)^{\frac{1}{2}}(\tau_2-\sigma_2)^\frac{3}{2}}.
\end{align*}
Taking the integral of $Y_2$ in $\tau_2\in [s,\sigma_1+\e]$ gives an upper bound \begin{align*}
\frac{4}{((\sigma_2-\sigma_1)(s-\sigma_2))^\frac{1}{2}}.
\end{align*}
Then, the integration in $s-\e<\sigma_1<\sigma_2<s$ gives the  upper bound $C \max|f|\e$ of $Y_2$. 

\subsection{Proof of \eqref{eq:A212}}\label{A212}

By Lemma \ref{lem:finitetra} and the some argument as in \eqref{eq:elbound}, we have that \begin{align}
&E\left[\int\int_{ 0\leq u\leq s\leq v\leq 1-\e, v-u\leq \e}\dd u\dd v\delta(\omega_{v}-\omega_{u})e^{-\overline{J}_{0,u}^{\e,\lambda}-\overline{J}_{v,1-\e}^{\e,\lambda}-\lambda J_{0,u;v,1-\e}^{\e}}|\Phi_{0,1}|J_{0,u;u,v}^{\e} e^{K_{\e,v-u,\lambda}} \right]\notag\\
&\leq C\max|f|\int_{s}^{s+\e}\dd v\int_{v-\e}^{s}\dd u\int_{u}^v \dd t \int_0^{t-\e}\dd r ((t-u)(u-r)+(v-t)(t-u)+(v-t)(u-r))^{-\frac{3}{2}}\label{eq:A2121}\\
&\leq C\max|f|\int_{s}^{s+\e}\dd v\int_{v-\e}^{s}\dd u\int_{u}^v \dd t \frac{1}{v-u} \frac{1}{\sqrt{\e(v-u)-(t-u)^2}}\notag\\
&\leq C\max|f|\int_{s}^{s+\e}\dd v\int_{v-\e}^{s}\dd u \frac{1}{v-u}\arcsin \sqrt{\frac{v-u}{\e}}\notag\\
&\leq C\max|f|\e,\notag
\end{align}
where we have used that $K_{\e,v-u,\lambda}\leq c$ for some constant $c>0$ if $v-u\leq \e$ in \eqref{eq:A2121}.

The same calculation yields \begin{align*}
&E\left[\int\int_{ 0\leq u\leq s\leq v\leq 1-\e, v-u\leq \e}\dd u\dd v\delta(\omega_{v}-\omega_{u})e^{-\overline{J}_{0,u}^{\e,\lambda}-\overline{J}_{v,1-\e}^{\e,\lambda}-\lambda J_{0,u;v,1-\e}^{\e}}|\Phi_{0,1}|J_{u,v;v,1-\e}^\e e^{K_{\e,v-u,\lambda}} \right]\\
&\leq C\max|f|\e.
\end{align*}

\subsection{Proofs of \eqref{eq:Remainder1}, \eqref{eq:Remainder2}}\label{app:Rem}

For  \eqref{eq:Remainder1}, the region \begin{align*}
T^4_{s,u,v,\e,\alpha,\beta,\gamma}:={T^3_{s,u,v}(\e^\alpha,\e^\beta)}\times \{(\sigma,\tau):0<\sigma<s+\e^\gamma, s-\e^\gamma<\tau<s+\e^\gamma,\tau-\sigma>\e\}
\end{align*}
 is partitioned into 15 regions $R_{s,u,v,\sigma,\tau}^{(i)}$ ($i=1,\dots,15$) due to the order of $\{s,u,v,s+\e,\sigma,\tau\}$ in $[0,s+\e^\gamma]$:
\begin{align*}
&R_{s,u,v,\sigma,\tau}^{(1)}=\{\sigma<\tau<u<s<v<s+\e\}\cap T^4_{s,u,v,\e,\alpha,\beta,\gamma},\\
&R_{s,u,v,\sigma,\tau}^{(2)}=\{\sigma<u<\tau<s<v<s+\e\}\cap T^4_{s,u,v,\e,\alpha,\beta,\gamma},\\
&R_{s,u,v,\sigma,\tau}^{(3)}=\{\sigma<u<s<\tau<v<s+\e\}\cap T^4_{s,u,v,\e,\alpha,\beta,\gamma},\\
&R_{s,u,v,\sigma,\tau}^{(4)}=\{\sigma<u<s<v<\tau<s+\e\}\cap T^4_{s,u,v,\e,\alpha,\beta,\gamma},\\
&R_{s,u,v,\sigma,\tau}^{(5)}=\{\sigma<u<s<v<s+\e<\tau\}\cap T^4_{s,u,v,\e,\alpha,\beta,\gamma},\\
&R_{s,u,v,\sigma,\tau}^{(6)}=\{u<\sigma<\tau<s<v<s+\e\}\cap T^4_{s,u,v,\e,\alpha,\beta,\gamma},\\
&R_{s,u,v,\sigma,\tau}^{(7)}=\{u<\sigma<s<\tau<v<s+\e\}\cap T^4_{s,u,v,\e,\alpha,\beta,\gamma},\\
&R_{s,u,v,\sigma,\tau}^{(8)}=\{u<\sigma<s<v<\tau<s+\e\}\cap T^4_{s,u,v,\e,\alpha,\beta,\gamma},\\
&R_{s,u,v,\sigma,\tau}^{(9)}=\{u<\sigma<s<v<s+\e<\tau\}\cap T^4_{s,u,v,\e,\alpha,\beta,\gamma},\\
&R_{s,u,v,\sigma,\tau}^{(10)}=\{u<s<\sigma<\tau<v<s+\e\}\cap T^4_{s,u,v,\e,\alpha,\beta,\gamma},\\
&R_{s,u,v,\sigma,\tau}^{(11)}=\{u<s<\sigma<v<\tau<s+\e\}\cap T^4_{s,u,v,\e,\alpha,\beta,\gamma},\\
&R_{s,u,v,\sigma,\tau}^{(12)}=\{u<s<\sigma<v<s+\e<\tau\}\cap T^4_{s,u,v,\e,\alpha,\beta,\gamma},\\
&R_{s,u,v,\sigma,\tau}^{(13)}=\{u<s<v<\sigma<\tau<s+\e\}\cap T^4_{s,u,v,\e,\alpha,\beta,\gamma},\\
&R_{s,u,v,\sigma,\tau}^{(14)}=\{u<s<v<\sigma<s+\e<\tau\}\cap T^4_{s,u,v,\e,\alpha,\beta,\gamma},\\
&R_{s,u,v,\sigma,\tau}^{(15)}=\{u<s<v<s+\e<\sigma<\tau\}\cap T^4_{s,u,v,\e,\alpha,\beta,\gamma}.
\end{align*}
We remark that $R^{(10)}$, $R^{(11)}$, $R^{(13)}$ are empty sets due to the restriction of $\tau-\sigma\geq \e$.
Then, we will estimate the integrals
\begin{align}
C\int_{R_{s,u,v,\sigma,\tau}^{(i)}}\dd s \dd u\dd v\dd \sigma\dd \tau Q^{(i)}(s,u,v,\sigma,\tau,\e)^{-\frac{3}{2}},\label{eq:Qintegral}
\end{align}
where $Q^{(i)}(s,u,v,\sigma,\tau,\e)$ is a polynomial function of $(s,u,v,\sigma,\tau,\e)$ on $R_{s,u,v,\sigma,\tau}^{(i)}$ for each $i=1,\dots,15$.

For each region, we can find an associated electrical circuit given in figure \ref{Fig:TypeCir} as follows:
\begin{itemize}
\item (1): $R_{s,u,v,\sigma,\tau}^{(5)}$.
\item (2): $R_{s,u,v,\sigma,\tau}^{(2)}$, $R_{s,u,v,\sigma,\tau}^{(3)}$, $R_{s,u,v,\sigma,\tau}^{(4)}$, $R_{s,u,v,\sigma,\tau}^{(7)}$, $R_{s,u,v,\sigma,\tau}^{(8)}$, $R_{s,u,v,\sigma,\tau}^{(9)}$, $R_{s,u,v,\sigma,\tau}^{(11)}$, $R_{s,u,v,\sigma,\tau}^{(12)}$, $R_{s,u,v,\sigma,\tau}^{(14)}$.
\item (3): $R_{s,u,v,\sigma,\tau}^{(1)}$, $R_{s,u,v,\sigma,\tau}^{(15)}$.
\item (4): $R_{s,u,v,\sigma,\tau}^{(6)}$, $R_{s,u,v,\sigma,\tau}^{(10)}$, $R_{s,u,v,\sigma,\tau}^{(13)}$.
\end{itemize}

\begin{figure}[ht]
\begin{center}
\includegraphics[width=4in,pagebox=cropbox,clip]{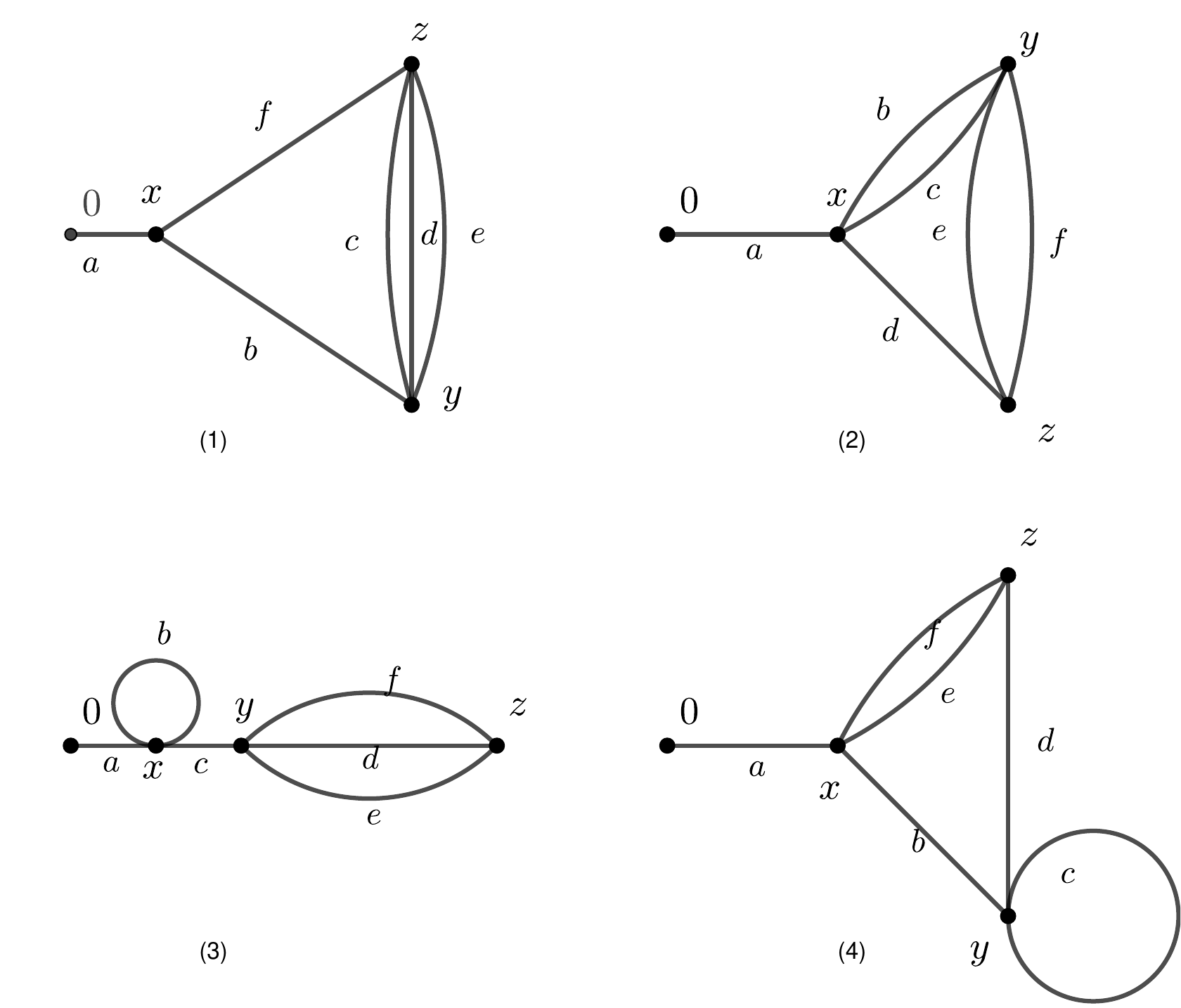}
\caption{$Q^{(i)}$ can be computed from these types of electrical circuits.  The associated electrical circuit for $R^{(15)}$ has the other version of (3) where the tail from $0$ is adjusted to $z$. Even for such a version, $Q^{(15)}$ has the same form. }\label{Fig:TypeCir}
\end{center}
\end{figure}
In particular, $Q^{(i)}$ has the form \begin{align*}
\begin{cases}
b(cd+de+ec)+f(cd+de+ec)+cde\quad &(1) \text{ in Figure }\ref{Fig:TypeCir}, \\
bc(e+f)+ef(b+c)+d(b+c)(e+f)\quad &(2) \text{ in Figure }\ref{Fig:TypeCir},\\
b(de+ef+fd)\quad &(3)\text{ in Figure }\ref{Fig:TypeCir},\\
c((b+d)e+(b+d)f+ef)&(4)\text{ in Figure }\ref{Fig:TypeCir}.
\end{cases}
\end{align*}
Applying this to each case and simplifying the polynomial, $Q^{(i)}$ is given as follows.

\begin{align*}
&Q^{(1)}=(\tau-\sigma)((s+\e-v)(v-s)+\e(s-u)),\\
&Q^{(2)}=(u-\sigma)(\tau-u)[(s+\e-v)+(v-s)]\\
&\hspace{5em}+(v-s)(s+\e-v)[(\tau-u)+(u-\sigma)]\\
&\hspace{5em}+(s-\tau)[(\tau-u)+(u-\sigma)][(s+\e-v)+(v-s)],\\
&Q^{(3)}=(u-\sigma)(v-\tau)[(s+\e-v)+(s-u)]\\
&\hspace{5em}+(s-u)(s+\e-v)[(v-\tau)+(u-\sigma)]\\
&\hspace{5em}+(\tau-s)[(v-\tau)+(u-\sigma)][(s+\e-v)+(s-u)],\\
&Q^{(4)}=(s+\e-\tau)[(\tau-v)+(u-\sigma)][(v-s)+(s-u)]\\
&\hspace{5em}+(s-u)(v-s)[(\tau-v)+(u-\sigma)]\\
&\hspace{5em}+(\tau-v)(u-\sigma)[(v-s)+(s-u)],\\
&Q^{(5)}=(\tau-s-\e)[(s-u)(v-s)+(v-s)(s+\e-v)+(s+\e-v)(s-u)]\\
&\hspace{5em}+(u-\sigma)[(s-u)(v-s)+(v-s)(s+\e-v)+(s+\e-v)(s-u)]\\
&\hspace{5em}+(s-u)(v-s)(s+\e-v),\\
&Q^{(6)}=(\tau-\sigma)((s+\e-v)(v-s)+\e(s-\tau+\sigma-u)),\\
&Q^{(7)}=(s+\e-v)[(v-\tau)+(\sigma-u)][(\tau-s)+(s-\sigma)]\\
&\hspace{5em}+(v-\tau)(\sigma-u)[(\tau-s)+(s-\sigma)]\\
&\hspace{5em}+(\tau-s)(s-\sigma)[(v-\tau)+(\sigma-u)],\\
&Q^{(8)}=(v-s)[(\tau-v)+(\sigma-u)][(s-\sigma)+(s+\e-\tau)]\\
&\hspace{5em}+(\tau-v)(\sigma-u)[(s-\sigma)+(s+\e-\tau)]\\
&\hspace{5em}+(s-\sigma)(s+\e-\tau)[(\tau-v)+(\sigma-u)],\\
&Q^{(9)}=(\sigma-u)[(s-\sigma)+(\tau-s-\e)][(s+\e-v)+(v-s)]\\
&\hspace{5em}+(s-\sigma)(\tau-s-\e)[(s+\e-v)+(v-s)]\\
&\hspace{5em}+(s+\e-v)(v-s)[(s-\sigma)+(\tau-s-\e)],\\ 
&Q^{(12)}=(v-\sigma)[(s+\e-v)+(s-u)][(\tau-s-\e)+(\sigma-s)]\\
&\hspace{5em}+(s+\e-v)(s-u)[(\tau-s-\e)+(\sigma-s)]\\
&\hspace{5em}+(\tau-s-\e)(\sigma-s)[(s+\e-v)+(s-u)],\\
&Q^{(14)}=(\sigma-v)[(v-s)+(s-u)][(\tau-s-\e)+(s+\e-\sigma)]\\
&\hspace{5em}+(s+\e-\sigma)(\tau-s-\e)[(v-s)+(s-u)]\\
&\hspace{5em}+(v-s)(s-u)[(\tau-s-\e)+(s+\e-\sigma)],\\
&Q^{(15)}=(\tau-\sigma)((s+\e-v)(v-s)+\e(s-u)).
\end{align*}

Estimates of all integrals in \eqref{eq:Qintegral} are obtained by explicit calculations.
By reversing the time, \eqref{eq:Remainder2} can be reduced to \eqref{eq:Remainder1}.

\subsection{Asymptotics of Expectation and Variance of $J_{0,1}^\e$}\label{kappa2}

In this subsection, we   prove \eqref{eq:kappa1}-\eqref{eq:kappa2}.

\subsubsection{Proof \eqref{eq:kappa1}}
Observe that \begin{align*}
E\left[J_{0,1}^\e\right]=\int_0^{1-\e}\dd s\int_{s+\e}^1\dd t p_{t-s}(0)&=\frac{2}{(2\pi)^\frac{3}{2}}\int_0^{1-\e}\left(\frac{1}{\e^\frac{1}{2}}-\frac{1}{(1-s)^\frac{1}{2}}\right)\dd s\\
&=\frac{2}{(2\pi)^\frac{3}{2}}\left(\frac{1-\e}{\e^\frac{1}{2}}-2(1-\sqrt{\e})\right).
\end{align*}

\subsubsection{Proof of \eqref{eq:kappa2-2}}
It is easy to see that 
\begin{align*}
&E\left[\left(J_{0,1}^\e\right)^2\right]\\
&=2\int_\e^{1-\e}\dd t\int_{t+\e}^1\dd u \int_0^{t-\e}\dd r\int_{r+\e}^t\dd s  \int_{\R^3}\dd x\int_{\R^3}\dd y p_r(0,x)p_{s-r}(x,x)p_{t-s}(x,y)p_{u-t}(y,y)\\
&\quad+2\int_0^{1-\e}\dd t\int_{t+\e}^1\dd u \int_0^t \dd r \int_{(r+\e)\vee t}^{u}\dd s\int_{\R^3}\dd x\int_{\R^3}\dd y p_{r}(0,x)p_{t-r}(x,y)p_{s-t}(y,x)p_{u-s}(x,y) \\
&\quad+2\int_0^{1-\e}\dd t\int_{t+\e}^1\dd u \int_t^{u-\e} \dd r \int_{r+\e}^{u}\dd s\int_{\R^3}\dd x\int_{\R^3}\dd y p_{t}(0,x)p_{r-t}(x,y)p_{s-r}(y,y)p_{u-s}(y,x)\\
&=:2V_1(\e)+2V_2(\e)+2V_3(\e). 
\end{align*}
We should now investigate the asymptotics of each term:

Observe that 
\begin{align*}
(2\pi)^3V_1(\e)&=\int_\e^{1-\e}\dd t\int_{t+\e}^1\dd u \int_0^{t-\e}\dd r\int_{r+\e}^t\dd s (s-r)^{-\frac{3}{2}}(u-t)^{-\frac{3}{2}}\\
&=2\int_\e^{1-\e}\dd t\int_{t+\e}^1\dd u (u-t)^{-\frac{3}{2}}\int_0^{t-\e}\dd r \left(\e^{-\frac{1}{2}}-(t-r)^{-\frac{1}{2}}\right)\\
&=2\int_\e^{1-\e}\dd t\int_{t+\e}^1\dd u (u-t)^{-\frac{3}{2}}\left((t-\e)\e^{-\frac{1}{2}}-2\sqrt{t}+2\sqrt{\e}\right)\\
&=4\int_\e^{1-\e}\dd t\left(\e^{-\frac{1}{2}}-(1-t)^{-\frac{1}{2}}\right)\left((t-\e)\e^{-\frac{1}{2}}-2\sqrt{t}+2\sqrt{\e}\right)\\
&=4\int_\e^{1-\e}\dd t\left(\e^{-1}t-\e^{-\frac{1}{2}}(1-t)^{-\frac{1}{2}}t-2\sqrt{t}{\e}^{-\frac{1}{2}}\right)+C_1(\e),
\end{align*}
where $C_1(\e)$ converges to some constant as $\e\to 0$. Since  \begin{align*}
&\int_\e^{1-\e}\dd t\left(\e^{-1}t-\e^{-\frac{1}{2}}(1-t)^{-\frac{1}{2}}t-2\sqrt{t}{\e}^{-\frac{1}{2}}\right)\\
&=\frac{1}{2}\frac{(1-\e)^2-\e^2}{\e}-\frac{1}{\sqrt{\e}}\left(2\left(\e\sqrt{1-\e}-(1-\e)\sqrt{\e}\right)+\frac{4}{3}\left((1-\e)^\frac{3}{2}-\e^{\frac{3}{2}}\right)\right)-\frac{4}{3}\frac{1}{\sqrt{\e}}\left((1-\e)^\frac{3}{2}-\e^\frac{3}{2}\right),
\end{align*}
there exists a constant $\check{C}_1\in \R$ such that \begin{align}
\lim_{\e\to 0}\left(2V_1(\e)-E[J_{0,1}^\e]^2+\frac{8}{(2\pi)^3}\frac{2}{3}\frac{(1-\e)^\frac{3}{2}}{\sqrt{\e}}\right)=\check{C}_1.\label{eq:Var1}
\end{align}
Similarly, we have \begin{align*}
(2\pi)^3V_3(\e)&=\int_0^{1-\e}\dd t\int_{t+\e}^1\dd u \int_t^{u-\e} \dd r \int_{r+\e}^{u}\dd s(s-r)^{-\frac{3}{2}}(u-s+r-t)^{-\frac{3}{2}}\\
&=\int_0^{1-\e}\dd r \int_{r+\e}^1\dd s(s-r)^{-\frac{3}{2}}\int_0^r\dd t\int_s^1 \dd u(u-s+r-t)^{-\frac{3}{2}}\\
&=2\int_0^{1-\e}\dd r \int_{r+\e}^1\dd s(s-r)^{-\frac{3}{2}}\int_0^r\dd t\left((r-t)^{-\frac{1}{2}}-(1-s+r-t)^{-\frac{1}{2}}\right)\\
&=4\int_0^{1-\e}\dd r \int_{r+\e}^1\dd s(s-r)^{-\frac{3}{2}}\left(\sqrt{r}-\sqrt{1-s+r}+\sqrt{1-s}\right).
\end{align*}
Since \begin{align*}
\int_0^{1-\e}\dd r \int_{r+\e}^1\dd s(s-r)^{-\frac{3}{2}}\sqrt{r}&=\int_0^{1-\e}\dd r 2\sqrt{r}\left(\e^{-\frac{1}{2}}-(1-r)^{-\frac{1}{2}}\right)=\frac{4}{3}\frac{(1-\e)^\frac{3}{2}}{\sqrt{\e}}+C_{3,1}(\e)\\
\int_0^{1-\e}\dd r \int_{r+\e}^1\dd s(s-r)^{-\frac{3}{2}}\sqrt{1-s+r}&=\int_{0}^{1-\e}\dd r \left(2\left(\frac{\sqrt{1-\e}}{\sqrt{\e}}-\frac{\sqrt{r}}{\sqrt{1-r}}\right)-\int_\e^{1-r}\dd \sigma \frac{1}{\sqrt{\sigma(1-\sigma)}}\right)\\
&=2\frac{(1-\e)^\frac{3}{2}}{\sqrt{\e}}+C_{3,2}(\e)\\
\int_0^{1-\e}\dd r \int_{r+\e}^1\dd s(s-r)^{-\frac{3}{2}}\sqrt{1-s}&=2\int_\e^1 \dd s \sqrt{1-s}\left(\e^{-\frac{1}{2}}-s^{-\frac{1}{2}}\right)\\
&=\frac{4}{3}\frac{(1-\e)^\frac{3}{2}}{\sqrt{\e}}+C_{3,3}(\e),
\end{align*}
where $C_{3,i}(\e)$ ($i=1,2,3$) converge to some constants as $\e\to 0$, there exists a constant $\check{C}_3$ such that  \begin{align}
\lim_{\e\to0}\left(2V_3(\e)-\frac{8}{(2\pi)^3}\frac{2}{3}\frac{(1-\e)^\frac{3}{2}}{\sqrt{\e}}\right)=\check{C}_3.\label{eq:Var2}
\end{align}

To calculate $V_2(\e)$, we first divide the integration as follows:\begin{align}
&\int_0^{1-\e}\dd t\int_{t+\e}^1\dd u \int_0^t \dd r \int_{(r+\e)\vee t}^{u}\dd sF(r,s,t,u)\label{eq:V3repre}\\
&=\int_0^{1-\e}\dd r\int_{r+\e}^1\dd s \int_r^s \dd t \int_{(t+\e)\vee s}^{1}\dd uF(r,s,t,u)\notag\\
&=\int_0^{1-\e}\dd r\int_{r+\e}^1\dd s \int_{r}^{s-\e} \dd t \int_{s}^{1}\dd uF(r,s,t,u)+\int_0^{1-\e}\dd r\int_{r+\e}^1\dd s \int_{s-\e}^s \dd t \int_{t+\e}^{1}\dd uF(r,s,t,u)\notag,
\end{align}
where $F(r,s,t,u)=\frac{1}{(2\pi)^3}\left((t-r)(s-t)+(u-s)(s-t)+(u-s)(t-r)\right)^{-\frac{3}{2}}$  is a continuous function on $\{0<r<t<s<u<1\}$. 
In particular, we have \begin{align*}
&(2\pi)^3V_{2,1}(\e)\\
&:=\int_0^{1-\e}\dd r\int_{r+\e}^1\dd s \int_{r}^{s-\e} \dd t \int_{s}^{1}\dd uF(r,s,t,u)\\
&=2\int_0^{1-\e}\dd r\int_{r+\e}^1\dd s \int_{r}^{s-\e} \dd t\frac{1}{s-r}\left(\frac{1}{\sqrt{(t-r)(s-t)}}-\frac{1}{\sqrt{(1-s)(s-r)+(t-r)(s-t)}}\right)
\intertext{and}
&(2\pi)^3V_{2,2}(\e)\\
&:=\int_0^{1-\e}\dd r\int_{r+\e}^1\dd s \int_{s-\e}^s \dd t \int_{t+\e}^{1}\dd uF(r,s,t,u)\\
&=2\int_0^{1-\e}\dd r\int_{r+\e}^1\dd s \int_{s-\e}^s \dd t\frac{1}{s-r}\\
&\hspace{5em}\times \left(\frac{1}{\sqrt{(t+\e-s)(s-r)+(t-r)(s-t)}}-\frac{1}{\sqrt{(1-s)(s-r)+(t-r)(s-t)}}\right).
\end{align*}
Thus, \begin{align*}
&(2\pi)^3\frac{\dd}{\dd \e}(V_{2,1}(\e)+V_{2,2}(\e))\\
&=-\int_0^{1-\e}\dd r\int_{r+\e}^1\dd s \int_{s-\e}^s \dd t \left((t+\e-s)(s-r)+(t-r)(s-t)\right)^{-\frac{3}{2}}\\
&\hspace{1em}-\frac{2}{\e}\int_0^{1-\e}\dd r  \int_{r}^{r+\e} \dd t \\
&\hspace{3em}\left(\frac{1}{\sqrt{(t-r)\e+(t-r)(r+\e-t)}}-\frac{1}{\sqrt{(1-r-\e)\e+(t-r)(r+\e-t)}}\right)\\
&=-\int_\e^{1}\dd s\int_{s-\e}^s\dd t \int_{0}^{s-\e} \dd r \left((t+\e-s)(s-t)+\e(t-r)\right)^{-\frac{3}{2}}\\
&\hspace{1em}-\frac{2}{\e}\int_{0}^{1-\e}\dd r \int_0^{\e}\frac{\dd u}{\sqrt{u(2\e-u)}}+O\left(\e^{-\frac{1}{2}}\right)\\
&=-\frac{2}{\e}\int_\e^{1}\dd s\int_{s-\e}^s\dd t \left(\frac{1}{\sqrt{\e^2-(s-t)^2}}-\frac{1}{\sqrt{\e s-(s-t)^2}}\right) -\frac{\pi}{\e}(1-\e)+O\left(\e^{-\frac{1}{2}}\right)\ \\
&=-\frac{2\pi}{\e}(1-\e)+\frac{2}{\e }\int_\e^{1}\dd s\arcsin \sqrt{\frac{\e}{s}}=-\frac{2\pi}{\e}+O\left(\e^{-\frac{1}{2}}\right).
\end{align*}
Therefore, there exists a constant $\check{C}_2\in \R$ such that \begin{align}
\lim_{\e\to 0}\left(2V_2(\e)+\frac{2}{(2\pi)^2}\log \e\right)=\check{C}_2.\label{eq:Var3}
\end{align}
 \eqref{eq:Var1}-\eqref{eq:Var3} yield \eqref{eq:kappa2-2}.

\subsubsection{Proof of \eqref{eq:kappa2}}

First, we remark that the proof \eqref{eq:kappa2} was already indicated in \cite[p.20]{Bol02} but we shall give more detail by the following calculation.

As in \eqref{eq:V3repre}, we have \begin{align*}
(2\pi)^3\kappa_2(\e)&=\int_0^\e \dd s\int_{s+\e}^1\dd u\int_\e^{u}\dd t \frac{1}{(s(t-s)+(t-s)(u-t)+(u-t)s)^\frac{3}{2}}\\
&+\int_\e^{1-\e} \dd s\int_{s+\e}^1\dd u\int_{s}^{u}\dd t \frac{1}{(s(t-s)+(t-s)(u-t)+(u-t)s)^\frac{3}{2}}.
\end{align*}
Hence, \begin{align*}
(2\pi)^3\frac{\dd }{\dd \e}\kappa_2(\e)=&-\int_0^\e\dd s \int_\e^{s+\e}\dd t(s(t-s)+(s+\e-t)t)^{-\frac{3}{2}}\\
&-\int_\e^{1-\e}\dd s \int_{s}^{s+\e}\dd t (s(t-s)+(s+\e-t)t)^{-\frac{3}{2}}\\
&-\int_0^\e\dd s\int_{s+\e}^1\dd u (s(\e-s)+(u-\e)\e)^{-\frac{3}{2}}.
\end{align*}
Using the integral \begin{align*}
\int_s^t (ax^2+b x+c)^{-\frac{3}{2}}\dd x=\left[\frac{4ax+2b}{(4ac-b^2)\sqrt{ax^2+bx+c}}\right]_{s}^t,
\end{align*}
we obtain that
\begin{align*}
&\int_0^\e\dd s \int_\e^{s+\e}\dd t(s(t-s)+(s+\e-t)t)^{-\frac{3}{2}}\\
&=\int_0^\e\dd s \left(\frac{2}{(4s+\e)\sqrt{\e s}}-\frac{2}{\e\sqrt{2\e s-s^2}}+\frac{12s}{\e(4s+\e)\sqrt{2\e s-s^2}}\right),\\
&\int_\e^{1-\e}\dd s \int_{s}^{s+\e}\dd t (s(t-s)+(s+\e-t)t)^{-\frac{3}{2}}\\
&=\int_\e^{1-\e}\dd s \frac{4}{(4s+\e)\sqrt{\e s}},\\
&\int_0^\e\dd s\int_{s+\e}^1\dd u (s(\e-s)+(u-\e)\e)^{-\frac{3}{2}}\\
&=\int_0^\e \dd s\frac{2}{\e}\left(\frac{1}{\sqrt{2\e s-s^2}}-\frac{1}{\sqrt{\e(1-\e)+s(\e-s)}}\right).
\end{align*}
Thus, we find that
\begin{align*}
(2\pi)^3\frac{\dd }{\dd \e}\kappa_2(\e)&=-\int_0^\e \dd s\left(\frac{2}{(4s +\e)\sqrt{\e s}}+\frac{12 \sqrt{s}}{\e (4s +\e)\sqrt{2\e -s}}\right)-\int_\e^{1-\e}\dd s\frac{4}{(4s +\e)\sqrt{\e s}}\\
&\quad+\int_0^\e\dd s \frac{2}{\e}\frac{1}{\sqrt{\e(1-\e)+s(\e-s)}}\\
&=-\int_0^{1-\e}\dd s\frac{4}{(4s +\e)\sqrt{\e s}}+\int_0^\e \dd s\frac{2}{(4s +\e)\sqrt{\e s}}-\int_0^\e \dd s \frac{12 \sqrt{s}}{\e (4s +\e)\sqrt{2\e -s}}\\
&\quad +O\left(\e^{-\frac{1}{2}}\right).
\end{align*}
It is easy to see from \eqref{eq:integralarctan} that \begin{align*}
-\int_0^{1-\e}\dd s\frac{4}{(4s +\e)\sqrt{\e s}}+\int_0^\e \dd s\frac{2}{(4s +\e)\sqrt{\e s}}&=-\frac{4}{\e}\arctan \left(2\sqrt{\frac{1-\e}{\e}}\right)+\frac{2}{\e}\arctan 2\\
&=-\frac{ 2\pi}{\e}+O\left(\e^{-\frac{1}{2}}\right)+\frac{\pi}{\e}-\frac{2}{\e}\arctan \frac{1}{2}.
\end{align*}
Also, we have \begin{align*}
\int_0^\e \dd s \frac{12 \sqrt{s}}{\e (4s +\e)\sqrt{2\e -s}}&=\frac{12}{\e}\int_0^1\dd t \frac{\sqrt{t}}{(4t+1)\sqrt{2-t}} &&\\
&=\frac{12}{\e}\int_0^1\dd u \frac{4u^2}{(9u^2 +1)(u^2 +1)}&&(u=\sqrt{\frac{t}{2-t}})\\
&=\frac{3\pi}{2\e}-\frac{2}{\e}\arctan 3=\frac{\pi}{2\e}+\frac{2}{\e}\arctan \frac{1}{3}.
\end{align*}
Since $\arctan \frac{1}{2}+\arctan \frac{1}{3}=\frac{\pi}{4}$, \begin{align}
(2\pi)^3\frac{\dd }{\dd \e}\kappa_2(\e)=-\frac{2\pi}{\e}+O\left(\e^{-\frac{1}{2}}\right).\label{eq:kappa2order}
\end{align}

We can also compute
\begin{align}
&
\int_{s-\e^\beta}^s\dd u\int_{s\vee (u+\e)}^{s+\e}\dd v\frac{(2\pi)^{-3}}{((s+\e-v)(v-s)+(s+\e-v)(s-u)+(v-s)(s-u))^\frac{3}{2}}\notag\\
&=
\int_{s}^{s+\e}\dd v\left(\frac{2(2\pi)^{-3}}{\e((s+\e-v)(v+\e-s))^\frac{1}{2}}-\frac{2(2\pi)^{-3}}{\e\left(\e^{\beta+1}+(s+\e-v)(v-s)\right)^\frac{1}{2}}\right)\notag\\
&=\frac{(2\pi)^{-3}\pi }{\e}+O(\e^{-\frac{\beta+1}{2}})=-\frac{1}{2}\frac{\dd }{\dd \e}\kappa_2(\e)+O(\e^{-\frac{\beta+1}{2}})\label{eq:kappa2order2}
\end{align}
for $\e^\alpha<s<1-\e^\alpha$ with $\alpha\in (0,\frac{1}{2})$ and $\beta\in (\frac{1}{2},1)$.

\section*{Acknowledgements}
This work was supported by JSPS KAKENHI Grant Numbers 19H00643, 21H00988, 21K03302, 22K03351, 22H00099, 22H01128. The authors thank Professor M.~Hairer for a fruitful discussion. The aithors also thank anonymous referees for their very useful comments.
They are very indebted to Professor Michael R\"ockner and the late Professor Xian Yin Zhou for the unpublished preprint \cite{ARZ96}, which gave the initial spark and basic methods for the working on the present paper.

\end{document}